\def\MR#1{}
\crefname{equation}{}{}
\Crefname{equation}{}{}
\DeclareSymbolFont{symbols2}{LS1}{stixfrak}{m}{n}
\DeclareMathSymbol{\typecolon}{\mathbin}{symbols2}{"25}
\newcommand{\nord}[1]{\mathopen: #1 \mathclose:}
\newcommand{\bilin}[2]{\hspace{.5mm}\kappa\hspace{-.5mm}\bigl( #1|#2\bigr)}
\theoremstyle{plain}
\newtheorem{thm}{Theorem}
\newtheorem{lem}[thm]{Lemma}
\newtheorem{prop}[thm]{Proposition}
\newtheorem{cor}[thm]{Corollary}
\newtheorem*{prop*}{Proposition}
\theoremstyle{definition}
\newtheorem{exmp}[thm]{Example}
\theoremstyle{remark}
\newtheorem{rem}[thm]{Remark}
\newtheorem*{rem*}{Remark}
\newcommand{\be}{\begin{equation}}    
\newcommand{\ee}{\end{equation}}    
\newcommand{\beu}{\begin{equation*}}    
\newcommand{\eeu}{\end{equation*}}    
\newcommand{\bea}{\begin{eqnarray}}    
\newcommand{\eea}{\end{eqnarray}}    
\newcommand{\beaa}{\begin{eqnarray*}}    
\newcommand{\eeaa}{\end{eqnarray*}}    
\newcommand{\bmx}{\begin{pmatrix}}    
\newcommand{\emx}{\end{pmatrix}}
\newcommand{\ol}{\overline}    
\newcommand{\del}{\partial}    
\newcommand{\g}{{\mathfrak g}}
\renewcommand{\a}{{\mathfrak a}}
\renewcommand{\b}{{\mathfrak b}}
\newcommand{\gh}{{\widehat \g}}
\newcommand{\f}{{\mathfrak f}}
\renewcommand{\c}{{\mathfrak c}}
\newcommand{\gl}{{\mathfrak{gl}}}
\newcommand{\n}{{\mathfrak n}}    
\newcommand{\np}{{\mathfrak n}}    
\newcommand{\nm}{{\mathfrak n_-}}    
\newcommand{\npm}{{\mathfrak n_\pm}}    
\newcommand{\nmp}{{\mathfrak n_\mp}}    
\newcommand{\h}{{\mathfrak h}}
\newcommand{\p}{{\mathfrak p}}
\newcommand{\lpg}[2]{\jota\left(#1\right)\,\vap 1\,\jota\left(#2\right)}
\newcommand{\B}{{\mathsf B}}
\newcommand{\Heis}{\mathsf H}
\newcommand{\Hg}{\Heis}
\newcommand{\Cl}{\mathsf{Cl}}
\newcommand{\mf}{\mathfrak}
\newcommand{\mc}{\mathcal}
\newcommand{\npc}{{\widetilde\np}}
\newcommand{\bc}{{\widetilde \b}}
\newcommand{\gc}{{\widetilde \g}}
\newcommand{\nc}{{\widetilde \n}}
\newcommand{\J}{\mathsf J}
\newcommand{\Jp}{{}^+\!\mathsf J}
\newcommand{\Rp}{{}^+\!R}
\newcommand{\JJ}{\mathbb J}
\newcommand{\dd}[2]{\mathsf d\!\left(#1,#2 \right)}     
\newcommand{\D}{\mathcal{D}}    
\newcommand{\Dg}{\mathcal{D}}    
\newcommand{\Dl}{\mathcal{D}_1}    
\newcommand{\Dw }{\ol{\Der}\,\Onp}
\newcommand{\Dwm}{\ol{\Der}\,\Onm}
\newcommand{\Dwg}{\ol{\Der}\,\Og}
\newcommand{\Mh}{\mathsf M}
\newcommand{\Wh}{\Lambda}
\newcommand{\Mf}{\bm{\mathsf M}} 
\newcommand{\Mfw}{\ol{\bm{\mathsf M}}}
\newcommand{\Mw}{\ol{\mathsf M}}
\newcommand{\half}{\frac{1}{2}}
\newcommand{\nn}{\nonumber}    
\newcommand{\sign}{{\rm sign}}    
\newcommand{\8}{{\infty}}
\newcommand{\eps}{\epsilon}
\newcommand{\ZZ}{{\mathbb Z}}
\newcommand{\CC}{{\mathbb C}}
\newcommand{\Q}{{\mathcal Q}}
\newcommand{\ket}[1]{{\,\left|#1\right>}\,}
\newcommand{\id}{{\textup{id}}}    
\newcommand{\bl}{{\bullet}}    
\newcommand{\wh}{\widehat}
\renewcommand{\gl}{\mf{gl}}
\renewcommand{\sl}{\mf{sl}}
\newcommand{\A}{\mathsf A}
\newcommand{\Am}{-\mathsf A}
\newcommand{\Apm}{\pm\mathsf A}
\newcommand{\Ag}{\I \times \ZZ}
\newcommand{\goi}[2]{=}    
\newcommand{\Hom}{\mathrm{Hom}}
\newcommand{\Homres}{\mathrm{Hom}^{\mathrm{res}}}
\newcommand{\Homcont}{\mathrm{Hom}^{\mathrm{cont}}}
\newcommand{\on}{.}
\newcommand{\npo}{.}
\renewcommand{\binom}[2]{{#1 \brack #2}}
\newcommand{\btp}{\begin{tikzpicture}[baseline=0pt,scale=0.9,line width=0.25pt]}    
\newcommand{\etp}{\end{tikzpicture}}
\renewcommand{\L}{\mathcal{L}}
\newcommand{\scr}{\mathscr}
\newcommand{\wt}{\widetilde}
\DeclareMathOperator{\res}{res}
\DeclareMathOperator{\ad}{ad}    
\DeclareMathOperator{\lad}{\mathit L\! \ad}
\newcommand{\la}{\left<}
\newcommand{\ra}{\right>}
\renewcommand{\O}{\mc O}
\newcommand{\Og}{{\O}}
\newcommand{\Onp}{{\O(\np)}}
\newcommand{\Onm}{{\O(\nm)}}
\newcommand{\Onpm}{{\O(\n_\pm)}}
\newcommand{\Ogp}{\Og \oplus \Omega_\Og}
\newcommand{\VV}{{\mathbb V}}
\newcommand{\vac}{\!\ket{0}\!}
\newcommand{\tox}{\,\mathbin{\widetilde\otimes}\,}
\DeclareMathOperator{\Coind}{Coind}
\DeclareMathOperator{\End}{End}
\DeclareMathOperator{\Der}{Der}
\DeclareMathOperator{\Derc}{\widetilde{Der}}
\newcommand{\ox}{\otimes}
\newcommand{\invlim}{\varprojlim}
\DeclareMathOperator*{\prodr}{\overrightarrow\prod}
\newcommand{\into}{\hookrightarrow}
\newcommand{\onto}{\twoheadrightarrow}
\newcommand{\va}{\mathscr V}
\newcommand{\vla}{\mathscr{L}}
\newcommand{\cent}{\mathsf k}
\newcommand{\cocent}{\mathsf d}
\newcommand{\Cocent}{\mathsf D}
\newcommand{\fm}[1]{[#1]} 
\def\lh{{{}^L\!\h}}
\def\hn{\hat\np}
\def\lh{{}^L\h}
\def\B{\mc B}
\def\H{H}
\newcommand{\Exp}[1]{\mathrm e^{#1}}
\newcommand{\F}{\mc F}
\DeclareMathOperator{\im}{Im}
\newcommand{\cai}{\sigma}
\newcommand{\isom}{\xrightarrow\sim}
\def\short{true}
\DeclareMathOperator{\height}{ht}
\DeclareMathOperator{\wgt}{wgt}
\newcommand{\vap}[1]{{\scriptstyle{\left( #1 \right)}}}
\newcommand{\oc}{\mathring}
\newcommand{\df}[1]{{[\leq\!\!#1]}}
\newcommand{\I}{\mathcal I}
\newcommand{\HH}{\mc H}
\newcommand{\HHc}{\widetilde \HH}
\newcommand{\exxp}[1]{\exp\!\left(#1\right)}
\newcommand{\vf}{\rho}
\newcommand{\vfd}{\uprho}
\newcommand{\vphi}{\upphi}
\newcommand{\bet}{\upbeta}
\newcommand{\gam}{\upgamma}
\newcommand{\bee}{\mathsf b}
\newcommand{\cee}{\mathsf c}
\newcommand{\NN}{\ZZ_{\geq 1}}
\newcommand{\NNm}{\ZZ_{\leq -1}}
\newcommand{\ii}[2]{(#1,#2)}
\newcommand{\ia}[2]{#1,#2}
\newcommand{\imb}{i}
\renewcommand{\SS}{\mathsf S}
\newcommand{\jota}{\jmath}
\newcommand{\Cloc}{C_{\mathrm{loc}}}
\newcommand{\Hloc}{H_{\mathrm{loc}}}
\newcommand{\Lglog}{\gl(\oc\g)[t,t^{-1}]}
\newcommand{\Log}{\oc\g[t,t^{-1}]}
\newcommand{\LLog}{L\!L\oc\g}
\newcommand{\LpLog}{L_{\!+}L\oc\g}
\newcommand{\extp}{\@ifnextchar^\@extp{\@extp^{\,}}}
\def\@extp^#1{\mathop{\bigwedge\nolimits^{\!#1}}}
\newcommand{\hextp}{\@ifnextchar^\@hextp{\@hextp^{\,}}}
\newcommand{\nff}{\vartheta}
\def\@hextp^#1{\mathop{\wh{\bigwedge\nolimits^{\!#1}}}}
\author{Charles Young}
\address{
Department of Physics, Astronomy and Mathematics, University of Hertfordshire, College Lane, Hatfield AL10 9AB, UK.}  \email{c.a.s.young@gmail.com}
\date{\today}
\begin{document} 
\title[Analog of Feigin-Frenkel homomorphism for double loop algebras]{An analog of the Feigin-Frenkel homomorphism\\ for double loop algebras}

\begin{abstract}
We prove the existence of a homomorphism of vertex algebras, from the vacuum Verma module over the loop algebra of an untwisted affine algebra, whose construction is analogous to that of the Feigin-Frenkel homomorphism from the vacuum Verma module at critical level over an affine algebra.
\end{abstract}


\maketitle
\setcounter{tocdepth}{1}
\tableofcontents


\section{Introduction and overview}
The goal of this paper is to give an analog of the Feigin-Frenkel homomorphism $\VV_0^{\g,-h^\vee}\to \Mh(\np)$ in the case in which $\g$ is of untwisted affine type. To set the scene, we should first recall the situation in finite types.


\subsection{}\label{sec: finitetype}
The Lie algebra $\sl_2$ (over $\CC$) has a realization in terms of first order differential operators:
\begin{align}
E\mapsto D\qquad
H\mapsto -2XD\qquad 
F\mapsto -XXD.\qquad \label{bigcellsl2}
\end{align}
Here $E,F,H$ are the Chevalley-Serre generators and $X,D$ are generators of a Weyl algebra with commutation relations $[D,X] =1$. 
At the heart of the Wakimoto construction \cite{Wakimoto} is the observation that this homomorphism of Lie algebras can be promoted to a homomorphism of vertex algebras, given 
by
\begin{alignat}{2}
E\fm{-1}\vac &\mapsto &\phantom+\bet\fm{-1}&\vac\nn\\
H\fm{-1}\vac &\mapsto &-2 \gam\fm 0\bet\fm{-1}&\vac\nn\\
F\fm{-1}\vac &\mapsto &- \gam\fm 0\gam\fm 0 \bet\fm{-1}&\vac -2 \gam\fm{-1}\vac, \label{sl2wak}
\end{alignat}
from the vacuum Verma module over $\wh\sl_2$ at the critical level, to the vacuum Fock module for a $\bet\gam$-system of free fields. Note the new feature, the term $-2\gam\fm{-1}\vac$. 

This is a special case of a construction which works for any finite-dimensional simple Lie algebra $\g =_\CC \nm\oplus\h\oplus \np$. 
The realization \cref{bigcellsl2} generalizes to a homomorphism 
\be \vf: \g \to \Der\Onp;\qquad A\mapsto \sum_{\alpha\in \Delta_+} P^\alpha_A(X) D_\alpha \label{bigcell}\ee 
from $\g$ 
to the Lie algebra $\Der\Onp$ of derivations of the algebra 
$\Onp = \CC[X^{\alpha}]_{\alpha\in \Delta_+}$ 
of polynomial functions on the unipotent group $U = \exp(\np) \cong \np$. 
This realization arises from the infinitesimal action of $\g$ on a flag manifold, $B_-\backslash G$, whose big cell is diffeomorphic to $U$.

Recall (from e.g. \cite{KacVertexAlgBook,FrenkelBenZvi}) that the vacuum Verma module $\VV_0^{\g,k}$ over $\gh$ at level $k\in \CC$ is generated as a vertex algebra by states $\{A\fm{-1}\vac: A\in \g\}$, whose non-zero non-negative products 
(i.e., whose OPEs) 
are given by 
\begin{alignat}{2}  A\fm{-1}\vac\,\, \vap 0\,\, B\fm{-1}\vac &= &[A,B]\fm{-1}&\vac, \nn\\
 A\fm{-1}\vac \,\,\vap 1\,\, B\fm{-1}\vac &=\,\,& k \bilin{A}{B} &\vac. \label{Vkdef}
\end{alignat}
Here $\bilin\cdot\cdot$ is the invariant symmetric bilinear form on $\g$ normalized as in \cite{KacBook}. With this normalization, the critical level is equal to $-h^\vee$, where $h^\vee$ is  the dual Coxeter number of $\g$. 

Let $\Mh(\np)$ be the vacuum Fock module for the $\bet\gam$-system on $\np \cong U$. It is generated as a vertex algebra by states $\bet_\alpha\fm{-1}\vac$ and $\gam^\alpha\fm 0 \vac$, $\alpha\in \Delta_+$, obeying
\begin{align}
\bet_\alpha\fm{-1}\vac\,\,\vap 0\,\, \gam^\beta\fm0\vac = \delta_\alpha^\beta \vac .\nn
\end{align} 
(See \cref{sec: va} for the details.) 

Both $\VV_0^{\g,k}$ and $\Mh(\np)$ have natural $\ZZ_{\geq 0}$-gradations (by \emph{depth}) and the first two graded subspaces are 
\begin{alignat}{2} 
\VV_0^{\g,k}[0] &\cong \CC\qquad &\Mh(\np)[0] &\cong \Onp\nn\\
\VV_0^{\g,k}[1] &\cong \g \qquad &\Mh(\np)[1] &\cong \Der\Onp \oplus \Omega_\Onp,\label{firsttwo}
\end{alignat}
where $\Omega_\Onp = \Hom_\Onp(\Der\Onp,\Onp)$ is the space of one-forms.
One identifies $\vac \simeq 1$ and $A\fm{-1}\vac \simeq A\in \g$; and $\gam^\alpha\fm 0 \simeq X^\alpha$, $\gam^\alpha\fm{-1} \simeq dX^\alpha$ and $D_\alpha \simeq \bet_\alpha\fm{-1}$.  
Given these identifications, the homomorphism $\vf:\g \to \Der\Onp$ gives rise to a graded linear map 
\begin{subequations}
\be \VV_0^{\g,k}\df 1 \to \Mh(\np)\df 1,\nn\ee 
sending $\vac \to \vac$ and 
\be A\fm{-1}\vac \mapsto \sum_{\alpha\in \Delta_+} P^\alpha_A(\gam\fm0) \bet_\alpha\fm{-1}\vac.\nn\ee
\end{subequations}
This map does not preserve the non-negative products, but the result of Feigin and Frenkel \cite{FF1990}, \cite{FF1988,FF1990b} \cite{Frenkel_2005,Fre07} is that,  at the critical level $k= -h^\vee$, it may be lifted to one which does. Namely, 
there exists a linear map 
\be \phi:\g \to \Omega_\Onp; \qquad A\mapsto \sum_{\alpha\in \Delta_+} Q_{\alpha,A}(X) dX^\alpha \nn\ee
such that the graded linear map 
\begin{subequations}\label{fftoptwo}
\be \VV_0^{\g,-h^\vee}\df 1 \to \Mh(\np)\df 1\ee 
associated to $\vf+\phi:\g \to \Der\Onp \oplus \Omega_\Onp$, i.e. the one sending $\vac \to \vac$ and 
\be A\fm{-1}\vac \mapsto \sum_{\alpha\in \Delta_+} P^\alpha_A(\gam\fm0) \bet_\alpha\fm{-1}\vac
+ \sum_{\alpha\in \Delta_+} Q_{\alpha,A}(\gam\fm0) \gam^\alpha\fm{-1}\vac,\ee
\end{subequations}
does preserve the non-negative products. 
This latter map \cref{fftoptwo} is the restriction of a homomorphism of graded vertex algebras, 
\be \VV_0^{\g,-h^\vee} \to \Mh(\np).\nn\ee

The map $\phi:\g \to \Omega_\Onp$ respects the weight gradation. In particular $\phi(\h\oplus\np) = 0$ on grading grounds.

For example in the case of $\sl_2$, $\phi(f) = -2 dX$, $\phi(e) = \phi(h) = 0$, as in \cref{sl2wak}.

Various perspectives on this important result have subsequently appeared in the literature \cite{deBF}, \cite{FF1999},\cite{FG1}, \cite{ACM} In particular, see \cite{GMShomog} for an interpretation the language of vertex algebroids and chiral algebras  \cite{MSV,GMSii}, \cite{
GMSi,
AG, 
GMSiii,
BDChiralAlgebras,Malikov_2017}

\subsection{}
Now, and for the rest of this paper, let us suppose instead that $\g$ is of untwisted affine type, i.e. that
\be \g \quad\cong_\CC\quad \oc \g[t,t^{-1}] \oplus \CC\cent \oplus \CC\cocent \nn\ee
for some finite-dimensional simple Lie algebra $\oc \g$. 
(Here $\cent$ is central and $\cocent = t\del_t$ is the derivation element corresponding to the homogeneous gradation.)

We still have the Cartan decomposition $\g =_\CC \nm \oplus \h \oplus \np$. The Lie subalgebra $\np = \bigoplus_{\alpha \in \Delta_+} \np_\alpha$ is now of countably infinite dimension, and no longer nilpotent. But its completion $\npc = \prod_{\alpha\in \Delta_+} \np_\alpha$ is a pro-nilpotent pro-Lie algebra (i.e. a certain inverse limit of nilpotent Lie algebras -- see  \cite{Kumar}, and \cref{sec: npc} below) and there is still a bijective exponential map 
\be \exp: \npc \xrightarrow{\,\sim\,} U\nn\ee 
to a group $U$, which is now a pro-unipotent pro-group. We shall fix (in \cref{sec: coords}) a convenient choice of coordinates on $U$,
\be X^{a,n}: U \to \CC.\nn\ee 
Here $\ii an$ runs over a countable index set, $\A$, which also indexes a topological basis $J_{a,n}$ of $\npc$. 
(Recall $\dim(\np_\alpha)$ can be greater than 1 in affine types other than $\wh\sl_2$, so we cannot simply index by the positive roots $\Delta_+$.) 

We set $\Onp := \CC[X^{a,n}]_{\ii a n\in \A}$ and define $\Der\Onp$ to be the Lie algebra of derivations of $\Onp$ consisting of sums of the form
\be \sum_{\ii a n \in \A} P^{\ia a n}(X) D_{\ia a n},\qquad P^{a,n}(X) \in \Onp,\nn\ee
subject to the constraint that only finitely summands are nonzero. It has a completion, $\Derc\Onp\supset \Der\Onp$, consisting of sums of the same form but without the constraint.
(Here the new generators $D_{\ia an}$ obey $[D_{\ia an},X^{\ia b m}] = \delta_{\ia an}^{\ia bm}$.)
As before, we define the space of one-forms $\Omega_\Onp = \Hom_\Onp(\Der\Onp,\Onp)$.

The group $U$ can still be seen as copy of the big cell of a flag manifold $B_-\backslash G$, in a sense made precise in \cite{KashiwaraFlag}.\footnote{It is perhaps worth stressing that $B_-\backslash G$ is not the \emph{affine Grassmannian} or the \emph{affine flag variety} in the usual sense of e.g. \cite[\S2.2]{Gortz}. For example when $\g=\wh\sl_2$, the (set of $\CC$-points of the) affine Grassmannian is $SL_2(\CC[[t]]) \big\backslash SL_2(\CC((t)))$, whereas here $B_-= SL_2(t^{-1}\CC[t^{-1}]) \oc B_-$ with $\oc B_-$ the usual lower-triangular Borel subgroup of $SL_2(\CC)$. See also the discussion in  \cite[\S5]{Fopersontheprojectiveline} (in which one should swap $t\leftrightarrow t^{-1}$ to match the present conventions).}
For our purpose the important point is that there is a homomorphism of Lie algebras,
\be \vf:\g \to \Derc\Onp; \qquad A\mapsto \sum_{\ii a n\in \A} P^{\ia a n}_A(X) D_{\ia a n} \label{affinebigcell}\ee
as we show in a concrete fashion in \cref{sec: inft}. This is the analog of the homomorphism \cref{bigcell}. 

Some examples in the case $\g = \wh\sl_2$ are shown in \cref{sl2diffs}.

(The centre of $\g$ lies in the kernel, $\vf(\cent) = 0$, so the homomorphism $\vf$ actually factors through $\g/\CC\cent \cong \oc \g[t,t^{-1}] \rtimes \CC\cocent$.)

\begin{figure}
\begin{align}
\vf(J_{E,0}) &= D_{E,{0}} +2X^{H,{1}} D_{E,{1}} -X^{F,{1}}D_{H,{1}}\nn\\ 
&\quad +\left(2X^{H,{2}}-2\left(X^{H,{1}}\right)^{2}\right)D_{E,{2}}
-X^{F,{2}}D_{H,{2}}+\left(X^{F,{1}}\right)^{2}D_{F,{2}}\nn\\
&\quad +\left(2X^{H,{3}}+\tfrac{4}{3}\left(X^{H,{1}}\right)^{3}\right)D_{E,{3}}
+\left(-X^{F,{3}}-X^{E,{1}}\left(X^{F,{1}}\right)^{2}\right)D_{H,{3}}
+2\left(X^{F,{1}}\right)^{2}X^{H,{1}}D_{F,{3}}\nn\\
& \quad +\dots\nn\\
\nn\\
\vf(J_{F,1}) &= 
D_{F,{1}}
+X^{E,{1}}D_{H,{2}}
-2X^{H,{1}}D_{F,{2}}\nn\\
&\quad+\left(X^{E,{1}}\right)^{2}D_{E,{3}}
+\left(X^{E,{2}}+2X^{E,{1}}X^{H,{1}}\right)D_{H,{3}}
+\left(-2X^{H,{2}}-2\left(X^{H,{1}}\right)^{2}\right)D_{F,{3}}\nn\\
&\quad + \dots\nn\\
\nn\\
\vf(J_{E,-1}) &=
\left(2X^{H,{1}}+2X^{E,{0}}X^{F,{1}}\right)D_{E,{0}}\nn\\
&\quad
+\left(2X^{H,{2}}+2\left(X^{H,{1}}\right)^{2}\right)D_{E,{1}}
+\left(-X^{F,{2}}-2X^{F,{1}}X^{H,{1}}\right)D_{H,{1}}
-\left(X^{F,{1}}\right)^{2}D_{F,{1}}\nn\\
&\quad
+\left(2X^{H,{3}}+2X^{E,{1}}X^{F,{2}}-\tfrac{8}{3}\left(X^{H,{1}}\right)^{3}\right)D_{E,{2}}
-X^{F,{3}}D_{H,{2}}
+2\left(X^{F,{1}}\right)^{2}X^{H,{1}}D_{F,{2}}\nn\\
&\quad+ \dots\nn\\
\nn\\
\vf(J_{F,0}) &=
-\left(X^{E,{0}}\right)^{2}D_{E,{0}}
+X^{E,{1}}D_{H,{1}}
-2X^{H,{1}}D_{F,{1}}\nn\\
&\quad-\left(X^{E,{1}}\right)^{2}D_{E,{2}}
+X^{E,{2}}D_{H,{2}}
+\left(-2X^{H,{2}}+2\left(X^{H,{1}}\right)^{2}\right)D_{F,{2}}\nn\\
&\quad+\left(X^{E,{3}}-2X^{E,{1}}\left(X^{H,{1}}\right)^{2}\right)D_{H,{3}}
+\left(-2X^{H,{3}}+\tfrac{8}{3}\left(X^{H,{1}}\right)^{3}\right)D_{F,{3}}\nn\\
&\quad + \dots\nn
\end{align}
\caption{\label{sl2diffs}
In type $\g = \wh\sl_2$, the first few terms of the images of the Chevalley-Serre generators $e_1 = J_{E,0}$, $e_0 = J_{F,1}$, $f_1 = J_{F,0}$,  $f_0 = J_{E,-1}$ under the homomorphism $\vf: \g \to \Derc\Onp$.
}
\end{figure}

\subsection{}
One may define the vacuum Verma module $\VV_0^{\g,k}$ over $\gh$ at level $k\in \CC$ when $\g$ is affine, such that \cref{Vkdef} still holds, where $\bilin\cdot\cdot$ is the standard non-degenerate symmetric invariant bilinear form from \cite{KacBook} (with $\bilin\cent\cocent =1$ and so on). It is still a vertex algebra. 

The main result of the present paper (\cref{homcor}) is that the homomorphism $\vf$ can be promoted to a homomorphism of vertex algebras 
$\VV_0^{\g,0} \to \Mfw$.
Of course, we have yet to explain what $\Mfw$ is. To motivate its definition, it is instructive to consider what happens when one attempts to generalize the construction above in the most direct fashion.

Thus, let $\Mh(\np)$ be, again, the vacuum Fock module for the $\bet\gam$-system on $\np\cong U$. It is a vertex algebra, generated by (now, countably infinitely many) states $\bet_{\ia a n}\fm{-1}\vac$ and $\gam^{\ia a n}\fm 0 \vac$, $\ii a n \in \A$, obeying
\begin{align}
\bet_{\ia a n}\fm{-1}\vac\,\,\vap 0\,\, \gam^{\ia b m}\fm0\vac = \delta_{\ia a n}^{\ia b m} \vac .\nn
\end{align} 
Both $\VV_0^{\g,k}$ and $\Mh(\np)$ are once more $\ZZ_{\geq 0}$-graded by depth, and the identifications in \cref{firsttwo} continue to hold. 
(One now identifies $\gam^{\ia a n}\fm 0 \simeq X^{\ia a n}$, $\gam^{\ia a n}\fm{-1} \simeq dX^{\ia a n}$ and $D_{\ia a n} \simeq \bet_{\ia a n}\fm{-1}$.)  
In particular, 
\be \Mh(\np)[1] \cong \Der\Onp \oplus \Omega_{\Onp} .\nn\ee
Importantly, it is $\Der\Onp$ and not its completion $\Derc\Onp$ which appears here. Indeed, by definition, $\Mh(\np)$ consists of \emph{finite} linear combinations of states of the form 
$\gam^{\ia{a_1}{n_1}}[-N_1]\dots \gam^{\ia{a_r}{n_r}}[-N_r] 
                \bet_{\ia{b_1}{m_1}}[-M_1]\dots \bet_{\ia{b_s}{m_s}}[-M_s] \vac.$ 
Thus, in contrast to \cref{sec: finitetype}, the image $\vf(\g)\subset \Derc\O$ does not naturally embed in $\Mh(\np)[1]$. 
We need some larger space. 
We shall introduce a completion $\wt\Mh(\np)$ of $\Mh(\np)$ as a vector space, in which certain infinite linear combinations are allowed provided they truncate to finite linear combinations when $\bet_{\ia a n}[N]$ is set to zero \emph{for large $n$}. (See \cref{sec: wtMh}.
) The definition is chosen to ensure that
\be \wt\Mh(\np)[1] \cong \Derc\Onp \oplus \Omega_{\Onp} .\nn\ee
We get the graded linear map 
\be \VV_0^{\g,k}\df 1 \to \wt\Mh(\np)\df 1\nn\ee
sending $\vac \to \vac$ and 
\be A\fm{-1}\vac \mapsto \sum_{\ii a n\in \A} P^{\ia a n}_A(\gam\fm0) \bet_{\ia a n}\fm{-1}\vac,\nn\ee
with $P^{a,n}_A(X)$ the polynomials from \cref{affinebigcell}. 

\subsection{}At this point, naively, one would like to ask whether this map gives rise to a homomorphism of vertex algebras in the same way as in \cref{fftoptwo} above. In that direction, we shall establish the following statement. (It will actually be a corollary, \cref{zeromodethm}, of our main result.) There exists a linear map 
\be \phi:\g \to \Omega_\Onp; \qquad A\mapsto \sum_{(a,n)\in \A} Q_{a,n;A}(X) dX^{a,n} \label{affphi}\ee
such that the graded linear map 
\be \VV_0^{\g,k}\df 1 \to \Mh(\np)\df 1\nn\ee 
associated to $\vf+\phi:\g \to \Derc\Onp \oplus \Omega_\Onp$, i.e. the one sending $\vac \to \vac$ and 
\be A\fm{-1}\vac \mapsto \sum_{(a,n) \in \A} P^{a,n}_A(\gam\fm0) \bet_{a,n}\fm{-1}\vac
+ \sum_{(a,n)\in \A} Q_{{a,n};A}(\gam\fm0) \gam^{a,n}\fm{-1}\vac,\nn\ee
does preserve \emph{at least the $0$th} vertex algebra product (for any $k\in \CC$). That is, if we call this latter map $\nff$, we have
\be \nff(A) \vap 0 \nff(B) = \nff(A\vap 0 B) \label{zeroeq}\ee
for all $A\simeq A\fm{-1}\vac$ and $B\simeq B\fm{-1}\vac$ in $\g\cong \VV_0^{\g,k}[1]$. 

The map $\phi$ again respects the weight gradation, so that $\phi(\h\oplus \np) = 0$.

For example, in type $\g = \wh\sl_2$, when using the same choice of coordinates on $U$ as in \cref{sl2diffs} one finds that
\begin{subequations}\label{hls}
\be \phi(f_1=J_{F,0}) = -2 dX^{E,0},\qquad \phi(f_0 = J_{E,-1}) = -4 dX^{F,1},\nn\ee
and then  
\begin{align} 
    \phi(J_{H,-1}) &= -12 dX^{H,1} - 4 X^{F,1} dX^{E,0}\nn\\ 
    \phi(J_{F,-1}) &= -8 dX^{E,1} + 4 X^{H,1} dX^{E,0} \nn\\
    \phi(J_{E,-2}) &= -10 dX^{F,2} -8 X^{H,1} dX^{F,1} +2 (X^{F,1})^2 dX^{E,0}, \nn
\end{align}
and so on.
\end{subequations}

While encouraging, this statement skirts around a serious caveat, which is the reason the question above was naive: the completion $\wt\Mh(\np)$ is not a vertex algebra. Or, more precisely, the vertex algebra structure on $\Mh(\np)$ does not extend to $\wt\Mh(\np)$. In particular, when one tries to extend the definition of the vertex algebra $n$th products $\vap n: \Mh(\np) \times \Mh(\np) \to \Mh(\np)$ by bilinearity to $\wt\Mh(\np)\times \wt\Mh(\np)$, the results are not in general finite.

Thus, it was a non-trivial fact about the image of $\nff$ that the expression $\nff(A) \vap 0 \nff(B)$ in \cref{zeroeq} was even well-defined. And one finds the would-be $1$st products $\nff(A) \vap 1 \nff(B)$ are not in general finite. 

\subsection{}\label{sec: zeta} 
As an aside, let us examine some examples of these divergences in type $\g=\wh\sl_2$. 
\emph{For this subsection only}, we modify the OPEs by introducing a formal variable $z$ which will serve as a regulator:
\begin{align}
\bet_{\ia a n}\fm{-1}\vac\,\,\vap 0\,\, \gam^{\ia b m}\fm0\vac = z^n \delta_{\ia a n}^{\ia b m} \vac.\nn
\end{align} 

One has 
\be \nff(J_{H,0}) = - 2 \sum_{n\geq 0} \gam^{E,n}[0] \bet_{E,n}[-1] \vac 
 + 2 \sum_{n\geq 1} \gam^{F,n}[0] \bet_{F,n}[-1] \vac .\nn\ee 
It follows that the regulated $1$st product $\nff(J_{H,0})\vap 1 \nff(J_{H,0})$ is the formal series
\be \nff(J_{H,0})\vap 1 \nff(J_{H,0}) = \Bigl(-4 - 4 \sum_{n\geq 1} z^{2n}\Bigr)\vac.\nn\ee
(We recall the standard details of computing such products in \cref{sec: va}.)
As more intricate examples, one finds
\begin{align} 
\nff(J_{E, 1})\vap 1 \nff(J_{F,-1}) &=  \Bigl(-8z - 4 \sum_{n\geq 1} z^{2n+3}\Bigr)\vac \nn\\
\nff(J_{H, 1})\vap 1 \nff(J_{H,-1}) &=  \Bigl(-12z - 4z^3 - 8 \sum_{n\geq 1} z^{2n+3}\Bigr)\vac \nn
\\ \nff(J_{E,-2})\vap 1 \nff(J_{F,2}) &=  \Bigl(-10z^2  - 4 \sum_{n\geq 1} z^{2n+4}\Bigr)\vac \nn
.\end{align} 
These series are divergent when one attempts to remove the regulator by setting $z=1$.
One might be tempted to treat these divergences by $\zeta$-function regularization. For an introduction to the formal-variable approach to $\zeta$-function regularization, see \cite{LepowskyZeta} (and cf. also \cite{Bloch,LepowskyZeta2,DLM1}). It amounts to the following prescription. First, one notes that each series above is the small-$z$ expansion of some rational expression in $z$. One substitutes $z=\Exp y$ in that rational expression, to obtain a rational expression in $\Exp y$; then one expands $\Exp y$ as a formal series in $y$. The result is a quotient of formal series in $y$, and hence a well-defined formal Laurent series in $y$. Finally, one extracts the constant term in that series.

Very suggestively, when one does that, the result is zero in each example above. For instance for $\nff(J_{E,-2})\vap 1 \nff(J_{F,2})$ one obtains 
$ -10 - 4 \left(-\frac{5}{2}\right) = 0$.
In what follows we shall use a different approach,
\ifdefined\short
\else
\footnote{
The reason is that these divergences are not the only obstacle to making  $\wt\Mh(\np)$ into a vertex algebra. Here is another: the vertex algebra axioms include
\be A\vap 1\left( B \vap 0C\right) = \left( A \vap 1 B\right)\vap 0 C + B\vap 0 \left( A \vap 1 C\right) + \left( A \vap 0 B\right) \vap 1 C,
\label{rel10}\ee
but consider the following elements of $\wt\Mh(\np)$, in the case $\g=\wh\sl_2$: 
\be A = \sum_{n\geq 0} \gam^{\ia E n}\fm 0 \bet_{\ia E n}\fm{-1}\vac\quad
    B = \sum_{n\geq 1} \gam^{\ia E {n-1}}\fm 0 \bet_{\ia E n}\fm{-1}\vac ,\quad 
    C = \sum_{n\geq 0} \gam^{\ia E {n+1}}\fm 0 \bet_{\ia E n}\fm{-1}\vac .\nn\ee
Then every term on the right in \cref{rel10} is zero, yet on the left we have $A\vap 1 \left( B\vap 0 C\right) = A\vap 1 \left(\gam^{\ia E 0}\fm 0 D_{\ia E 0}\fm{-1}\vac\right) = - 1$. The underlying problem here is that the commutator ($B\vap 0 C$) of two traceless semi--infinite matrices ($B$ and $C$) need not be traceless. (When there is an upper cut-off $N$ on the index $n$, one has instead $B\vap 0 C =   \gam^{\ia E 0}\fm 0 \bet_{\ia E 0}\fm{-1}\vac - \gam^{\ia E N}\fm 0 \bet_{\ia E N}\fm{-1}\vac$ and then $A\vap 1(B\vap 0 C) = -1 +1 = 0$, and \cref{rel10} holds.)
} 
\fi
but it will indeed be the case that the homomorphism we construct is from the vacuum Verma module at level zero.

\subsection{} To proceed, we need more information about the image of the homomorphism $\vf:\g \to \Derc\Onp$ from \cref{affinebigcell}.
We illustrate the idea with an example in type $\g=\wh\sl_2$. Let us consider a term in the infinite sum $\vf(e_1) = \sum_{(a,n) \in \A} P^{a,n}_{e_1}(X) D_{a,n}$ for the generator $e_1 = J_{E,0}$: say, the term $P^{E,8}_{e_1}(X) D_{E,8}$. One finds
\begin{align}
P_{e_1}^{E,8}(X) &= 2 X^{H,8} \nn\\
&      - 2 \left(X^{H,4}\right)^2
       + 4 \left(X^{H,2}\right)^2 X^{H,4}
       - \tfrac 23 \left(X^{H,2}\right)^4
       - 8 \left(X^{H,1}\right)^2 X^{H,2} X^{H,4}\nn\\
&       + \tfrac 83 \left(X^{H,1}\right)^2 \left(X^{H,2}\right)^3
       + \tfrac{16}{3} \left(X^{H,1}\right)^3 X^{H,2} X^{H,3}
       + \tfrac{4}{3} \left(X^{H,1}\right)^4 X^{H,4}
       - \tfrac{4}{3} \left(X^{H,1}\right)^4 \left(X^{H,2}\right)^2\nn\\
&       - \tfrac{8}{15} \left(X^{H,1}\right)^5 X^{H,3}
       + \tfrac{8}{45} \left(X^{H,1}\right)^6 X^{H,2}
       - \tfrac{2}{315} \left(X^{H,1}\right)^8
       + \tfrac{8}{3} X^{E,3} X^{F,2} \left(X^{H,1}\right)^3\nn\\
&       - \tfrac{4}{3} X^{E,2} X^{F,2} \left(X^{H,1}\right)^4
       - 4 X^{E,2} X^{E,3} \left(X^{F,1}\right)^2 X^{H,1}
       - \left(X^{E,2}\right)^2 \left(X^{F,2}\right)^2\nn\\
&       - 2 \left(X^{E,2}\right)^2 \left(X^{F,1}\right)^2 \left(X^{H,1}\right)^2
       - 4 X^{E,1} X^{E,3} \left(X^{F,1}\right)^2 \left(X^{H,1}\right)^2
       + 4 X^{E,1} X^{E,2} \left(X^{F,1}\right)^2 X^{H,3}\nn\\
&       - \tfrac 83 X^{E,1} X^{E,2} \left(X^{F,1}\right)^2 \left(X^{H,1}\right)^3
       + 2 \left(X^{E,1}\right)^2 \left(X^{F,1}\right)^2 X^{H,4}
       - 2 \left(X^{E,1}\right)^2 \left(X^{F,1}\right)^2 \left(X^{H,2}\right)^2\nn\\
&       + 4 \left(X^{E,1}\right)^2 \left(X^{F,1}\right)^2 X^{H,1} X^{H,3}
       + 4 \left(X^{E,1}\right)^2 \left(X^{F,1}\right)^2 \left(X^{H,1}\right)^2 X^{H,2}\nn\\
&       - \tfrac 23 \left(X^{E,1}\right)^2 \left(X^{F,1}\right)^2 \left(X^{H,1}\right)^4
 - 2 \left(X^{E,1}\right)^2 X^{E,2} \left(X^{F,1}\right)^2 X^{F,2}.\nn
\end{align}
Observe that only the first monomial has any factor $X^{a,n}$ with $n> 4$. 
This is an example of a general pattern: for any fixed  $A\in\g$, in the coefficient polynomials $P^{a,n}_A(X)$ there is, for large $n$, always at most one leading monomial $X^{b,m}$ with $m\sim n$; the remaining monomials have only factors $X^{c,p}$ with $p \lesssim n/2$. 

We shall make this idea precise with the notion of \emph{widening gap} in \cref{sec: Dw}. See \cref{linthm}, which will show that the difference  
\be \vf(J_{a,n}) - \sum_{b,c\in \I} f_{ba}{}^c \sum_{m > \max(1,n) } X^{b,m-n} D_{c,m}, \label{wgi}\ee
has widening gap. (Here $f_{ba}{}^c$ are structure constants of $\oc\g$.) Elements of widening gap form a Lie algebra, $\Dw$, with $\Der\Onp\subset \Dw \subset\Derc\Onp$.

The notion of widening gap goes over to the vertex algebra $\Mh(\np)$: one can define a subspace $\Mw(\np)$ of the completion $\wt\Mh(\np)$ in which infinite sums are allowed but only if they have widening gap. The vertex algebra structure on $\Mh(\np)$ does extend to $\Mw(\np)$. (See \cref{lem: Mw}.)

\subsection{}\label{sec: vfintro}
The question therefore becomes: what to do with the leading terms in $\vf(J_{a,n})$? 
Our approach, in \cref{sec: cai}, will be to glue together two copies of the realization \cref{affinebigcell} back-to-back. Let $ \Og := \O(\g) := \CC[X^{\ia a n}]_{\ii a n \in \Ag} $ denote the algebra of polynomial functions on all of $\g$. We shall define $\Derc\Og$ and its subalgebra, $\Dwg$, of elements of widening gap. The Cartan involution $\cai:\g\to\g$, which exchanges $\np$ and $\nm$, gives rise to an involution $\tau: \Derc\Og\to \Derc\Og$, which exchanges $\Derc\Onp$ and $\Derc\Onm$.
On twisting the homomorphism $\vf : \g \into \Derc\Onp \subset$ from \cref{affinebigcell} by these involutions, we get a homomorphism $\tau\circ\vf\circ\cai: \g \into \Derc\Onm$. Adding the two, we obtain a homomorphism
\begin{align} \vfd := (\vf + \tau\circ\vf\circ\cai) : \g 
&\to \Derc\Onp \oplus \Derc\Onm\nn\\&\into \Derc\Og.\nn
\end{align}
Of course, having helped oneself to a copy of $\O=\O(\g)$, there is an obvious homomorphism $\g\to \Derc\Og$ coming from the coadjoint representation, which sends $J_{a,n}\mapsto\sum_{b,c\in \I} f_{ba}{}^c \sum_{m\in \ZZ}  X^{b,m-n} D_{c,m}$. So one should keep in mind that what is special about the homomorphism $\vfd$ is that, by construction, the resulting action of $\g$ on $\Og$ stabilizes $\Onp$ and $\Onm$.

We then check that the difference
\be \vfd(J_{a,n}) - \sum_{b,c\in \I} f_{ba}{}^c \sum_{m\in \ZZ}  X^{b,m-n} D_{c,m} \nn\ee
has widening gap, i.e. belongs to $\Dwg$. The advantage of this statement, compared to \cref{wgi}, is that we can replace the sums, $\sum_{m\in \ZZ}  X^{b,m-n} D_{c,m}$, by an abstract set of generators $\SS^b_{c,n}$ of the loop algebra $\gl(\oc\g)[t,t^{-1}]$. In this way we can, and shall, regard $\vfd$ as a homomorphism from $\g$ to the Lie algebra 
\be \Dg := \Dwg \rtimes \left(\Lglog \rtimes \CC\Cocent\right) \label{Dgintro}\ee
(here $\Cocent$ is a derivation element, and $\g\ni\cocent\mapsto\Cocent$).
See \cref{lem: Jdif} and the discussion following.

\subsection{} 
We define a vertex algebra $\Mfw$ in light of this definition of the Lie algebra $\Dg$. Namely, we have $\Mh := \Mh(\g)$, the vacuum Fock module for the $\bet\gam$-system on $\g$, and we introduce its completion as a vector space, $\wt\Mh$, and the subspace generated by elements of widening gap, $\Mw\subset \wt\Mh$. 
Then $\Mw$ is a vertex algebra, and we can take a ``semi-direct product of vertex algebras'' with the level zero vacuum Verma module $\VV_0^{\Lglog\rtimes \CC\Cocent,0}$ to define $\Mfw$; so, as a vector space,
\be \Mfw \cong_\CC \Mw \ox \VV_0^{\Lglog\rtimes\CC\Cocent,0}. \nn\ee
See \cref{sec: Mfw}. By construction we have
\begin{align} \Mfw[0] &\cong \Og \nn\\
              \Mfw[1] &\cong \Dg \oplus  \Omega_\Og .\nn\end{align}
At that point we shall be in a position to state our main result: see \cref{mainthm} and \cref{homcor}. 
It says the following: let $\vphi = \phi + \tau \circ\phi \circ\sigma : \g \to \Omega_\Og$
where $\phi:\g\to \Omega_\Onp\subset \Omega_{\Og}$ is the map from \cref{affphi}. Then the graded linear map 
\be \VV_0^{\g,0}\df 1 \to \Mfw\df 1\nn\ee 
associated to $\vfd+\vphi:\g \to \Dg \oplus \Omega_\Og$ (in the same fashion as above) preserves the non-negative products, and is the restriction of a homomorphism of graded vertex algebras
\be \theta: \VV_0^{\g,0} \to \Mfw.\label{thetadef}\ee

Associated to this homomorphism of vertex algebras is a homomorphism of Lie algebras
\be L\g \to \L(\Mfw\df 1)\label{Lghom}\ee 
from the loop algebra $L\g := \g \ox \CC((s))$ of the affine algebra $\g$ to the Lie algebra of formal modes of states in $\Mfw\df1$. In fact, the central element $\cent\in \g$ is in the kernel of $\vf$, so we actually get a homomorphism
\be \LLog \to \L(\Mfw\df1) \nn\ee
from the double-loop algebra $\LLog := \oc\g[t,t^{-1}]\ox \CC((s))$.

The homomorphism $\theta$ has the property that the non-negative modes of states in $\VV_0^{\g,0}[1]\cong \g$ stabilize $\Mh(\np)$ and $\Mh(\nm)$ inside $\Mh= \Mh(\g)$. Thus, we get an action of $L_+\g:= \g\ox \CC[[s]]$, and in fact of $\LpLog := \oc\g[t,t^{-1}] \ox \CC[[s]]$, on $\Mh(\np)$ and $\Mh(\nm)$. See \cref{Lpgstab}. 
(This is in contrast to the obvious vertex-algebra homomorphism $\VV_0^{\g,0}\to \Mfw$, which sends $J_{a,n}\fm{-1}\vac \to \sum_{b,c\in \I} f_{ba}{}^c \SS^b_{c,n}\fm{-1}\vac $; cf. \cref{sec: vfintro}.)

Finally, in \cref{bthm}, we shall lift the homomorphism $\theta$ to a homomorphism
\be  \VV_0^{\g,0} \to \Mfw \ox \pi_0 \nn\ee
where $\pi_0$ is the vacuum Fock module for a system of $\dim\h$ free bosons (see \cref{sec: pi0}). (For this homomorphism, the state $\cent\fm{-1}\vac$ is no longer in the kernel.)

\subsection{} \label{sec: discussion}

Let us conclude this introduction with some comments about these results.

\medskip

The homomorphism $\theta$ is not a \emph{free-field} realization: we adjoined the copy of the vacuum Verma module $\VV_0^{\Lglog\rtimes \CC\Cocent,0}$ in the definition of $\Mfw$, and so the vertex algebra $\Mfw$ does not have mutually commuting creation operators and mutually commuting annihilation operators.
This is apparent already at the level of the Lie algebra homomorphism $\vfd : \g \to \Dg$: the Lie algebra $\Dg$ defined in \cref{Dgintro} had generators $\SS^a_{b,n}$ in addition to the 
mutually commuting coordinates $X^{a,n}$ and mutually commuting derivatives $D_{a,n}$. 
There is some rough intuition that says that is to be expected. In this paper, the vertex algebra structure is always associated to the second coordinate, $s$, appearing in the double loop algebra $\LLog := \oc\g[t,t^{-1}] \ox \CC((s))$.
One would like to be able to say at the same time that $X^{a,n}$ and $D_{b,n}$ are modes in the $t$-coordinate of states ``$X^a_{0}\vac$'' and ``$D_{b,-1}\vac$'', and then that the $\SS^a_{b,n}$ are merely the modes in the $t$-coordinate of a composite state, ``$X^a_{0} D_{b,-1} \vac$'', roughly speaking. 
To make sense of such statements, one would need a theory of vertex algebras on polydiscs (of complex dimension two, in our case), perhaps following \cite{CG,GW,SWW}. Since in the present paper we confine ourselves to the standard definition of vertex algebras, it is perhaps unsurprising that we need to include these $\SS^a_{b,n}$ as generators in their own right.

Relatedly, whereas in finite types $\Mh(\np)$ gets the structure of a module at the critical level over the central extension $\gh$ of the full loop algebra $L\g$, here the subspace $\Mh(\np)\subset \Mfw$ is stabilized by $L_+\g$, as in \cref{Lpgstab}, but certainly not by all of $L\g$. It might be interesting to study the $L\g$ module through $\Mh(\np)$ inside $\Mfw$.
 
\medskip

To illustrate the structure of the image of $\theta$, let us examine an example in type $\g = \wh\sl_2$. One finds
\begin{align}\label{exmp}
 \theta(J_{F,1}\fm{-1} \vac) = 
& \phantom+  \SS^E_{H,1}\fm{-1} \vac - 2 \SS^H_{F,1}\fm{-1}\vac\\
&       - \gam^{E,0}\fm0 \bet_{H,1}\fm{-1} \vac
       - \gam^{E,-1}\fm0 \bet_{H,0}\fm{-1} \vac
       + 2\gam^{H,0}\fm0 \bet_{F,1}\fm{-1} \vac\nn\\
&       - 4\gam^{E,-1}\fm{-1} \vac\nn\\
&       + \bet_{F,1}\fm{-1} \vac\nn\\
&       + 2\gam^{E,-1}\fm0 \gam^{F,0}\fm0 \bet_{F,0}\fm{-1} \vac       - \gam^{E,-1}\fm0 \gam^{E,-1}\fm0 \bet_{E,-1}\fm{-1} \vac\nn\\
&       - 2\gam^{H,-1}\fm0 \gam^{E,-1}\fm0 \bet_{H,-1}\fm{-1} \vac       + 2\gam^{H,-1}\fm0 \gam^{H,-1}\fm0 \bet_{F,-1}\fm{-1} \vac\nn\\
&       + 2\gam^{E,-2}\fm0 \gam^{F,-1}\fm0 \bet_{F,-2}\fm{-1} \vac       - 2\gam^{H,-1}\fm0 \gam^{E,-1}\fm0 \gam^{E,-1}\fm0 \bet_{E,-2}\fm{-1} \vac\nn\\
&       + \tfrac 83\gam^{H,-1}\fm0 \gam^{H,-1}\fm0 \gam^{H,-1}\fm0 \bet_{F,-2}\fm{-1} \vac\nn\\
&       + \gam^{E,1}\fm0 \gam^{E,1}\fm0 \bet_{E,3}\fm{-1} \vac       + 2\gam^{H,1}\fm0 \gam^{E,1}\fm0 \bet_{H,3}\fm{-1} \vac\nn\\
&       - 2\gam^{H,1}\fm0 \gam^{H,1}\fm0 \bet_{F,3}\fm{-1} \vac
+\dots.\nn
\end{align}
In the first line there are terms belonging to $\VV_0^{\Lglog\rtimes \CC\Cocent,0}[1]$; in the second,  a finite sum of compensating quadratic terms. Then in the remaining lines is the sum of other terms, which is infinite but with widening gap. 
The divergences in the $1$st products, cf. \cref{sec: zeta}, are removed because we set the $1$st products of the states $\SS^a_{b,n}\fm{-1} \vac$ to zero. Note that zero, rather than some other finite level, was not a choice: since $\theta(\cent\fm{-1}\vac) =0$ and $\bilin\cent\cocent = 1$, $\theta$ could not be a homomorphism from $\VV_0^{\g,k}$ at any nonzero level $k$. 
Correspondingly, the homomorphism in \cref{Lghom} is from the loop algebra $L\g$, rather than any central extension thereof. It is tempting to say that the critical level is zero for untwisted affine algebras. We do not consider deforming to other levels in the present paper.

\medskip

One motivation for the present paper comes from Gaudin models. In the case of $\g$ of finite type there is a deep connection  \cite{FFR,Fopers} between the centre of the vacuum Verma module at the critical level, $\VV_0^{\g,-h^\vee}$, and the (large, commutative) algebra of Gaudin Hamiltonians, sometimes called the Bethe algebra \cite{MTV1,MTVschubert,RybnikovProof}. In the approach to the Bethe ansatz for Gaudin models described in \cite{FFR}, the Feigin-Frenkel homomorphism $\VV_0^{\g,-h^\vee}\to \Mh(\np)\ox \pi_0$ plays a key role. 
Gaudin models of \emph{affine} type should provide a means of describing the spectra of integrals of motion of certain integrable quantum field theories: an idea pioneered in \cite{FFsolitons}, and with further progress in \cite{V17,FH, LVY,LVY2,LacroixThesis,FJM,DKLMV,DLMV2,Vic2,Y,GLVW}.

\subsection{} The structure of this paper is as follows. 

In \cref{sec: diffop} we construct the homomorphism $\vf: \g \to \Derc\Onp$. Then in \cref{sec: cai} we introduce the homomorphism $\vfd: \g \to \Dg$. In \cref{sec: va} we recall basic facts about $\bet\gam$-systems and vertex algebras, before going on to state the main results starting in \cref{sec: mr}. The proof of the main theorem, \cref{mainthm}, is given in \cref{sec: proof}. It follows the strategy due to Feigin and Frenkel and discussed in detail in \cite[\S5]{Fre07}. In particular, we introduce the $\bee\cee$-ghost system and use it to define a subcomplex, the \emph{local complex}, of the Chevalley-Eilenberg complex for (in our case) the double loop algebra $\LLog$. In \cref{sec: proofb} we give the proof of \cref{bthm}. Finally, in \cref{sec: cis} we compute explicitly the values of coefficients appearing in the images of the Chevalley-Serre generators of $\g$ under the homomorphism $\theta$: see \cref{CScor}.


\section{Realization of $\g$ by differential operators on the big cell}\label{sec: diffop}

\subsection{Loop realization} \label{sec: loop rel}
We work over the complex numbers $\CC$. Let $\oc\g$ be a finite-dimensional simple Lie algebra, and  
$\Log$ the Lie algebra of Laurent polynomials, in a formal variable $t$, with coefficients in $\oc\g$.
Let $\bilin{\cdot}{\cdot}: \oc\g\times\oc\g \to \CC$ denote the non-degenerate symmetric $\oc\g$-invariant bilinear form on $\oc\g$, with the standard normalization from \cite{KacBook}. 
Let $\g'$ denote the central extension of $\Log$ by a one dimensional centre $\CC\cent$,
\be 0 \to \CC \cent \to \g' \to \Log \to 0, \nn\ee
whose commutation relations are given by $[\cent,\cdot] = 0$ and 
\be [a \ox f(t), b \ox g(t) ] :=[a,b] \ox f(t) g(t) - (\res_tfdg) \bilin a b \cent .\nn\ee
If we write $a_n := a\ox t^n$ for $a\in \oc\g$ and $n\in \ZZ$, the commutation relations take the form
\be [a_m, b_n ] = [a,b]_{n+m} + m \delta_{n+m,0} \bilin a b \cent .\nn\ee

Define the Lie algebra 
\be \g := \g' \rtimes  \CC\cocent,\nn\ee
by declaring that $\cocent$ obeys $[\cocent,\cent]=0$ and $[\cocent, a\otimes f(t)] = a\otimes t\del_t f(t)$ for all $a \in \oc\g$ and $f(t) \in \CC[t,t^{-1}]$. 

The form $\bilin{\cdot}{\cdot}$ extends uniquely to a non-degenerate invariant symmetric bilinear form on $\g$, which we also write as $\bilin{\cdot}{\cdot}$, whose nonzero entries are given by
\be \bilin{a_n}{b_m} = \bilin a b \delta_{n+m,0} ,\qquad \bilin \cent \cocent = \bilin \cocent \cent = 1 \nn\ee 
for $a,b\in \oc\g$, $n,m\in \ZZ$.
\subsection{Kac-Moody data}\label{sec: uaa}
Recall that the Lie algebra $\g$ is isomorphic to a Kac-Moody algebra $\g(A)$ with indecomposable Cartan matrix $A = (A_{ij})_{i,j\in I}$ of untwisted affine type. Here  $I=\{0,1,\dots,\ell\}$ is the set of the labels of the nodes of the Dynkin diagram. 
Let $\h\subset \g(A)$ be the Cartan subalgebra and 
\be \g = \nm \oplus \h \oplus \np \nn\ee
the Cartan decomposition, where $\np$ (resp. $\nm$) is generated by $e_i$ (resp. $f_i$), $i\in I$. These $e_i,f_i$ are the Chevalley-Serre generators of the derived subalgebra $\g' := [\g,\g]$. By definition they obey
\begin{subequations} \label{KMrels}
\begin{alignat}{2}
\label{KM rel a} [h,  e_i] &= \langle \alpha_i,h\rangle   e_i, &\qquad
[h,  f_i] &= - \langle \alpha_i,h\rangle  f_i, \\
\label{KM rel b} [h, h'] &= 0, &\qquad
[ e_i,  f_j] &= \check \alpha_i \delta_{ij},
\end{alignat}
for any $h, h' \in \h$, together with the Serre relations
\be \label{KM rel c}
(\text{ad}\,  e_i)^{1- A_{ij}}  e_j = 0, \qquad (\text{ad}\,  f_i)^{1- A_{ij}}  f_j = 0.
\ee
\end{subequations}
Here  $\la\cdot,\cdot\ra : \h^* \times \h \to \CC$ is the canonical pairing, $\alpha_i$ (resp. $\check\alpha_i$), $i\in I$, are the simple roots (resp. simple coroots) of $\g$. We have $\la \alpha_i,\check\alpha_j\ra = A_{ji}$.

Let $\b := \h \oplus \np$ and $\b_- := \h \oplus \nm$. 

There exist unique collections of relatively prime positive integers $\{\check a_i\}_{i\in I}$ and $\{a_i\}_{i\in I}$ such that $\sum_{j\in I} A_{ij} a_j  =0$ and $\sum_{j\in I} \check a_i A_{ij} = 0$. Whenever $\g$ is untwisted (in fact more generally whenever $\g$ is not of type ${}^2\!A_{2k}$) one has $\check a_0 = 1$ and $a_0 = 1$.  
The dual Coxeter and Coxeter numbers of $\g$ are given respectively by
\be h^\vee = \sum_{i\in I} \check a_i,\qquad h = \sum_{i\in I} a_i = 1+ \sum_{i\in \oc I} a_i. \nn\ee
The central element $\cent\in \h$ and imaginary root $\delta\in \h^*$ are given by
\be \cent = \sum_{i\in I} \check a_i \check \alpha_i\in \h, \qquad \delta = \sum_{i\in I} a_i \alpha_i \in \lh\label{kddef}\ee

Let $\oc I = I \setminus \{0\} = \{1,\dots,\ell\}$. The matrix $(A_{ij})_{i,j\in \oc I}$ obtained by removing the zeroth row and column of $A$ is the Cartan matrix of $\oc\g$. 
Let $\oc\h := \textup{span}_\CC \{\check\alpha_i\}_{i\in \oc I}\subset \g$ denote its Cartan subalgebra and $\oc\h^* = \textup{span}_\CC \{\alpha_i\}_{i\in \oc I}$ its dual.

\subsection{Root lattice and basis of root vectors}\label{Bpdef}
Let $Q := \bigoplus_{i\in I} \ZZ \alpha_i$ denote the root lattice of $\g$. We have the decomposition of $\g$ into root spaces,
\be \g = \bigoplus_{\alpha\in Q} \g_\alpha,\qquad \g_\alpha := \{ x\in \g: [h,x] = x \la \alpha,h\ra \text{ for all }  h\in \h\},\nn\ee
and this is a $Q$-gradation of the the Lie algebra $\g$.  
Let $\Delta := \{ \alpha\in Q: \dim \g_\alpha \neq 0 \}$ be the set of roots of $\g$. 
We write $Q_{\geq 0} := \bigoplus_{i \in I} \ZZ_{\geq 0} \alpha_i$ and $Q_{>0} := Q_{\geq 0} \setminus \{0\}$. The positive roots of $\g$ are $\Delta_+ := Q_{>0} \cap \Delta$. 

Let $\oc Q := \bigoplus_{i\in \oc I} \ZZ\alpha_i$ denote the root lattice of $\oc\g$ and $\oc \Delta \subset \oc Q$ its set of roots. Let $\oc \Delta_+ := \oc \Delta \cap \Delta_+$ be the positive roots of $\oc\g$. 

Recall that the roots of $\g$ are given by
\be \Delta = \{ \alpha + n \delta : \alpha \in \oc\Delta, n \in \ZZ\} \cong \oc \Delta \times \ZZ\nn\ee
and the positive roots are given by
\be \Delta_+ = \oc\Delta_+ \sqcup \{ \alpha + n\delta: \alpha \in \oc \Delta, n\in \NN\} \cong \oc \Delta_+ \sqcup \oc \Delta \times \NN \nn\ee

Let $\{H_i\}_{i\in \oc I}\subset \oc\h$ be a basis of $\oc\h$. 
Choose root vectors $E_{\pm \alpha}\in \oc\g_{\pm \alpha}$ for each $\alpha\in \oc\Delta_+$, normalized such that $\la \alpha , [E_\alpha,E_{-\alpha}] \ra = 2$. 
Let\footnote{Strictly speaking, in the examples in type $\wh\sl_2$ in the introduction, we wrote for example $J_{E,n}$ rather than $J_{\alpha_1,n}$ and $J_{H,n}$ rather than $J_{1,n}$.}
\be J_\alpha := E_\alpha, \qquad J_i := H_i,\nn\ee
and also let $\I := \left(\oc\Delta\setminus\{0\}\right) \cup \oc I$,
so that
\be \{ J_a \}_{a\in \I} = \{E_{\pm\alpha}\}_{\alpha\in\oc\Delta_+}\cup \{H_i\}_{i\in \oc I} \nn\ee
is a Cartan-Weyl basis of $\oc\g$. 

Let $\B$ denote the basis of $\g$ given by
\be \B = \{\cent,\cocent\} \cup \{ J_{a,n} \}_{a\in \I, n\in \ZZ}\nn\ee 
where $J_{a,n} := J_a\ox t^n$.
We fix a total ordering $\prec$ on $\B$ as follows. 
Pick any total ordering $\prec$ of the positive roots $\oc\Delta_+$ of $\oc\g$ such that $\alpha\prec\beta$ whenever $\alpha-\beta \in Q_{>0}$.  
and any total ordering $\prec$ of the Cartan generators $\{H_i\}$ of $\oc\g$. Then declare that for every $n\in \ZZ$,
\begin{subequations}\label{Bpdef}
\be E_{-\beta,n} \prec E_{-\alpha,n} \prec H_{i,n} \prec H_{j,n} \prec E_{\alpha,n} \prec E_{\beta,n} \prec E_{-\beta,n+1} \dots \ee
whenever $i\prec j$ and $\alpha\prec \beta$, and finally that 
\be E_{-\alpha,0} \prec \cocent \prec \cent \prec H_{i,0}. \nn\ee
\end{subequations}

We can identify the Chevalley-Serre generators of $\g'$ as follows: $e_i = E_{\alpha_i,0}$ and $f_i = E_{-\alpha_i,0}$, for $i\in \oc I$, while $e_0 = E_{-\delta+\alpha_0,1}$ and $f_0 = E_{\delta-\alpha_0,-1}$, 
where
\be \delta-\alpha_0 = \sum_{i\in \oc I} a_i \alpha_i \in \oc\Delta_+\label{highestrootdef}\ee 
is the highest root of $\oc\g$. 

Let 
\be \wgt: \Ag \to Q; \quad 
\begin{cases} \wgt{\ii \alpha n} = \alpha + n \delta & \alpha \in \oc\Delta \setminus\{0\} \\
              \wgt{\ii i n} = n \delta  & i \in \oc I 
            \end{cases}
            \nn\ee
\subsection{The pro-nilpotent pro-Lie algebra $\npc$}\label{sec: npc}
Define 
\be \npc := \prod_{\alpha\in \Delta_+} \g_\alpha. \nn\ee
An element of $\npc$ is a (possibly infinite) sum of the form $\sum_{\alpha\in \Delta_+} x^\alpha$ with $x^\alpha\in \g_\alpha$ for each $\alpha\in \Delta_+$. (It lies in $\np\subset \npc$ if and only if all but finitely many of the $x^\alpha$ are zero.) The Lie bracket is well-defined on $\npc$ because, for a given positive root $\alpha\in \Delta_+$, there are only finitely many positive roots $\beta,\gamma\in \Delta_+$ such that $\beta+\gamma=\alpha$. Similarly, the Lie bracket is well-defined on 
\be \bc := \h\oplus \npc, \qquad \gc := \nm \oplus \h \oplus \npc .\nn\ee 

Let $\height(\alpha)$ denote the grade of a root $\alpha\in Q$ in the homogeneous $\ZZ$-gradation of $\g$,
i.e. $\height(n\delta + \alpha) = n$ for $n\in \ZZ$ and $\alpha\in \oc\Delta$. 
Define
\be \n_{\geq k} := \bigoplus_{\substack{\alpha\in \Delta_+\\ \height(\alpha)\geq k}} \np_\alpha, \qquad k\geq 1. \nn\ee
These are Lie ideals in $\np$ and we have embeddings $\n_{\geq k+1} \into \n_{\geq k}$ for each $k\in \ZZ_{\geq 1}$.
For each $k$, the quotient $\np\big/\n_{\geq k}$ is a nilpotent Lie algebra, and these nilpotent Lie algebras form an inverse system
\be \dots \onto \np\big/\n_{\geq k+1} \onto \np\big/\n_{\geq k} \onto \dots . \nn\ee 
The inverse limit of this inverse system is isomorphic to $\npc$:
\be \npc \cong \invlim_{k} \np\big/\n_{\geq k}.\label{civ}\ee
Indeed, elements of this inverse limit are by definition (possibly infinite) sums of the form $\sum_{\alpha\in \Delta_+} x^\alpha$, $x^\alpha\in \g_\alpha$, which truncate to finite sums modulo $\n_{\geq k}$ for any $k$; in other words, they are nothing but elements of $\npc$.

In this way, $\npc$ becomes a topological Lie algebra, with the linear topology in which a base of the open neighbourhoods of $0$ is given by the ideals
\be \nc_{\geq k} := \prod_{\substack{\alpha\in \Delta_+\\ \height(\alpha)\geq k}} \np_\alpha ,\qquad k\geq 1. \nn\ee  
(This topology is Hausdorff since $\bigcap_{k=1}^\8 \nc_{\geq k} = \{0\}$.)

In fact, the Lie algebra $\npc$ gets the structure of a \emph{pro-nilpotent pro-Lie algebra}; see e.g. \cite[\S4.4 and \S6.1]{Kumar}. The pro-Lie algebra structure on $\npc$ consists of the family $\F$ consisting of \emph{all} Lie ideals $\a \subset \npc$ such that $\a\supset \nc_{\geq k}$ for some $k$, and the \emph{pro-topology} is the linear topology in which these ideals form a base of the open neighbourhoods of $0$. 

\subsection{The group $U$}\label{sec: udef}
For each $k\geq 1$, let $\exp \bigl( \np\big/ \n_{\geq k} \bigr)$ denote a copy of the vector space $\np/ \n_{\geq k}$ and let $m\mapsto \exp(m)\equiv \Exp m$ be the map into this copy. We may endow this copy with a group structure, given by the Baker-Campbell-Hausdorff formula,
\be  \exxp{x}\exxp{y} := \exxp{x+y+ \frac 12 [x,y] + \dots} ,\nn\ee 
from which only finitely many terms contribute since $\np / \n_{\geq k}$ is nilpotent. For all $m,k$ with $m\geq k$, there is a commutative diagram
\be\begin{tikzcd}
\np / \n_{\geq m}\rar[two heads]\dar{\sim} & \np / \n_{\geq k} \dar{\sim}\\
\exxp{\np / \n_{\geq m} } \rar[two heads] & \exxp{  \np / \n_{\geq k} }
\end{tikzcd}
\nn\ee
where the horizontal maps are the canonical projections. 
The inverse limit
\be  U := \invlim_k \exxp{ \np / \n_{\geq k} } \label{udef}\ee
is then a group, and the diagram above defines an exponential map $\exp : \hn_+ \isom U$.
Recall that, by definition of the inverse limit, we have the commutative diagram
\be
\begin{tikzcd} & &  U \arrow[dashed,dll,shorten = 2mm,two heads]
 \dlar[two heads]{\pi_3} \dar[two heads]{\pi_2} \drar[two heads]{\pi_1} \\
 \dots \phantom{O}\rar[dashed,two heads]& 
\exxp{\np/\n_{\geq 3}} \rar[two heads]& 
\exxp{\np/\n_{\geq 2}} \rar[two heads]& 
\exxp{\np/\n_{\geq 1}}
\end{tikzcd}
\nn\ee
in which the maps are surjective group homomorphisms, and the group element $g$ in \cref{gel} is equivalent to the sequence $(\pi_i(g))_{i=1}^\8$ of its truncations.

In fact, to any pro-Lie algebra one can canonically associate a \emph{pro-algebraic group} (or \emph{pro-group}, for short). If the pro-Lie algebra is pro-nilpotent, then this pro-group will be \emph{pro-unipotent}. For the definitions, see e.g. \cite[\S4]{Kumar}.
The group $U$ is the pro-unipotent pro-group algebra associated to the pro-nilpotent pro-Lie algebra $\npc$,  just as the quotients $\exxp{\np/\n_{\geq k}}$ are the unipotent affine algebraic groups associated to the nilpotent Lie algebras $\np/\n_{\geq k}$.


\subsection{Polynomial functions on $U$}\label{sec: coords}
Now let us choose coordinates on $U$. 
Recall our ordered basis $(\B,\prec)$ of $\g$ from \cref{Bpdef}. We get an ordered basis of $\np$, 
\begin{align} \B_+ &:= \B \cap \np \nn\\
&=  \left\{ J_{\alpha,0}\right\}_{\alpha\in \oc\Delta_+} 
 \cup \left\{J_{a,n} \right\}_{a\in \I, n\in\NN} = \{J_{\ia a n}\}_{{\ii a n}\in \A},\label{Bplus}
\end{align}
where we introduced a notion for the index set,
\be \A := \{(\alpha,0)\}_{\alpha\in \oc\Delta_+} \,\,\cup\,\, \Ag_{\geq 1}. \label{Adef}\ee
It will also be useful to introduce 
the set
\be \Am := \{(\alpha,0)\}_{\alpha\in \oc\Delta_-} \,\,\cup\,\, \I \times \NNm. \label{Amdef}\ee

Any element of $g\in U$ can be uniquely written in the form
\be g = \prodr_{{\ii a n} \in \A} \exxp{x^{\ia a n} J_{\ia a n}} \label{gel}\ee
for some $x^{\ia a n}\in \CC$, where we use $\prodr$ to denote the product with factors ordered so that $\exxp{x^{\ia b m} J_{\ia b m}}$ stands to the left of $ \exxp{x^{\ia c p} J_{\ia c p}}$ if $J_{\ia b m}\prec J_{\ia c p}$ in our basis.


For each ${\ii a n}\in \A$ let $X^{\ia a n} : U \to \CC$ be the function on $U$ such that $X^{{\ia a n}}(g) = x^{\ia a n}$. Then these are good coordinates on $U$. 
We get the $\CC$-algebra
\begin{align} \Onp &:= \CC\!\left[X^{\ia a n}\right]_{{\ii a n}\in \A}\nn
\end{align}
of polynomial functions on $U$. 
Inside $\Onp$ we have for each $k\in \ZZ_{\geq 1}$ the subalgebra 
\be \Onp_{< k} := \CC[X^{\ia a n}]_{\substack{{\ii a n}\in \A\\ n< k}} \label{Okdef}\ee
of polynomial functions on the quotient group $\exp(\np\big/ \n_{\geq k})$, and $\Onp= \bigcup_{k\geq 1} \Onp_{< k}$ is their union. 
In other words, $\Onp$ is the direct limit of these $\Onp_{< k}$ with respect to the inclusions 
\be \dots \into \Onp_{<k} \into \Onp_{<k+1} \into \dots.\label{oinc}\ee

\subsection{Differential operators on $U$}\label{Hdef}
Let $\HH$ denote the Weyl algebra on generators $X^{\ia a n}$, $D_{\ia a n}$, ${\ii a n}\in \A$, i.e. the associative unital $\CC$-algebra obtained by quotienting the free associative unital $\CC$-algebra in these generators by the two-sided ideal generated by the relations 
\be [X^{\ia a n},X^{\ia b m} ] = 0, \qquad [D_{\ia a n}, X^{\ia b m}] = \delta_a^b \delta_{n+m,0},\qquad [D_{\ia a n}, D_{\ia b m} ] =0 ,\nn\ee
where $[A,B] := AB-BA$. 
As a vector space, one has
\be \HH \cong_\CC \CC[X^{\ia a n}]_{{\ii a n}\in \A} \ox \CC[D_{\ia a n}]_{{\ii a n}\in\A}.\nn\ee
We think of $\HH$ as the algebra of polynomial differential operators on $U$. The left action of $\HH$ on $\Onp$ is given by
\be X^{\ia a n} \on f = X^{\ia a n} f,\qquad D_{\ia a n} \on f = [D_{\ia a n},f] \nn\ee
for ${\ii a n}\in \A$ and $f\in \Onp$. 

Let $\HH_{\geq k}$ denote the left ideal in $\HH$ generated by $D_{{\ii a n}}$ with $n\geq k$. It consists of those polynomial differential operators that annihilate $\Onp_{<k}$. The quotients $\HH/\HH_{\geq k}$ form an inverse system of associative algebras,
\be \dots \onto \HH \big/ \HH_{\geq k+1} \onto \HH \big/ \HH_{\geq k} \onto. \nn\ee
The inverse limit 
\be \HHc := \invlim_k \HH \big/ \HH_{\geq k}\nn\ee 
is a topological associative algebra, whose elements are by definition (possibly infinite) sums of the form
\be \sum_{{\ii a n}\in \A} P^{\ia a n}(X;D) D_{\ia a n}, \label{PSDsum}\ee
where, for each ${\ii a n}\in \A$, $P^{\ia a n}(X;D)\in \CC[X^{\ia b m}]_{{\ii b m}\in\A}\ox \CC[D_{\ia b m}]_{{\ii b m} \in \A}$.
Since any $f\in \Onp$ lies in $\Onp_{<k}$ for some $k$, this completion $\HHc$ also has a well-defined action on $\Onp$.

Note that the degree of the polynomials $P^{\ia a n}(X;D)$ in \cref{PSDsum} is allowed to grow without bound as $n$ increases. 

\subsection{The $Q$-gradation}
The algebra $\HH$ has a gradation by the root lattice $Q$, 
\be \HH = \bigoplus_{\alpha \in Q} \HH_\alpha, \nn\ee
defined by the demand that $\D_{\ia a n} \in \HH_{\wgt\ii a n}$ and $X^{\ia a n} \in \HH_{-\wgt\ii a n}$ for all $\ii a n \in \A$. That is, more explicitly,
\be D_{\ia \alpha n}\in \HH_{\alpha +n\delta}\qquad\text{and}\qquad X^{\ia \alpha n} \in \HH_{-\alpha - n\delta}, \nn\ee
for each $\ii \alpha n \in  \oc \Delta_+\times\{0\} \cup  (\oc\Delta\setminus\{0\}) \times \NN \subset \A$, and 
\be D_{\ia i n}\in \HH_{n\delta}\qquad\text{and}\qquad X^{\ia i n} \in \HH_{- n\delta}, \nn\ee
for each $\ii i n \in \oc I \times \NN \subset \A$. 
In particular we get a gradation of $\Onp\subset \HH$ by the negative root lattice:
\be  \Onp = \bigoplus_{\alpha \in -Q_{\geq 0}} \Onp_\alpha .\nn\ee

\subsection{Polynomial vector fields on $U$}\label{sec: derO}
The commutator bracket $[A,B] := AB-BA$ makes $\HH$ into a $Q$-graded Lie algebra. It has the Lie subalgebra $\Der\Onp$ consisting the (finite) sums of the form 
\be \sum_{{\ii a n}\in \A} P^{\ia a n}(X) D_{\ia a n}, \label{derogen}\ee
where the coefficients $P^{\ia a n}(X)\in \CC[X^{\ia b m}]_{{\ii b m}\in \A}$ are nonzero for only finitely ${\ii a n}\in \A$, i.e. it the free $\Onp$-module with $\Onp$-basis consisting of the derivative operators $D_{\ia a n}$, ${\ii a n}\in \A$.
It inherits the $Q$-grading of $\HH$. 

\begin{lem}
The commutator bracket makes the completion $\HHc$ into a topological Lie algebra.
\end{lem}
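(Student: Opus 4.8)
The plan is to verify that the commutator bracket, which is already defined on $\HH$, extends continuously to the inverse limit $\HHc = \invlim_k \HH/\HH_{\geq k}$ and satisfies the Lie algebra axioms there. The key observation is that $\HHc$ is, by construction, a topological associative algebra, so it suffices to check three things: first, that the bracket $[A,B] := AB - BA$ is well-defined on $\HHc$ (i.e. that the product of two elements of the form \cref{PSDsum} makes sense); second, that it is continuous; and third, that antisymmetry and the Jacobi identity, which hold on each finite quotient, pass to the limit.

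First I would confirm that $\HHc$ is closed under multiplication and hence under the commutator. An element of $\HHc$ is a sum $\sum_{\ii a n\in \A} P^{\ia a n}(X;D) D_{\ia a n}$, and the left ideals $\HH_{\geq k}$ satisfy $\HH_{\geq k'} \subset \HH_{\geq k}$ for $k' \geq k$. Because multiplication is continuous on any inverse limit of associative algebras — concretely, because for each $k$ the product of two elements, reduced modulo $\HH_{\geq k}$, depends only on the images of the factors in $\HH/\HH_{\geq k'}$ for $k'$ large enough — the product of two elements of $\HHc$ is again a well-defined element of $\HHc$. To make this precise one checks that for fixed $k$, computing $[A,B] \bmod \HH_{\geq k}$ requires only finitely much data from $A$ and $B$: commuting a derivative $D_{\ia b m}$ past the coefficient polynomials can only produce finitely many new terms contributing modulo $\HH_{\geq k}$, since the relevant indices are bounded.

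Next I would check that the commutator is genuinely a Lie bracket on $\HHc$. Antisymmetry $[A,B] = -[B,A]$ is immediate from the definition $[A,B] = AB - BA$ once the product is well-defined. For the Jacobi identity, the clean argument is to pass to the quotients: for each $k$, the associative algebra $\HH/\HH_{\geq k}$ is a Lie algebra under its commutator (this is automatic for any associative algebra), so the Jacobi identity holds in every $\HH/\HH_{\geq k}$. Since the projections $\HHc \onto \HH/\HH_{\geq k}$ are algebra homomorphisms and are jointly injective (the topology is Hausdorff, as $\bigcap_k \HH_{\geq k} = 0$), the Jacobi identity for three elements of $\HHc$ follows by verifying it after projecting to each $\HH/\HH_{\geq k}$ and invoking separatedness of the inverse limit.

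The main obstacle — really the only point requiring genuine care — is the well-definedness of the product in the first step: one must rule out the possibility that infinitely many terms of $A$ and $B$ conspire to contribute to a single coefficient modulo $\HH_{\geq k}$, which would make the bracket ill-defined or non-continuous. The resolution is the finiteness built into the $Q$-gradation and the structure of the ideals $\HH_{\geq k}$: when reducing modulo $\HH_{\geq k}$, only summands $P^{\ia a n}(X;D) D_{\ia a n}$ with $n < k$ survive on the right, and each such reduction pulls in only finitely many terms from the coefficient polynomials after normal-ordering. Once this finiteness is established, continuity of the bracket (that it is a morphism of the inverse system) is a formal consequence, and the Lemma follows.
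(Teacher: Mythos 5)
Your proposal has the same overall shape as the paper's proof: both reduce the Lie-algebra axioms to the associative structure on $\HHc$ and treat the topology as a separate check, and your finiteness discussion of why products of infinite sums make sense modulo each $\HH_{\geq k}$ is exactly the right substance (it is essentially the argument the paper writes out for the \emph{next} lemma, on $\Derc\Onp$). But one step of your argument is wrong as written. The subspace $\HH_{\geq k}$ is a \emph{left} ideal only, not a two-sided one, so the quotient $\HH/\HH_{\geq k}$ carries no induced multiplication and the projections $\HHc \onto \HH/\HH_{\geq k}$ are not algebra homomorphisms. Concretely, $D_{a,k}\in\HH_{\geq k}$, yet $D_{a,k}X^{a,k} = X^{a,k}D_{a,k}+1$ does not annihilate the constant function $1\in\Onp_{<k}$ and so does not lie in $\HH_{\geq k}$: the class of a product depends on the choice of representatives. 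Hence the claims ``$\HH/\HH_{\geq k}$ is an associative algebra'', ``the Jacobi identity holds in every $\HH/\HH_{\geq k}$'', and ``the projections are algebra homomorphisms'' are all false, and your derivation of the Jacobi identity collapses. (Projecting to the quotients remains a legitimate way to show that an element of $\HHc$ \emph{vanishes}, since the projections are jointly injective; what you cannot do is compute products or brackets \emph{in} the quotients.)

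The gap is repairable with tools you already set up. Once your finiteness argument shows that the product of two elements of $\HHc$ is a well-defined element of $\HHc$, either (a) check associativity directly --- each instance of $(AB)C = A(BC)$, reduced modulo $\HH_{\geq k}$, involves only finitely many monomials and therefore follows from associativity in $\HH$ --- or (b) use the action of $\HHc$ on $\Onp$ recorded in the paper, check it is faithful, and identify $\HHc$ with a subspace of $\End(\Onp)$ closed under composition; either way antisymmetry and Jacobi come from the associative structure on $\HHc$ itself, never from the quotients. This is in effect what the paper does: its proof opens with the one-line observation that the associative algebra $\HHc$ is a Lie algebra under the commutator, and then spends all its effort on the point you wave away as ``a formal consequence'', namely continuity. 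That check is short but not empty: if $A,B\in\HHc_{\geq k}$ then $AB$ and $BA$ both lie in $\HHc_{\geq k}$ precisely because $\HHc_{\geq k}$ is a left ideal, so $[\HHc_{\geq k},\HHc_{\geq k}]\subset\HHc_{\geq k}$; since these subspaces form a base of neighbourhoods of $0$ in a linear topology, the bracket is continuous. Your write-up needs at least this much, since continuity is exactly what distinguishes ``topological Lie algebra'' from ``Lie algebra''.
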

\begin{proof}
The associative algebra $\HHc$ is certainly a Lie algebra with respect to the commutator.
Let us check the commutator $[\cdot,\cdot]: \HHc\times \HHc \to \HHc$ is continuous, where $\HHc\times \HHc$ has the product topology. The open neighbourhoods of zero in the product topology include the subspaces $\HHc_{\geq k}\times \HHc_{\geq \ell}$ for all $k,\ell \in \ZZ_{\geq 1}$. Suppose $(A_n,B_n)$, $n=1,2,\dots$, is a sequence in $\HHc\times \HHc$ which converges to zero. Then, for any $k$, the terms in this sequence are eventually in $\HHc_{\geq k}\times \HHc_{\geq k}$. It follows that, for any $k$, the terms of the sequence $[A_n,B_n]$, $n=1,2,\dots$, of commutators eventually lie in $\HHc_{\geq k}$. But the latter are a base of open neighbourhoods of zero in $\HHc$, so the sequence of commutators converges to zero. Since these are all linear topologies (i.e. vector addition is continuous) this is enough to show that the commutator is a continuous map, as required. 
\end{proof}
Define $\Derc\Onp\subset \HHc$ to be the vector subspace of $\HHc$ topologically generated by the monomials of the form $P^{\ia a n}(X) D_{\ia a n}$. In other words, $\Derc\Onp$ consists of (now possibily infinite) sums of the form \cref{derogen}. 

\begin{lem} $\Derc\Onp$ is a topological Lie subalgebra of $\HHc$.
\end{lem}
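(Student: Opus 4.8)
The plan is to isolate the only substantive point, namely that $\Derc\Onp$ is closed under the commutator bracket; the topological assertions then come for free. Indeed, we have just seen that $\HHc$ is a topological Lie algebra, so the restriction of the (continuous) bracket to the subspace $\Derc\Onp$ is again continuous, and $\Derc\Onp$ is a topological Lie algebra in the subspace topology the moment we know $[\Derc\Onp,\Derc\Onp]\subseteq\Derc\Onp$. So the whole task reduces to this closure statement.

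To establish it I would take two arbitrary elements $\xi=\sum_{\ii a n\in\A}P^{\ia a n}(X)D_{\ia a n}$ and $\eta=\sum_{\ii b m\in\A}Q^{\ia b m}(X)D_{\ia b m}$ of $\Derc\Onp$ and compute $[\xi,\eta]$ inside $\HHc$, expanding termwise by the continuity and bilinearity of the bracket. Since the $X$'s commute with one another, the $D$'s commute with one another, and each $[D_{\ia a n},X^{\ia b m}]$ is a scalar, the bracket of two monomials collapses in the familiar way for vector fields,
\[
[P^{\ia a n}D_{\ia a n},\,Q^{\ia b m}D_{\ia b m}]=P^{\ia a n}\,(D_{\ia a n}\on Q^{\ia b m})\,D_{\ia b m}-Q^{\ia b m}\,(D_{\ia b m}\on P^{\ia a n})\,D_{\ia a n},
\]
the second-order pieces $P^{\ia a n}Q^{\ia b m}D_{\ia a n}D_{\ia b m}$ cancelling. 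Summing over all indices, the coefficient of a fixed $D_{\ia c p}$ in $[\xi,\eta]$ is $\xi\on Q^{\ia c p}-\eta\on P^{\ia c p}$, so that
\[
[\xi,\eta]=\sum_{\ii c p\in\A}\bigl(\xi\on Q^{\ia c p}-\eta\on P^{\ia c p}\bigr)D_{\ia c p}.
\]

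The step I expect to be the crux is verifying that each such coefficient genuinely lies in $\Onp$ — that is, that it is an honest polynomial, not an infinite series and not an expression involving the $D$'s. This is precisely the property that separates $\Derc\Onp$ from a generic element of $\HHc$, whose coefficients $P^{\ia a n}(X;D)$ may involve the $D$'s and may have unboundedly growing degree. The resolution is a finiteness observation: for each fixed output index $\ii c p$, the polynomial $Q^{\ia c p}(X)$ depends on only finitely many of the coordinates $X^{\ia a n}$, whence $D_{\ia a n}\on Q^{\ia c p}=0$ for all but finitely many $\ii a n$; therefore $\xi\on Q^{\ia c p}=\sum_{\ii a n}P^{\ia a n}(D_{\ia a n}\on Q^{\ia c p})$ is a finite sum of polynomials, hence lies in $\Onp$, and similarly for $\eta\on P^{\ia c p}$. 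Consequently $[\xi,\eta]\in\Derc\Onp$, as required. (Equivalently, one could observe that $\Derc\Onp$ is the closure of the Lie subalgebra $\Der\Onp$ of finite sums and invoke the general fact that the closure of a subalgebra of a topological Lie algebra is a subalgebra; the finiteness observation is exactly what guarantees that this closure consists of derivations with polynomial coefficients.)
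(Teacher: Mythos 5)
Your proposal is correct and follows essentially the same route as the paper: reduce everything to closure of $\Derc\Onp$ under the commutator, compute the bracket of two (possibly infinite) sums termwise, and observe that for each fixed output index the coefficient is a finite sum because each polynomial $Q^{\ia c p}(X)$ involves only finitely many coordinates (the paper phrases this as $Q_{\ia b m}(X)\in\Onp_{<k}$ for some $k$, so only the terms with $n<k$ contribute). The explicit commutator formula and the finiteness observation you identify as the crux are exactly those in the paper's proof.
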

\begin{proof}
This is really immediate, given the previous lemma: all that has to be checked is that terms linear in $D$'s close under the commutator, which is obvious. But let us write out the full argument nonetheless.
Let $\sum_{{\ii a n}\in \A} P^{\ia a n}(X) D_{\ia a n}$ and $\sum_{{\ii a n}\in \A} Q_{\ia a n}(X) D_{\ia a n}$ be two elements of $\Derc\Onp$. Their commutator is equal to
\be \sum_{{\ii b m}\in \A} \left(\sum_{{\ii a n}\in \A} P^{\ia a n}(X) \left[D_{\ia a n}, Q_{\ia b m}(X)\right] - \sum_{{\ii a n}\in \A} Q_{\ia a n}(X) \left[D_{\ia a n}, P^{\ia b m}(X)\right]\right) D_{\ia b m}  \label{comsum}\ee
For each ${\ii b m}$, the polynomial $Q_{\ia b m}(X)\in \Onp$ lies in $\Onp_{< k}$ for some $k$, so the sum 
\be \sum_{{\ii a n}\in \A} P^{\ia a n}(X) [ D_{\ia a n},Q_{\ia b m}(X)] \nn\ee
is well-defined: only the (finitely many) terms with $n<k$ contribute. The other sum in \cref{comsum} works the same way. We see that \cref{comsum} is a well-defined sum belonging to $\Derc\Onp$. 
\end{proof}

\subsection{The topological Lie algebra $\gc$}
For each $k\in \ZZ$, define vector subspaces of $\g$ and $\gc$ respectively as follows:
\be \g_{\geq k} = \bigoplus_{\substack{\alpha\in \Delta\\\height(\alpha) \geq k}} \g_\alpha, \qquad
 \gc_{\geq k} = \prod_{\substack{\alpha\in \Delta\\\height(\alpha) \geq k}} \g_\alpha .\nn\ee
(So when $k\geq 1$, $\g_{\geq k} = \n_{\geq k}$ and $\gc_{\geq k} = \nc_{\geq k}$.) Just as in \cref{civ}, we have 
\be \gc \cong \invlim_k \g\big/\g_{\geq k} \quad\left(\cong \invlim_k \gc\big/\gc_{\geq k}\right),\nn\ee
where this is now an inverse limit merely of vector spaces. This endows the vector space $\gc$ with a  
linear topology in which $\{\gc_{\geq k}\}_{k\in \ZZ}$ form a base of the open neighbourhoods of $0$. Moreover $\gc = \bigcup_{k\in \ZZ} \gc_{\geq k}$ and $\dim\left(\gc_{\geq k}/\gc_{\geq m}\right)<\8$ for all $k,m \in \ZZ$ with $k\leq m$. 

(With this topology, $\gc$ becomes a \emph{c.l.c. $\CC$-vector space}, in the language of \cite{KashiwaraFlag}.) 

The Lie bracket on $\gc$ is continuous in this topology, i.e. 
\be \gc \times \gc \to \gc; \qquad (A,B) \mapsto [A,B] \nn\ee
is continuous, where $\gc \times \gc$ gets the product topology. 

The adjoint action makes the Lie algebra $\gc$, regarded as a vector space, into a module over itself. In particular, it makes $\gc$ into a module over the subalgebra $\npc$.
\begin{lem}\label{Uactg} The action of $\npc$ on $\gc$ exponentiates to yield a well-defined action of $U$ on $\gc$ from the right, 
\be \gc \times U \to \gc,\nn\ee
given by
\be B \npo e^A = \sum_{k=0}^\8 \frac{(-1)^k}{k!} \ad_A^k B = B + [B,A] + \frac 1 2 [[B,A],A] + \dots \nn\ee
for $B\in \gc$, $A\in \npc$.
\qed\end{lem}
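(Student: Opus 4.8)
The plan is to define the operator $e^{-\ad_A} := \sum_{k=0}^\infty \tfrac{(-1)^k}{k!}\ad_A^k$ on $\gc$, to show that this series converges so that $B \npo e^A := e^{-\ad_A}B$ is a well-defined element of $\gc$, and then to check that $B \mapsto B\npo e^A$ satisfies the axioms of a right action. Since the topology on $\gc$ is the linear, complete and Hausdorff topology with base of neighbourhoods of $0$ given by the subspaces $\gc_{\geq m}$, convergence amounts to the claim that, for each fixed $B\in\gc$, one has $\ad_A^k B \to 0$, i.e. that for every $m$ there is a $K$ with $\ad_A^k B \in \gc_{\geq m}$ for all $k\geq K$. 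Well-definedness with respect to $U$ is then automatic, because $\exp:\npc\isom U$ is a bijection (\cref{sec: udef}), so every group element $g=e^A$ determines its exponent $A\in\npc$ uniquely.

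The heart of the argument, and the one genuinely non-trivial point, is this convergence. Write $A = A_0 + A_{\geq 1}$ with $A_0 \in \bigoplus_{\alpha\in\oc\Delta_+}\g_\alpha$ the height-$0$ part and $A_{\geq 1}\in\nc_{\geq 1}$. Since $A\in\npc$ has only components of non-negative height, $\ad_A$ maps $\gc_{\geq k}$ into itself for every $k$; in particular $\ad_{A_{\geq 1}}$ \emph{strictly} raises height, while $\ad_{A_0}$ preserves it. The key extra input is that $\ad_{A_0}$ is \emph{nilpotent} on each height component: $A_0$ is an ad-nilpotent element of the finite-dimensional simple Lie algebra $\oc\g$, and since $[\oc\g\ox t^0,\oc\g\ox t^n]=[\oc\g,\oc\g]\ox t^n$ carries no central correction for $n\neq 0$, the operator $\ad_{A_0}$ acts on each finite-dimensional height space $\g[n]\cong\oc\g$ as $\ad^{\oc\g}_{A_0}\ox\id$, hence nilpotently of some degree $M$ independent of $n$ (and likewise on $\g[0]$, where it kills $\cent$ and $\cocent$). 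Now let $h_0$ be the minimal height of $B$ (finite, since $\nm$ and $\h$ are honest direct sums). Expanding $\ad_A^k=(\ad_{A_0}+\ad_{A_{\geq 1}})^k$ into words in the two letters, any word whose output has a component of height $<m$ can contain at most $m-h_0$ height-raising letters $\ad_{A_{\geq 1}}$ and, between consecutive such letters, at most $M-1$ height-preserving letters $\ad_{A_0}$ (a longer run annihilates its input); this bounds the total length $k$ by a constant depending only on $m$, $h_0$ and $M$. For $k$ larger than this bound every word contributes only in height $\geq m$, so $\ad_A^k B\in\gc_{\geq m}$, as required. The same estimate shows $e^{-\ad_A}$ is a continuous linear operator; that it is moreover a Lie-algebra automorphism follows since $\ad_A$ is a continuous derivation.

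It remains to verify the action axioms for $B\npo e^A := e^{-\ad_A}B$. The identity $B\npo e^0 = B$ is immediate (only the $k=0$ term survives), and the group law $e^A e^{A'}=e^{A\ast A'}$ on $U$ is, by construction in \cref{sec: udef}, the Baker--Campbell--Hausdorff product $\ast$ on $\npc$. So it suffices to prove $e^{-\ad_{A'}}\,e^{-\ad_A}B = e^{-\ad_{A\ast A'}}B$ in $\gc$. I would establish this modulo $\gc_{\geq m}$ for each $m$ and then use that $\bigcap_m\gc_{\geq m}=0$ together with completeness. Fixing $m$ and $h_0$ as above and choosing $N\geq m-h_0$, the finite-dimensional quotient $W:=\g_{\geq h_0}/\g_{\geq m}$ (which contains the image of $B$) is a module over the nilpotent Lie algebra $\np/\n_{\geq N}$, because $\n_{\geq N}$ raises height from $\geq h_0$ into $\geq m$ and so acts as zero. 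On this finite-dimensional nilpotently-acting module the exponentiated adjoint action is the classical action of the corresponding unipotent group, for which $e^{-\ad_{A'}}e^{-\ad_A}=e^{-\ad_{A\ast A'}}$ is the standard statement that $\exp$ intertwines the Lie-algebra action with group multiplication written via BCH. Letting $m\to\infty$ gives the identity in $\gc$ and completes the proof. The only subtle step is the convergence in the second paragraph; everything else is formal or reduces to the classical finite-dimensional unipotent case.
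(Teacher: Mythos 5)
Your proof is correct, but there is nothing in the paper to compare it against: \cref{Uactg} is stated with no proof at all, the well-definedness being treated as immediate. Your write-up supplies the missing argument, and it correctly isolates the one point that is genuinely not formal: the fact that bracketing with $\npc$ never lowers height does not by itself make the series $\sum_{k}\frac{(-1)^k}{k!}\ad_A^k B$ componentwise finite, since a priori infinitely many powers could contribute in any fixed height. The saving fact is exactly the one you use: the height-zero part $A_0$ of $A$ lies in $\bigoplus_{\alpha\in\oc\Delta_+}\oc\g_\alpha$ (it has no Cartan component, and no central corrections arise since $A_0$ has $t$-degree zero), so $\ad_{A_0}$ acts on every height space through $\ad^{\oc\g}_{A_0}$, hence nilpotently of a degree $M$ uniform in the height; your word-counting in the two letters $\ad_{A_0}$, $\ad_{A_{\geq 1}}$ then gives $\ad_A^k B\in\gc_{\geq m}$ for all $k\geq (m-h_0)M$, which yields both convergence and, in fact, finiteness of each height component of the sum. (This is precisely the hypothesis that fails for general $A\in\gc$, consistent with the paper's counterexample at the start of \cref{sec: inft}.) Your treatment of the action axiom is also sound: since $\ad_A$ never lowers height, the whole computation modulo $\gc_{\geq m}$ takes place in the finite-dimensional quotient $W=\gc_{\geq h_0}/\gc_{\geq m}$, on which $\npc$ acts through the finite-dimensional nilpotent Lie algebra $\np/\n_{\geq N}$, each of whose elements acts nilpotently on $W$ by the same estimate --- a point worth stating explicitly, since it is what licenses the appeal to the classical unipotent/BCH statement via Engel's theorem --- and the group law on $U$ is indeed the BCH product on $\npc$ by the construction in \cref{sec: udef}. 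One small imprecision: passing to the limit at the end uses only Hausdorffness, $\bigcap_m\gc_{\geq m}=\{0\}$, not completeness; completeness is what you need earlier, for the convergence of the exponential series itself.
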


\subsection{Infinitesimal transformations}\label{sec: inft}
Unlike the action of $\npc\subset \gc$ on $\gc$, the action of the full Lie algebra $\gc$ on itself does not exponentiate to an action of a group. Indeed, for $A,B\in \gc$, the sum $\sum_{k=1}^\8 \frac 1 {k!} \ad^k_{A}B$ may not be an element of $\gc_{\geq m}$ for any $m\in \ZZ$, which is to say it  may not be a well-defined element of $\gc$. (Consider for example $A = H_{i,-1}$ and $B = E_{\alpha,0}$.) 

Nonetheless, it is convenient for computations in what follows to be able to treat infinitesimal transformations (by general elements of $\gc$) on the same footing as finite group transformations. That is, we would like to make sense of  group elements of the form $\exp(\eps A) = 1+ \eps A$, working to first order in an (``infinitesimal'') parameter $\eps$.

To that end, let $\eps$ be a formal variable and let $\CC_\eps$ denote the ring
\be \CC_\eps := \CC[\eps]\big/\eps^2\CC[\eps].\nn\ee 
For any Lie algebra $\p$ (over $\CC$), we have the Lie algebra $\p(\CC_\eps) := \CC_\eps \ox_\CC \p$.   
Consider the Lie algebra 
\be \gc(\CC_\eps) := \CC_\eps \ox_\CC \gc. \nn\ee 
We shall write its elements as $A+ \eps B$ with $A,B\in \gc$. It has a nilpotent Lie ideal  $\eps \gc$. The vector subspace
\be \f_\eps := \eps\gc \oplus \npc
, \label{esa}\ee 
forms a Lie subalgebra
\be \f_\eps \subset \gc(\CC_\eps).\nn\ee
It is another pro-nilpotent pro-Lie algebra.
We get the corresponding pro-unipotent pro-group $\exxp{\f_\eps}$.
Let us write $\ad_A(B) := [A,B]$.  
\begin{lem}\label{BCHlem} 
For all $A\in \npc$ and $B\in \gc$, we have the following relations in $\exxp{\f_\eps}$,
\be \Exp{ A } \Exp{ \eps B } = \Exp{ \eps B} \left(\prod_{k=1}^\8 \Exp{ \eps \frac{1}{k!}\ad_A^k(B)}\right) \Exp A, \nn\ee 
\be \Exp{ \eps B } \Exp{ A } = \Exp{ A} \left(\prod_{k=1}^\8 \Exp{ \eps \frac{(-1)^k}{k!}\ad_A^k(B)} \right)\Exp{\eps B}, \nn\ee
together with $\Exp{ \eps A} \Exp{\eps B} = \Exp{\eps B} \Exp{\eps A}$, and $\Exp{ xA } \Exp{\eps A} = \Exp{(x+\eps) A} = \Exp{ \eps A} \Exp{xA}$ for all $x\in \CC$.
\end{lem}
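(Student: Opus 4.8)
The plan is to reduce all four identities to standard Baker--Campbell--Hausdorff (BCH) calculus in finite-dimensional nilpotent Lie groups and then pass to the pro-unipotent limit. Since $\f_\eps$ is pro-nilpotent, $\exxp{\f_\eps}$ is the inverse limit of the finite-dimensional nilpotent groups $\exxp{\f_\eps/\mathfrak a}$ taken over the open ideals $\mathfrak a$ of its pro-structure, and it suffices to verify each relation in every such quotient; there the exponential map is a bijection, the BCH series terminates, and the infinite products $\prod_{k=1}^\infty$ truncate to finite ones. I would dispatch the two easy identities first. Because $\eps^2 = 0$ in $\CC_\eps$, every bracket of two elements of $\eps\gc$ vanishes, so $\eps\gc$ is an \emph{abelian} ideal of $\f_\eps$; hence $\Exp{\eps A}\Exp{\eps B} = \Exp{\eps A + \eps B}$ by BCH, which is visibly symmetric in $A$ and $B$, and likewise $\Exp{xA}\Exp{\eps A} = \Exp{(x+\eps)A} = \Exp{\eps A}\Exp{xA}$ since $[xA,\eps A] = 0$.

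For the two conjugation relations the key tool is the standard group identity
\[
\Exp{A}\Exp{X}\Exp{-A} = \Exp{e^{\ad_A}(X)}, \qquad e^{\ad_A}(X) = \sum_{k\geq 0}\tfrac{1}{k!}\ad_A^k(X),
\]
valid in any nilpotent Lie group. Multiplying the first claimed relation on the right by $\Exp{-A}$ reduces it to showing
\[
\Exp{A}\Exp{\eps B}\Exp{-A} = \Exp{\eps B}\prod_{k=1}^\infty \Exp{\eps\tfrac{1}{k!}\ad_A^k(B)}.
\]
Applying the identity with $X = \eps B$ and using that $\ad_A$ is $\CC_\eps$-linear, so commutes with multiplication by $\eps$, the left-hand side equals $\Exp{\eps\, e^{\ad_A}(B)} = \Exp{\eps B + \eps\sum_{k\geq 1}\tfrac{1}{k!}\ad_A^k(B)}$. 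Here I would invoke \cref{Uactg} to guarantee that $e^{\ad_A}(B) = \sum_{k\geq 0}\tfrac1{k!}\ad_A^k(B)$ is a genuine element of $\gc$, so that the exponent lies in $\eps\gc \subset \f_\eps$. Finally, because $\eps\gc$ is abelian, the product of exponentials on the right collapses to the exponential of the sum, matching the left-hand side. The second relation is identical upon replacing $\Exp{A}$ by $\Exp{-A}$ and $e^{\ad_A}$ by $e^{-\ad_A}$, which produces the alternating signs $\tfrac{(-1)^k}{k!}$.

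The only genuinely delicate point---and the main obstacle---is convergence, i.e.\ checking that all the displayed infinite sums and products are well-defined elements of $\exxp{\f_\eps}$ and that the finite-quotient identities are compatible with the projections of the inverse system. Both follow from the pro-nilpotent structure of $\gc$: for $A \in \npc$ the series $\sum_{k\geq 0}\tfrac{1}{k!}\ad_A^k(B)$ converges by \cref{Uactg}, and modulo any fixed open ideal $\gc_{\geq m}$ only finitely many of its terms survive, so in each finite quotient of $\f_\eps$ every identity above becomes a finite, classical BCH computation. Taking the inverse limit over $m$ then yields the asserted relations in $\exxp{\f_\eps}$.
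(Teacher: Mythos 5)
Your proof is correct and follows essentially the same route as the paper, which simply asserts that all four identities follow from the Baker--Campbell--Hausdorff formula (noting the order of factors is immaterial since $\eps\gc$ is abelian). You have merely filled in the details the paper leaves implicit: reduction to the finite-dimensional nilpotent quotients of the pro-unipotent group, the conjugation identity $\Exp{A}\Exp{X}\Exp{-A}=\Exp{e^{\ad_A}(X)}$, the abelianness of $\eps\gc$ forced by $\eps^2=0$, and convergence via \cref{Uactg}.
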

\begin{proof} These follow from the Baker-Campbell-Hausdorff formula. (Note that the order of the terms in the products $\prod_{k=1}^\8$ is unimportant by virtue of the equality $\Exp{ \eps A} \Exp{\eps B} = \Exp{\eps B} \Exp{\eps A}$.)
\end{proof}


There is another vector-space decomposition of $\f_\eps$, namely ${\f_\eps} =_\CC \eps \b_- \oplus \npc(\CC_\eps)$, and this gives a useful way to factorize elements of the group $\exxp{\f_\eps}$. Let $U(\CC_\eps) := \exxp{ \npc(\CC_\eps)}$. Then the multiplication map
\be \exxp{\eps\b_-} \times U(\CC_\eps) \isom \exxp{\f_\eps} \label{mb}\ee
is a bijection. 

Let us introduce the right coset space
\be U_0(\CC_\eps) := \exxp{\eps\b_-} \!\Big\backslash\! \exxp{\f_\eps}.\nn\ee
The action from the right of the subgroup $U(\CC_\eps)\subset \exxp{\f_\eps}$ is both transitive and free, in view of \cref{mb}, and we get a bijection
\be U(\CC_\eps) \isom U_0(\CC_\eps);\qquad g \mapsto  \exxp{\eps \b_-} g .\nn\ee
Let 
\be U_0 := \exxp{\eps \b_-} U \nn\ee
denote the orbit of $\exxp{\eps \b_-}$ under the right action of the subgroup $U\subset U(\CC_\eps)$. The bijection above restricts to a bijection
\be U \isom U_0;\qquad g \mapsto  \exxp{\eps \b_-} g .\nn\ee
By means of this bijection, we can regard $X^{\ia a n}$, ${\ii a n}\in \A$ as coordinates on $U_0$, and $\Onp$ as the ring of polynomial functions on $U_0$.

For us, the motivation for these constructions is contained in the following lemma.

\begin{lem}\label{Plem} Let $A\in \gc$. Then there exist polynomials $\left\{P_A^{\ia b m}(X)\in \Onp\right\}_{{\ii b m}\in \A}$ (depending linearly on $A$) such that
\be \Exp{\eps \b_-}\left( \prodr_{{\ii b m}\in\A} \Exp{x^{\ia b m} J_{\ia b m}}\right) \Exp{\eps A} = \Exp{\eps \b_-}
 \prodr_{{\ii b m}\in\A} \Exp{\left(x^{\ia b m} + \eps P_A^{\ia b m}(x)\right)J_{\ia b m}}\nn\ee
in $U_0(\CC_\eps)$, for every element $\prodr_{{\ii b m}\in\A} \Exp{x^{\ia b m} J_{\ia b m}}$ of the group $U \subset U(\CC_\eps)$. 

If $A\in \g_{\alpha}$ and $J_{\ia b m}\in \g_\beta$ then $P_A^{\ia b m}(X) \in \Onp_{\alpha-\beta}$.
\end{lem}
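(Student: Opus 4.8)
The plan is to read everything off from the unique factorization \cref{mb} together with the pro-nilpotent structure of $\f_\eps$, obtaining the weight statement at the end by an $\h$-equivariance argument.

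First I would apply \cref{mb}. The element $g\,\Exp{\eps A}$, where $g=\prodr_{\ii b m\in\A}\Exp{x^{\ia b m}J_{\ia b m}}\in U\subset U(\CC_\eps)$, lies in $\exxp{\f_\eps}$, so by the bijection \cref{mb} there are a unique $b_-\in\b_-$ and a unique $u\in U(\CC_\eps)$ with $g\,\Exp{\eps A}=\Exp{\eps b_-}\,u$. Thus, modulo $\exxp{\eps\b_-}$, the coset of $g\,\Exp{\eps A}$ is represented by $u$. Writing $u$ in ordered-exponential coordinates (the $\CC_\eps$-analog of \cref{gel}), $u=\prodr_{\ii b m\in\A}\Exp{y^{\ia b m}J_{\ia b m}}$ with unique $y^{\ia b m}\in\CC_\eps$, and noting that setting $\eps=0$ returns $g$, one gets $y^{\ia b m}=x^{\ia b m}+\eps P_A^{\ia b m}$ for unique scalars $P_A^{\ia b m}$ depending on the $x$'s. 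Linearity in $A$ is then forced by $\eps^2=0$: since $\Exp{\eps(A+A')}=\Exp{\eps A}\Exp{\eps A'}$ and each resulting shift is first order in $\eps$, all cross terms vanish and the shifts for $A$ and $A'$ add.

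\emph{The crux}, and the main obstacle, is to show each $P_A^{\ia b m}$ is a genuine polynomial in finitely many of the $X$'s. Here I would exploit that $\exxp{\f_\eps}$ is pro-unipotent, being the pro-group of the pro-nilpotent pro-Lie algebra $\f_\eps$. In the finite-dimensional unipotent quotient attached to any open ideal (so that only finitely many roots are retained), BCH is a finite polynomial map, the factorization \cref{mb} is an isomorphism of affine varieties, and the two systems of ordered-exponential coordinates are polynomial functions of one another; hence in each such quotient the shift of $y^{\ia b m}$ is a polynomial in the retained $x^{\ia c p}$, linear in $A$. Passing to the inverse limit as in \cref{civ}, it remains to see that for each fixed $\ii b m$ this polynomial stabilizes. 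This is precisely where pro-nilpotency enters: the correction to $P_A^{\ia b m}$ is assembled, via \cref{BCHlem}, from iterated brackets $\ad_{x^{\ia c p}J_{\ia c p}}$ applied to a component of $A$, and such a bracket string contributes only if its total $Q$-weight equals $\wgt\ii b m$ minus the weight $\nu$ of the relevant component of $A$. Since $A$ has only finitely many such weights $\nu$ (its $\nm$-part is a finite sum and $\h$ is finite-dimensional), and since for any fixed element of $Q_{\geq0}$ there are only finitely many ways to write it as a sum of positive roots, only finitely many bracket strings, involving finitely many of the $x^{\ia c p}$, can contribute. Thus each $P_A^{\ia b m}$ stabilizes to a finite polynomial in $\Onp$.

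Finally I would obtain the weight statement by $\h$-equivariance. The adjoint action of $\h$ gives a $Q$-grading of $\exxp{\f_\eps}$ compatible with \cref{mb} and with the ordered-exponential coordinates, under which $X^{\ia b m}$ carries weight $-\wgt\ii b m$. Assigning the formal parameter $\eps$ the weight $-\alpha$ whenever $A\in\g_\alpha$ makes $\Exp{\eps A}$ weight-homogeneous of weight $0$, so the entire defining identity is homogeneous of weight $0$. Matching the weight of the term $\eps P_A^{\ia b m}(X)\,J_{\ia b m}$, namely $-\alpha+\wgt(P_A^{\ia b m})+\beta$ with $J_{\ia b m}\in\g_\beta$, against the weight $0$ of $x^{\ia b m}J_{\ia b m}$, forces $\wgt(P_A^{\ia b m})=\alpha-\beta$, i.e. $P_A^{\ia b m}\in\Onp_{\alpha-\beta}$, as claimed.
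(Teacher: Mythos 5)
Your skeleton --- uniqueness of the shift from the factorization \cref{mb}, linearity from $\eps^2=0$, the weight-counting of bracket strings, and the $\h$-equivariance argument for the final grading claim --- is sound, and the weight-counting is essentially the paper's own ``on grading grounds'' step. But the crux of your argument, polynomiality via finite-dimensional unipotent quotients of $\exxp{\f_\eps}$, rests on a false picture of what those quotients are, and this is a genuine gap. A two-sided height truncation of $\f_\eps=\eps\gc\oplus\npc$ (``only finitely many roots are retained'') is \emph{not} a quotient by an ideal: bracketing with $\npc$ raises heights without bound, so killing the deep negative-height part of $\eps\gc$ forces killing elements inside any retained window (e.g.\ $[J_{a,k+1+j},J_{b,-k-1-j}]$ has height $0$). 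Worse, for \emph{any} ideal $I\subset\f_\eps$: if $x\in I$ has image $\bar x$ in $\npc$, then $[\eps C,x]=\eps[C,\bar x]\in I$ for all $C\in\gc$, so $I\supseteq\eps[\gc,\bar I\,]$ where $\bar I$ is the image of $I$ in $\npc$. If $\bar I\supseteq\nc_{\geq k}$ (as holds for every open ideal), then taking $[\,x\ox t^{-m},\,y\ox t^{m+p}\,]$ with $m$ large and using $[\oc\g,\oc\g]=\oc\g$ shows $[\gc,\nc_{\geq k}]\supseteq\oc\g\ox t^{p}$ for \emph{every} $p\in\ZZ$; in particular $I\supseteq\eps\np$. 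So in every finite-dimensional quotient of $\exxp{\f_\eps}$ the entire correction $\eps P_A^{\ia b m}J_{\ia b m}\in\eps\npc$ is annihilated: each of your quotient computations returns the shift $0$, and no stabilization or inverse-limit argument can recover the coefficients from them.

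What the argument needs instead --- and what the paper's proof does --- is to truncate not $\exxp{\f_\eps}$ but the coset space $U_0(\CC_\eps)\cong U(\CC_\eps)$, i.e.\ to use the pro-structure of $\npc(\CC_\eps)$, whose quotients by $\nc_{\geq i}(\CC_\eps)$ genuinely are finite-dimensional and retain the coordinates. The price, which is exactly what your single-quotient framework cannot accommodate, is that right multiplication by $\Exp{\eps A}$ does not descend to a self-map of any one truncation: for $A\in\gc_{\geq k}$ it only induces maps from the $\max(i,i-k)$-th truncation of the input to the $i$-th truncation of the output, $g'_i=\pi_i\bigl(g_{\max(i,i-k)}\Exp{\eps A}\bigr)$ in the paper's notation. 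Proving this shifted compatibility is the grading argument; after it, the reordering via \cref{BCHlem} is a finite, manifestly polynomial computation, and your weight-counting would serve perfectly well there. (A smaller point: your parenthetical that $A$ has ``only finitely many such weights $\nu$'' addresses only the $\nm\oplus\h$ part of $A$; the $\npc$ part may have infinitely many components, and for those the finiteness comes from your other observation, that $\wgt\ii b m$ admits only finitely many decompositions into positive roots.)
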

\begin{proof}
This is true by construction, but let us go through the details to explain how in practice one computes these $P_A^{\ia b m}$. 
Recall that the group element 
\be g = \prodr_{{\ii b m}\in\A} \Exp{x^{\ia b m} J_{\ia b m}}\in U \nn\ee
is defined by the sequence $(g_i)_{i=1}^\8$ of its truncations, 
\be
g_i := \pi_i(g) = \prodr_{\substack{{\ii b m}\in\A\\ m< i }} \Exp{x^{\ia b m} J_{\ia b m}}\in \Exp{\np/\n_{\geq i}}, 
\label{geli}\ee
to the subgroups $\Exp{\np / \n_{\geq i}}$. The product in \cref{geli} is finite, for each $i$.

Let $A\in \gc$. Then $A\in \gc_{\geq k}$ for some integer $k$. Consider the right action on $g$ by the group element $\Exp{\eps A}\in \exxp{\eps\gc}\subset \exxp{\f_\eps}$. The group element $g':= g\Exp{\eps A}\in \exxp{\f_\eps}$ is defined by the sequence of \emph{its} truncations, $\left(g'_i\right)_{i=1}^\8$. On grading grounds, we have 
\be g'_i  = \pi_i\left( g_{\max(i,i-k)} \Exp{\eps A}\right), \nn\ee
i.e. to compute $\pi_i(g \Exp{\eps A})$ we need at most the first $\max(i,i-k)$ terms in the sequence \cref{geli} of truncations of $g$. Also, we need only the truncation $\pi_i(\Exp{\eps A})$ of $\Exp{\eps A}$; this truncation can be expressed as a finite product of monomials of the form $\Exp{\eps a}$ with $a$ a multiple of one of our basis vectors of $\g$. 
The relations in \cref{BCHlem} hold in particular when the $A$ and $B$ there are multiples of vectors from our basis $\B_+$ of $\np$, and in that case so too are all the factors in the products on $k$ on the right-hand sides. 
Using these relations a finite number of times, we may re-write $g_i'$ in the form
\be g_i' = g_i'' \prodr_{\substack{{\ii b m}\in\A\\\height({\ii b m})< i}} 
\Exp{(x^{\ia b m}+ \eps P^{\ia b m}_A(x)) J_{\ia b m}}, \qquad\text{where}\quad g_i'' \in \Exp{\eps \b_-},
\nn\ee
for polynomials $P^{\ia b m}_A(X)\in \Onp$, depending linearly on $A$ and with the stated grading property.
\end{proof}

\begin{lem}\label{vfdef}
The linear map 
\be \vf:\gc \to \Derc\Onp\nn\ee 
given by
\be \vf: A \mapsto \sum_{{\ii b m}\in \A} P_A^{\ia b m}(X) D_{\ia b m} \nn\ee
is a homomorphism of Lie algebras. It respects the $Q$-gradation, and is therefore continuous.
\end{lem}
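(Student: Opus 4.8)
The plan is to recognize $\vf$ as the map sending $A\in\gc$ to the fundamental vector field of the (formal) right action of $\gc$ on the coset space $U_0$, under the identification $U\isom U_0$ of \cref{sec: inft}. For a genuine \emph{right} action the fundamental-vector-field map is a Lie algebra homomorphism, so one expects $\vf$ to be a homomorphism rather than an anti-homomorphism; the work is in making this rigorous given that $\gc$ does not exponentiate. That $\vf$ lands in $\Derc\Onp$ is immediate, since each $P_A^{b,m}(X)D_{b,m}$ is linear in the $D$'s with coefficients in $\Onp$. Regarding $\vf(A)=\sum_{(b,m)\in\A}P_A^{b,m}(X)D_{b,m}$ as a derivation of $\Onp$, the commutator of two such elements in $\HHc$ is the usual vector-field bracket,
\be [\vf(A),\vf(B)] = \sum_{(b,m)\in\A}\bigl(\vf(A)\cdot P_B^{b,m} - \vf(B)\cdot P_A^{b,m}\bigr)D_{b,m}, \nn\ee
where $\vf(A)\cdot P_B^{b,m} = \sum_{(c,p)\in\A}P_A^{c,p}\,(D_{c,p}\cdot P_B^{b,m})$, a finite sum since each $P_B^{b,m}$ depends on only finitely many coordinates. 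Hence the homomorphism property is equivalent to the family of identities
\be \vf(A)\cdot P_B^{b,m} - \vf(B)\cdot P_A^{b,m} = P_{[A,B]}^{b,m}, \qquad (b,m)\in\A. \label{planid}\ee

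To establish \cref{planid} I would introduce a second square-zero parameter, working over $R:=\CC[\eps_1,\eps_2]/(\eps_1^2,\eps_2^2)$ in place of $\CC_\eps$. As the constructions of \cref{sec: inft} and the relations of \cref{BCHlem} are polynomial in the base ring (and the coordinate parametrization stays unique, so the analog of $U\isom U_0$ holds over $R$), \cref{Plem} continues to apply: right multiplication by $\Exp{\eps_i A}$ implements $x^{b,m}\mapsto x^{b,m}+\eps_i P_A^{b,m}$, now also for $R$-valued coordinates by universality of that polynomial identity. I would then compute the coordinate image of $g\,\Exp{\eps_1 A}\Exp{\eps_2 B}$ in two ways. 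Composing the two coordinate changes directly gives, to order $\eps_1\eps_2$,
\be x^{b,m}+\eps_1 P_A^{b,m}+\eps_2 P_B^{b,m}+\eps_1\eps_2\,\bigl(\vf(A)\cdot P_B^{b,m}\bigr), \nn\ee
whereas the Baker--Campbell--Hausdorff identity $\Exp{\eps_1 A}\Exp{\eps_2 B}=\Exp{\eps_2 B}\Exp{\eps_1 A}\Exp{\eps_1\eps_2[A,B]}$ (all higher corrections vanishing because $\eps_1^2=\eps_2^2=0$) expresses the same element as $g\,\Exp{\eps_2 B}\Exp{\eps_1 A}\Exp{\eps_1\eps_2[A,B]}$, whose coordinate image is
\be x^{b,m}+\eps_2 P_B^{b,m}+\eps_1 P_A^{b,m}+\eps_1\eps_2\,\bigl(\vf(B)\cdot P_A^{b,m}\bigr)+\eps_1\eps_2\,P_{[A,B]}^{b,m}, \nn\ee
where in the last term only the $\CC$-valued part of the coordinates survives, as it is already weighted by $\eps_1\eps_2$. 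Equating $\eps_1\eps_2$-coefficients yields \cref{planid}, and hence that $\vf$ is a homomorphism.

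For the grading I would invoke the last statement of \cref{Plem}: if $A\in\g_\alpha$ and $J_{b,m}\in\g_\beta$ then $P_A^{b,m}(X)\in\Onp_{\alpha-\beta}$. Since $\Onp_{\alpha-\beta}\subset\HH_{\alpha-\beta}$ and $D_{b,m}\in\HH_\beta$ with $\beta=\wgt(b,m)$, each summand $P_A^{b,m}D_{b,m}$ lies in $\HH_\alpha$, so $\vf(\g_\alpha)\subseteq\HHc_\alpha$ and $\vf$ respects the $Q$-gradation. Continuity then follows: a summand is nonzero only when $\Onp_{\alpha-\beta}\neq 0$, i.e. $\beta-\alpha\in Q_{\geq 0}$, whence $m=\height(\beta)\geq\height(\alpha)$; therefore $\vf$ maps $\g_\alpha$ into $\HHc_{\geq\height(\alpha)}$, giving $\vf(\gc_{\geq k})\subseteq\HHc_{\geq k}$ for every $k$, which is exactly continuity for the linear topologies with bases $\{\gc_{\geq k}\}$ and $\{\HHc_{\geq k}\}$.

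The conceptual core is standard---a right action induces a homomorphism---so I expect the main obstacle to be organizational rather than substantive: setting up the two-parameter infinitesimal calculus over $R$ rigorously, in particular confirming that \cref{Plem} and the factorization underlying $U\isom U_0$ survive base change to $R$, and that all coordinate substitutions remain well-defined (finite) in the completions $\gc$ and $\HHc$. The grading estimate of \cref{Plem} is precisely what secures this finiteness and, at the same time, delivers both the grading and continuity claims.
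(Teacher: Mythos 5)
Your proposal is correct and is essentially the paper's own argument: the paper likewise passes to a two-square-zero-parameter ring $\CC_{\eps,\eta}$, applies \cref{Plem} twice together with \cref{BCHlem} (using the group-commutator form $\Exp{\eps A}\Exp{\eta B}\Exp{-\eps A}\Exp{-\eta B}=\Exp{\eps\eta[A,B]}$ rather than your reordering identity, a cosmetic difference), and equates coefficients of $\eps\eta$ to obtain the same bracket identity $\vf(A)\cdot P_B^{\ia b m}-\vf(B)\cdot P_A^{\ia b m}=P_{[A,B]}^{\ia b m}$. The grading and continuity statements are likewise read off from the grading assertion in \cref{Plem}, exactly as you do.
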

\begin{proof}
Let $\CC_{\eps,\eta} := \CC[\eps,\eta]\big/\left(\eps^2\CC[\eps,\eta]\oplus \eta^2\CC[\eps,\eta]\right)$. Replacing $\CC_\eps$ by $\CC_{\eps,\nu}$ in the definitions above, we get the Lie algebra $\gc(\CC_{\eps,\eta}):= \CC_{\eps,\eta} \ox \gc$ and its Lie subalgebra $\f_{\eps,\eta} =_\CC \eps \gc \oplus \eta \gc \oplus \npc =_\CC \eps \b_- \oplus \eta \b_- \oplus \npc(\CC_{\eps,\eta})$. Define $U_0(\CC_{\eps,\eta}) := \exxp{\eps\b_- \oplus \eta\b_-}\Big\backslash \exxp{\f_{\eps,\eta}}$. We have $U(\CC_{\eps,\eta}) \cong U_0(\CC_{\eps,\eta})$. We can identify $U_0$ as defined above with the orbit $\exxp{\eps\b_-\oplus \eta\b_-} U$ of  $\exxp{\eps\b_-\oplus \eta\b_-}$ under the right action of $U\subset U(\CC_{\eps,\eta})$. Let $A,B\in \gc$ and consider the coset
\be \Exp{\eps \b_-\oplus \eta \b_-} \left( \prodr_{{\ii b m}\in\A} \Exp{x^{\ia b m} J_{\ia b m}}\right) \Exp{\eps A} \Exp{\eta B} \Exp{-\eps A}\Exp{-\eta B}.\label{encoset}\ee
On the one hand $\Exp{\eps A} \Exp{\eta B} \Exp{-\eps A}\Exp{-\eta B}  = \Exp{\eps\eta[A,B]}$ and hence \cref{encoset} is equal to 
\be   \Exp{\eps \b_-\oplus \eta \b_-}  \prodr_{{\ii b m}\in\A} \Exp{\left( x^{\ia b m} + \eps\eta P^{\ia b m}_{[A,B]}(x) \right) J_{\ia b m}} 
= 
 \Exp{\eps \b_-\oplus \eta \b_-}  \prodr_{{\ii b m}\in\A} \Exp{\left( x^{\ia b m} + \eps\eta \sum_{\gamma} P^\gamma_{[A,B]}(x) D_\gamma \on x^{\ia b m} \right) J_{\ia b m}} 
 .\nn\ee
On the other hand, we see that 
\begin{align}
&\Exp{\eps \b_-\oplus \eta \b_-} \left( \prodr_{{\ii b m}\in\A} \Exp{x^{\ia b m} J_{\ia b m}}\right) \Exp{\eps A} \Exp{\eta B} \nn\\
&=  \Exp{\eps \b_-\oplus \eta \b_-} \prodr_{{\ii b m}\in\A} 
\Exp{\left( x^{\ia b m} + \eps P_A^{\ia b m}(x) + \eta P^{\ia b m}_B(x+ \eps P_A(x)) \right) J_{\ia b m}} \nn\\
& = \Exp{\eps \b_-\oplus \eta \b_-} \prodr_{{\ii b m}\in\A} 
\Exp{\left( x^{\ia b m} + \eps P_A^{\ia b m}(x) + \eta P^{\ia b m}_B(x) + \eps\eta \sum_{{\ii a n}} 
\left(P_A^{\ia a n}(x) (D_{\ia a n} P^{\ia b m}_B)(x) \right) 
 \right) J_{\ia b m}}\nn
\end{align}
and hence \cref{encoset} is equal to
\begin{align} 
&\Exp{\eps \b_-\oplus \eta \b_-} \prodr_{{\ii b m}\in\A} 
\Exp{\left( x^{\ia b m} + \eps\eta \sum_{{\ii a n}} 
\left(P_A^{\ia a n}(x) (D_{\ia a n} P^{\ia b m}_B)(x) - P_B^{\ia a n}(x) (D_{\ia a n} P^{\ia b m}_A)(x) \right) 
 \right) J_{\ia b m}}\nn\\
&=\Exp{\eps \b_-\oplus \eta \b_-} \prodr_{{\ii b m}\in\A} 
\Exp{\left( x^{\ia b m} 
+ \eps\eta \left[ \sum_{{\ii a n}} P_A^{\ia a n}(x) D_{\ia a n}, \sum_{\gamma} P_B^\gamma(x) D_\gamma\right] \on x^{\ia b m} \right)J_{\ia b m}}
\end{align}
This is true for all $x^{\ia b m}$. We conclude that 
\be \left[ \sum_{{\ii a n}} P_A^{\ia a n}(X) D_{\ia a n}, 
           \sum_{\ii b m} P_B^{\ia b m}(X) D_{\ia b m}\right] = 
 \sum_{\ii a n} P^{\ia a n}_{[A,B]}(X) D_{\ia a n},\nn\ee
so that the map $\vf$ is indeed a homomorphism of Lie algebras. The polynomial $P^{\ia b m}_A$ belongs to $ \Onp_{\wgt{\ii b m} - \wgt{\ii a n}}$ whenever $A\in \g_{\wgt\ii a n}$, so $\vf$ respects the $Q$-gradation. Hence it respects the principal $\ZZ$-gradation and is therefore continuous. 
\end{proof}
\begin{rem}
Just as in the case of $\g$ of finite type, there is also another realization $\vf^L:\npc\to \Derc\Onp$ of $\npc$, coming from the \emph{left} action of $U$ on itself. The image $\vf^L(\npc)\subset\Derc\Onp$ lies in the centralizer of $\vf(\npc)$, but not of $\vf(\gc)$. In the case of $\g$ of finite type, $\rho^L$ plays an important role in the definition of screening operators \cite{FFGD,FrenkelBenZvi,Fre07}.
\end{rem}

In what follows, we get more information about image of the homomorphism $\vf$.

\subsection{Bounded grade and widening gap}\label{sec: Dw}
Let us say a collection of polynomials $\{P^{a,n}(X)\}_{(a,n) \in \A}$ in $\Onp$ has \emph{widening gap} if, for every $K\geq 1$, we have 
\be P^{a,n}(X) \in \CC\!\left[X^{b,m}: m < n - K, b\in \I\right] \label{Dwcond}\ee
(for all $a\in \I$) for all but finitely many $n\in \ZZ_{\geq 0}$.  
Let us say that an element 
\be v =  \sum_{{\ii a n}\in \A} P^{\ia a n}(X) D_{\ia a n} \in \Derc\Onp \nn\ee
has \emph{widening gap} if the collection of coefficient polynomials $\{P^{a,n}(X)\}_{(a,n) \in \A}$ has widening gap.
Let us say that $v$ has \emph{bounded grade} if there exists $M\in \ZZ$ such that
\be P^{{\ii a n}}(X) \in \bigoplus_{k=-M}^{M} \Onp_{[n + k]} \nn\ee
for all ${\ii a n}\in \A$, where we denote by $\Onp = \bigoplus_{k\in \ZZ} \Onp_{[k]}$ the $\ZZ$-gradation of $\Onp$ in which $X^{{\ii a n}}\in \Onp_{[-n]}$. 

\begin{exmp}
For any $a,b,c\in \I$:
\begin{enumerate}[(i)]
\item $\sum_{n\in \NN} X^{a,n} X^{b,n} D_{c,2n}$ has bounded grade and widening gap,
\item $\sum_{n\in \NN} X^{a,1} D_{b,n}$ has widening gap but not bounded grade, 
\item $\sum_{n\in \NN} X^{a,n} D_{b,n}$ has bounded grade but not widening gap.
\end{enumerate}
\end{exmp}

Define $\Dw$ to be the subset of $\Derc\Onp$ consisting of elements with bounded grade and widening gap. It is a Lie subalgebra (and an $\Onp$-submodule) of $\Derc\Onp$. (See \cref{lem: dercdw} and its proof, below.)

The following lemma says that any element of bounded grade ``almost'' has widening gap.
\begin{lem}\label{Wlem}
Suppose $\sum_{{\ii a n}\in \A} P^{\ia a n}(X) D_{\ia a n}\in \Derc\Onp$ has bounded grade. For every $K$, we eventually (i.e. for all but finitely many $n$) have  
\be P^{\ia a n}(X) \in \Onp_{<n - K} \ox \left(\CC\oplus \bigoplus_{\substack{\ii b m \in \A \\ m\geq  n - K}} \CC X^{\ia b m} \right).\nn\ee 
\end{lem}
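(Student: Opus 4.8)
The plan is to reduce the statement to an elementary weight count on monomials. The only input I would take from the bounded-grade hypothesis is the following: there is an integer $M$ such that, for every $\ii a n\in\A$, each monomial $X^{b_1,m_1}\cdots X^{b_r,m_r}$ occurring in $P^{\ia a n}(X)$ has total weight $m_1+\cdots+m_r$ (the sum of the second indices of its factors, counted with multiplicity) lying in the band $[n-M,n+M]$; in particular this weight is at most $n+M$. Call a variable $X^{\ia b m}$ \emph{high} (relative to the pair $\ii a n$ and the cutoff $K$) if $m\geq n-K$, and \emph{low} otherwise, so that the low variables are exactly those generating $\Onp_{<n-K}$. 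The conclusion to be proved says precisely that, for all but finitely many $n$ (and all $a$ with $\ii a n\in\A$), every monomial of $P^{\ia a n}(X)$ contains at most one high factor, counted with multiplicity.

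First I would fix $K$ and examine a monomial $\mu=X^{b_1,m_1}\cdots X^{b_r,m_r}$ occurring in $P^{\ia a n}(X)$ that has at least two high factors counted with multiplicity --- that is, either two distinct high variables or a single high variable to a power $\geq 2$. In either case two of the exponents, say $m_i$ and $m_j$ (distinct entries in the multiset of factors), satisfy $m_i,m_j\geq n-K$, so the total weight of $\mu$ is at least $m_i+m_j\geq 2(n-K)=2n-2K$. Combining this with the upper bound $m_1+\cdots+m_r\leq n+M$ coming from bounded grade yields $2n-2K\leq n+M$, i.e.\ $n\leq M+2K$.

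Hence for every $n>M+2K$ no monomial of $P^{\ia a n}(X)$ can contain two or more high factors; each such monomial is therefore a product of low variables times either $1$ or a single high variable to the first power. This is exactly the asserted membership
\[
P^{\ia a n}(X)\in\Onp_{<n-K}\ox\Bigl(\CC\oplus\bigoplus_{\substack{\ii b m\in\A\\ m\geq n-K}}\CC\,X^{\ia b m}\Bigr),
\]
and since $\I$ is finite the threshold $n>M+2K$ is uniform in $a$, so the set of excluded $n$ is finite, as required. I expect essentially no obstacle beyond this weight count; the only points needing care are the bookkeeping of multiplicities (so that a squared high variable is correctly counted as two high factors) and the observation that the threshold is uniform over the finite index set $\I$, both of which are handled above.
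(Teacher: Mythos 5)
Your proof is correct and uses essentially the same argument as the paper: both rest on the observation that a monomial of $P^{\ia a n}(X)$ containing two factors $X^{\ia b r}X^{\ia c s}$ with $r,s\geq n-K$ forces the total weight to be at least $2(n-K)$, which is incompatible with the bounded-grade band $[n-M,n+M]$ once $n>M+2K$. The only difference is presentational—you argue directly with an explicit threshold $n\leq M+2K$, while the paper phrases the same weight count as a contradiction with grades becoming unbounded below.
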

\begin{proof}
Suppose to the contrary that for some $K$ there is, for every $N$, always an $n>N$ such that $P^{\ia a n}(X)$ (for some $a\in \I$) has a nonzero monomial $m(X)$ with $X^{\ia b r} X^{\ia c s}$ as a factor, for some $r,s \geq n-K$, $b,c\in\I$. The grade of $m(X) D_{\ia a n}$ is bounded above by $-r-s+n \leq  2K-n < 2K -N$. So for every $N$ we find terms of grade less than $2K-N$.
\end{proof}

This applies in particular to the image $\vf(J_{\ia a n}) \in \Derc\Onp$ of one of the generators of $\g'$ (which is certainly of bounded grade: it is in grade $n$, since homomorphism $\vf: \gc \to \Derc\Onp$ respects the $Q$-gradation and hence the homogeneous $\ZZ$-gradation).

It is natural to try to isolate the terms in $\vf(J_{\ia a n})$ which fail to have widening gap. This is the content of \cref{linthm} below.
Let $f_{ab}{}^c$ denote the structure constants of $\oc\g$ in our basis $\{J_a\}_{a\in \I}$:
\be \left[ J_a, J_b\right] = \sum_{c\in \I} 
f_{ab}{}^c J_c .\nn\ee
For every $(a,n) \in \Ag$, define the element $\Jp_{a,n}\in \Derc\Onp$ by 
\be \Jp_{a,n} := \sum_{b,c\in \I} 
f_{ba}{}^c \sum_{m > \max(1,n) } X^{b,m-n} D_{c,m} \label{Jpdef}\ee
\begin{thm}\label{linthm} For all $(a,n) \in \Ag$, 
\be \vf(J_{a,n}) - \Jp_{a,n} \in \Dw,\nn\ee
i.e. this difference has widening gap.
\end{thm}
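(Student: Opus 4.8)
My plan is to bootstrap from the Chevalley generators using that $\vf$ is a Lie algebra homomorphism (\cref{vfdef}) and that, modulo $\Dw$, the linear assignment $J_{a,n}\mapsto\Jp_{a,n}$ is bracket-compatible. I would first establish three facts: (a) $\Dw$ is closed under the bracket (asserted via \cref{lem: dercdw}); (b) $[\Jp_{a,n},\Dw]\subseteq\Dw$; and (c) $[\Jp_{a,n},\Jp_{a',n'}]-\Jp_{[J_{a,n},J_{a',n'}]}\in\Dw$, where $\Jp$ is extended linearly with $\Jp$ of the central element set to $0$. Together these make $\Dw$ an ideal in the span of $\Dw$ and the $\Jp_{a,n}$, so that if $\vf(J_{a',n'})\equiv\Jp_{a',n'}$ and $\vf(J_{a'',n''})\equiv\Jp_{a'',n''}$ modulo $\Dw$, then expanding $[\vf(J_{a',n'}),\vf(J_{a'',n''})]$ and using (a)--(c) yields $\vf([J_{a',n'},J_{a'',n''}])\equiv\Jp_{[J_{a',n'},J_{a'',n''}]}$. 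Since every $J_{c,N}$ with $N\ge 2$ occurs in a bracket $[J_{a,N-1},J_{b,1}]$ of strictly smaller height (and symmetrically for $N\le -2$), while the height-$0$ part $\oc\g$ is generated by $\{e_i,f_i\}_{i\in\oc I}$, a double induction on $|N|$ and on finite-root height reduces the theorem to the generators $\h\cup\{e_i,f_i\}_{i\in I}$.

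Facts (b) and (c) are computations with linear vector fields. For (b), the coordinate $X^{b,m-n}$ that $\Jp_{a,n}$ attaches to $D_{c,m}$ can only enter a term of $w\in\Dw$ by contracting $D_{c,m}$ against a coordinate $X^{c,m}$ occurring in some coefficient $Q^{c',m'}$ of $w$; widening gap of $w$ forces $m<m'-K$, whence the injected index $m-n<m'-K$ as well, and by the same token the reverse contraction stays within $\Dw$. For (c), both sides are coadjoint-type fields, and the two contractions differ from $\sum_c f_{aa'}{}^c\Jp_{c,n+n'}$ only through the cut-offs $m>\max(1,n)$, which fail to commute with the contraction $D_{c,m}\leftrightarrow X^{c,m}$ on a bounded range of $m$; the resulting discrepancy is a finite sum, hence lies in $\Der\Onp\subseteq\Dw$. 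There is no analogue of the divergent $1$st products of \cref{sec: zeta} here, precisely because the Lie bracket $[fD,gD']=f(Dg)D'-g(D'f)D$ returns a vector field rather than a scalar.

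For the base case the Cartan generators are immediate: $\vf(H_i)=-\sum_{(b,r)\in\A}\la\wgt(b,r),H_i\ra X^{b,r}D_{b,r}$ is already linear and equals $\Jp_{H_i,0}$ outside the finite range $r\le 1$. For $e_i,f_i$ I would split $\vf(J_{a,n})$ by coordinate degree. Linearising the transport of \cref{Plem} at the identity of $U$ shows that its degree-$1$ part is the coadjoint field $\sum_{b,c}f_{ba}{}^c\sum_m X^{b,m-n}D_{c,m}$, which agrees with $\Jp_{a,n}$ up to the finitely many terms with $m\le\max(1,n)$. It then remains to prove that the part of degree $\ge 2$ has widening gap. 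By \cref{Wlem}, for large $m$ every monomial of $P^{c,m}_{J_{a,n}}$ has at most one \emph{high} factor $X^{b,p}$ with $p\ge m-K$, so the task reduces to excluding degree-$\ge 2$ monomials that carry such a factor.

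This exclusion is the crux, and it is invisible to the $Q$-grading: a monomial $X^{b,p}\prod_j X^{e_j,q_j}D_{c,m}$ with $p$ high and some low factors has grade exactly $n$, so grading permits it. I would instead examine the Baker--Campbell--Hausdorff disentangling of \cref{BCHlem}. Moving $\Exp{\eps J_{a,n}}$ past a factor $\Exp{x^{b,r}J_{b,r}}$ produces corrections $\tfrac{(x^{b,r})^k}{k!}\ad_{J_{b,r}}^k J_{a,n}$, contributing the coordinate $X^{b,r}$ to the power $k$ at derivative index $n+kr$; thus a single power of a high coordinate is tied to the leading index $p=m-n$, whereas $k\ge 2$ powers raise the index to $n+kr$ and force $r\le(m-n)/2$. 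Higher multi-coordinate monomials arise only upon re-ordering these corrections, and each re-ordering contracts a derivative $D_{c,m'}$ against a coordinate $X^{c,m'}$ of equal index. The key claim, to be proved by induction on the number of such contractions, is that this pairing can never leave a high coordinate standing next to an independent low coordinate, so that every degree-$\ge 2$ monomial is balanced, with all indices $\le(m-n)/2$; for $m>2K-n$ this places the whole degree-$\ge 2$ part in $\CC[X^{b,m'}:m'<m-K]$, i.e. it has widening gap. The main obstacle is to carry this induction while controlling the Baker--Campbell--Hausdorff coefficients, so as to rule out a surviving high-index multi-coordinate term through cancellation --- the very phenomenon displayed by $P^{E,8}_{e_1}$, in which $2X^{H,8}$ is the unique monomial with an index exceeding $m/2=4$.
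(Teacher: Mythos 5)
Your reduction scaffolding is sound as far as it goes: facts (a)--(c) are all correct (for (c), the discrepancy between the cut-offs $m>\max(1,n)$, $m>\max(1,n')$ and $m>\max(1,n+n')$ is supported on finitely many $m$, hence lies in $\Der\Onp\subset\Dw$), and since $[\oc\g\ox t^{N-1},\oc\g\ox t]=\oc\g\ox t^{N}$ for $N\neq 0$ and $\vf(\cent)=0=\Jp_{\cent}$, the double induction does reduce the theorem to the generators $\h\cup\{e_i,f_i\}_{i\in I}$; the Cartan base case is immediate from the diagonal form of $\vf(h)$. But the proposal is not a proof: at the base cases $e_i,f_i$ you arrive at exactly the statement to be proved --- that the degree-$\geq 2$ part of $\vf(J_{a,n})$ never carries a high-index coordinate --- you call it ``the crux'', sketch an induction ``on the number of contractions'', and then concede that the main obstacle is to control the Baker--Campbell--Hausdorff coefficients so as to rule out survival ``through cancellation''. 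That obstacle is the entire content of the theorem, and nothing in the bootstrap makes it easier for $e_0$ or $f_i$ than for a general $J_{a,n}$, so the reduction buys essentially nothing: the hard step still has to be carried out, just for particular values of $(a,n)$.

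Moreover, the worry about cancellation signals that the approach is aimed at the wrong target: the exclusion is term-by-term, with no cancellation to control, provided one chooses the order of the BCH moves strategically --- this is how the paper's proof goes. Fix $K$ and take $m$ so large that $2(m-K)+\min(n,0)>m$. Factor the truncated group element as $g_{<m-K}\,g_{\geq m-K}$ and move $\Exp{\eps J_{a,n}}$ leftwards through $g_{\geq m-K}$ \emph{first}: any nested commutator of $J_{a,n}$ with two or more generators of grade $\geq m-K$ has grade $>m$ and dies in the truncation $\pi_{m+1}$, so the corrections picked up at this stage are at most linear in the high coordinates, with the coefficient attached to $J_{b,m}$ being exactly $f_{ca}{}^{b}x^{c,m-n}$, and they involve no low coordinates at all, since $g_{<m-K}$ has not yet been touched. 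Moving $\Exp{\eps J_{a,n}}$ on through $g_{<m-K}$ then produces corrections depending only on coordinates of index $<m-K$. Finally, the step your induction would have to supply --- that re-merging the two streams of corrections creates no monomial mixing a high and a low factor --- is again immediate from the same grading bound: any commutator of two generators of grade $\geq m-K$ exceeds grade $m$, so in $\pi_{m+1}$ the exponents simply add. Hence $P^{b,m}=f_{ca}{}^{b}x^{c,m-n}$ modulo polynomials in the low coordinates, exactly, for every sufficiently large $m$; the absence of further high-index monomials in $P^{E,8}_{e_1}$ is structural, not the result of cancellations among BCH coefficients. Until you supply an argument of this kind, your proposal has a genuine gap precisely at the point where the theorem's difficulty lies.
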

\begin{proof} For the entirety of this proof, pick and fix some $\ii a n \in \A$. The element $\vf(J_{a,n}) - \Jp_{a,n}$ is in grade $n$.
The non-trivial thing we have to check is that it has widening gap. 
We have
\be \vf(J_{a,n}) = \sum_{\ii b m\in \A} P^{\ia b m}(X) D_{\ia b m} ,\nn\ee
for some polynomials $P^{\ia b m}(X)$. 
As in \cref{Plem}, these polynomials are given by the demand that,  for any group element 
\be g=\prodr_{{\ii b m}\in\A} \Exp{x^{\ia b m} J_{\ia b m}} \in U \subset U(\CC_\eps),\nn\ee 
we have, for some  $B\in \b_-$, the following equality in the group $\exxp{\f_\eps}$:
\be \left( \prodr_{\ii b m\in\A} \Exp{x^{\ia b m} J_{\ia b m}}\right) \Exp{\eps J_{\ia a n}} = \Exp{\eps B }
 \prodr_{{\ii b m}\in\A} \Exp{\left(x^{\ia b m} + \eps P^{\ia b m}(x)\right)J_{\ia b m}}.\label{gterma}\ee

Given any positive integer $K$, we can always find a positive integer $N$ large enough that $2(N-K) + \min(n,0) > N$. Consider any $m > N$, so that we have 
\be 2(m-K) + \min(n,0) > m. \label{Mbnd}\ee
To compute $P^{\ia b m}(X)$ in \cref{gterma} it is enough to compute the truncation,  
\be \pi_{m+1}\left( \pi_p(g) \Exp{\eps J_{a,n}}\right), \label{gterm}\ee
where $p = m+1 - \min(n,0)$. 
We can factor the truncation $\pi_p(g)$ as follows:
\be \pi_p(g) = g_{<m-K} g_{\geq m-K} \nn\ee
where $g_{<m-K} = \pi_{m-K}(g)$ is the further truncation, and where $g_{\geq m-K}$ are the remaining factors in $\pi_p(g)$, i.e. some product of $\Exp{x^{\ia d s} J_{\ia d s}}$ with $p\geq s\geq m-K$, $d\in \I$. 

Consider what happens as we use \cref{BCHlem} to move $\Exp{\eps J_{\ia a n}}$ leftwards through the product in \cref{gterm}. We arrive at an equality of the following form:
\be \pi_{m+1}\left( g_{<m-K} g_{\geq m-K}\Exp{\eps J_{a,n}}\right) 
=  \pi_{m+1}\left( g_{<m-K} \Exp{\eps J_{a,n}} g'_{\geq m-K} \right)\label{hwt}\ee
for some 
\be g'_{\geq m-K} = \prodr_{\substack{\ii d s \in \A\\ m\geq s \geq m-K}} \Exp{\left( x^{\ia d s} + \eps y^{\ia d s}\right) J_{\ia d s}}.\nn\ee
Here certainly $y^{\ia d s} \in \CC[x^{\ia e r}]_{\ii e r \in \A; p\geq r \geq m-K}$. Moreover, these  $y^{\ia d s}$ are at most linear:
indeed, our choice \cref{Mbnd} ensures that for any $t,s > m-K$, a nested commutator 
\be [J_{f,t},[J_{e,s},J_{a,n}]] \nn\ee
has grade $t+s+n > 2(m-K) + n > m$ and so does not contribute in $\pi_{m+1}(g)$; a fortiori, nested commutators of $J_{a,n}$ with three or more generators of grade $>m-K$ cannot contribute.
In this we way see that 
\be y^{\ia b m} = f_{ca}{}^b x^{\ia c {m-n}};\nn\ee
it arises from the step 
\be \Exp{x^{c,m-n} J_{c,m-n}} \Exp{\eps J_{a,n}} = \Exp{\eps J_{a,n}} \Exp{(\eps x^{c,m-n} f_{ca}{}^b J_{b,m} + \dots)}  \Exp{x^{c,m-n} J_{c,m-n}}.\nn\ee

It remains to show that, starting from the expression on the right of \cref{hwt}, as we continue to reshuffle terms in \cref{hwt} using \cref{BCHlem} until they are all in order, we never again shift $x^{\ia b m}$ by any quantity that depends on any of the $x^{\ia d s}$ with $s\geq m-K$.
To see that, note first that we have
\be  \pi_{m+1}\left( g_{<m-K} \Exp{\eps J_{a,n}} g'_{\geq m-K} \right) 
   = \pi_{m+1}\left( \pi_{m+1}\left(g_{<m-K} \Exp{\eps J_{a,n}}\right) g'_{\geq m-K} \right).\nn\ee 
Here we can use \cref{BCHlem} to re-write
\be \pi_{m+1}(g_{<m-K} \Exp{\eps J_{a,n}}) = \pi_{m+1}(\Exp{\eps B} g'_{<m-K} g'') \nn\ee
for some $B\in \b_-$ and some 
\be g'_{<m-K} = \prodr_{\substack{\ii d s \in \A\\ m-K > s \geq 0}} 
\Exp{\left( x^{\ia d s} + \eps y^{\ia d s}\right) J_{\ia d s}}\qquad\text{and}\qquad
g'' = \prodr_{\substack{\ii d s \in \A\\ m \geq s \geq m-K}} 
\Exp{ \eps z^{\ia d s} J_{\ia d s}}.\nn\ee
Here $z^{d,s} \in \CC[x^{\ia e r}]_{\ii e r \in \A; m-K> r \geq 0}$; to stress the point, $z^{d,s}$  manifestly cannot depend on any of the $x^{\ia d s}$ with $s\geq m-K$. At this stage we have arrived at 
\begin{align} \pi_{m+1}\left( g_{<m-K} g_{\geq m-K}\Exp{\eps J_{a,n}}\right) 
&= \pi_{m+1}\left(\Exp{\eps B} g'_{<m-K} g'' g'_{\geq m-K} \right)\nn\\
&= \pi_{m+1}\left(\Exp{\eps B} g'_{<m-K} \pi_{m+1}(g'' g'_{\geq m-K}) \right)
.\nn
\end{align}
All that remains is to put the terms in 
\be \pi_{m+1}(g'' g'_{\geq m-K}) \nn\ee
in the correct order. But since $2(m-K) > m$, as in \cref{Mbnd}, we again have on grading grounds that all commutators we produce as we use \cref{BCHlem} have too high grade to contribute in this truncation, and we simply get
\be  \pi_{m+1}(g'' g'_{\geq m-K})  
=   \prodr_{\substack{\ii d s \in \A\\ m \geq s \geq m-K}} 
\Exp{\left(x^{\ia d s} + \eps y^{\ia d s} + \eps z^{\ia d s}\right) J_{\ia d s}}. \nn\ee

This completes the proof that, for any positive integer $K$, we have that for all sufficiently large $m$,
\be P^{\ia b m}(x) = f_{ca}{}^b x^{\ia c {m-n}} \mod \CC[x^{\ia e r}]_{r<m-K} .\ee
That, is $\vf(J_{a,n}) - \Jp_{a,n}$ has widening gap, which is what we had to show.
\end{proof}
It will be helpful to have a name for the difference in \cref{linthm}: let us write
\be \vf(J_{a,n}) - \Jp_{a,n} =: \sum_{(b,m)\in \A} \Rp^{b,m}_{a,n}(X) D_{b,m}\label{Rpdef}\ee
for certain polynomials $\Rp$. 

\section{Cartan involution and doubling trick}\label{sec: cai}
\subsection{Definition of $\Dg$}\label{sec: wg}
Let us introduce the polynomial algebra
\be \Og := \CC[X^{\ia a n}]_{\ii a n \in \Ag} \nn\ee
and define the vector spaces
\be    \Der\Og := \bigoplus_{\ii a n \in \Ag} \Og D_{\ia a n},
\qquad \Derc\Og := \prod_{\ii a n \in \Ag} \Og D_{\ia a n} .\nn\ee
Let us say a collection of polynomials $\{P^{a,n}(X)\}_{(a,n) \in \Ag}$ in $\Og$ has \emph{widening gap} if, for every $K\geq 1$, we have, 
\be P^{a,n}(X) \in \CC\!\left[X^{b,m}: |m| < |n| - K, b\in \I\right]. \label{Dwcond}\ee
(for all $a\in \I$) for all but finitely many $n\in \ZZ$.  
\begin{lem}\label{lem: wg} 
Equivalently, the polynomials $\{P^{a,n}(X)\}_{(a,n) \in \Ag}$ have widening gap if and only if, for every $K\geq 1$ there is some $B(K)$ such that
\be P^{a,n}(X) \in \CC\!\left[X^{b,m}: |m| < \max(|n| - K, B(K)), b\in \I\right] \label{Dwcondprime}\ee
for all $\ii a n \in \Ag$. 
\end{lem}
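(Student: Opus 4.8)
The plan is to prove the two implications separately, observing that the two formulations of widening gap differ only in how they handle the finitely many ``small'' indices $n$: the defining condition permits finitely many unconstrained exceptions, whereas the reformulation in \cref{Dwcondprime} replaces those exceptions by a uniform floor $B(K)$ on the range of variables allowed.

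First I would dispatch the direction \cref{Dwcondprime} $\Rightarrow$ widening gap. Fix $K\geq 1$ and take the associated bound $B(K)$. For every $\ii a n\in \Ag$ with $|n|\geq B(K)+K$ one has $\max(|n|-K,B(K))=|n|-K$, so \cref{Dwcondprime} already gives $P^{a,n}(X)\in \CC[X^{b,m}:|m|<|n|-K,\ b\in\I]$. The only excluded indices are those with $|n|<B(K)+K$, of which there are finitely many; hence the defining widening-gap condition holds with this finite exceptional set.

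For the converse, I would fix $K\geq 1$ and let $S_K\subset\ZZ$ be the finite exceptional set furnished by the definition, so that $P^{a,n}(X)\in \CC[X^{b,m}:|m|<|n|-K,\ b\in\I]$ for every $a\in\I$ and every $n\notin S_K$. For those $n$ the inclusion required in \cref{Dwcondprime} holds for any choice of $B(K)$, since $|n|-K\leq\max(|n|-K,B(K))$. It then remains only to absorb the finitely many exceptional polynomials $\{P^{a,n}:a\in\I,\ n\in S_K\}$ into a single uniform bound. Here I use that $\I$ is finite (it equals $(\oc\Delta\setminus\{0\})\cup\oc I$, with $\oc\g$ finite-dimensional) and that $S_K$ is finite, so this is a finite list of polynomials, each of which---being a polynomial---involves only finitely many of the variables $X^{b,m}$. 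Choosing $B(K)$ greater than $|m|$ for every variable $X^{b,m}$ occurring in any of these finitely many polynomials then yields $P^{a,n}(X)\in\CC[X^{b,m}:|m|<B(K)]\subseteq\CC[X^{b,m}:|m|<\max(|n|-K,B(K))]$ for each $n\in S_K$ as well, establishing \cref{Dwcondprime}.

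There is no substantial obstacle; the content is purely the bookkeeping that ``finitely many exceptions'' and ``a uniform floor'' encode the same asymptotic behaviour. The one point worth flagging is the appeal to finiteness of the index set $\I$, which guarantees that the exceptional data indexed by $S_K$ really is a finite list of polynomials and hence admits a common bound on the variables it uses; without that finiteness the absorption step in the converse would break down.
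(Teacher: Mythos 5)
Your proof is correct and takes essentially the same approach as the paper's: both directions amount to the same bookkeeping, namely that for $|n|\geq B(K)+K$ the maximum in \cref{Dwcondprime} equals $|n|-K$, and conversely that the finitely many exceptional polynomials (finitely many because $\I$ is finite) each involve only finitely many variables and so admit a common bound $B(K)$. If anything, your write-up is slightly more careful than the paper's converse, which loosely says ``$|n|>B(K)$'' where the correct threshold is $|n|\geq B(K)+K$.
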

\begin{proof} Suppose $\{P^{a,n}(X)\}_{(a,n) \in \Ag}$ have widening gap. Then for all $K\geq 1$ there exists some $m(K)$ such that whenever $|n|> m(K)$,  $P^{a,n}$ has no factor $X^{c,m}$ with $|m|> |n|- K$. But that leaves only finitely many polynomials $P^{a,n}$, and we can find $B(K)$ such that none of them has any factor $X^{c,m}$ with $|m| > B(K)$. Conversely, suppose $\{P^{a,n}(X)\}_{(a,n) \in \Ag}$ obeys the condition in the lemma. Then for all $K\geq 1$,  $|n|> B(K)$ for all but finitely many $n$, so $\{P^{a,n}(X)\}_{(a,n) \in \Ag}$ has widening gap.
\end{proof}
\begin{cor}\label{cor: wg}
Suppose we are given collections $\{P^{a,n}\}$, $\{Q^{a,n}\}$, $\{R^{a,n}\}$, \dots, of polynomials with widening gap. 
Then 
\be \sum_{(b,m)\in \Ag} P^{b,m} \frac{\del Q^{a,n}}{\del X^{b,m}},\qquad \ii a n \in \Ag, \nn\ee
is a collection of polynomials with widening gap (and, hence, so is
\be \sum_{(b,m),(c,p)\in \Ag} P^{b,m} \frac{\del Q^{c,p}}{\del X^{b,m}} 
             \frac{\del R^{a,n}}{\del X^{c,p}}, \qquad \ii a n \in \Ag, \nn\ee
and so on).
\qed
\end{cor}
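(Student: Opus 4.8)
The plan is to read this off directly from the equivalent characterization of widening gap proved in \cref{lem: wg}. First I would note that each proposed polynomial
\[ S^{a,n} := \sum_{(b,m)\in\Ag} P^{b,m}\,\frac{\del Q^{a,n}}{\del X^{b,m}} \]
is well defined: since $Q^{a,n}\in\Og$ involves only finitely many coordinates, $\del Q^{a,n}/\del X^{b,m}$ vanishes for all but finitely many $(b,m)\in\Ag$, so the sum is finite. It then remains to verify the widening-gap condition for the collection $\{S^{a,n}\}_{(a,n)\in\Ag}$, which I would do by bounding the indices of the coordinates $X^{c,p}$ that can occur in $S^{a,n}$.

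So fix a target $K\geq 1$; the goal is to show $S^{a,n}\in\CC[X^{c,p}: |p|<|n|-K,\ c\in\I]$ for all but finitely many $n$. Applying \cref{lem: wg} to $\{Q^{a,n}\}$ with parameter $K$ gives a bound $B_Q(K)$, independent of $(a,n)$, such that every coordinate $X^{b,m}$ occurring in $Q^{a,n}$ has $|m|<\max(|n|-K, B_Q(K))$. This controls the coordinates contributed by the factor $\del Q^{a,n}/\del X^{b,m}$ (whose coordinates are among those of $Q^{a,n}$); it also bounds the index $m$ of any $X^{b,m}$ for which the term $P^{b,m}\,\del Q^{a,n}/\del X^{b,m}$ is nonzero, since such an $X^{b,m}$ must occur in $Q^{a,n}$.

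The remaining coordinates of $S^{a,n}$ are those contributed by the factors $P^{b,m}$. Applying \cref{lem: wg} to $\{P^{b,m}\}$ with parameter $1$ gives a bound $B_P(1)$ such that each coordinate $X^{c,p}$ occurring in $P^{b,m}$ has $|p|<\max(|m|-1, B_P(1))$. Chaining this with the bound $|m|<\max(|n|-K, B_Q(K))$ from the previous step yields
\[ |p| < \max\bigl(|n|-K-1,\; B_Q(K)-1,\; B_P(1)\bigr) \]
for every such coordinate. Since the constants $B_Q(K), B_P(1)$ do not depend on $n$ and $\I$ is finite, for every $n$ with $|n|>K+\max(B_Q(K), B_P(1))$ the quantity $|n|-K$ exceeds all three arguments of the maximum above (and exceeds $B_Q(K)$, handling the derivative factor), so for such $n$ every coordinate $X^{c,p}$ appearing in $S^{a,n}$ satisfies $|p|<|n|-K$. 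As this excludes only finitely many $n$, the collection $\{S^{a,n}\}$ has widening gap.

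The one nonroutine point—what I expect to be the crux—is the \emph{chaining} of the two estimates in the third paragraph: the index $m$ of the differentiated coordinate is itself bounded in terms of $n$ through the gap of $Q$, and this bound must be fed into the gap of $P$ to control the coordinates of $P^{b,m}$; the choice of parameters ($K$ for $Q$ and $1$ for $P$) is arranged precisely so that both contributions land strictly below $|n|-K$. The parenthetical iterated statements then follow by applying the result just established one factor at a time: writing $T^{c,p} := \sum_{(b,m)\in\Ag} P^{b,m}\,\del Q^{c,p}/\del X^{b,m}$, the corollary applied to $\{P^{b,m}\}$ and $\{Q^{c,p}\}$ shows $\{T^{c,p}\}$ has widening gap, and applying it again to $\{T^{c,p}\}$ and $\{R^{a,n}\}$ shows that $\sum_{(c,p)\in\Ag} T^{c,p}\,\del R^{a,n}/\del X^{c,p} = \sum_{(b,m),(c,p)\in\Ag} P^{b,m}\,\bigl(\del Q^{c,p}/\del X^{b,m}\bigr)\bigl(\del R^{a,n}/\del X^{c,p}\bigr)$ has widening gap, and so on.
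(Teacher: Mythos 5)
Your proof is correct and follows exactly the route the paper intends: the paper states this as an immediate corollary of \cref{lem: wg} with no written proof, and your argument — invoking the uniform-bound characterization for $\{Q^{a,n}\}$ with parameter $K$ and for $\{P^{b,m}\}$ with parameter $1$, then chaining the two bounds and discarding the finitely many $n$ with $|n|\leq K+\max(B_Q(K),B_P(1))$ — is precisely the filling-in of that step, including the observation that nonvanishing of $\del Q^{a,n}/\del X^{b,m}$ itself bounds $|m|$. The iteration for the three-factor (and longer) sums by applying the two-factor case repeatedly is also the intended reading of the parenthetical claim.
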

For future use, let us note also the following.
\begin{cor}\label{cor: wgtree}
More generally, if $\{P^{a,n}_i\}_{(a,n)\in \Ag}$, $i=1,\dots,N$ and $\{Q^{a,n}\}$ are collections of polynomials with widening gap then
\be \sum_{(b_1,m_1),\dots,(b_N,m_N)\in \Ag} P_1^{b_1,m_1} \dots P_N^{b_N,m_N} 
\frac{\del^N Q^{a,n}}{\del X^{b_1,m_1}\dots\del X^{b_N,m_N}},\qquad \ii a n \in \Ag, \nn\ee
is a collection of polynomials with widening gap.
\qed
\end{cor}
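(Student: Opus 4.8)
The plan is to prove the statement directly, by the same bookkeeping used for \cref{cor: wg} but carried out across all $N$ summation indices at once, using the uniform characterization of widening gap supplied by \cref{lem: wg}. Write $T^{a,n}$ for the polynomial in question and fix $K\geq 1$; the goal is to produce a single constant $B'$, independent of $(a,n)$, such that every variable $X^{c,p}$ occurring in $T^{a,n}$ satisfies $|p|<\max(|n|-K,B')$. By \cref{lem: wg} this is exactly the assertion that $\{T^{a,n}\}$ has widening gap.

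First I would invoke \cref{lem: wg} for each of the input collections at the \emph{same} value $K$. This supplies constants $B_Q$ and $B_1,\dots,B_N$ such that $Q^{a,n}$ involves only variables $X^{b,m}$ with $|m|<\max(|n|-K,B_Q)$, and each $P_i^{a,n}$ involves only $X^{c,p}$ with $|p|<\max(|n|-K,B_i)$, uniformly in $(a,n)\in\Ag$.

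The crux of the argument is then to exploit the factor $\tfrac{\del^N Q^{a,n}}{\del X^{b_1,m_1}\cdots\del X^{b_N,m_N}}$. A summand is nonzero only if each $X^{b_i,m_i}$ genuinely occurs in $Q^{a,n}$, which forces $|m_i|<\max(|n|-K,B_Q)$ for every $i$; moreover the derivative itself introduces no variable beyond those of $Q^{a,n}$, so it contributes only $X^{c,p}$ with $|p|<\max(|n|-K,B_Q)$. Feeding the bound on $|m_i|$ into the widening-gap estimate for $P_i$ then shows that each coefficient $P_i^{b_i,m_i}$ involves only $X^{c,p}$ with $|p|<\max(|m_i|-K,B_i)\le\max(|n|-K,B_Q,B_i)$. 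Collecting these, every variable appearing in any nonzero summand (hence in $T^{a,n}$) has $|p|<\max(|n|-K,B')$ with $B':=\max(B_Q,B_1,\dots,B_N)$, and since $B'$ does not depend on $(a,n)$ we conclude by \cref{lem: wg}.

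I expect the only genuine subtlety — the step playing the role of the main obstacle — to be the observation that it is the presence of the $N$-th derivative of $Q$ that pins down the summation indices $(b_i,m_i)$: without using non-vanishing of that derivative one cannot bound the $|m_i|$, and it is precisely this bound that lets the widening gap of the $P_i$ propagate through. I would also flag that a naive induction built purely on \cref{cor: wg} does \emph{not} yield the statement, since iterating the single contraction produces additional terms in which the $P_i$ themselves get differentiated; the direct uniform estimate above sidesteps this complication entirely.
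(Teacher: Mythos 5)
Your proof is correct, and it is exactly the routine verification that the paper leaves implicit: the corollary carries no written argument (it is stated with a bare \qed), the intended proof being precisely this uniform-bound bookkeeping through \cref{lem: wg} --- bound the summation indices $(b_i,m_i)$ by the variables actually occurring in $Q^{a,n}$, then propagate the widening-gap bounds of the $P_i$, with a single constant $B'=\max(B_Q,B_1,\dots,B_N)$ independent of $(a,n)$. Your closing observation is also accurate: \cref{cor: wg} iterates to chains $P\to Q\to R$ of single derivatives, not to the ``tree'' configuration here, since iteration would differentiate the $P_i$ themselves and produce extra terms, so the direct estimate is indeed the right route.
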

\begin{cor}\label{cor: wgcycle}
If $\{Q^{a,n}\}$ is a collection of polynomials with widening gap then the sum 
\be \sum_{(a,n)\in \Ag} \frac{\del Q^{a,n}}{\del X^{a,n}} \nn\ee
has only finitely many non-zero terms, and hence is a well-defined polynomial in $\Og = \CC[X^{a,n}]_{(a,n)\in\Ag}$.  
\qed
\end{cor}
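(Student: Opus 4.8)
The plan is to take $K=1$ in the definition of widening gap and to observe that a partial derivative $\frac{\del Q^{a,n}}{\del X^{a,n}}$ can be nonzero only if the variable $X^{a,n}$ actually occurs in the polynomial $Q^{a,n}$. So it suffices to show that $X^{a,n}$ fails to occur in $Q^{a,n}$ for all but finitely many $(a,n)\in\Ag$.

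First I would record that the index set $\I=(\oc\Delta\setminus\{0\})\cup\oc I$ is finite, since it is built from the root system and simple-root labels of the finite-dimensional simple Lie algebra $\oc\g$. Consequently the phrase ``for all but finitely many $n\in\ZZ$, and for all $a\in\I$'' appearing in the definition of widening gap is equivalent to ``for all but finitely many $(a,n)\in\Ag$''. This bookkeeping remark is exactly what turns the conclusion into a \emph{finite} sum rather than merely a convergent one, and is the only point that requires any care.

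Next I would apply the widening gap hypothesis to the collection $\{Q^{a,n}\}$ with $K=1$: there is a finite set $S\subset\ZZ$ such that for every $n\notin S$ and every $a\in\I$ one has $Q^{a,n}\in\CC[X^{b,m}:|m|<|n|-1,\ b\in\I]$. For such a pair $(a,n)$ the variable $X^{a,n}$ itself is excluded from this polynomial ring, because its index $m=n$ satisfies $|m|=|n|\not<|n|-1$. Hence $X^{a,n}$ does not appear in $Q^{a,n}$, and therefore $\frac{\del Q^{a,n}}{\del X^{a,n}}=0$.

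Finally, the only summands that can be nonzero come from the pairs $(a,n)$ with $n\in S$, of which there are at most $|\I|\cdot|S|<\infty$. Thus $\sum_{(a,n)\in\Ag}\frac{\del Q^{a,n}}{\del X^{a,n}}$ has only finitely many nonzero terms and is a genuine element of $\Og=\CC[X^{a,n}]_{(a,n)\in\Ag}$, as claimed. There is no substantive obstacle here; the statement is essentially immediate from the $K=1$ instance of widening gap together with the finiteness of $\I$.
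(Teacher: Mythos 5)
Your proof is correct and is essentially the argument the paper intends: the corollary is stated with no proof precisely because it follows immediately from the $K=1$ instance of the widening-gap condition, which forces $X^{a,n}$ to be absent from $Q^{a,n}$ for all but finitely many $n$, together with the finiteness of $\I$. Your explicit bookkeeping about $\I$ being finite is a fair point of care, but there is no divergence from the paper's (implicit) reasoning.
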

Let $\Dwg\subset \Derc\Og$ denote the subspace consisting of elements of the form 
\be \sum_{(a,n)\in \Ag} P^{a,n}(X) D_{a,n} \nn\ee
such that the polynomials $\{P^{a,n}(X)\}_{(a,n) \in \Ag}$ have widening gap. 
(We shall also refer to elements of $\Dwg$ themselves as having widening gap.)
Evidently, we have 
\be \Der\Og \subset \Dwg \subset \Derc\Og \nn\ee
and the Lie algebra $\Dw$ from \cref{sec: Dw} embeds in $\Dwg$.

\begin{lem}\label{lem: dercdw}
$\Derc\O$ is a Lie algebra and $\Dwg$ is a Lie subalgebra. 
\end{lem}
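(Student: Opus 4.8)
The plan is to regard every element of $\Derc\Og$ as a derivation of the polynomial ring $\Og$ and to compute the commutator bracket term by term. Writing $u = \sum_{\ii a n\in\Ag} P^{\ia a n}(X) D_{\ia a n}$ and $v = \sum_{\ii a n\in\Ag} Q^{\ia a n}(X) D_{\ia a n}$, the purely second-order contributions to $uv-vu$ cancel, since the $D_{\ia a n}$ mutually commute and the coefficients are polynomials; using $[D_{\ia a n},X^{\ia b m}]=\delta_a^b\delta_{n+m,0}$ one is left with
\be [u,v] = \sum_{\ii b m\in\Ag} R^{\ia b m}(X)\, D_{\ia b m}, \qquad R^{\ia b m} := \sum_{\ii a n\in\Ag}\left( P^{\ia a n}\,\frac{\del Q^{\ia b m}}{\del X^{\ia a {-n}}} - Q^{\ia a n}\,\frac{\del P^{\ia b m}}{\del X^{\ia a {-n}}}\right).\nn\ee
The whole statement then reduces to controlling these coefficient polynomials $R^{\ia b m}$.

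For the first assertion, what I would verify is that each $R^{\ia b m}$ is an honest (finite) element of $\Og$. Fixing $\ii b m\in\Ag$, the polynomials $Q^{\ia b m}$ and $P^{\ia b m}$ each involve only finitely many of the variables $X^{\ia c p}$, so $\del Q^{\ia b m}/\del X^{\ia a {-n}}$ and $\del P^{\ia b m}/\del X^{\ia a {-n}}$ vanish for all but finitely many $\ii a n\in\Ag$. Hence the sum defining $R^{\ia b m}$ has only finitely many non-zero summands and lies in $\Og$; this shows $[u,v]\in\Derc\Og$. Antisymmetry and the Jacobi identity are inherited from the commutator of derivations, so $\Derc\Og$ is a Lie algebra.

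For the second assertion I would assume in addition that the families $\{P^{\ia a n}\}$ and $\{Q^{\ia a n}\}$ have widening gap and show the same for $\{R^{\ia b m}\}$. The key device is that the widening-gap condition depends on $n$ only through $|n|$, hence is invariant under $n\mapsto -n$; in particular the re-indexed families $\{P^{\ia a {-n}}\}$ and $\{Q^{\ia a {-n}}\}$ again have widening gap. Substituting $n\mapsto -n$ in the dummy sum then rewrites the first half of $R^{\ia b m}$ as $\sum_{\ii a n\in\Ag} P^{\ia a {-n}}\,\del Q^{\ia b m}/\del X^{\ia a n}$, which is exactly the shape treated in \cref{cor: wg}; that corollary yields a family with widening gap, and the second half is handled identically. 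Since widening gap is a linear condition, stable under differences, the family $\{R^{\ia b m}\}$ has widening gap and therefore $[u,v]\in\Dwg$.

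The one genuinely delicate point, and the step I would treat most carefully, is this bookkeeping: the Weyl action carries the loop index across with a sign, $D_{\ia a n}=\del/\del X^{\ia a {-n}}$, so one must pass through the $n\mapsto -n$ symmetry of the widening-gap condition before invoking \cref{cor: wg}, whose index convention is stated with $\del/\del X^{\ia b m}$. Everything else is a direct bilinear manipulation of the bracket together with the already-established \cref{cor: wg}.
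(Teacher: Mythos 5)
Your proof is correct and is essentially the paper's own argument: well-definedness of the bracket on $\Derc\Og$ follows because, for fixed $(b,m)$, the polynomials $P^{b,m}$ and $Q^{b,m}$ involve only finitely many variables, and closure of $\Dwg$ is precisely \cref{cor: wg}. The one point where you diverge --- the convention $D_{a,n}=\del/\del X^{a,-n}$, read off from the relation $[D_{a,n},X^{b,m}]=\delta_a^b\delta_{n+m,0}$ stated for the Weyl algebra $\HH$ --- is an artifact of what appears to be a typo in that definition: the introduction, the $Q$-gradation, \cref{cor: wg}, and the paper's own proof of this lemma all use the diagonal convention $D_{a,n}=\del/\del X^{a,n}$, under which your $n\mapsto-n$ re-indexing step is unnecessary. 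Your observation that the widening-gap condition on $\Og$ depends on $n$ only through $|n|$ is nevertheless valid, so your argument goes through under either convention; it is just extra bookkeeping, not extra content.
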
 
\begin{proof} The Lie bracket of the Lie algebra $\Der\O$, given by
\begin{align} &\left[\sum_{(a,n)\in \Ag} P^{a,n}(X) D_{a,n},
                    \sum_{(b,m)\in \Ag} Q^{b,m}(X) D_{b,m}\right]\nn\\
&\qquad\qquad  = \sum_{(a,n) , (b,m)\in \Ag} 
\left( P^{b,m} \frac{\del Q^{a,n}}{\del X^{b,m}} - Q^{b,m} \frac{\del P^{a,n}}{\del X^{b,m}} \right) D_{a,n} ,\nn
\end{align} 
extends to a well-defined Lie bracket on $\Derc\O$: for each $(a,n)$ on the right, the sum on $(b,m)$ contains only finitely many non-zero terms since $Q^{a,n}$ and $P^{a,n}$ are polynomials. 
\Cref{cor: wg} then implies $\Dwg$ is a Lie subalgebra.
\end{proof}

Let $\{\SS^a_{b,n}\}_{a,b\in \I, n\in \ZZ}$ denote the generators of the loop algebra $\Lglog$, obeying the commutation relations
\be \left[\SS^a_{b,n},\SS^c_{d,m}\right] = \delta^c_b \SS^a_{d,n+m}- \delta^a_d \SS^c_{b,n+m}. \label{Lgl}\ee
Let $\Cocent$ denote the derivation element for the homogeneous gradation of this loop algebra. By definition, it obeys
\be [\Cocent, \SS^a_{b,n}] = n \SS^a_{b,n} .\nn\ee
We have the homomorphism $\lad: \Log\to \Lglog$ given by
\ifdefined\short
\else
\footnote{Indeed, 
\begin{align} 
[\J_{a,n},\J_{b,m}] 
&= f_{ca}{}^d f_{eb}{}^{f} [\SS^c_{d,n},\SS^e_{f,m}] \nn\\
&= f_{ca}{}^d f_{eb}{}^{f} \left(\delta^e_d \SS^c_{f,n+m} - \delta^c_f \SS^e_{d,n+m}\right) \nn\\
&= f_{ca}{}^d f_{db}{}^{f} \SS^c_{f,n+m} - f_{ca}{}^d f_{eb}{}^c \SS^e_{d,n+m} \nn\\
&= -f_{ab}{}^d f_{dc}{}^{f} \SS^c_{f,n+m} = f_{ab}{}^d \J_{d,n+m}\nn
\end{align}
}
\fi
\be J_{a,n}\mapsto \J_{a,n} := \sum_{b,c\in \I} f_{ba}{}^c \SS^b_{c,n},\qquad n\in \ZZ.\label{Jdef}\ee
and hence the homomorphism $\lad: \g \to \Lglog \rtimes \CC\Cocent$, with
\be \cent \mapsto 0 \quad\text{and}\quad \cocent \mapsto \Cocent.\nn\ee

There is an embedding $\iota:\Lglog \rtimes \CC\Cocent \into \Derc\Og$ given by
\be \iota(\SS^b_{c,n}) := \sum_{m\in \ZZ}  X^{b,m-n} D_{c,m},\qquad \iota(\Cocent) := \sum_{a\in \I,m\in \ZZ}  m X^{a,m} D_{a,m}.\nn\ee
By means of this embedding, $\Lglog \rtimes \CC\Cocent$ acts (via the adjoint action) on $\Derc\Og$.
\begin{lem}\label{lem: stabdw} This action stabilizes the Lie subalgebra $\Dwg$. 
\end{lem}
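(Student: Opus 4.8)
The plan is to reduce to the generators of $\Lglog \rtimes \CC\Cocent$ and then compute the two relevant brackets explicitly, verifying the widening-gap condition through the reformulation in \cref{lem: wg}. Since the action is the adjoint action $v \mapsto [\iota(x), v]$, which is linear in $x$, and since $\Dwg$ is a vector subspace of $\Derc\Og$, it suffices to treat the case where $x$ is one of the generators $\SS^b_{c,n_0}$ (for $b,c \in \I$, $n_0 \in \ZZ$) or $\Cocent$: every element of $\Lglog \rtimes \CC\Cocent$ is a finite linear combination of these. Each bracket $[\iota(x), v]$ is a well-defined element of $\Derc\Og$ by \cref{lem: dercdw}, so the only content is to show it has widening gap whenever $v = \sum_{(a,n) \in \Ag} P^{a,n}(X) D_{a,n}$ does.

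First I would dispose of $\Cocent$. Using the bracket formula from \cref{lem: dercdw}, the coefficient of $D_{a,n}$ in $[\iota(\Cocent), v]$ is
\be \sum_{(e,r)\in\Ag} r\, X^{e,r} \frac{\del P^{a,n}}{\del X^{e,r}} - n\, P^{a,n}. \nn\ee
The first term is a weighted Euler operator: it rescales each monomial of $P^{a,n}$ by a scalar and so introduces no new coordinates, while the second is just a multiple of $P^{a,n}$. Hence the resulting polynomials involve only coordinates already occurring in $\{P^{a,n}\}$, and widening gap is preserved trivially.

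The substantive case is $\SS^b_{c,n_0}$, where $\iota(\SS^b_{c,n_0}) = \sum_{m\in\ZZ} X^{b,m-n_0} D_{c,m}$. The bracket formula gives, as the coefficient of $D_{a,n}$ in $[\iota(\SS^b_{c,n_0}), v]$,
\be T^{a,n} := \sum_{r\in\ZZ} X^{b,r-n_0}\, \frac{\del P^{a,n}}{\del X^{c,r}} - \delta^a_c\, P^{b,n-n_0}, \nn\ee
each summand being a genuine polynomial (the sum over $r$ is finite since $P^{a,n}$ is a polynomial). To verify that $\{T^{a,n}\}$ has widening gap I would invoke \cref{lem: wg}: fix a target gap $K' \geq 1$ and apply the widening gap of $\{P^{a,n}\}$ with $K := K' + |n_0|$. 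The key observation is that $\iota(\SS^b_{c,n_0})$ shifts the grading index by the \emph{bounded} amount $n_0$. Concretely, in the first summand, differentiating in $X^{c,r}$ is nonzero only when $|r| < |n| - K$ (for $|n|$ large), and then the factor $X^{b,r-n_0}$ has index $|r-n_0| \leq |r| + |n_0| < |n| - K'$, while the surviving coordinates $X^{d,s}$ of $\del P^{a,n}/\del X^{c,r}$ already satisfy $|s| < |n| - K < |n| - K'$; in the second summand, $P^{b,n-n_0}$ involves coordinates with $|s| < |n-n_0| - K \leq |n| + |n_0| - K = |n| - K'$ once $|n-n_0|$ is large. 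Thus for all but finitely many $n$ every coordinate occurring in $T^{a,n}$ has index $s$ with $|s| < |n| - K'$, which is exactly the widening-gap condition.

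The only nontrivial step is this last index bookkeeping for $\SS^b_{c,n_0}$; everything else is formal. The crux is precisely that $\iota$ sends a loop generator to an operator that translates the grading index by a fixed integer, so that a gap of $K' + |n_0|$ in $v$ already guarantees a gap of $K'$ in the bracket.
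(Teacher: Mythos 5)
Your proof is correct and takes essentially the same approach as the paper: reduce to the generators $\SS^b_{c,n_0}$ and $\Cocent$, then exploit the fact that the gap is genuinely \emph{widening} by applying the hypothesis with $K = K' + |n_0|$ to absorb the bounded index shift that the generator produces. The paper's version is terser (it leaves the bracket computation and the $\Cocent$ case implicit), but your index bookkeeping is exactly its key step, made explicit.
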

\begin{proof} Suppose $\sum_{(a,n)\in \Ag} P^{a,n}(X) D_{a,n}\in \Dwg$. Pick any $K\geq 1$ and any generator $\SS^b_{c,m}$ of $\Lglog$ (it is clear that $\Dwg$ is stable under $\Cocent$). Since the $\{P^{a,n}\}$ have widening gap, eventually $P^{a,n}$ has no factor $X^{d,p}$ with $|p| > |n| - K - |m|$. Therefore $\left[\SS^b_{c,m} , \sum_{(a,n)\in \Ag} P^{a,n}(X) D_{a,n} \right]$ again has widening gap. (Note that for this argument to work it is necessary that the gap is really \emph{widening}, i.e. that the condition in \cref{Dwcond} is that \emph{for every $K$} eventually $|m|< |n| -K$.)
\end{proof}

Let us define $\Dg$ as the corresponding semi-direct product of Lie algebras, 
\be \Dg := \Dwg \rtimes \left(\Lglog \rtimes \CC\Cocent\right) . \label{Dgdef}\ee

\subsection{Cartan involution}
Now let $\tau: \Derc\Og \to \Derc\Og$ be the involutive (i.e. $\tau^2 = \id$) automorphism defined by
\begin{alignat}{2} 
\tau(X^{\ia \alpha n}) &= X^{\ia{-\alpha}{-n}},\quad& \tau(X^{\ia i n}) &= - X^{\ia i {-n}},\nn\\
\tau(D_{\ia \alpha n}) &= D_{\ia{-\alpha}{-n}},\quad& \tau(D_{\ia i n}) &= - D_{\ia i {-n}},
\end{alignat}
for $\alpha\in \oc\Delta\setminus\{0\}$, $i\in \oc I$ and $n\in \ZZ$.
Recall $\Onp = \CC[X^{\ia a n}]_{\ii a n \in \A}$ where $\A := \{(\alpha,0)\}_{\alpha\in \oc\Delta_+} \,\,\cup\,\, \Ag_{\geq 1}$ indexes a basis of $\np$. Let us introduce 
$\Onm := \CC[X^{\ia a n}]_{\ii a n \in \Am}$, where $\Am := \{(\alpha,0)\}_{\alpha\in \oc\Delta_-} \,\,\cup\,\, \Ag_{\leq -1}$. We define $\Derc\Onm$ by obvious analogy with $\Derc\Onp$.
Clearly, $\tau(\Derc\Onp) = \Derc\Onm$ and $\tau(\Derc\Onm) = \Derc\Onp$. 
Let $\cai:\g \to \g$ be the Cartan involution of the affine Kac-Moody algebra $\g$, defined by
\be \cai(e_i) = f_i,\quad \cai(f_i) = e_i,\quad \cai(H) = -H,\label{caidef}\ee
for $i\in I$ and $H\in \h$. One has
\be \cai(J_{\alpha,n}) = J_{-\alpha,-n},\quad \cai(J_{i,n}) = -J_{i,-n},\nn\ee
for $\alpha\in \oc\Delta\setminus\{0\}$, $i\in \oc I$ and $n\in \ZZ$.
We get a homomorphism of Lie algebras
\be \tau \circ \vf \circ \cai : \g \to \Derc\Onm \nn\ee
and hence a homomorphism of Lie algebras
\begin{align} \vfd := (\vf + \tau\circ\vf\circ\cai) : \g 
&\to \Derc\Onp \oplus \Derc\Onm\nn\\ 
&\into \Derc\Og.\label{vfddef}
\end{align}
By construction, $\vfd$ has the equivariance property
\be \tau\circ\vfd = \vfd \circ \cai .\label{vfdequi}\ee

Note that
\be \iota(\J_{a,n}) = (\iota\circ\lad)(J_{a,n}) = \sum_{b,c\in \I} f_{ba}{}^c \sum_{m\in \ZZ}  X^{b,m-n} D_{c,m} \in \Derc\Og.\nn\ee
\begin{lem}\label{lem: Jdif}
For all $(a,n)\in \I\times \ZZ$, 
\be \vfd(J_{a,n}) - \iota(\J_{a,n}) \in \Dwg. \nn\ee
\end{lem}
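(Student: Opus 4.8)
The plan is to exploit the very definition $\vfd = \vf + \tau\circ\vf\circ\cai$ of \cref{vfddef} together with \cref{linthm}, which already isolates the non--widening-gap part of each $\vf(J_{a,n})$ as the explicit linear element $\Jp_{a,n}$. Write $\eps_a=+1$ for $a\in\oc\Delta\setminus\{0\}$ and $\eps_a=-1$ for $a\in\oc I$, and set $\bar a:=-a$ in the first case, $\bar a:=a$ in the second, so that $\cai(J_{a,n})=\eps_a J_{\bar a,-n}$ while $\tau(X^{a,n})=\eps_a X^{\bar a,-n}$ and $\tau(D_{a,n})=\eps_a D_{\bar a,-n}$. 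Introduce the negative-support analogue of $\Jp_{a,n}$,
\be {}^{-}\!\J_{a,n} := \sum_{b,c\in\I} f_{ba}{}^c \sum_{m<\min(-1,n)} X^{b,m-n}D_{c,m} \in \Derc\Onm,\nn\ee
and split the target difference as
\be \vfd(J_{a,n}) - \iota(\J_{a,n}) = \bigl(\vf(J_{a,n})-\Jp_{a,n}\bigr) + \bigl(\tau\vf\cai(J_{a,n}) - {}^{-}\!\J_{a,n}\bigr) + \bigl(\Jp_{a,n}+{}^{-}\!\J_{a,n} - \iota(\J_{a,n})\bigr).\nn\ee
The first bracket lies in $\Dw\subset\Dwg$ by \cref{linthm}. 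For the third bracket, once ${}^{-}\!\J$ is correctly identified, the sums $\Jp_{a,n}$ and ${}^{-}\!\J_{a,n}$ together account for all $m>\max(1,n)$ and all $m<\min(-1,n)$, so the remainder is $\sum_{b,c}f_{ba}{}^c$ times the sum over the \emph{finite} range $\min(-1,n)\le m\le\max(1,n)$ of $X^{b,m-n}D_{c,m}$; this is a finite element of $\Der\Og\subset\Dwg$. Everything therefore reduces to the middle bracket.

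For the middle bracket I would apply \cref{linthm} to the index $(\bar a,-n)\in\Ag$, giving $\vf(J_{\bar a,-n})=\Jp_{\bar a,-n}+w$ with $w\in\Dw$, so that $\tau\vf\cai(J_{a,n})=\eps_a\tau(\Jp_{\bar a,-n})+\eps_a\tau(w)$. Two facts then remain. First, that $\tau$ carries widening gap in $\Onp$ to widening gap in $\Og$, so $\eps_a\tau(w)\in\Dwg$: this is a bare reindexing, since $\tau$ sends monomial indices $(b,m)\mapsto(\bar b,-m)$ and generator indices $(a,n)\mapsto(\bar a,-n)$, whereupon the condition ``$P^{a,n}$ uses only $X^{b,m}$ with $m<n-K$ for all but finitely many $n\ge0$'' becomes ``the image coefficient at $(\bar a,-n)$ uses only $X^{\bar b,-m}$ with $|{-m}|<|{-n}|-K$ for all but finitely many $-n\le0$'', which is exactly the widening-gap condition of \cref{lem: wg}. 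Second, and this is the crux, one must show $\eps_a\tau(\Jp_{\bar a,-n})={}^{-}\!\J_{a,n}$ exactly.

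This last identification is where the real bookkeeping lives, and I expect it to be the main obstacle. Expanding $\Jp_{\bar a,-n}=\sum_{b,c}f_{b\bar a}{}^c\sum_{m>\max(1,-n)}X^{b,m+n}D_{c,m}$ and applying $\tau$ termwise gives $\eps_b\eps_c f_{b\bar a}{}^c\,X^{\bar b,-m-n}D_{\bar c,-m}$; the substitution $m\mapsto -m$ converts the summation range to $m<\min(-1,n)$ and the monomial to $X^{\bar b,m-n}D_{\bar c,m}$, and relabelling $b\mapsto\bar b$, $c\mapsto\bar c$ leaves the scalar $\eps_a\eps_b\eps_c\,f_{\bar b\bar a}{}^{\bar c}$ multiplying $X^{b,m-n}D_{c,m}$. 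The needed input is the structure-constant symmetry
\be f_{\bar b\bar a}{}^{\bar c} = \eps_a\eps_b\eps_c\, f_{ba}{}^c,\nn\ee
which follows from applying $\cai$ (an automorphism of $\oc\g$ with $\cai(J_a)=\eps_a J_{\bar a}$) to $[J_b,J_a]=\sum_c f_{ba}{}^c J_c$ and comparing coefficients of $J_c$. With this identity the scalar collapses to $f_{ba}{}^c$, giving $\eps_a\tau(\Jp_{\bar a,-n})={}^{-}\!\J_{a,n}$ and completing all three brackets. No estimate beyond \cref{linthm} and the widening-gap calculus of \cref{cor: wg,lem: dercdw} is required; the whole difficulty is the sign-and-range accounting in this final step.
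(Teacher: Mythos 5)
Your proposal is correct and follows essentially the same route as the paper: the paper also splits $\vfd(J_{a,n})-\iota(\J_{a,n})$ into the two widening-gap differences controlled by \cref{linthm} (one for $\vf$, one for its $\tau$-$\cai$ twist) plus the observation that $(f_+ + \tau\circ f_+\circ\cai)(J_{a,n})-\iota(\J_{a,n})$ is a finite sum of terms $X^{b,m}D_{c,p}$, hence in $\Der\Og\subset\Dwg$. The only difference is that the paper states this last point as a ``key observation'' without computation, whereas you carry out the sign-and-range bookkeeping (the identity $\eps_a\tau(\Jp_{\bar a,-n})={}^{-}\!\J_{a,n}$ via the structure-constant symmetry, and the $\tau$-equivariance of the widening-gap condition) explicitly and correctly.
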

\begin{proof}
Recall the definition \cref{Jpdef} of $\Jp_{a,n}$. Let us give a name to the linear map 
$f_+ : \g \to\Derc\Onp; J_{a,n} \to \Jp_{a,n}$. The key observation is that the difference
\be (f_+ + \tau\circ f_+ \circ \sigma)(J_{a,n}) - \iota(\J_{a,n}) \label{Jdif}\ee
is a finite sum of terms of the form $X^{b,m} D_{c,p}$, and thus an element of $\Der\Og\subset \Dwg$. 

Together with \cref{linthm}, this implies the result.
\end{proof}

It follows from \cref{lem: Jdif} that the image $\vfd(\g)$ of $\g$ in $\Derc\Og$ lies in the embedded copy of $\Dg$. In what follows we shall regard $\vfd$ as a homomorphism 
\be \vfd : \g \to \Dg \label{vfddef}\ee
into the abstract copy of $\Dg$ we defined in \cref{Dgdef}. 

Observe that the difference $(f_+ + \tau\circ f_+ \circ \sigma)(J_{a,n}) - \iota(\J_{a,n})$ in \cref{Jdif} generically contains terms which do not stabilize both $\Onp$ and $\Onm$. (For example, terms like $X^{a,5} D_{b,-7}$ or $X^{a,-3} D_{b,8}$.) 
For that reason, it is worth stressing the following crucial property (which is true by construction).
\begin{prop}\label{stabprop}
$\vfd(\g)$ stabilizes $\Onp$ and $\Onm$ in $\Og$.
\qed\end{prop}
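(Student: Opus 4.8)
The plan is to deduce the stabilization directly from the defining decomposition of $\vfd$, namely $\vfd(A) = \vf(A) + (\tau\circ\vf\circ\cai)(A)$ with $\vf(A)\in\Derc\Onp$ and $(\tau\circ\vf\circ\cai)(A)\in\Derc\Onm$, and \emph{not} from the presentation of $\vfd(A)$ inside $\Dg$ furnished by \cref{lem: Jdif}. I would first stress why the latter cannot be used: as observed in the remark preceding the proposition, neither summand there preserves $\Onp$ or $\Onm$ on its own — the image $\iota(\J_{a,n})$ involves the derivatives $D_{c,m}$ for all $m\in\ZZ$, and the widening-gap correction mixes $X$'s and $D$'s with indices of either sign. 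The stabilization is a property of the full operator $\vfd(A)$ and becomes transparent only through its splitting into a $\Derc\Onp$-part and a $\Derc\Onm$-part.

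Next I would record the bookkeeping on which everything rests. Recall that $\Onp$ (resp. $\Onm$) is the polynomial subalgebra of $\Og$ generated by the coordinates $X^{a,n}$ with $(a,n)\in\A$ (resp. $(a,n)\in\Am$), that these two index sets are disjoint, and that $D_{b,m}$ is the partial derivative with respect to $X^{b,m}$. Any element of $\Derc\Onp$ is a sum $\sum_{(b,m)\in\A}P^{b,m}(X)D_{b,m}$ with all coefficients $P^{b,m}\in\Onp$; it differentiates only the $\A$-coordinates and keeps its coefficients among them, hence maps $\Onp$ into $\Onp$, whereas each $D_{b,m}$ with $(b,m)\in\A$ annihilates every generator of $\Onm$ and so kills $\Onm$ entirely. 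Applied to $\vf(A)$ this yields $\vf(A)(\Onp)\subseteq\Onp$ and $\vf(A)(\Onm)=0$; the mirror statement for $\Derc\Onm$ gives $(\tau\circ\vf\circ\cai)(A)(\Onm)\subseteq\Onm$ and $(\tau\circ\vf\circ\cai)(A)(\Onp)=0$. Only finitely many monomials act nontrivially on a given polynomial, just as for the action of $\HHc$ on polynomials, so no convergence issue arises from the infinite sums.

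Combining the two halves completes the proof: restricted to $\Onp$ the second summand vanishes and $\vfd(A)=\vf(A)$ sends $\Onp$ into $\Onp$, while restricted to $\Onm$ the first summand vanishes and $\vfd(A)=(\tau\circ\vf\circ\cai)(A)$ sends $\Onm$ into $\Onm$; as $A\in\g$ is arbitrary, this is the claim. As a consistency check I would also note that the assertion for $\Onm$ follows from that for $\Onp$ via the equivariance $\tau\circ\vfd=\vfd\circ\cai$ of \cref{vfdequi} together with $\tau(\Onp)=\Onm$. I do not anticipate a substantive obstacle: the whole content is the cross-vanishing of each summand on the opposite subalgebra, which is nothing more than the disjointness of the index sets $\A$ and $\Am$ and the duality between $D_{b,m}$ and $X^{b,m}$.
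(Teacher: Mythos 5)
Your proof is correct and takes essentially the same approach as the paper: the paper's own ``proof'' is just the remark that the claim is true by construction, and your write-up makes that construction explicit --- $\vf(A)\in\Derc\Onp$ maps $\Onp$ into $\Onp$ and annihilates $\Onm$, while $(\tau\circ\vf\circ\cai)(A)\in\Derc\Onm$ does the mirror image, so the sum $\vfd(A)$ stabilizes both. Your opening observation that the presentation of \cref{lem: Jdif} cannot be used (since $\iota(\J_{a,n})$ and the widening-gap correction separately mix the two index sets) is precisely the point the paper itself stresses in the sentences surrounding the proposition.
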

This is contrast to the obvious homomorphism $\g \to\Derc\Og$ sending $J_{a,n}\mapsto \iota(\J_{a,n})$ (and $\cent\mapsto 0$, $\cocent \mapsto \sum_{n} n X^{a,n} D_{a,n}$). So what we have shown is that it is possible to add, to each generator $\iota(\J_{a,n})$, an infinite sum belonging to $\Dwg$ of ``correction terms'', in such a way that the resulting action does stabilize $\Onp$ and $\Onm$. 
It will be useful to have a name for these ``correction terms''. Let us write
\be \vfd(J_{a,n}) - \iota(\J_{a,n}) = \sum_{(b,m)\in \I\times \ZZ} R_{a,n}^{b,m}(X) D_{b,m}, \label{Rdef}\ee
where the collection $\{R_{a,n}^{b,m}(X)\}_{\ii bm \in \Ag}$ of polynomials has widening gap, for each $(a,n)\in \Ag$.

\section{Vertex algebras and main results}\label{sec: va}

\subsection{Weyl algebra $\Hg$}\label{sec: Heis}
Let $\Hg$ denote the associative unital $\CC$-algebra obtained by quotienting the free associative unital $\CC$-algebra with generators 
\be \bet_{{\ia a n}}[N],\quad \gam^{{\ia a n}}[N],\nn\ee 
with $N\in \ZZ$ and $\ii a n \in \Ag$, by the two-sided ideal generated by the commutation relations 
\be [\bet_{\ia a n}[N],\bet_{\ia b m}[M]] = 0,\quad 
[\bet_{\ia a n}[N],\gam^{\ia b m}[M]] = \delta_{\ia a n}^{\ia b m} \delta_{N,-M} \bm 1,\quad 
[\gam^{\ia a n}[N],\gam^{\ia b m}[M]] = 0.\nn\ee


As a vector space,
\begin{align} \Hg &\cong_\CC \Hg^- \ox \Hg^+,  \nn
\end{align}
where
\be \Hg^- \cong \CC\left[ \gam^{\ia a n}[N], \bet_{\ia a n}[N-1] \right]_{N \leq 0; \ii a n \in \Ag}\label{Hm}\ee
is the algebra of  \emph{creation operators} and 
\be \Hg^+\cong \CC\left[ \gam^{\ia a n}[N], \bet_{\ia a n}[N-1] \right]_{N > 0; \ii a n \in \Ag}\label{Hp}\ee
is the algebra of  \emph{annihilation operators}. The fact that $\Hg^+$ and $\Hg^-$ are commutative (and that $\Hg$ is commutative modulo $\bm 1$) makes this an example of a \emph{system of free fields}. 

\subsection{Fock module $\Mh$}
Define $\Mh$ to be the induced $\Hg$-module generated by a vector $\vac$, the \emph{vacuum}, annhilated by $\Hg^+$, i.e.
\be \bet_{\ia a n}[M]\vac = 0,\quad M\in \ZZ_{\geq 0}, \qquad 
    \gam^{\ia a n}[M]\vac = 0,\quad M\in \ZZ_{\geq 1} \label{Mdef}\ee
for all ${\ii a n}\in \Ag$, and on which $\bm 1\vac = \vac$. Vectors in $\Mh$ are called \emph{states}. 

There is an obvious $\ZZ\times Q$-gradation 
of $\Hg$ and of $\Mh$ in which $\bet_{\ia a n}[N]$ has grade $(N,\alpha)$ 
and  $\gam^{\ia a n}[N]$ has grade $(N,-\alpha)$ 
whenever $J_{\ia a n} \in \g_\alpha$, and $\vac$ has grade $(0,0)$. 

Call the first factor here the \emph{depth gradation}. Let $\Mh[N]$ denote the subspace of depth $n$, so that
\be \Mh = \bigoplus_{n=0}^\8 \Mh[N] .\label{depthgrad}\ee 
Call the corresponding filtration, given by
\be \Mh\df m := \bigoplus_{n=0}^m \Mh[N] ,\nn\ee
the \emph{depth filtration}. 
Every  $v\in \Mh$ belongs to $\Mh\df m$ for some $m$. 


\subsection{The subspace $\Mh\df 1$}
Let us introduce the space 
\be \Omega_\Og := \Hom_\Og(\Der\Og,\Og).\nn\ee
It comes equipped with the derivative 
$\del: \Og \to \Omega_\Og$ 
defined by $(\del f)(v) = v(f)$. 
It is a free left $\Og$-module, with an $\Og$-basis consisting of basis vectors $\del X^{\ia a n}$, $\ii a n\in \A$, which obey $\del X^{\ia a n}(D_{\ia b m}) = \delta^{\ia a n}_{\ia b m}$: 
$\Omega_\Og \cong_\Og \bigoplus_{\ii a n\in \A} \Og \del X^{\ia a n}$. 
There is an action of $\Der\Og$ on $\Omega_\Og$, given by $v \on (f\del g) = (v\on f) \del g + f \del(v\on g)$. 
This action extends to a well-defined action of $\Derc\Og$.
The space $\Omega_\Og$ is graded by the root lattice $Q$, $\del$ has grade 0, and these structures respect the $Q$-grading. 

The subspace $\Mh[0]$ consists of states of the form $P(\gam[0]) \vac$ where $P(X) \in \Og$.
Meanwhile the subspace $\Mh[1]$ consists of states of the form
\be \sum_{\ii a n \in \Ag}P^{\ia a n}(\gam[0]) \bet_{\ia a n}[-1]\vac 
  + \sum_{\ii a n \in \Ag}Q_{\ia a n}(\gam[0]) \gam^{\ia a n}[-1]\vac \nn\ee
for polynomials $P^{\ia a n}(X), Q_{\ia a n}(X) \in \Og$, $\ii a n \in \Ag$.  
Thus, there are isomorphisms of vector spaces,
\begin{align} \Mh[0] &\cong \Og, \label{Mnought}\\
\Mh[1] &\cong \Omega_\Og \oplus \Der\Og,\label{vdecomp}
\end{align}
the latter 
given by identifying 
$\bet_{\ia a n}[-1]\vac$ with $D_{\ia a n}$ and $\gam^{\ia a n}[-1]\vac$ with $\del X^{\ia a n}$.

\begin{rem}
The subspace $\Mh\df 1$, equipped with the restriction of the vertex algebra structure on $\Mh$ we are about to recall, is an example of a \emph{$1$-truncated vertex algebra}. This in turn makes $\Mh[1]$ into a \emph{vertex $\Og$-algebroid}. See \cite{GMSii, Bressler, GMShomog}. 
\end{rem}

\subsection{Vertex algebra structure}\label{sec: vas}
For every $N\in \ZZ$, there is a linear map \be \Mh \to \End\Mh;\quad A\mapsto A\vap N\nn\ee sending any given state $A$ to its \emph{$N$th mode}, $A\vap N\in \End\Mh$, and these modes can be arranged in a formal series, the \emph{field}
\be Y(A, x) := \sum_{n\in \ZZ} A\vap N x^{-N-1} \in \Hom\left(\Mh, \Mh((x))\right).\nn\ee 
The \emph{state-field map} $Y(\cdot,x): \Mh \to  \Hom\left(\Mh, \Mh((x))\right)$ obeys a collection of axioms that make $\Mh$ into a \emph{vertex algebra}; see e.g. \cite{LLbook,KacVertexAlgBook,FrenkelBenZvi}. 
It is defined as follows. First,  
\begin{alignat}{2}
\bet_{\ia a n}(x) &:= Y\left(\bet_{\ia a n}[-1] \vac,x\right) 
                   &&= \sum_{N\in \ZZ} \bet_{\ia a n}[N] x^{-N-1},\nn\\
\gam^{\ia a n}(x) &:= Y\left(\gam^{\ia a n}[0] \vac,x\right) 
                   &&= \sum_{N\in \ZZ} \gam^{\ia a n}[N] x^{-N} .\nn
\end{alignat}
Next, let us write $f^{(N)}(u) := \frac{1}{N!} \frac{\del^N}{\del u^N} f(u)$. Then $Y( \bet_{\ia b m}[-M] \vac , u ) = \bet_{\ia b m}^{(M-1)}(u)$ and  $Y( \gam^{\ia a n}[-N] \vac , u ) = \gam^{\ia a n (N)}(u)$
and more generally if
\be A = \gam^{\ia{a_1}{n_1}}[-N_1]\dots \gam^{\ia{a_r}{n_r}}[-N_r] 
                \bet_{\ia{b_1}{m_1}}[-M_1]\dots \bet_{\ia{b_s}{m_s}}[-M_s] \vac, \label{monstate}\ee 
then
\begin{multline} A(u) := Y(A,u) =\nord{\gam^{\ia{a_1}{n_1} (N_1)}(u) \dots \gam^{\ia{a_r}{n_r} (N_r)}(u) 
               \bet_{\ia{b_1}{m_1}}^{(M_1-1)}(u)\dots \bet_{\ia{b_s}{m_s}}^{(M_s-1)}(u)}
.\label{Au}
\end{multline}           
Here $\nord{\dots}$ denotes the \emph{normal-ordered product} of fields, which is defined in general as follows. For any states $A,B\in \Mh$, 
\be \nord{Y(A,u) Y(B,v)} \,\,\, := \Biggl(\sum_{M<0} A\vap M u^{-M-1} \Biggr)Y(B,v) + Y(B,v) \Biggl(\sum_{M\geq 0} A\vap M u^{-M-1}\Biggr),\label{nord}\ee 
and $\nord{Y(A, u) Y(B, u)}$ is the specialization at $u=v$.
For more than two fields, the normal ordered product is understood to be right-associative, i.e.
$\nord{Y(A,u) Y(B,v) Y(C,w)} = \nord{Y(A,u)\bigl(\nord{Y(B,v) Y(C,w)}\bigr)}$
and so on.

For the $\bet\gam$-system (and for systems of free fields more generally) there is a simpler definition of the normal ordered product for monomial states like $A$ in \cref{monstate}. Given any monomial $m\in \Hg$, one defines $\nord m\in \Hg$ to be the monomial with the same factors as $m$ but ordered so that all annihilation operators stand to the right of all creation operators (see \cref{Hm,Hp}). Then 
\begin{equation}   \nord{\gam^{\ia{a_1}{n_1} (N_1)}(u_1) \dots \gam^{\ia{a_r}{n_r} (N_r)}(u_r)  \bet_{\ia{b_1}{m_1}}^{(M_1-1)}(v_1)\dots \bet_{\ia{b_s}{m_s}}^{(M_s-1)}(v_s)} 
\nn\end{equation}
is defined by normal-ordering the monomials in the series, term by term. 

The depth gradation, \cref{depthgrad}, makes $\Mh$ into a graded vertex algebra: the vacuum $\vac$ is in grade zero; for any states $A\in \Mh[R]$, $B \in \Mh[S]$ and any $N \in \ZZ$,
$A\vap N  B  \in \Mh[R + S  - N - 1]$.

We have the \emph{translation operator} $T\in \End\Mh$ of the vertex algebra $\Mh$, defined by $TA := A\vap{-2}\vac$. It acts on monomials in the generators of $\Hg$ as a derivation, according to
\be \left[T, \bet_{a,n}[-N]\right] = -N \bet_{a,n}[N-1] ,\qquad \left[T, \gam^{a,n}[-N]\right] = -(N-1)\gam^{a,n}[N-1], \nn\ee
and by definition $T\vac = 0$. 


\subsection{The OPE}
One has the commutator formula for modes of states:
\be \label{com Am Bn}
\bigl[ A\vap M, B\vap N  \bigr] = \sum_{K \geq 0} \binom M K \big( A\vap K B \big)\vap {M+N-K}.
\ee
Here, for all $M\in \ZZ$, $\binom MK := \frac{M(M-1)\dots (M-K+1)}{K!}$ for $K\neq 0$ and $\binom M 0 := 0$.
From 
this and the definition of the normal ordered product, \cref{nord}, one obtains the \emph{operator product expansion (OPE)}:
\be Y(A,u) Y(B,v) = \sum_{N\geq 0} \frac{Y(A\vap N B, v)}{(u-v)^{N+1}} + \nord{Y(A,u) Y(B,v)} \nn\ee
as an equality in $\Hom(\Mh,\Mh((u))((v)))$. Here $(u-v)^{-N-1}$ is understood to be expanded in positive powers of $v/u$.
\ifdefined\short
\else
\footnote{Indeed, in  $\wh\Hg[[u^{\pm 1},v^{\pm 1}]]$, one has
\begin{align} 
Y[A,u] Y[B,v]- \nord{Y[A,u] Y[B,v]} &= \left[ \left(\sum_{M\geq 0} A\vap M u^{-M-1}\right), Y[B,v] \right]\nn\\
&= 
\sum_{M\geq 0} \sum_{N} [ A\vap M, B \vap N ] u^{-M-1} v^{-N-1} \nn\\
&=
\sum_{M\geq 0} \sum_{N} \sum_{P\geq 0} \binom M P \left( A \vap P B\right)\vap{M+N-P} u^{-M-1} v^{-N-1} \nn\\
&=
\sum_{M\geq 0} \sum_{N} \sum_{P\geq 0} \binom M P 
\left( A \vap P B\right)\vap{M+N-P} v^{-N-M+P-1} u^{-M-1} v^{M-P}  \nn\\
&=
\sum_{M\geq 0} \sum_{P\geq 0} \binom M P 
Y[ A \vap P  B,v] u^{-M-1} v^{M-P}  \nn\\
&=
\sum_{P\geq 0} 
Y[ A \vap P  B,v] \frac{1}{P!} \del_v^P \sum_{M\geq 0} u^{-M-1} v^{M}  \nn\\
&=
\sum_{P\geq 0} 
Y[ A \vap P  B,v] \frac{1}{(u-v)^{P+1}}  \nn
\end{align}
}
\fi
Thus, computing the singular terms in the OPE is the same things as computing the non-negative vertex algebra products. 
\subsection{The Wick formula}\label{sec: wick}
For the $\bet\gam$-system (and for systems of free fields more generally) one has the usual \emph{Wick formula} for OPEs of monomial states. 
We take the statement from \cite[\S12.2.6]{FrenkelBenZvi}.

Let $A(u)$ and $B(u)$ be normal-ordered monomials in $\bet_{\ia a n}(u)$, $\gam^{\ia a n}(u)$, $\ii a n \in \Ag$, and their derivatives as in \cref{Au}. A \emph{single pairing} between $A(u)$ and $B(v)$ is a choice, for some fixed $\ii a n\in \Ag$, of
\begin{enumerate}[i)]
\item one factor $\bet_{\ia a n}^{(N)}(u)$ from $A(u)$ and one factor $\gam^{\ia a n (M)}(v)$ from $B(v)$ \\ -- to such a pairing we associate the function $(-1)^N \binom{N+M}{N}\frac{1}{(u-v)^{N+M+1}}$ -- or  
\item  one factor $\gam^{\ia a n (N)}(u)$ from $A(u)$ and one factor $\bet_{\ia a n (M)}(v)$ from $B(v)$ \\ -- to such a pairing we associate the function $(-1)^{M+1} \binom{N+M}{N}\frac{1}{(u-v)^{N+M+1}}$.
\end{enumerate}
A pairing is a disjoint union of zero or more single pairings. To a pairing $P$ we associate the function $f_P(u,v)$ obtained by taking the product the functions associated to each constituent single pairing (or 1, for the empty pairing). Given a pairing $P$ let $(A(u)B(v))_P$ denote the product $A(u)B(v)$ but with all factors belonging to the pairing removed (to leave $1$, in the special case that there are no factors left). The \emph{contraction} $\nord{A(u) B(v)}_P$ associated to a pairing $P$ is by definition  
\be \nord{A(u) B(v)}_P := f_P(u,v) \,\nord{(A(u)B(v))_P}\,\,.\nn\ee

\begin{lem}[Wick formula]\label{Wicklem}  The product $A(u) B(v)$ is equal to the sum of contractions $\nord{A(u) B(v)}_P$ over all pairings $P$ between the monomials $A$ and $B$, counted with multiplicity (and including the empty one). 
\qed\end{lem}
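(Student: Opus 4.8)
The plan is to reduce the formula to two elementary facts about the free fields $\bet_{a,n}(u),\gam^{a,n}(u)$ and then induct on the total number of fields in $A$ and $B$. \emph{Step 1 (single contractions are scalars):} for generating fields $\phi,\psi\in\{\bet_{a,n},\gam^{a,n}\}$ and their derivatives, split $\phi(u)=\phi(u)_++\phi(u)_-$ into its annihilation and creation parts as dictated by \cref{Mdef}. The defining formula \cref{nord} gives $\nord{\phi(u)\psi(v)}=\phi(u)_-\psi(v)+\psi(v)\phi(u)_+$, so the \emph{contraction}
\[
\phi(u)\psi(v)-\nord{\phi(u)\psi(v)}=[\phi(u)_+,\psi(v)]
\]
is central: $[\phi(u)_+,\psi(v)_+]=0$ because $\Hg^+$ is commutative (\cref{Hp}), while $[\phi(u)_+,\psi(v)_-]$ is built from the scalars $[\bet_{a,n}[N],\gam^{b,m}[M]]=\delta_{a,n}^{b,m}\delta_{N,-M}\bm 1$. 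Summing the geometric series yields $1/(u-v)$ (expanded for $|v|<|u|$), and applying $\tfrac1{N!}\del_u^N\tfrac1{M!}\del_v^M$ reproduces the functions of cases (i) and (ii). This identifies the factor $f_P$ of every single pairing.

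\emph{Step 2 (one field past a normal-ordered monomial):} I would next prove
\begin{multline*}
\phi(u)\,\nord{\psi_1(v)\cdots\psi_s(v)}=\nord{\phi(u)\psi_1(v)\cdots\psi_s(v)}\\
+\sum_{j=1}^{s}\bigl(\phi(u)\psi_j(v)-\nord{\phi(u)\psi_j(v)}\bigr)\,\nord{\psi_1(v)\cdots\widehat{\psi_j(v)}\cdots\psi_s(v)}.
\end{multline*}
Its creation part $\phi(u)_-$ already sits at the far left and feeds the first term; the annihilation part $\phi(u)_+$ is moved rightward through the monomial, commuting freely with the other annihilation factors and depositing, as it passes the creation part of each $\psi_j$, exactly the central scalar of Step 1, after which the remaining $s-1$ fields are left normal-ordered.

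\emph{Step 3 (induction):} finally I would induct on the number $r$ of factors of $A=\nord{\phi_1(u)\cdots\phi_r(u)}$, the case $r=1$ being Step 2. Commutativity of $\Hg^\pm$ (\cref{Hm,Hp}) lets one write $A(u)=\phi_1(u)_-\,A'(u)+A'(u)\,\phi_1(u)_+$ with $A'(u):=\nord{\phi_2(u)\cdots\phi_r(u)}$, so $A(u)B(v)=\phi_1(u)_-[A'(u)B(v)]+A'(u)[\phi_1(u)_+B(v)]$. Applying Step 2 to $\phi_1(u)_+B(v)$ splits it into a fully normal-ordered part and the single contractions of $\phi_1$ with each factor of $B$; applying the inductive hypothesis to the various products with $A'(u)$ then expresses everything as a sum over pairings. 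The decisive point is that when $A'(u)$ meets the normal-ordered part $\nord{\phi_1(u)_+B(v)}$, no factor $\phi_i$ with $i\geq2$ can contract with $\phi_1(u)_+$: both are annihilation fields at the \emph{same} point $u$, so their would-be contraction is the vanishing scalar $[\phi_i(u)_+,\phi_1(u)_+]$ rather than a singular $1/(u-u)$. Hence no self-contraction within $A$ is ever generated. By linearity of normal ordering in $\phi_1=\phi_1(u)_-+\phi_1(u)_+$, the two contributions recombine into the pairings leaving $\phi_1$ uncontracted, while the single-contraction terms supply the pairings in which $\phi_1$ is matched to one factor of $B$.

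I expect the real work to be the combinatorial bookkeeping of Step 3. One must set up the induction on a statement general enough to allow the second monomial to carry annihilation factors at $u$ (whose self-contraction with $A$ vanishes, as noted), and then verify that every pairing of the $A$-fields with the $B$-fields is produced exactly once, with the multiplicity required when several of the $\phi_i$ or $\psi_j$ coincide and with the correct product of scalars $f_P$. This is handled by partitioning the set of all pairings according to whether $\phi_1$ is matched and, if so, to which factor of $B$, giving a bijection with the terms produced above. Throughout, all identities are equalities in $\Hom(\Mh,\Mh((u))((v)))$ in which, for each fixed pair of powers of $u$ and $v$, only finitely many contractions contribute, so the rearrangements are legitimate.
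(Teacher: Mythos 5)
There is no proof in the paper to compare against: \cref{Wicklem} is stated with a QED box and attributed to \cite[\S12.2.6]{FrenkelBenZvi}, i.e.\ it is quoted as a standard fact about systems of free fields. Judged as a standalone argument, your proposal is correct, and it is the standard induction proof. The three checkpoints all hold: in Step 1 the contraction $\phi(u)\psi(v)-\nord{\phi(u)\psi(v)}=[\phi(u)_+,\psi(v)_-]$ is a central scalar because the only nonzero commutators $[\bet_{a,n}[N],\gam^{b,m}[M]]=\delta_{a,n}^{b,m}\delta_{N,-M}\bm 1$ are multiples of $\bm 1$ and $\Hg^+$ is commutative; Step 2 is the Leibniz-type identity that is valid precisely because those scalars are central; and in Step 3 the splitting $A(u)=\phi_1(u)_-A'(u)+A'(u)\phi_1(u)_+$ is legitimate because $\Hg^-$ and $\Hg^+$ are each commutative, after which partitioning the pairings of $A$ with $B$ according to the partner of $\phi_1$ (unpaired, or paired to $\psi_j$) matches your terms bijectively, with the right multiplicities since factors are distinguished by position. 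The subtlety you flag is genuine: applying the inductive hypothesis to $A'(u)\,\nord{\phi_1(u)_+B(v)}$ requires a statement allowing mixed-point monomials, and your observation that the would-be self-contraction $[\phi_i(u)_+,\phi_1(u)_+]$ vanishes is exactly what makes that strengthening true. You can also bypass it entirely: since annihilation parts commute, $\nord{\phi_1(u)_+B(v)}=B(v)\,\phi_1(u)_+$, so one applies the inductive hypothesis to $A'(u)B(v)$, right-multiplies by $\phi_1(u)_+$, and recombines with the $\phi_1(u)_-$ term via $\phi_1(u)_-\,\nord{A'(u)B(v)}_{P'}+\nord{A'(u)B(v)}_{P'}\,\phi_1(u)_+=\nord{A(u)B(v)}_{P'}$, which follows from placing $\phi_1(u)_-$ at the far left and $\phi_1(u)_+$ at the far right of each normal-ordered term.

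One caveat on Step 1: you assert, without recording the computation, that the commutators reproduce the paper's pairing functions (i) and (ii) of \cref{sec: wick}, and for (ii) they do not literally. Your method gives $[\gam^{a,n\,(N)}(u)_+,\bet_{a,n}^{(M)}(v)_-]=\frac{1}{N!\,M!}\del_u^N\del_v^M\bigl(-\tfrac{1}{u-v}\bigr)=(-1)^{N+1}\binom{N+M}{N}\frac{1}{(u-v)^{N+M+1}}$, whereas the paper prints $(-1)^{M+1}$. The exponent $N+1$ is the correct one: for $\gam'(u)\bet(v)$ the contraction is $\del_u\bigl(-\tfrac{1}{u-v}\bigr)=+\tfrac{1}{(u-v)^2}$, while the paper's formula with $N=1$, $M=0$ gives $-\tfrac{1}{(u-v)^2}$. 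So the paper's case (ii) contains a sign typo (harmless in the paper, whose computations only ever invoke case (ii) with $N=M=0$, where the two expressions agree). Your proof is right, but you should carry out this computation explicitly rather than claim agreement with the printed formula.
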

One can then compute the OPE by Taylor-expanding the fields $\bet_{\ia a n}(u)$ and $\gam^{\ia a n}(u)$ about $u=v$. 

\subsection{Completion $\wt\Mh$ of $\Mh$}\label{sec: wtMh}
Having in mind the completion $\Derc\Og$ of $\Der\Og$ introduced in \cref{sec: cai} (and see \cref{sec: derO}), one sees that $\Mh\df 1$ is not quite big enough for our purposes. 
Let us introduce a completion of $\wt\Mh$ of $\Mh$ in such a way as to preserve the depth filtration. To that end, we start by completing each filtered subspace $\Mh\df m$, as follows. 

Let $\Hg^-_{\geq k}$ denote the two-sided ideal in the commutative algebra $\Hg^-$ of creation operators, cf. \cref{Hm}, generated by 
\be \{\bet_{\ia a n}[N]:  a \in \I, |n| \geq k, N\in \ZZ\} .\nn\ee

Let $\I\df m_k$ denote the subspace of $\Mh\df m$ given by
\be \I\df m_k := \Mh\df m \cap \left(\Hg^-_{\geq k}\vac\right) .\nn\ee
Thus, $\I\df m_k$ is the subspace of $\Mh\df m$ spanned by monomials in the creation operators that have some factor $\bet_{\ia a n}[N]$ with $|n|\geq k$.
We have
\be \I\df m_0 \supset \I\df m_1 \supset \I\df m_2 \supset \dots \nn\ee
and $\bigcap_{i=0}^\8 \I\df m_i = \{0\}$ and we define
\be \wt\Mh\df m := \invlim_k \Mh\df m\big/ \I\df m_k .\nn\ee
These completed subspaces $\wt\Mh\df m$ form a directed system, $\wt\Mh\df 0 \subset \wt\Mh\df 1 \subset \wt\Mh\df 2 \dots$, and we define $\wt\Mh$ to be the direct limit
\be \wt\Mh := \varinjlim_m \wt\Mh\df m .\nn\ee
In other words, each element of $\wt\Mh$ is by definition an element of $\wt\Mh\df m$ for some sufficiently large $m$ (depending on the element) with two elements of $\wt\Mh$ considered equal if they are equal in $\wt\Mh\df m$ for some $m$. 

Explicitly, the sum
\be \sum_{\substack{a_1,\dots,a_p\in \I \\ n_1,\dots,n_p\in \ZZ}} 
P^{a_1,n_1}(\gam) \dots P^{a_p,n_p}(\gam) \bet_{a_1,n_1}[-N_1] \dots \bet_{a_p,n_p}[-N_p] \vac,\quad p\in \ZZ_{\geq 0}, \label{Mstate}\ee 
belongs to $\wt\Mh$ if, for each $i\in \{1,\dots,p\}$, $N_i\in \ZZ_{\geq 1}$ and there is a bound on the depth of the polynomials
\be P^{a_i,n_i}(\gam)\in \CC[\gam^{b,m}[-M]]_{b\in \I, m\in \ZZ, M\in \ZZ_{\geq 0}}\nn\ee 
as $(a_i,n_i)$ ranges over $\I\times \ZZ$. Elements of $\wt\Mh$ are finite linear combinations of such sums.

The vertex algebra structure on $\Mh$ does not extend to a well-defined vertex algebra structure on $\wt\Mh$. For example in $\wt\Mh$ we have the state
\be  S:= \sum_{\ii a n \in \Ag} \gam^{\ia a n}[0] \bet_{\ia a n}[-1]\vac \in \wt \Mh .\nn\ee 
If $\g$ were finite-dimensional, this would be a conformal vector.
But here it is an infinite sum, and when we attempt to compute, for example, the action of the would-be first mode of $S$, 
\be S\vap{1} = \sum_{N\geq 0} \sum_{\ii a n \in \Ag} 
    \left( \bet_{\ia a n}[-N] \gam^{\ia a n}[N+1] + \gam^{\ia a n}[-N] \bet_{\ia a n}[N+1]\right)
\nn\ee
on the state $S$, we encounter a double contraction yielding the ill-defined sum
\begin{align}  \sum_{\ii a n \in \Ag}  \bet_{\ia a n}[0] \gam^{\ia a n}[1] \sum_{\ii b m \in \Ag} \gam^{\ia b m}[0] \bet_{\ia b m}[-1]\vac\nn\\
= \sum_{\ii a n \in \Ag} \sum_{\ii b m \in \Ag} \delta_{\ia a n}^{\ia b m} \delta_{\ia b m}^{\ia a n}
= \sum_{\ii a n \in \Ag} 1 .\nn
\end{align} 


\subsection{The vertex algebra $\Mw$}\label{sec: mw}
Let $\Mw\subset\wt\Mh$ denote the subspace of $\wt\Mh$ spanned by states of the form \cref{Mstate} such that, for each $i=1,\dots,p$, the collection of polynomials $\{P^{a_i,n_i}\}_{\ii{a_i}{n_i}\in \Ag}$ has widening gap in same sense as in \cref{sec: wg}, i.e. for every $K\geq 1$, we have 
\be P^{a_i,n_i}(\gam) \in \CC\!\left[\gam^{b,m}[-M]: |m| < |n_i|-K, b\in \I, M\in \ZZ_{\geq 0} \right], \label{Mwcond}\ee
(for all $a_i\in \I$) for all but finitely many $n_i\in \ZZ$.

Evidently $\Mh\subsetneq \Mw \subsetneq \wt\Mh$, and
\begin{align} \wt\Mh[1] &\cong \Omega_\Og \oplus \Derc\Og \nn\\
                 \Mw[1] &\cong \Omega_\Og \oplus \Dwg\nn
\end{align} 
(and $\Mh[0] = \Mw[0] = \wt\Mh[0] \cong \Og$).

\begin{lem}\label{lem: Mw}
The vertex algebra structure on $\Mh$ extends uniquely to a well-defined vertex algebra structure on $\Mw$. 
\end{lem}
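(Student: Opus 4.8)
The plan is to define the $N$th products on $\Mw$ by the only possible recipe---extend the bilinear products of $\Mh$ to the infinite sums defining $\Mw$, computing each via the Wick formula (\cref{Wicklem}) applied termwise---and then to prove the two things that make this well-defined: (i) every coefficient of the result is a genuine (finite) polynomial, and (ii) the result again lies in $\Mw$. Because $\Mh\subset\Mw$ and the products are already fixed on $\Mh$, any such extension is forced, so uniqueness is automatic once well-definedness is shown; the entire content is therefore the well-definedness of the products.

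First I would fix the bookkeeping. Writing $A=\sum P^{(a_1,n_1)}(\gam)\cdots P^{(a_p,n_p)}(\gam)\,\bet_{a_1,n_1}[-N_1]\cdots\vac$, and $B$ similarly as in \cref{Mstate}, with each collection $\{P^{(a_i,n_i)}\}$ of widening gap, the Wick formula expands $Y(A,u)Y(B,v)$ as a sum over pairings. A single pairing contracts one $\bet$-factor of $A$, carried by a leading index $(a,n)$ summed over $\Ag$, against a $\gam^{a,n}$ occurring in some coefficient $Q^{(b,m)}$ of $B$ (or the mirror situation). At the level of coefficients this contraction is exactly the operation $P^{(a,n)}\,\del Q^{(b,m)}/\del X^{a,n}$, while the scalar contraction function $f_P(u,v)$ carries the mode dependence and so records into which product $\vap N$ the term falls. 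Thus, after summing over all contracted (``internal'') indices, each pairing contributes precisely one of the substitution/derivative expressions governed by \cref{cor: wg} and \cref{cor: wgtree}, built from the widening-gap coefficients of $A$ and $B$.

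The finiteness step is a chain argument on the contraction structure. Since $A$ and $B$ have bounded depth, each carries only finitely many $\bet$-factors, so any pairing has only finitely many contractions, and each contracted leading index is attached through its unique $\bet$-factor. By \cref{lem: wg}, a $\gam^{a,n}$ appearing in a widening-gap coefficient $Q^{(b,m)}$ forces $|n|<\max(|m|-K,B(K))$, so passing from a leading index to the leading index whose coefficient supplies its $\gam$-partner strictly increases index-size (until the bound $B(K)$ is reached). Starting from any internal index and following these links therefore produces a chain of strictly increasing sizes; having only finitely many links, it terminates, necessarily at a free (non-contracted) index, which is fixed in the output. This bounds every internal index above by a fixed free index, so each internal sum is finite and every output coefficient is a true polynomial; the same estimates bound the surviving $\gam$-indices relative to the free indices, so the output again has widening gap and lands in $\Mw$. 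A \emph{closed} chain (cycle) would force $|n_1|<|n_2|-K<\cdots<|n_1|-rK$, which is impossible; such cyclic ``trace'' contractions---the source of the divergences of \cref{sec: zeta}---therefore survive only through the finitely many terms allowed by $B(K)$, and assemble exactly into the finite sum of \cref{cor: wgcycle}.

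Finally I would upgrade ``products are well-defined'' to ``vertex algebra''. The vacuum, the translation operator $T$, and the state-field map extend without trouble, and each vertex algebra axiom---the commutator formula \cref{com Am Bn}, the Borcherds/associativity identities, and translation covariance---holds on $\Mw$ because, by the finiteness just established, every infinite sum occurring in it is in fact finite, so the identities that hold termwise on $\Mh$ persist after rearrangement. I expect the main obstacle to be exactly the finiteness-and-widening-gap analysis of the cyclic contractions: this is where the precise ``for every $K$, eventually $|m|<|n|-K$'' form of the condition is essential, and where $\wt\Mh$---on which \cref{sec: zeta} exhibits genuine divergences and axiom failures---is cut down to the subspace $\Mw$ on which all products close up.
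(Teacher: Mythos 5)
Your proposal is correct and follows essentially the same route as the paper's proof: both reduce everything to closure of the OPE via the Wick formula, view contractions as a directed graph on leading indices, handle the acyclic part with \cref{cor: wg} and \cref{cor: wgtree}, and dispose of cycles with \cref{cor: wgcycle}. Your explicit chain-of-strictly-increasing-indices argument is just an unwinding of those corollaries (via \cref{lem: wg}), so the two proofs coincide in substance.
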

\begin{proof} 
The non-trivial thing we have to check is the closure of the OPE, i.e. that the OPE of any two fields of states in $\Mw$ has coefficients that are fields of states in $\Mw$. 
 
This follows from the Wick formula, \cref{sec: wick}. Consider a typical contraction: 
\be \sum_{(a,n)\in \Ag}\wick{\nord{\dots P^{a,n}(\gam)(z) \dots \c\bet_{a,n}(z)} \quad \sum_{(b,m)\in \Ag}\nord{\dots Q^{b,m}(\c \gam)(w) \dots \bet_{b,m}(w) }}. \nn\ee
The resulting sum has, associated to the factor $\bet_{b,m}$, the coefficient polynomial (we suppress the mode numbers $[M]$, which play no essential role here) 
\be \sum_{(a,n)\in \Ag} P^{a,n} \frac{\del Q^{b,m}}{\del \gam^{a,n}} .\nn\ee
As in \cref{cor: wg}, this collection of polynomials again has widening gap, and the same logic extends to longer chains of contractions. More generally, by \cref{cor: wgtree} the same is true in any situation in which the contractions form an acyclic directed graph. (We think of a contraction of $\bet_{a,n}$ into $P^{b,m}(\gam)$ as a directed edge $(a,n) \to (b,m)$.)
And, in view of \cref{cor: wgcycle}, any cycle in the directed graph of the contractions merely gives rise to an overall factor belonging to $\CC[\gam^{c,p}[-M]]_{c\in \I,p\in \ZZ,M\in \ZZ_{\geq 0}}$. For example, in  
\be \sum_{(a,n)\in \Ag}\wick{\nord{\dots P^{a,n}(\c2\gam)(z) \dots \c1\bet_{a,n}(z)} \quad\sum_{(b,m)\in \Ag}\nord{\dots Q^{b,m}(\c1 \gam)(w) \dots \c2\bet_{b,m}(w) }}, \nn\ee
only finitely many $n,m$ can give nonzero summands.
\end{proof}
\begin{rem}
As a vertex algebra, $\Mw$ is generated by the fields $(\gam^{a,n}(z))_{(a,n)\in \Ag}$ together with all the fields 
\be  \sum_{(a,n)\in \Ag} \nord{P^{a,n}(\gam(z)) \bet_{a,n}(z)} \nn\ee
as
\be \left(P^{a,n}(\gam(z))\in \CC[\gam^{b,m\,(N)}(z): (b,m)\in \Ag, N\geq 0]\right)_{(a,n)\in \Ag} \nn\ee
runs over all collections of polynomials that have widening gap and bounded depth.
\end{rem}

\subsection{The vertex algebras $\Mf$ and $\Mfw$}\label{sec: Mfw}
Let $L$ denote the Lie algebra with generators $\{\SS^a_{b,n}[N]\}_{a,b\in \I, n\in \ZZ,N\in \ZZ}$ and $\{\Cocent[N]\}_{N\in \ZZ}$ subject to the relations
\begin{align} \left[\SS^a_{b,n}[N],\SS^c_{d,m}[M]\right] &= \delta^c_b \SS^a_{d,n+m}[N+M]- \delta^a_d \SS^c_{b,n+m}[N+M].\nn\\
 \left[\Cocent[N], \SS^a_{b,n}[M]\right] &= n \SS^a_{b,n}[N+M] .\nn
\end{align}
In other words, $L$ is loop algebra of $\Lglog \rtimes \CC\Cocent$. 
We can take the semi-direct product $L\ltimes \Hg$, where the action of $L$ on $\Hg$ is given by
\begin{alignat}{2} 
\left[\SS^a_{b,n}[N], \bet_{c,m}[M]\right] &= - \delta_c^a \bet_{b,m+n}[N+M],\quad &
\left[\Cocent[N], \bet_{c,m}[M]\right] &= - m \bet_{c,m}[N+M] \nn\\
\left[\SS^a_{b,n}[N], \gam^{c,m}[M]\right] &= \phantom +  \delta_b^c \gam^{a,m-n}[N+M],\quad &
\left[\Cocent[N], \gam^{c,m}[M]\right] &= \phantom +  m \gam^{c,m}[N+M] .\nn
\end{alignat}
Let $\Mf$ denote the module over $L\ltimes \Hg$ induced from a vector $\vac$ obeying the conditions \cref{Mdef} together with
\be \SS^a_{b,n}[N] \vac = 0, \qquad \Cocent[N] \vac = 0,  \qquad N \in \ZZ_{\geq 0}.\nn\ee 

Thus as a vector space $\Mf$ is the tensor product of the Fock module $\Mh$ over $\Hg$ and the vacuum Verma module (at level 0) $\VV_0^{\Lglog\rtimes\CC\Cocent,0}$ over $L$:
\be \Mf \cong_\CC \Mh \ox \VV_0^{\Lglog\rtimes\CC\Cocent,0}. \nn\ee

This module $\Mf$ is a vertex algebra. The state-field map is given by the formulas in \cref{sec: vas} together with
\begin{alignat}{2}
\SS^a_{b,n}(x) &:= Y\!\left( \SS^a_{b,n}[-1]\vac, x\right)
               && = \sum_{N\in \ZZ} \SS^a_{b,n}[N] x^{-N-1},\nn\\
    \Cocent(x) &:= Y\!\left( \Cocent[-1]\vac, x\right) 
               &&= \sum_{N\in \ZZ} \Cocent[N] x^{-N-1}.
\end{alignat}

As a vertex algebra, $\Mf$ is generated (in the sense of the Strong Reconstruction Theorem \cite{FKRW},\cite[\S4.5]{KacVertexAlgBook},\cite[\S4.4]{FrenkelBenZvi}) by the countable collection of fields $\bet_{a,n}(z)$, $\gam^{a,n}(z)$, $\SS^a_{b,n}(z)$ and $\Cocent(z)$, subject to the OPEs 
\begin{align} \bet_{a,n}(z) \gam^{b,m}(w) &= \delta^{b,m}_{a,n} \frac{1}{z-w} + \dots \nn
\end{align}
\begin{alignat}{2} 
\SS^a_{b,n}(z) \gam^{c,m}(w) &= \phantom + \delta^c_b\frac{\gam^{a,m-n}(w)}{z-w} + \dots \qquad &
\SS^a_{b,n}(z) \bet_{c,m}(w) &=          - \delta^a_c\frac{\bet_{b,m+n}(w)}{z-w} + \dots ,\nn\\
           \Cocent(z) \gam^{c,m}(w) &= \phantom +m\frac{\gam^{a,m}(w)}{z-w} + \dots &
\qquad     \Cocent(z) \bet_{c,m}(w) &=          -m\frac{\bet_{b,m}(w)}{z-w} + \dots ,\nn
\end{alignat}
\begin{align} \SS^a_{b,n}(z) \SS^c_{d,m}(w) &=  
        \frac{\delta^c_b \SS^a_{d,n+m}(w)- \delta^a_d \SS^c_{b,n+m}(w)}{z-w} + \dots ,\nn\\
\Cocent(z) \SS^a_{b,n}(w) &=  
        n\frac{\SS^a_{b,n}(w)}{z-w} + \dots .\nn
\end{align}
(with the others being trivial). 

We can enlarge the tensor factor $\Mh$ to $\Mw$ as in \cref{sec: mw} above. 
Let $\Mfw$ denote the resulting vector space,
\be \Mfw \cong_\CC \Mw \ox \VV_0^{\Lglog\rtimes\CC\Cocent,0}. \nn\ee

\begin{lem}
The vertex algebra structure on $\Mf$ extends uniquely to a well-defined vertex algebra structure on $\Mfw$. 
\qed
\end{lem}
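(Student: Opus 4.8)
The plan is to follow the same strategy used for \cref{lem: Mw}, enlarging $\Mh$ to $\Mw$ and $\Mf$ to $\Mfw$ in parallel. Since $\Mfw$ differs from $\Mf$ only in the tensor factor (replacing $\Mh$ by $\Mw$) while the Verma factor $\VV_0^{\Lglog\rtimes\CC\Cocent,0}$ is unchanged, the generating fields are still $\bet_{a,n}(z)$, $\gam^{a,n}(z)$, $\SS^a_{b,n}(z)$ and $\Cocent(z)$. Uniqueness is then immediate: any vertex algebra structure on $\Mfw$ restricting to the given one on $\Mf$ has its $N$th products determined on all of $\Mfw$ by bilinearity together with the commutator formula \cref{com Am Bn} applied to these generating fields, since every state of $\Mfw$ is obtained from states of $\Mf$ by the widening-gap limits of \cref{sec: wtMh}. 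The content of the lemma is therefore the existence, i.e. the \emph{closure} of the OPE: one must check that the OPE of the fields of any two states of $\Mfw$ again has coefficients that are fields of states of $\Mfw$.

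As in \cref{lem: Mw} this is a computation with the Wick formula (\cref{Wicklem}), now extended by the extra elementary OPEs of $\SS^a_{b,n}(z)$ and $\Cocent(z)$ recorded in \cref{sec: Mfw}. I would classify the single pairings into three kinds: (i) $\bet$--$\gam$ pairings within the two $\Mw$-factors, which produce a pure pole and are exactly the pairings analysed in \cref{lem: Mw}; (ii) pairings among the $\SS$ and $\Cocent$ fields, which stay inside the genuine vertex algebra $\VV_0^{\Lglog\rtimes\CC\Cocent,0}$; and (iii) the new ``cross'' pairings of an $\SS$ or $\Cocent$ field with a $\bet$ or $\gam$ field. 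The crucial point is that a state of $\Mfw$ carries only \emph{finitely many} $\SS$- and $\Cocent$-modes (its image in the $\VV_0$ factor is a finite polynomial), so only finitely many pairings can be of kind (ii) or (iii); all the infinite summations come from kind (i).

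For the cross pairings (iii) I would observe that, by the OPEs in \cref{sec: Mfw}, contracting the fixed field $\SS^a_{b,n}(z)$ into a factor $\gam^{c,m}(w)$ replaces it by $\delta^c_b\,\gam^{a,m-n}(w)$ and into a factor $\bet_{c,m}(w)$ replaces it by $-\delta^a_c\,\bet_{b,m+n}(w)$, while $\Cocent(z)$ reproduces the same field multiplied by its second index; in each case the effect on a coefficient collection is to shift the second index by the fixed bounded amount $\pm n$ (and to relabel the first), or else to leave the set of variables unchanged. Since the shift $\pm n$ is exactly the bounded index-shift appearing in \cref{lem: stabdw}, and since the defining condition \cref{Mwcond} of widening gap is required to hold \emph{for every} $K$, applying finitely many such shifts to a widening-gap collection yields again a widening-gap collection. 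Hence the tree-shaped chains of contractions still produce widening-gap coefficients (by \cref{cor: wg} and \cref{cor: wgtree}, now composed with finitely many index shifts), and any cycle of $\bet$--$\gam$ contractions still collapses to a scalar with only finitely many nonzero terms (\cref{cor: wgcycle}); the finitely many $\SS/\Cocent$ contractions cannot manufacture a new divergent cycle, since a pairing of $\SS$ with $\bet$ or $\gam$ returns a \emph{field}, not a scalar, and so never closes a loop on its own.

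The main obstacle I expect is precisely the bookkeeping in kind (iii): one must be sure that inserting the bounded index-shifts of the $\SS$ and $\Cocent$ contractions into the acyclic/cyclic analysis of \cref{lem: Mw} neither breaks the widening-gap bounds (this is where the ``for every $K$'' formulation, rather than a fixed gap, is essential) nor introduces cross-contraction cycles that escape \cref{cor: wgcycle}. Once closure is established, the remaining vertex-algebra axioms (translation covariance, locality, and the identities such as \cref{com Am Bn}) transfer from $\Mf$ to $\Mfw$ by the same density-and-continuity argument as for $\Mw\supset\Mh$: they hold on the subalgebra $\Mf$ generated by the elementary fields and extend to the widening-gap completion because all the relevant $N$th products are continuous for the topology of \cref{sec: wtMh}.
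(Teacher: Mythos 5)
Your proposal is correct and takes essentially the same route as the paper: closure reduces to \cref{lem: Mw} for the $\bet$--$\gam$ contractions, and the finitely many remaining contractions involving $\SS^a_{b,n}$ or $\Cocent$ act by bounded index shifts, which preserve the widening-gap condition by exactly the reasoning of \cref{lem: stabdw} (where the ``for every $K$'' formulation is indeed the essential point). The paper's proof is a condensed version of this same argument, with your pairing classification and cycle bookkeeping left implicit.
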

\begin{proof}
Given \cref{lem: Mw}, the remaining thing to check is that the OPE of $\SS^a_{b,n}(z)$ with the field of any state in $\Mw$ has coefficients which are again the fields of well-defined states in $\Mw$. This is true by the same reasoning as in \cref{lem: stabdw}. 
\end{proof}

We have the depth gradation on $\Mfw$, in which $\vac$ has grade $0$ and $\gam^{a,n}[N]$, $\bet_{a,n}[N]$, $\SS^a_{b,n}[N]$ and $\Cocent[N]$ contribute grade $-N$. 

We continue to identify
\be \Mfw[0] = \Mh[0] \cong \Og \nn\ee
by identifying $P(X) \in \Og$ with the state $P(\gam[0])\vac$ in $\Mfw[0]$, and to identify $\Omega_\Og$ with the subspace of $\Mfw[1]$ consisting of states of the form 
\be \sum_{a,n\in \Ag} P_{a,n}(\gam[0]) \gam^{a,n}[-1]\vac \in \Mh[1] \subset \Mfw[1],\nn\ee
by identifying this state with $\sum_{a,n\in \Ag} P_{a,n}(X) \del X^{a,n} \in \Omega_\Og$. Note that $\del: \Og \to \Omega_\Og$ then corresponds to the vertex algebra translation operator $T$. 

We also have the obvious injective linear map 
\be \jota: \Dg \into \Mfw[1] \label{jotadef}\ee
which maps the element
\be \sum_{a,n\in \Ag} P^{a,n}(X) D_{a,n} + \sum_{\substack{a,b\in \I,n\in \ZZ}} p^{b,n}_{a} \SS^a_{b,n} \in \Dg\nn\ee
to the state
\be \sum_{a,n\in \Ag} P^{a,n}(\gam[0]) \bet_{a,n}[-1]\vac + \sum_{\substack{a,b\in \I,n\in \ZZ}} p^{b,n}_{a} \SS^a_{b,n}[-1]\vac \in \Mfw[1]\nn\ee
(The first sum is possibly infinite but must obey the condition \cref{Dwcond}, cf. \cref{Mwcond}. By definition of $\Lglog$ the second sum must have only finitely many nonzero summands.)

In this way,  
\begin{align} \Mfw[0] &\cong \Og \nn\\
              \Mfw[1] &\cong  \Omega_\Og \oplus \jota(\Dg).\label{vtdecomp2}\end{align}
Note that while we identify $\Og$ and $\Omega_\Og$ with their images in $\Mfw\df 1$, from now on we keep the explicit notation $\jota$ for the embedding of $\Dg$. 

\subsection{Local Lie algebras}\label{sec: loc lie}
Given any vertex algebra $\va$ one has the Lie algebra $\L(\va)$ of formal modes of states in $\va$.  (See \cite[\S4]{FrenkelBenZvi}.) Namely, 
\be \L(\va) := \bigl.\va \ox \CC((t)) \bigr/ \im(T\ox 1 + \id \ox \del_t). \label{LVdef}\ee 
It is generated by formal modes $A\fm M$, $A\in \va$, $M\in \ZZ$, modulo the relations $(TA)\fm M = -M A\fm{M-1}$.\footnote{Strictly speaking, it is spanned by linear combinations of the form $\sum_{N} f_N A\fm N$, $f_N\in \CC$, $A\in \va$, of these formal modes.} The Lie bracket is given by the same commutator formula \cref{com Am Bn} obeyed by the modes $A\vap M$ (living in $\End(\va)$), i.e. 
\be\bigl[ A\fm M ,B\fm N  \bigr] = \sum_{K \geq 0} \binom M K \big( A\vap K B \big)\fm{M+N-K}.\label{comf}\ee

Since this formula involves only the non-negative products (i.e., only the singular terms of the OPE), we get a Lie subalgebra $\L(\vla)$ associated to any subspace $\vla \subset \va$ closed under translation $T$ and all the non-negative products. (Such a subspace $\vla$ is called a \emph{vertex Lie subalgebra} of $\va$ \cite{Primc,DLM}.) 
In fact, we don't need to insist on closure under translation: let $\vla$ be any subspace closed under the non-negative products; then $\sum_{n=0}^\8 T^n \vla\subset \va$ is also closed under all the non-negative products\footnote{Indeed, we have $(TA)\vap n B = -n A\vap{n-1} B$ (note the right-hand side is zero when $n=0$) and $A\vap n TB = T(A\vap n B) - (TA)\vap n B$.}, and under translation.

Thus, given any vector subspace $\vla\subset \va$, let
\be \L(\vla) := \bigl.\left(\sum_{n=0}^\8 T^n \vla\right)\ox \CC((t)) \bigr/ \im(T\ox 1 + \id \ox \del_t);\nn\ee
then \cref{comf} defines the structure of a Lie algebra on $\L(\vla)$ whenever $\vla$ closes under all the non-negative products.

Such Lie algebras $\L(\vla)$ are called \emph{local} Lie algebras. 

For any such Lie algebra $\L(\vla)$ we have (as one sees from the commutator formula) subalgebras $\L_{\geq 0}(\vla)$ and $\L_0(\vla)$ consisting of, respectively, the non-negative and zero modes of states in $\vla$. 

\subsection{The extension of $L\Dg$ by $\L(\Ogp)$}
In our case we have the Lie algebra $\L(\Mfw)$ associated to the vertex algebra $\Mfw$. The following is clear on inspection. 
\begin{lem}\label{Midlem} $ $
\begin{enumerate}[(i)]
\item The subspace $\Mfw\df1$ is closed under all the non-negative products. 
\item The subspace $\Ogp$ is closed under all the non-negative products and moreover these non-negative products all vanish. 
\item $\Ogp\subset \Mfw\df 1$ is an ideal, in the sense that $A\vap M B \in \Ogp$ for all $A\in \Ogp$, $B\in \Mfw\df 1$ and $M\geq 0$.
\end{enumerate}
\qed\end{lem}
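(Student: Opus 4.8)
The plan is to obtain all three parts from the depth gradation of \cref{sec: vas}---under which $A\vap N B\in\Mfw[R+S-N-1]$ whenever $A$ has depth $R$ and $B$ has depth $S$---together with one structural observation: every state of $\Ogp$ has a field built solely from the $\gam^{a,n}(x)$ and their derivatives. Indeed, under the identifications of \cref{sec: Mfw} a state of $\Og$ is $P(\gam[0])\vac$, with field $\nord{P(\gam(x))}$, and a state of $\Omega_\Og$ is $\sum_{a,n}P_{a,n}(\gam[0])\gam^{a,n}[-1]\vac$, with field $\sum_{a,n}\nord{P_{a,n}(\gam(x))\,\del_x\gam^{a,n}(x)}$; no $\bet$, $\SS$ or $\Cocent$ occurs.

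Part (i) is then pure bookkeeping: if $A,B\in\Mfw\df1$ have depths $R,S\leq 1$, then for $N\geq 0$ the product $A\vap N B$ sits in depth $R+S-N-1\leq 1$, and a negative value forces it to vanish, so in either case $A\vap N B\in\Mfw\df1$. For part (ii), the fields of states of $\Ogp$ involve only the $\gam^{a,n}(x)$, whose pairwise OPEs are non-singular (no pairing between two $\gam$'s occurs in the Wick formula, \cref{Wicklem}); hence the OPE of any two such fields has vanishing polar part, and $A\vap N B=0$ for every $N\geq 0$.

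For part (iii), the estimate of part (i) already places $A\vap N B$ in $\Mfw\df1$ with depth $\leq 1$, so by \cref{vtdecomp2} it suffices to show that its depth-$1$ component carries no $\jota(\Dg)$-part, i.e.\ lies in $\Omega_\Og$ (the depth-$0$ component lies in $\Og$ automatically). Expand $B$ into monomials and recall that $A$'s field is a polynomial in the $\gam$'s. A non-empty contraction---which is what a nonzero \emph{non-negative} (polar) product requires---must pair a $\gam$ of $A$ against a factor of $B$ having non-trivial OPE with $\gam$, namely a $\bet$, an $\SS$ or a $\Cocent$ (see the OPEs in \cref{sec: Mfw}). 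Since $B$ has depth $\leq 1$, each of its monomials carries at most one such generator, which is therefore consumed entirely by this single contraction; and by those OPEs a $\bet$-contraction returns a scalar while an $\SS$- or $\Cocent$-contraction returns a $\gam$. Every surviving factor of $A\vap N B$ is thus a $\gam$, so the product is a state built only from $\gam$'s, of depth $\leq 1$, and hence lies in $\Ogp$.

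The only genuine content is part (iii), and the crux is precisely the last observation: a state of $\Mfw\df1$ carries at most one of the generators $\bet$, $\SS$, $\Cocent$, and each of them, on contracting with a $\gam$, gives back either a scalar or another $\gam$, so no generator outside the $\gam$'s can survive. Parts (i) and (ii) are immediate from the gradation and the triviality of the OPE among the $\gam$'s.
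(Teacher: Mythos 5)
Your proof is correct, and it is precisely the direct inspection that the paper leaves implicit (the lemma is stated there with no written proof beyond ``clear on inspection''): depth bookkeeping for (i), the triviality of $\gam$--$\gam$ OPEs for (ii), and for (iii) the observation that a depth-$\leq 1$ state carries at most one $\bet$, $\SS$ or $\Cocent$, whose single contraction against a $\gam$ returns a scalar or a $\gam$. No gaps; the single-contraction argument in (iii) is exactly the right point to make explicit.
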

As a result, 
we have the short exact sequence of Lie algebras
\be 0 \to \L(\Ogp) \to \L(\Mfw\df1) \to \L(\Mfw\df1)\big/\L(\Ogp) \to 0 \nn\ee
where $\L(\Ogp)$ is commutative. 

\begin{rem}
We can describe the commutative Lie algebra $\L(\Ogp)$ more explicitly: since $\del=T: \Og \to \Omega_\Og$ is injective and has kernel $\CC\vac$, we have that
\begin{align} 
\L(\Ogp) &\cong \L(\Omega_\Og) \oplus \CC\bm 1
\cong L\Omega_\Og \oplus \CC\bm 1
\nn\end{align}
where $L\Omega_\Og := \Omega_\Og \ox \CC((t))$ is the loop algebra of $\Omega_\Og$, the latter regarded as a commutative Lie algebra, and where
$\bm 1 := \vac\fm{-1} $
is the only nonzero mode of the state $\vac$. 
\end{rem}

Now let $L\Dg := \Dg \ox \CC((t))$ 
denote the loop algebra of $\Dg$.

\begin{lem}\label{LDextlem} 
There is an isomorphism
\be L\Dg \cong \L(\Mfw\df1)\big/ \L(\Ogp)  \nn\ee 
of Lie algebras, so we have the exact sequence of Lie algebras
\be 0 \to \L(\Ogp)  \to \L(\Mfw\df1) \to L\Dg \to 0.\label{LDext}\ee 
\end{lem}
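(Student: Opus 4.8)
The plan is to establish the isomorphism directly at the level of modes, using the decomposition $\Mfw\df1\cong\Ogp\oplus\jota(\Dg)$ (from \cref{vtdecomp2}) together with \cref{Midlem}. Two structural inputs drive everything. First, by \cref{Midlem}\,(ii)--(iii) the subspace $\Ogp$ is closed under the non-negative products (which all vanish on it) and is an ideal of $\Mfw\df1$; by the commutator formula \cref{comf} this makes the image of $\L(\Ogp)$ an abelian ideal of $\L(\Mfw\df1)$ and shows the inclusion $\Ogp\subset\Mfw\df1$ induces a well-defined Lie-algebra map $\L(\Ogp)\to\L(\Mfw\df1)$. Second, the translation operator $T$ raises depth by exactly one; since $\jota(\Dg)\subset\Mfw[1]$, we have $T\jota(\Dg)\subset\Mfw[2]$, which lies outside the truncation $\Mfw\df1$.

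Next I would determine $\L(\Mfw\df1)$ as a graded vector space. Because $T$ raises depth, the relations $(Tv)\fm M=-Mv\fm{M-1}$ let one reduce every mode of a state in $\sum_{n\ge0}T^n(\Mfw\df1)$ to a combination of modes $v\fm M$ with $v\in\Mfw\df1$; hence $\L(\Mfw\df1)$ is spanned by $\{P\fm M,\,\omega\fm M,\,\jota(a)\fm M\}$ for $P\in\Og$, $\omega\in\Omega_\Og$, $a\in\Dg$, $M\in\ZZ$. The only relations internal to $\Mfw\df1$ arise from $T(\Og)=\del\Og\subset\Omega_\Og$, namely $(\del P)\fm M=-MP\fm{M-1}$; these involve only $\Ogp$-modes (and, reduced, yield $\L(\Ogp)\cong L\Omega_\Og\oplus\CC\bm1$ as in the remark preceding this lemma), while no relation constrains the modes $\jota(a)\fm M$, since $T\jota(a)$ has left $\Mfw\df1$. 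It follows that $\L(\Ogp)\to\L(\Mfw\df1)$ is injective with image the span of the $\Ogp$-modes, and that the classes $\{\jota(a)\fm M+\L(\Ogp)\}_{a\in\Dg,\,M\in\ZZ}$ form a basis of the quotient; in particular $\jota(a)\fm M\mapsto a\ox t^M$ is a linear isomorphism $\L(\Mfw\df1)/\L(\Ogp)\isom L\Dg$.

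I would then verify this linear isomorphism respects brackets. By \cref{comf} and the depth grading, $\jota(a)\vap K\jota(b)\in\Mfw[1-K]$, so this product vanishes for $K\ge2$, lies in $\Og\subset\Ogp$ for $K=1$, and lies in $\Mfw[1]\cong\Omega_\Og\oplus\jota(\Dg)$ for $K=0$. Reducing modulo $\L(\Ogp)$ annihilates the $K\ge1$ terms and the $\Omega_\Og$-part of the $K=0$ term, leaving $[\jota(a)\fm M,\jota(b)\fm N]\equiv(\jota(a)\vap0\jota(b))\fm{M+N}\equiv\jota([a,b])\fm{M+N}\pmod{\L(\Ogp)}$, which matches the loop bracket $[a\ox t^M,b\ox t^N]=[a,b]\ox t^{M+N}$. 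Injectivity of $\L(\Ogp)\hookrightarrow\L(\Mfw\df1)$ and surjectivity of the quotient map then give the exact sequence \cref{LDext}.

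The crux I expect is the identity $\jota(a)\vap0\jota(b)\equiv\jota([a,b])\pmod{\Omega_\Og}$, i.e. that the $\jota(\Dg)$-component of the $0$th product reproduces the bracket of $\Dg$; proving it requires the explicit construction of $\Mfw$ and the embedding $\jota$ (\cref{sec: Mfw}, \cref{jotadef}) and the checks that the infinite, widening-gap sums defining $\jota(\Dwg)$ contract to well-defined states (\cref{lem: Mw,lem: stabdw}). A second, conceptual subtlety --- already signalled by the remark before the lemma --- is that the quotient is the \emph{full} loop algebra $L\Dg$ and not merely $\Dg$: this is not formal functoriality, since $T$ does not descend to $\Mfw\df1/\Ogp$, but rather reflects that the modes $\jota(a)\fm M$ remain unconstrained for \emph{all} $M\in\ZZ$ precisely because $T$ carries $\jota(\Dg)$ out of the truncation $\Mfw\df1$.
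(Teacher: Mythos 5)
Your proposal is correct and takes essentially the same route as the paper: the paper's proof likewise records the linear decomposition $\L(\Mfw\df 1) \cong_\CC \L(\Ogp)\oplus\L(\jota(\Dg))$ with $\L(\jota(\Dg))\cong_\CC L\Dg$ (your mode-level analysis of why the quotient is the \emph{full} loop algebra just makes this "checking the definitions" step explicit) and then reduces bracket compatibility, via the depth grading, to exactly the identity you single out as the crux. That crux identity, $\jota(X)\vap 0 \jota(Y)\equiv\jota([X,Y]) \mod \Omega_\Og$, is precisely the paper's \cref{jotalem}, proven immediately afterwards by the explicit Wick/OPE computation \cref{ABope}.
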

\begin{proof}
Checking the definitions, one first sees that at the level of vector spaces we have
\be \L(\Mfw\df 1) \cong_\CC \L(\Ogp) \oplus \L(\jota(\D(g))) \nn\ee
and 
\be \L(\jota(\D(g))) \cong_\CC L\Dg.\nn\ee
The fact that $L\Dg \cong \L(\Mfw\df1)\big/\L(\Ogp)$ is also an isomorphism of Lie algebras is a consequence of the following observation.
\end{proof}
\begin{lem}
\label{jotalem}
For any $X,Y\in \Dg$, we have
\be \jota(X) \vap 0 \jota(Y) \equiv \jota([X,Y]) \mod \Omega_\Og ,\qquad 
    \jota(X) \vap 0 \jota(Y) \equiv 0 \mod \Og. \nn\ee
\end{lem}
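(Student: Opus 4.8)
The plan is to compute $\jota(X)\vap0\jota(Y)$ directly, using the Wick formula (\cref{Wicklem}) for the $\bet\gam$-fields together with the generating OPEs of $\SS^a_{b,n}(z)$ and $\Cocent(z)$ recorded in \cref{sec: Mfw}. First I would split each argument along the semidirect decomposition $\Dg = \Dwg \rtimes (\Lglog \rtimes \CC\Cocent)$, writing $X = v_X + s_X$ and $Y = v_Y + s_Y$ with $v_X, v_Y \in \Dwg$ and $s_X, s_Y \in \Lglog \rtimes \CC\Cocent$. Then $\jota(v_X) = \sum_{a,n} P^{a,n}(\gam[0])\bet_{a,n}[-1]\vac$ is a (possibly infinite, widening-gap) state of $\bet\gam$-type, while $\jota(s_X)$ is a finite combination of the linear states $\SS^a_{b,n}[-1]\vac$ and $\Cocent[-1]\vac$. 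By \cref{lem: Mw} each of the four products obtained by bilinearity lies in $\Mw \subset \Mfw$, and the goal is to show that their sum equals $\jota([X,Y])$ up to a term in $\Omega_\Og$.

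The second congruence I would dispose of first, since it is immediate: as $\jota(X),\jota(Y) \in \Mfw[1]$, the depth grading of the vertex algebra $\Mfw$ (see \cref{sec: vas}) forces $\jota(X)\vap0\jota(Y) \in \Mfw[1]$, so under the identification \cref{vtdecomp2} its depth-$0$ component, i.e. its $\Og$-component, vanishes.

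For the first congruence I would treat the four pieces in turn. The $\SS$--$\SS$ products (and those involving $\Cocent$) reproduce $\jota([s_X,s_Y])$ exactly and have no higher-order poles, because the relevant currents have only simple poles in their mutual OPEs---the level being $0$---so in particular $\jota(s_X)\vap1\jota(s_Y)=0$. The mixed products are governed by the OPEs of $\SS^a_{b,n}(z)$ with $\bet_{c,m}(w)$ and $\gam^{c,m}(w)$; these say precisely that $\SS^a_{b,n}(z)$ acts on the $\bet\gam$-fields as the derivation $\iota(\SS^a_{b,n})$ of \cref{lem: stabdw}. Since that action preserves the $\bet\gam$-form (it sends $\gam[0]\mapsto\gam[0]$ and $\bet[-1]\mapsto\bet[-1]$, never producing a $\gam[-1]$) and again involves only simple poles, the products $\jota(s_X)\vap0\jota(v_Y)$ and $\jota(v_X)\vap0\jota(s_Y)$ equal the two mixed pieces of the $\Dg$-bracket, $\jota([s_X,v_Y])$ and $\jota([v_X,s_Y])$, with no $\Omega_\Og$ correction (the opposite ordering being handled by skew-symmetry, whose $T$-corrections vanish as the currents' higher products with $\bet\gam$-states are zero). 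The only correction arises in the $\bet\gam$--$\bet\gam$ product $\jota(v_X)\vap0\jota(v_Y)$: by the Wick formula the single pairings, a $\bet$ of one factor against a $\gam$ of the other, contribute exactly the vector-field bracket of \cref{lem: dercdw} and hence the $\jota(\Dwg)$-component $\jota([v_X,v_Y])$; the double pairing, which removes both $\bet$'s, yields a $(u-v)^{-2}$ pole whose simple-pole residue, obtained by one step of Taylor expansion, is a state of the form $\sum_{a,n} R_{a,n}(\gam[0])\gam^{a,n}[-1]\vac$---that is, an element of $\Omega_\Og$. Adding the four contributions gives $\jota(X)\vap0\jota(Y) = \jota([X,Y]) + \omega$ with $\omega \in \Omega_\Og$, which is the first congruence.

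The hard part is not the algebra of brackets but the bookkeeping of the infinite widening-gap sums in the $\bet\gam$--$\bet\gam$ computation: one must check that every coefficient polynomial produced---both in the bracket term and in the double-pairing $\Omega_\Og$ term---is a genuine element of $\Og$ and that the resulting collections again have widening gap, so that the answer really lies in $\Mw$. This is exactly what \cref{cor: wg,cor: wgtree,cor: wgcycle} provide (the last ensuring that the cyclic double-pairing contraction collapses to a finite, hence polynomial, coefficient), applied just as in the proof of \cref{lem: Mw}. The remaining verification, that the single-pairing residue matches the bracket formula of \cref{lem: dercdw} under $X^{a,n}\leftrightarrow\gam^{a,n}[0]$ and $D_{a,n}\leftrightarrow\bet_{a,n}[-1]$, is routine index-matching.
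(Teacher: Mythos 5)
Your overall strategy is the same as the paper's proof of \cref{jotalem}: split each element of $\jota(\Dg)$ into its widening-gap $\bet\gam$ part and its finite $\SS$, $\Cocent$ part, compute the OPE by Wick, identify the simple poles with the bracket of $\Dg$ and the double contraction with the $\Omega_\Og$ defect, and use \cref{cor: wg,cor: wgtree,cor: wgcycle} (as in \cref{lem: Mw}) to keep everything inside $\Mw$. Your handling of the $\bet\gam$--$\bet\gam$ piece, the $\SS$--$\SS$ piece, and the widening-gap bookkeeping agrees with the computation recorded in \cref{ABope}.

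There is, however, a genuine error in your treatment of the mixed pieces: it is not true that the OPE of $\SS^a_{b,n}(z)$ with the field of a \emph{composite} $\bet\gam$-state has only simple poles, nor that ``the currents' higher products with $\bet\gam$-states are zero''. Simple poles against the generators $\bet,\gam$ do not imply simple poles against composites, because the contraction of $\SS(z)$ with a $\gam$-factor is field-valued (it is another $\gam$), which can then contract with the $\bet$ inside the same normally ordered monomial; this is the ``integral term'' in the non-commutative Wick formula. Concretely, writing $v$ for a widening-gap state and $s$ for a current state as in your notation, one finds
\be \SS^a_{b,n}[-1]\vac \,\vap 1\, \Bigl(\,\sum_{(c,m)\in\Ag} Q^{c,m}(\gam[0])\,\bet_{c,m}[-1]\vac\Bigr) \;=\; -\sum_{p} \frac{\del Q^{a,p-n}}{\del \gam^{b,p}[0]}(\gam[0])\,\vac, \nn\ee
a generically nonzero element of $\Og$ (take $Q^{a,p-n}=\gam^{b,p}[0]$ for a single $p$; the sum over $p$ is finite by widening gap). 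One of the two orderings, $\jota(s_X)\vap 0 \jota(v_Y)$, is indeed exactly $\jota([s_X,v_Y])$, but for the other ordering skew-symmetry gives $\jota(v_X)\vap 0 \jota(s_Y) = \jota([v_X,s_Y]) + T\bigl(\jota(s_Y)\vap 1 \jota(v_X)\bigr)$, and the $T$-term is a generically nonzero element of $\Omega_\Og$; so your claim of ``no $\Omega_\Og$ correction'' from the mixed products is false. The lemma itself is unharmed, since these extra contributions land precisely in $\Omega_\Og$ (for the $0$th product) and in $\Og$ (for the $1$st), i.e.\ in the subspaces the congruences quotient out; your proof is repaired by weakening the mixed-product claim from an equality to a congruence mod $\Omega_\Og$. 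Be aware that the displayed OPE \cref{ABope} in the paper omits these same mixed double contractions, listing only the $\bet\gam$--$\bet\gam$ one; that omission is immaterial for \cref{jotalem}, but it does affect the explicit formula \cref{omegadef} for the cocycle $\omega$ that is read off from it.
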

\begin{proof}
Let us compute the OPE of two states in $\jota(\Dg)$. 
Throughout this proof we use summation convention over repeated pairs of indices not only in $\I$ but also in $\ZZ$. Let
\be A = P^{a,n}(\gam[0]) \bet_{a,n}[-1]\vac + p^{b,n}_a \SS^a_{b,n}[-1] \vac ,\nn\ee
\be B = Q^{a,n}(\gam[0]) \bet_{a,n}[-1]\vac + q^{b,n}_a \SS^a_{b,n}[-1] \vac .\nn\ee
Then we find
\begin{align}
Y(A,z) Y(B,w) &= \nord{Y(A,z)Y(B,z)} \label{ABope} \\
& + \frac{1}{z-w} 
\nord{Y(P^{a,n}(\gam) \vac,z) 
      Y\left(\frac{\del Q^{b,m}}{\del \gam^{a,n}[0]} \bet_{b,m}[-1]\vac,w\right)}\nn\\
& + \frac{1}{z-w} 
p^{b,n}_a \nord{ 
      Y\left(\frac{\del Q^{c,m}}{\del \gam^{b,p}[0]} \gam^{a,p-n}[0]\bet_{c,m}[-1]\vac,w\right)}\nn\\
& + \frac{1}{z-w} 
p^{b,n}_a \nord{ 
      Y\left(Q^{a,m}(\gam) \bet_{b,m+n}[-1]\vac,w\right)}\nn\\
& - \frac{1}{z-w} 
\nord{Y\left(\frac{\del P^{a,n}}{\del \gam^{b,m}[0]} \bet_{a,n}[-1]\vac,z\right) 
      Y\left(Q^{b,m}(\gam) \vac,w\right)} \nn\\
& - \frac{1}{z-w} 
q^{b,n}_a \nord{ 
      Y\left(\frac{\del P^{c,m}}{\del \gam^{b,p}[0]} \gam^{a,p-n}[0]\bet_{c,m}[-1]\vac,z\right)}\nn\\
& - \frac{1}{z-w} 
q^{b,n}_a \nord{ 
      Y\left(P^{a,m}(\gam) \bet_{b,m+n}[-1]\vac,z\right)}\nn\\
& + \frac1{z-w} p^{b,n}_a q^{d,m}_b \left(\delta^c_b \SS^a_{d,n+m}(w)- \delta^a_d \SS^c_{b,n+m}(w)\right)\nn\\
& -  \frac {1}{(z-w)^{2}}  
 \nord{ 
      Y\left(\frac{\del P^{a,n}}{\del \gam^{b,m}[0]} \vac,z\right)
      Y\left(\frac{\del Q^{b,m}}{\del \gam^{a,n}[0]} \vac,w\right)
}. \nn
\end{align}
All but the final line are modes of states in $\jota(\Dg)$ and involve only single contractions, and we recognise the terms as correctly reproducing the Lie bracket in $\Dg$. 
\end{proof}

The final line in \cref{ABope} involves a double contraction, and defines the extension of $L\Dg$ by $\L(\Ogp)$ in \cref{LDextlem}. Namely, we see that
\be [A\fm K, B\fm L ] = [A,B]\fm{K+L} + \omega(A\fm K,B\fm L),\nn\ee
with
\begin{subequations}\label{omegadef}
\begin{multline} \omega(A\fm K,B\fm L) \\:=  -K  \left(\frac{\del P^{a,n}}{\del \gam^{b,m}[0]} 
                 \frac{\del Q^{b,m}}{\del \gam^{a,n}[0]} \vac\right)\fm{K+L-1}
 - \left(\left[T,\frac{\del P^{a,n}}{\del \gam^{b,m}[0]}\right] 
                 \frac{\del Q^{b,m}}{\del \gam^{a,n}[0]} \vac\right)\fm{K+L}
\end{multline}
where we continue to employ summation convention over repeated indices.
Let us stress that these implicit sums on $b,m$ and $a,n$ have only finitely many nonzero terms, by virtue of our definition of $\Mfw$; cf. \cref{Mwcond}. 

The formula above defines $\omega$ on the subalgebra $L\!\left(\Lglog\right)\subset L\Dg$. When one of the arguments to $\omega$ is a mode $\Cocent\fm K$ of $\Cocent$, there are obviously no double contractions, so 
\be \omega(\Cocent\fm K,\cdot) = 0.\ee
\end{subequations}

This map $\omega$ defines a cocycle 
\be [\omega] \in \H^2( L\Dg, \L(\Ogp)) \nn\ee 
in the usual Chevalley-Eilenberg cohomology of $L\Dg$ with coefficients in $\L(\Ogp)$. 
(It is easy to see that this cocycle is non-trivial, as in the case of $\g$ of finite type in \cite[\S5.5.3]{Fre07}, so the sequence in \cref{LDextlem} does not split.)

\subsection{Main result}\label{sec: mr} 
The homomorphism $\vfd: \g \to \Dg$ gives rise to a homomorphism of the corresponding loop algebras
\be \wt\vfd : L\g \to L\Dg,\label{vfdhatdef}\ee
given by
\be{}\qquad\qquad\qquad \wt\vfd( A[N] ) :=  \vfd(A)[N], \qquad A\in \g, N\in \ZZ .\nn\ee
Pulling back the cocycle $\omega \in \H^2( L\Dg, \L(\Ogp))$ from \cref{omegadef} by this homomorphism $\wt\vfd$, we obtain a cocycle
\be \wt\vfd^*(\omega) \in  \H^2( L\g, \L(\Ogp)) \label{pbo}\ee
of $L\g$ with coefficients in $\L(\Ogp)$. 
It defines an extension $\wt{L\g}$ of $L\g$ by $\L(\Ogp)$,
\be 0 \to \L(\Ogp) \to \wt{L\g} \to L\g \to 0.\nn\ee

We can now state the main result of this paper. 
\begin{thm}\label{mainthm}
The cocycle $\wt\vfd^*(\omega)$ is trivial, so this sequence splits. 

Specifically, there is a linear map
\be \vphi:\g \to \Omega_\Og\nn\ee
of $Q$-grade 0 and obeying
\be \vphi(\h) = 0,\quad \vphi(\nmp) \subset \Omega_\Onpm \quad\text{and}\quad \vphi \circ \sigma = \tau \circ \vphi,\nn\ee
such that the map
$\jota\circ\vfd + \vphi : \g \to \jota(\D) \oplus \Omega_\Og \cong \Mfw[1]$
gives rise to a homomorphism of Lie algebras,
\begin{align} L\g &\to \L(\Mfw\df 1);\nn\\
{}\qquad\qquad\qquad A\fm N &\mapsto \bigl((\jota \circ\vfd + \vphi)(A)\bigr)\fm N, \qquad A\in \g,N\in \ZZ. \nn
\end{align}
\end{thm}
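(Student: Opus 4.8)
The plan is to recast the assertion as the vanishing, after pullback, of the obstruction to splitting the extension \cref{LDext}, and to construct the splitting by the cohomological method of Feigin and Frenkel. Since $\vphi$ takes values in $\Omega_\Og$, which by \cref{Midlem} is an abelian ideal on which all non-negative products vanish, adding $\vphi$ to $\jota\circ\vfd$ alters the brackets of the loop modes only through the Chevalley--Eilenberg coboundary $d_{\mathrm{CE}}$ of the loop-extension of $\vphi$; meanwhile \cref{jotalem} guarantees that the single contractions of $\jota\circ\vfd$ already reproduce the bracket in $\Dg$. Thus the existence of the required homomorphism into $\L(\Mfw\df 1)$ (which is well-defined by \cref{Midlem}(i)) is equivalent to writing $\wt\vfd^*(\omega) = d_{\mathrm{CE}}\psi$ for a $1$-cochain $\psi$ of $L\g$ valued in $\L(\Ogp)$ of the prescribed local form, i.e.\ to showing the class \cref{pbo} is trivial. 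Because $\vfd$ respects the $Q$-gradation (\cref{vfdef}) and the depth gradation, while $\omega$ is of $Q$-grade $0$ and depth $0$, the cocycle $\wt\vfd^*(\omega)$ is homogeneous, and it suffices to solve the coboundary equation within each bigraded component.

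First I would set up the local Chevalley--Eilenberg complex. Following \cite[\S5]{Fre07}, introduce the $\bee\cee$-ghost system, the Clifford vertex algebra generated by fermionic fields dual to a topological basis of the double loop algebra $\LLog$ (recall $\vfd(\cent)=0$, so $\wt\vfd$ factors through $\LLog$). The differential $d_{\mathrm{CE}}$ is realized as the zero mode of a BRST current, and the cochains relevant to us -- those expressible as local functionals of the $\bet\gam$ and $\bee\cee$ fields -- form a subcomplex, the \emph{local complex}. The passage to the completions $\Dwg$, $\Mw$ and $\Mfw$, governed throughout by the widening-gap condition, is exactly what makes the infinite sums occurring in these local cochains, and in the coboundary equation, convergent: the closure statements of \cref{cor: wg}, \cref{cor: wgtree} and \cref{cor: wgcycle} will be invoked repeatedly to keep every expression inside the local complex.

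The heart of the argument -- and the step I expect to be the main obstacle -- is the computation that $[\wt\vfd^*(\omega)]$ vanishes in the second cohomology of the local complex; as in finite type, this is the incarnation of the statement that the level is critical, here equal to $0$. I would treat the two summands of $\L(\Ogp) \cong L\Omega_\Og \oplus \CC\bm 1$ separately. The $\CC\bm 1$-component of $\wt\vfd^*(\omega)$ can never be cancelled by a correction valued in $\Omega_\Og$, so it must be shown to be cohomologically trivial on its own; this is the level-zero statement, and it follows from a direct residue computation of the double-contraction term in \cref{omegadef}, forced by $\vfd(\cent)=0$ and $\bilin\cent\cocent=1$. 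For the $L\Omega_\Og$-component one shows, working grade by grade and exploiting the bounded-depth and widening-gap conditions cutting out the local complex, that the relevant local $H^2$ is exhausted by coboundaries, so that a local primitive $\psi$ exists; the computation localizes onto the root strings and reduces, as in \cite[\S5]{Fre07}, to the finite-type Feigin--Frenkel vanishing at the critical level.

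Finally I would extract $\vphi$ and verify its symmetries. Restricting $\psi$ to the zero loop modes recovers a map $\vphi:\g \to \Omega_\Og$ of $Q$-grade $0$, and then $\vphi(\h)=0$ is immediate on weight grounds, since there is no grade-$0$ one-form of the requisite weight. The equivariance $\vphi\circ\sigma = \tau\circ\vphi$ and the inclusions $\vphi(\nmp)\subset \Omega_\Onpm$ follow from the equivariance \cref{vfdequi} of $\vfd$ together with \cref{stabprop}: averaging any primitive over the involution interchanging $\sigma$ and $\tau$ produces an equivariant one, whereupon the weight decomposition forces the stated stabilization. This produces the map $\vphi$ and the homomorphism $A\fm N \mapsto \bigl((\jota\circ\vfd+\vphi)(A)\bigr)\fm N$, as required.
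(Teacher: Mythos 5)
There is a genuine gap, and it sits exactly at the step you yourself flag as "the heart of the argument." Your plan for proving that $[\wt\vfd^*(\omega)]$ vanishes is to split off the $\CC\bm 1$-component (handled by an unspecified "direct residue computation") and then to argue that for the $L\Omega_\Og$-component the relevant local $H^2$ "is exhausted by coboundaries," the computation "reducing to the finite-type Feigin--Frenkel vanishing at the critical level." Neither claim is substantiated, and the second cannot be right as stated: if the local $H^2$ simply vanished, the same argument would split the extension for the analogous cocycle at \emph{any} level, whereas the paper points out that no such homomorphism can exist at nonzero level (since $\theta(\cent\fm{-1}\vac)=0$ and $\bilin\cent\cocent=1$). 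There is also no reduction to finite type in the actual proof. What the paper does instead is intrinsically affine: (i) it identifies $\Mh_0(\np)$ and $\Mh_0(\nm)$ as \emph{coinduced} modules (contragredient Verma modules) over $\LpLog$, $\Mh_0(\np)\cong \Homres_{U(L_+\b_-)}(U(L_+\g),\CC v_0)$ (\cref{asgmods}); (ii) from this, via the Serre--Hochschild spectral sequence, it obtains a quasi-isomorphism $C^\bl(\LpLog,\Mh_0(\np))\to C^\bl(L_+\oc\h,\CC v_0)$ (\cref{lem: quasi-isom}); and (iii) it proves the key lemma (\cref{keylem}) that a \emph{local} cocycle with coefficients in $\L(\Mh_0(\np))$ whose restriction to $\extp^\bl L\oc\h$ vanishes is a local coboundary. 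The triviality of the class is then equivalent to checking that the restriction of $\wt\vfd^*(\omega)$ to $L\oc\h$ is zero, which holds because $\vfd(J_{i,0})=\iota(\J_{i,0})$ exactly, i.e. $R_{i,0}^{b,m}=0$. This restriction-to-Cartan check is precisely where level zero enters; your proposal misplaces that input and supplies no mechanism that could detect it.

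A second, structural omission: the cocycle $\upomega[0]$ does not take values in $\L(\Mh_0(\np))$ but only in $\L(\Mh_0(\np)+\Mh_0(\nm))$, so the coinduction/quasi-isomorphism machinery cannot be applied to it directly. The paper gets around this with the Cartan-involution doubling: it shows $\tau\Q=\Q$ and that $\upomega$ lies in the $\tau$-equivariant subcomplex with coefficients in $\L(\Mh_0(\np)+\Mh_0(\nm))$, and then uses the isomorphism of complexes $v\mapsto(1+\tau)v$ to transport the problem to $\Cloc^\bl(\LLog,\L(\Mh_0(\np)))$, where the key lemma applies; the equivariant primitive $(1+\tau)\xi$ is also what delivers the symmetry properties $\vphi\circ\cai=\tau\circ\vphi$ and $\vphi(\nmp)\subset\Omega_\Onpm$ claimed in the theorem. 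In your proposal the involution appears only at the very end, as a cosmetic averaging to fix the symmetries of an already-constructed primitive; without it the central cohomological step never gets off the ground. (A minor further point: your assertion that $\vphi(\h)=0$ "on weight grounds" is only valid once you know $\vphi$ takes values in $\Omega_\Onp\oplus\Omega_\Onm$ --- weight-zero one-forms such as $X^{\alpha,n}\del X^{-\alpha,-n}$ do exist in $\Omega_\Og$ --- so this, too, depends on the equivariant construction you have not supplied.)
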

\begin{proof}
The proof occupies \cref{sec: proof} below.
\end{proof}

To state this result in a more concrete form, we note the map $\vphi$ is given by 
\be J_{a,n} \mapsto \sum_{(b,m)\in \Ag} Q_{a,n;b,m}(X) \del X^{b,m} \nn\ee
(and $\cent \mapsto 0;\cocent\mapsto 0$) for certain polynomials $Q_{a,n;b,m}(X) \in \Og$.
We introduce also the generating series of the generators of $L\g$, 
\be \cent(z) := \sum_N\cent\fm N z^{-N-1},\quad \cocent(z) := \sum_N\cocent\fm N z^{-N-1},
\quad J_{a,n}(z) := \sum_N J_{a,n}\fm N z^{-N-1}, \nn\ee
and let $\jota(\J_{a,n})(z) = \sum_{N\in \ZZ} \jota(\J_{a,n})[N] z^{-N-1}$ with $\J_{a,n}$ as in \cref{Jdef}. 
Recall the polynomials $R_{a,n}^{b,m}(X)\in \Og$ from \cref{Rdef}.

\begin{thm}\label{homcor}
There is a homomorphism of $\ZZ\times Q$-graded Lie algebras  
\be L\g \to \L(\Mfw\df 1)\nn\ee 
given by
\begin{align} 
\cent(z) &\mapsto 0\nn\\
\cocent(z) & \mapsto \jota(\Cocent)(z)\nn\\
J_{a,n}(z) &\mapsto \jota(\J_{a,n})(z) +  \sum_{(b,m)\in \Ag} Y(R_{a,n}^{b,m}(\gam[0]) \bet_{b,m}[-1] \vac,z) \nn\\ 
          & \qquad    \qquad  \qquad   + \sum_{(b,m)\in \Ag} Y(Q_{a,n; b,m}(\gam[0]) \gam^{b,m}[-1]\vac,z) .\nn
\end{align}
Equivalently, there is a homomorphism of $\ZZ_{\geq 0}\times Q$-graded vertex algebras 
\be \theta: \VV_0^{\g,0} \to \Mfw \nn\ee
given by
\begin{align} 
\cent\fm{-1}\vac &\mapsto 0\nn\\
\cocent\fm{-1}\vac & \mapsto \jota(\Cocent)\fm{-1}\vac\nn\\
J_{a,n}\fm{-1}\vac &\mapsto \jota(\J_{a,n})\fm{-1}\vac +  \sum_{(b,m)\in \Ag} R_{a,n}^{b,m}(\gam[0]) \bet_{b,m}[-1]\vac  \nn\\ 
          & \qquad    \qquad  \qquad   + \sum_{(b,m)\in \Ag} Q_{a,n; b,m}(\gam[0]) \gam^{b,m}[-1]\vac .\nn
\end{align}
\end{thm}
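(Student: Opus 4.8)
The statement is \cref{mainthm} rewritten explicitly, together with the passage from formal modes to a vertex algebra homomorphism. First I would put the map $\jota\circ\vfd+\vphi\colon\g\to\Mfw[1]$ supplied by \cref{mainthm} into coordinates. By \cref{Rdef}, $\vfd(J_{a,n})=\iota(\J_{a,n})+\sum_{(b,m)\in\Ag}R_{a,n}^{b,m}(X)D_{b,m}$ in $\Dg$, so applying the embedding $\jota$ of \cref{jotadef} (which sends $D_{b,m}\mapsto\bet_{b,m}[-1]\vac$ and $\SS^a_{b,n}\mapsto\SS^a_{b,n}[-1]\vac$, and recall $\jota(\J_{a,n})\fm{-1}\vac=\jota(\J_{a,n})$) gives $\jota(\vfd(J_{a,n}))=\jota(\J_{a,n})+\sum_{(b,m)\in\Ag}R_{a,n}^{b,m}(\gam[0])\bet_{b,m}[-1]\vac$. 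Writing the one-form--valued correction of \cref{mainthm} as $\vphi(J_{a,n})=\sum_{(b,m)\in\Ag}Q_{a,n;b,m}(X)\del X^{b,m}$ and using the identification of $\del X^{b,m}$ with $\gam^{b,m}[-1]\vac$ from \cref{vtdecomp2}, the sum $\jota(\vfd(J_{a,n}))+\vphi(J_{a,n})$ is exactly the displayed image of $J_{a,n}\fm{-1}\vac$; passing to modes gives the displayed image of $J_{a,n}(z)$.

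For the remaining generators I would check directly that $\cent\fm{-1}\vac\mapsto0$ and $\cocent\fm{-1}\vac\mapsto\jota(\Cocent)\fm{-1}\vac$. Indeed $\vf(\cent)=0$ and $\cai(\cent)=-\cent$, so $\tau\circ\vf\circ\cai(\cent)=0$ as well, whence $\vfd(\cent)=0$; since moreover $\cent\in\h$ gives $\vphi(\cent)=0$, the image of $\cent\fm{-1}\vac$ vanishes. For $\cocent\in\h$ one again has $\vphi(\cocent)=0$, while a short computation — using that the Cartan element $\cocent$ acts on the big cell through the homogeneous grading, so that $\vf(\cocent)=\sum_{(a,n)\in\A}nX^{a,n}D_{a,n}$ and the $\cai$-twisted copy $\tau\circ\vf\circ\cai$ supplies the terms with $n<0$ — yields $\vfd(\cocent)=\sum_{(a,n)\in\Ag}nX^{a,n}D_{a,n}=\iota(\Cocent)$, i.e. $\cocent\fm{-1}\vac\mapsto\jota(\Cocent)\fm{-1}\vac$.

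It remains to upgrade the Lie algebra homomorphism $L\g\to\L(\Mfw\df1)$ of \cref{mainthm} to the vertex algebra homomorphism $\theta$, and to see that the two displayed formulas are equivalent. Because $\theta(\cent\fm{-1}\vac)=0$ while $\bilin\cent\cocent=1$, the level is forced to be $0$; at level $0$ the defining non-negative products \cref{Vkdef} read $A\fm{-1}\vac\vap0B\fm{-1}\vac=[A,B]\fm{-1}\vac$ and $A\fm{-1}\vac\vap NB\fm{-1}\vac=0$ for $N\geq1$, and by the commutator formula \cref{comf} the modes of $\VV_0^{\g,0}[1]\cong\g$ close into the loop algebra $L\g$ with no further central term. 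By the universal property of the vacuum affine vertex algebra (equivalently, the Strong Reconstruction Theorem \cite{FrenkelBenZvi}), a graded vertex algebra homomorphism $\theta\colon\VV_0^{\g,0}\to\Mfw$ fixing $\vac$ is the same datum as a weight-preserving linear map $\g\to\Mfw\df1$ whose induced modes define a graded Lie algebra homomorphism $L\g\to\L(\Mfw\df1)$; the relevant products of the images are computed by \cref{jotalem} and the OPE \cref{ABope}, and $\Mfw\df1$ is closed under the non-negative products by \cref{Midlem}(i). The map of \cref{mainthm} is precisely such a datum, so it integrates to $\theta$, which is graded since $J_{a,n}\fm{-1}\vac$ and its image both have depth $1$ and equal $Q$-weight. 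The entire substantive content is \cref{mainthm}: the sole genuine obstacle — the triviality of the pulled-back cocycle $\wt\vfd^*(\omega)$, equivalently the construction of $\vphi$ cancelling the level-one products — is settled in \cref{sec: proof}, and the present deduction is bookkeeping together with the standard reconstruction.
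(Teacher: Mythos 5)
Your proposal is correct and follows essentially the same route as the paper: the paper's proof of \cref{homcor} consists precisely of observing that the first statement is \cref{mainthm} restated in concrete terms and of citing \cite[\S6.1]{Fre07} for the equivalence between the Lie-algebra and vertex-algebra formulations, which is exactly the coordinate unpacking plus the standard vacuum-module universal-property argument that you spell out. One minor caveat: your intermediate formula $\vf(\cocent)=\sum_{(a,n)\in\A}nX^{a,n}D_{a,n}$ has the opposite sign to what \cref{hX} yields for the right action, though this does not affect the conclusion $\vfd(\cocent)=\iota(\Cocent)$ (the paper's own sign conventions relating \cref{hX}, the definition of $\iota(\Cocent)$, and the relation $[\Cocent,\SS^a_{b,n}]=n\SS^a_{b,n}$ are not entirely consistent at this point).
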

\begin{proof}
The first part is a restatement of \cref{mainthm} in more concrete terms. 
For the equivalence of the statement about vertex algebras, see \cite[\S6.1]{Fre07}.
\end{proof}

Let us give the form of the homomorphism on the Chevalley-Serre generators of $\g$. 
We write $\gam^{e_i} = \gam^{\alpha_i,0}$ for $i\in \oc I$ and $\gam^{e_0} = \gam^{-\delta+\alpha_0,1}$, cf. \cref{highestrootdef}, and so on.

\begin{prop}\label{CScor}
The homomorphism $\theta: \VV_0^{\g,0} \to \Mfw$ from \cref{homcor} is given by
\begin{align}
e_i\fm{-1}\vac &\mapsto \jota(\vfd(e_i))  
                          + c_i \gam^{f_i}\fm{-1}\vac \nn\\
h\fm{-1}\vac &\mapsto \jota(\vfd(h)) \nn\\
f_i\fm{-1}\vac & \mapsto \jota(\vfd(f_i)) + c_i \gam^{e_i}\fm{-1}\vac \nn  
\end{align}
for $i\in I$ and $h\in \h$, where 
\begin{align}
c_i &:=  -2  + \sum_{\substack{j\in I\\ j\prec i}} a_{ij}, \qquad i\in I.\nn
\end{align}
\end{prop}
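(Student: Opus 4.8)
The plan is to reduce the three lines to the determination of a single scalar $c_i$ per node, and then to fix $c_i$ by imposing that $\theta$ respect the Serre relation $[e_i,f_i]=\check\alpha_i$ at level zero.

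For the Cartan line there is nothing to do: by \cref{mainthm} we have $\vphi(\h)=0$, so $\theta(h\fm{-1}\vac)=\jota(\vfd(h))$ for $h\in\h$. For $e_i$ and $f_i$ I would unwind $\vphi=\phi+\tau\circ\phi\circ\sigma$, where $\phi:\g\to\Omega_\Onp$ is the map of \cref{affphi}. Since $\phi$ is of $Q$-grade $0$ and $\Omega_\Onp$ is concentrated in weights lying in $-Q_{>0}$, while $e_i$ has weight $\alpha_i\in Q_{>0}$, we get $\phi(e_i)=0$ for every $i\in I$ (including $e_0=J_{-\delta+\alpha_0,1}$). For $f_i$ the weight is $-\alpha_i$, and a term $P\,\del X^{b,m}$ of that weight forces $\wgt(b,m)=\alpha_i-\mu$ with $\mu\in Q_{\geq0}$; as $\alpha_i$ is a \emph{simple} root of $\g$, the only positive root of that form is $\alpha_i$ itself (so $\mu=0$), whence $P$ is constant and $(b,m)=e_i$. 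Hence $\phi(f_i)=c_i\,\del X^{e_i}$ for a single constant $c_i$. Feeding this through the equivariance and using $\tau(\del X^{e_i})=\del X^{f_i}$ yields $\vphi(f_i)=c_i\gam^{e_i}\fm{-1}\vac$ and $\vphi(e_i)=c_i\gam^{f_i}\fm{-1}\vac$ with the \emph{same} constant $c_i$. This establishes the stated shape of $\theta$, uniformly for $i\in\oc I$ and for $i=0$ (where $f_0=J_{\delta-\alpha_0,-1}$), leaving only the value of $c_i$ open.

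To pin down $c_i$ I would impose that $\theta$ respect the relation $[e_i,f_i]=\check\alpha_i$ at level zero, i.e. both $\theta(e_i)\vap0\theta(f_i)=\jota(\vfd(\check\alpha_i))$ and $\theta(e_i)\vap1\theta(f_i)=0$. The bracket part is automatic by \cref{jotalem}; the remaining content is that the normal-ordering anomaly produced by the double contractions of $\jota(\vfd(e_i))$ against $\jota(\vfd(f_i))$ must be cancelled. Expanding by the Wick formula \cref{Wicklem}, this anomaly is the $c_i$-independent cocycle $\omega$ of \cref{omegadef}, while the one-form corrections $c_i\gam^{f_i}\fm{-1}\vac$ and $c_i\gam^{e_i}\fm{-1}\vac$ supply the compensating $c_i$-linear terms (those coming from contracting the leading derivatives $D_{e_i},D_{f_i}$ into them), the products of two one-forms vanishing by \cref{Midlem}. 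This yields a single scalar equation for $c_i$.

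The evaluation of that anomaly is the step I expect to be the main obstacle. The cocycle $\omega$ is the trace $\sum_{(a,n),(b,m)\in\Ag}\frac{\del P^{a,n}_{e_i}}{\del\gam^{b,m}[0]}\frac{\del Q^{b,m}_{f_i}}{\del\gam^{a,n}[0]}$ built from the coefficient polynomials of $\jota(\vfd(e_i))$ and $\jota(\vfd(f_i))$, which are computable from the group-theoretic construction of \cref{Plem} and \cref{linthm}. A priori this is a sum over the semi-infinite index set $\Ag$, but the widening-gap property of \cref{linthm} (together with \cref{cor: wgcycle}) truncates it to a finite sum, so the anomaly is a well-defined number. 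The diagonal pairing of $e_i$ against $f_i$ reproduces the finite-type value $-\langle\alpha_i,\check\alpha_i\rangle=-2$ (exactly as in the $\sl_2$ Wakimoto realization, where $\phi(f)=-2\,dX$), while the remaining contributions are controlled by the order $\prec$ on $\B$ entering the ordered exponential $\prodr$ that defines the coordinates $X^{a,n}$: the simple directions with $j\prec i$ sit on one side of the normal ordering and contribute $\sum_{j\prec i}a_{ij}$, whereas those with $j\succ i$ cancel in pairs. This produces $c_i=-2+\sum_{j\prec i}a_{ij}$. I would finish by checking this against $\g=\wh\sl_2$, where it predicts $c_1=-2$ and $c_0=-2+a_{01}=-4$, in agreement with the values $\phi(f_1)=-2\,dX^{E,0}$ and $\phi(f_0)=-4\,dX^{F,1}$ recorded in \cref{hls} and with the explicit image in \cref{exmp}.
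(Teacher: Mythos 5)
Your first half — pinning the shape of $\theta$ by the $Q$-grading, so that $\phi(e_i)=0$ and $\phi(f_i)=c_i\,\del X^{e_i}$ with a single undetermined scalar, and transporting this through $\tau$ — is correct and is exactly the paper's "on grading grounds" step. The gap is in how you fix $c_i$.

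You propose to pair $e_i$ against $f_i$ and extract $c_i$ from the level-zero relation $\theta(e_i)\vap1\theta(f_i)=0$. In principle this relation does determine $c_i$, but not by the computation you describe. First, your claim that the anomaly $\jota(\vfd(e_i))\vap1\jota(\vfd(f_i))$ is "a sum over $\Ag$ truncated to a finite sum by the widening-gap property" is false: \cref{linthm} gives widening gap only for the differences $\vf(J_{a,n})-\Jp_{a,n}$, not for $\vf(J_{a,n})$ itself, and the naive trace for the pair $(e_i,f_i)$ contains the pairing of the two $\Jp$-parts against each other, which is a divergent sum of constants — these are precisely the divergences exhibited in \cref{sec: zeta} (already for $\wh\sl_2$ one finds $\frac{\del P^{E,m}_{e_1}}{\del X^{H,m}}\frac{\del P^{H,m}_{f_1}}{\del X^{E,m}}=2$ for every $m\geq1$). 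The product $\theta(e_i)\vap1\theta(f_i)$ is finite in $\Mfw$ only because the $\SS$--$\SS$ first products are set to zero there; its value is not any truncation of the naive trace, and evaluating it requires the correction polynomials $R_{e_i}$, $R_{f_i}$ of \cref{Rdef} (including their off-diagonal constant and linear terms) together with the $\SS$--$R$ cross contractions. Second, your bookkeeping of the compensating terms is off: \emph{both} cross terms $\jota(\vfd(e_i))\vap1\bigl(c_i\gam^{e_i}\fm{-1}\vac\bigr)$ and $\bigl(c_i\gam^{f_i}\fm{-1}\vac\bigr)\vap1\jota(\vfd(f_i))$ equal $+c_i\vac$ (each reads off the total coefficient of the leading derivative $D_{e_i}$, resp.\ $D_{f_i}$, which is the constant $1$), so the relation reads $(\text{anomaly})+2c_i\vac=0$. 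In $\wh\sl_2$ with $i=1$ the anomaly is therefore $+4\vac$, not the finite-type value $-2$ your sketch invokes; the proposed mechanism ("diagonal pairing gives $-2$, $j\prec i$ contributes $\sum a_{ij}$, $j\succ i$ cancel in pairs") is an assertion whose arithmetic is inconsistent with this factor of $2$ and does not correspond to an actual computation.

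The paper avoids all of this by pairing $f_i$ against Cartan elements instead: since $\vphi(\h)=0$, the relation $\theta(h)\vap1\theta(f_i)=0$ contains exactly \emph{one} $c_i$-linear term, and because $\vfd(h)$ is purely diagonal while $\Jp_{f_i}$ has no diagonal terms at all (no structure constant $f_{b,-\alpha_i}{}^{b}$ is nonzero), the anomaly collapses to the manifestly finite diagonal trace $\sum_{(a,n)\in\A}\la\wgt(a,n),h\ra\,D_{a,n}P^{a,n}_{f_i}(X)$. That trace needs only the terms of $\vf(f_i)$ of the form $X^iX^{a,n}D_{a,n}$, which are extracted from the group-theoretic (BCH) construction in \cref{quadterms}; this is where $-2+\sum_{j\prec i}a_{ij}$ actually comes from. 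If you want to keep your pair $(e_i,f_i)$, you would have to carry out the full renormalized computation in $\Mfw$ sketched above, which is strictly harder than the paper's route.
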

\begin{proof} On $\ZZ_{\geq 1} \times Q$-grading grounds, the homomorphism must be of this form for some values of coefficients $c_i$. We compute these values in \cref{sec: cis}.
\end{proof}

\subsection{Action of $L_+\g$ on $\Mh(\np)$}\label{sec: Lpgact}
The vertex algebra $\Mh= \Mh(\g)$ was the vacuum Fock module of the $\bet\gam$-system associated to the vector space $\g$. In exactly the same way, one defines $\Mh(\npm)$, with
\be \Mh(\npm) \cong_\CC \CC\!\left[ \gam^{\ia a n}[N]\right]_{N \leq 0; \ii a n \in \Apm} \ox \CC\!\left[\bet_{\ia a n}[N-1] \right]_{N \leq 0; \ii a n \in \Apm} \nn\ee
as vector spaces. 
These $\Mh(\npm)$ are vertex subalgebras of $\Mh$. 

We have the subalgebra $\L_{\geq 0}(\Mfw(\df 1)$ of non-negative modes of states in $\Mfw\df 1$, as in \cref{sec: loc lie}. 
For all $A\in \Mfw\df 1$ and $B\in \Mh$, $A\vap N B\in \Mh$ for all $N\geq 0$. 
Therefore $\L_{\geq 0}(\Mfw\df1)$ acts on $\Mh$, via
\be A\fm N v := A\vap N v\nn\ee 
for $A\in \Mfw\df1$, $N\geq 0$ and $v\in \Mh$.
The Lie algebra homomorphism $L\g \to \L(\Mfw\df 1)$ of \cref{mainthm} restricts to a homomorphism $L_+\g \to \L_{\geq 0}(\Mfw(\df 1)$ from the half-loop algebra 
\be L_+\g := \g \ox \CC[[t]]. \nn\ee 
In this way, $L_+\g$ acts on $\Mh$. The following is analogous to \cref{stabprop}. 
\begin{prop}\label{Lpgstab} This action of $L_+\g$ stabilizes $\Mh(\np)$ and $\Mh(\nm)$. 
That is, for all $A\in \g$, $N\geq 0$, and $v\in \Mh(\npm)$,
\be \theta(A\fm{-1}\vac)\vap N\, v \in \Mh(\npm).\nn\ee
\end{prop}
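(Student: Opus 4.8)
The plan is to reduce the statement to a short list of products with the generating states of $\Mh(\np)$, and then to extract those products from \cref{stabprop} and the properties of $\vphi$ in \cref{mainthm}. I treat $\Mh(\np)$; the case of $\Mh(\nm)$ is identical after transporting everything by the equivariances $\tau\circ\vfd=\vfd\circ\sigma$ and $\tau\circ\vphi=\vphi\circ\sigma$. Since $\Mh(\np)$ is strongly generated as a vertex algebra by the fields $\gam^{a,n}(z),\bet_{a,n}(z)$ with $(a,n)\in\A$, it is spanned by states $g_1\vap{M_1}\cdots g_r\vap{M_r}\vac$ with each $g_i$ one of the generating states $\gam^{a,n}[0]\vac,\bet_{a,n}[-1]\vac$ and each $M_i<0$. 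Writing $A:=\theta(X\fm{-1}\vac)$ for $X\in\g$, I would induct on $r$. The base case $A\vap N\vac=0$ ($N\geq0$) is the vacuum axiom. For the step, expand $A\vap N(g\vap M u)=g\vap M(A\vap N u)+[A\vap N,g\vap M]u$; by \cref{com Am Bn} the commutator is the \emph{finite} (as $N\geq0$) sum $\sum_{K\geq0}\binom NK (A\vap K g)\vap{N+M-K}u$. As $\Mh(\np)$ is a vertex subalgebra, its own modes preserve it, and $g\vap M$ preserves it; so the induction closes \textbf{provided only} that $A\vap K g\in\Mh(\np)$ for all $K\geq0$ and both generating states $g$.

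It therefore suffices to compute the singular part of $Y(A,z)$ against $\gam^{c,p}(w)$ and $\bet_{c,p}(w)$ for $(c,p)\in\A$ and check that the residues are fields of states in $\Mh(\np)$ (these products are finite by the widening-gap condition, \cref{lem: Mw}). I would use the explicit form of $A=\theta(J_{a,n}\fm{-1}\vac)$ from \cref{homcor} --- the $\SS$-part $\jota(\J_{a,n})$, the $\Dwg$-part $\sum_{(b,m)}R^{b,m}_{a,n}(\gam[0])\bet_{b,m}[-1]\vac$, and the one-form part $\sum_{(b,m)}Q_{a,n;b,m}(\gam[0])\gam^{b,m}[-1]\vac$ --- together with the $\SS\gam$, $\SS\bet$ OPEs of \cref{sec: Mfw} and the Wick formula of \cref{sec: wick}. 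All three OPEs have only a simple pole, and no $\SS$ or $\Cocent$ field appears in any residue, so the outputs automatically lie in the $\bet\gam$-subalgebra $\Mh$. Against $\gam^{c,p}(w)$ the residue is $\sum_b f_{ba}{}^c\gam^{b,p-n}(w)+R^{c,p}_{a,n}(\gam(w))$, which is exactly the field attached to $\vfd(J_{a,n})(X^{c,p})$; by stabilization of $\Onp$ in \cref{stabprop} this lies in $\Onp$, hence in $\Mh(\np)$.

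The crux is the OPE with $\bet_{c,p}(w)$, whose residue splits into a $\bet$-part and a $\gam$-part. Writing $\vfd(J_{a,n})=\sum_{(b,m)}V^{b,m}(X)D_{b,m}$, the $\bet$-part is the field attached to the bracket $[\vfd(J_{a,n}),D_{c,p}]=-\sum_{(b,m)}(D_{c,p}\on V^{b,m})D_{b,m}$ in $\Derc\Og$. Here \emph{both} halves of \cref{stabprop} enter: stabilization of $\Onp$ gives $V^{b,m}\in\Onp$ for $(b,m)\in\A$, while stabilization of $\Onm$ gives $V^{b,m}\in\Onm$ for $(b,m)\in\Am$. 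Since for $(c,p)\in\A$ the operator $D_{c,p}$ differentiates a coordinate of $\Onp$ and annihilates $\Onm$, the potentially bad terms with $(b,m)\in\Am$ drop out, and the surviving $(b,m)\in\A$ terms retain $\Onp$-coefficients; thus the $\bet$-part lies in $\Mh(\np)$. The $\gam$-part is governed by $\vphi$: splitting $X=X_-+X_0+X_+$ and using $\vphi(\h)=0$, $\vphi(\nm)\subset\Omega_\Onp$, $\vphi(\np)\subset\Omega_\Onm$ from \cref{mainthm}, the $X_-$-contribution is a one-form-plus-function with good indices and $\Onp$-coefficients, while the $X_+$-contribution vanishes identically, because an element of $\Omega_\Onm$ carries no $\del X^{c,p}$ with $(c,p)\in\A$ and its coefficients do not depend on such coordinates. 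This establishes the key claim and hence the proposition.

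The main obstacle is precisely this $\bet_{c,p}$-residue. Taken alone, the $\SS$-part $\jota(\J_{a,n})$ produces $\bet_{b,m}$ with shifted index $m=p+n$ that can leave $\A$, so the statement is \emph{false} for the naive assignment $J_{a,n}\mapsto\jota(\J_{a,n})$; stabilization is a genuine cancellation against the correction polynomials $R^{b,m}_{a,n}$, and it is stabilization of $\Onm$ --- not merely of $\Onp$ --- that kills the terms drifting to the negative side. I expect the bookkeeping of Wick contractions and the induction to be routine; the only real work is to display this cancellation cleanly.
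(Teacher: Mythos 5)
Your proof is correct in substance and rests on exactly the same two inputs as the paper's own proof --- \cref{stabprop} for \emph{both} $\Onp$ and $\Onm$, and the support conditions $\vphi(\h)=0$, $\vphi(\nmp)\subset\Omega_\Onpm$ from \cref{mainthm} --- but its organization is genuinely different. The paper works directly on arbitrary states of $\Mh(\npm)$: it decomposes $\theta(J_{a,n}\fm{-1}\vac)$ into the $\vphi$-part, the widening-gap terms lying in $\jota(\Dw)\oplus\jota(\Dwm)$, and the $\SS$-part together with compensating quadratic terms, and it must then argue separately that double contractions of those quadratic terms with states of $\Mh(\npm)$ are harmless, so that what survives is the single-contraction (differential-operator) action covered by \cref{stabprop}. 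Your reduction to the two one-factor generating states via the commutator formula \cref{com Am Bn} removes the double-contraction discussion entirely: against $\gam^{c,p}[0]\vac$ or $\bet_{c,p}[-1]\vac$ only single contractions can occur, and the effect of multiple contractions on longer monomials is regenerated automatically by iterating the commutator identity. That is what your route buys; what the paper's route buys is a term-by-term picture of exactly which pieces of $\theta(J_{a,n}\fm{-1}\vac)$ obstruct stabilization by the full loop algebra (the index shifts produced by the $\SS$-part), which feeds the discussion in \cref{sec: discussion}.

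Two caveats. First, a concrete but fixable gap: in the $\bet_{c,p}$-residue you let $(b,m)$ range only over $\A\cup\Am$, but $\Ag$ also contains the Cartan level-zero indices $(i,0)\notin\pm\A$, $i\in\oc I$, and the correction part of $\theta(J_{a,n}\fm{-1}\vac)$ genuinely contains such terms (e.g.\ $-\gam^{E,-1}\fm 0\bet_{H,0}\fm{-1}\vac$ in \cref{exmp}). \Cref{stabprop} alone cannot exclude a surviving $\bet_{i,0}$-term in the residue, because $D_{i,0}$ annihilates both $\Onp$ and $\Onm$ and is therefore invisible to stabilization; what you need is the finer fact, true by the construction \cref{vfddef}, that $\vfd(\g)\subset\Derc\Onp\oplus\Derc\Onm$, so that the coefficient $V^{i,0}$ vanishes identically and the $\bet_{i,0}$-contributions of $\jota(\J_{a,n})$ and of the polynomials $R^{b,m}_{a,n}$ cancel. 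Second, a shared imprecision: $[\vfd(J_{a,n}),D_{c,p}]$ is in general an \emph{infinite} sum with widening gap (already for $\wh\sl_2$, $X^{H,1}$ occurs in the coefficient of $D_{E,m}$ in $\vf(J_{E,0})$ for infinitely many $m$, cf.\ \cref{sl2diffs}), so $A\vap 0\,\bet_{c,p}[-1]\vac$ lies in the widening-gap completion of $\Mh(\np)$ rather than in $\Mh(\np)$ itself, and your induction closes only if it is run in that completion (where it does close, by the arguments behind \cref{lem: Mw}). Since the paper's statement and proof elide the same point, this is not a defect of your argument relative to the paper's, but the induction should be formulated in the completed space.
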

\begin{proof} Let $N\geq 0$. Suppose without loss that $A=J_{a,n}$. (The result is clear for $A=\cocent$ and trivial for $A=\cent$.)

The conditions on $\vphi$ given in \cref{mainthm} imply that $\vphi(J_{a,n})\vap N$ stabilizes the subspace $M(\npm)$, and annihilates $M(\nmp)$, whenever $J_{a,n} \in \nmp$. And $\vphi(\h) = 0$. Hence $\vphi(J_{a,n})\vap N$ stabilizes both $M(\npm)$.

The same applies to all terms in $\jota(\vfd(J_{a,n}))$ cubic or higher in the generators $\bet$, $\gam$. (Recall that these terms are either in $\jota(\Dw)$ or in $\jota(\Dwm)$.)

The term $\jota(\J_{a,n})\vap N$ in  $\jota(\vfd(J_{a,n}))\vap N$ stabilizes $\Mh(\npm)$ if $a\in \oc I$ and $n=0$, i.e. if $J_{a,n}\in \h$. Otherwise it does not, but by our construction there is then also a sum of compensating quadratic terms in $\jota(\vfd(J_{a,n}))\vap N$, of the form $\gam^{b,m}\vap M\bet_{c,p}\vap{N-M}$ with $\{(b,m),(c,p)\} \not\subset \pm\A$. The latter condition ensures the double contractions between such quadratic terms and states in $\Mh(\npm)$ vanish. The single contractions of such compensating quadratic terms ensure that $\Mh(\npm)$ is stabilized, just as in \cref{stabprop} and the discussion following. 
\end{proof}

\subsection{Homomorphism to $\Mfw \ox \pi_0$}\label{sec: pi0}
We have the loop algebra $L\h := \h \ox \CC((t))$ of the Cartan subalgebra $\h\subset \g$. 
Let $\{b_i\}_{i=1,\dots,\dim\h} = \{H_i\}_{i\in \oc I} \sqcup\{\cent,\cocent\}$ denote a copy of our basis of $\h$
and let $\{b^i\}_{i=1}^{\dim\h}\subset \h$ be its dual basis with respect to the form $\bilin\cdot\cdot$:
\be \bilin{b_i}{b^j} = \delta_i^j .\nn\ee
Then $L\h$ has basis $\{b_i\fm N\}_{i=1,dots,\dim\h;N\in \ZZ}$. 
Let $\pi_0$ denote the $L\h$-module 
\be \pi_0 := U(L\h) \ox_{U(\h\ox \CC[[t]])} \CC \vac \nn\ee
induced from the trivial one-dimensional $\h\ox \CC[[t]]$-module $\CC \vac$.
There is a linear isomorphism $\pi_0 \cong \CC[b_{i,n}]_{i=1,\dots,\dim\h; n<0}$ of 
of vector spaces, and of modules over $U(t^{-1}\h[t^{-1}])\cong \CC[b_{i,n}]_{i=1,\dots,\dim\h; n<0}$.

The space $\pi_0$ is a commutative vertex algebra, the state-field map being given by
\be b_i(x) := Y(b_i\fm{-1}\vac,x) =  \sum_{N\in \ZZ} b_i[N] x^{-N-1},\nn\ee
cf. \cref{sec: vas}. (Like $\Mh$, it is a system of free fields.)
It has the depth gradation, in which $b_i\fm N$ contributes grade $-N$, and it inherits the $Q$-gradation from $\h$.

We continue to write $\gam^{e_i} = \gam^{\alpha_i,0}$ for $i\in \oc I$ and $\gam^{e_0} = \gam^{-\delta+\alpha_0,1}$, etc.

We have the tensor product of vertex algebras $\Mfw \ox \pi_0$, which is again $\ZZ_{\geq 0}\times Q$-graded. 

\begin{thm}\label{bthm}
There exists a $\ZZ_{\geq 0} \ox Q$-graded homomorphism of vertex algebras
\be w : \VV_0^{\g,0} \to \Mfw \ox \pi_0 \nn\ee
given by
\begin{align}
e_i\fm{-1}\vac &\mapsto \jota(\vfd(e_i)) 
                          + c_i \gam^{f_i}\fm{-1}\vac + \la b^j,\check\alpha_i\ra\gam^{f_i}\fm0  b_j\fm{-1}\vac\nn\\
h\fm{-1}\vac &\mapsto \jota(\vfd(h)) + \la b^i,h\ra b_i\fm{-1}\vac\nn\\
f_i\fm{-1}\vac & \mapsto \jota(\vfd(f_i)) + c_i \gam^{e_i}\fm{-1}\vac
                      + \la b^j,\check\alpha_i\ra \gam^{e_i}\fm0  b_j\fm{-1}\vac   \nn  
\end{align}
for $i\in I$ and $h\in \h$. 
\end{thm}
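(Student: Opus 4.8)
The plan is to exploit the universal property of the vacuum Verma module, using the homomorphism $\theta$ of \cref{homcor} as a backbone and treating the $\pi_0$-factor as a purely commutative perturbation. Recall that $\VV_0^{\g,0}$ is the enveloping vertex algebra of $\g$ at level $0$, so a vertex algebra homomorphism $\VV_0^{\g,0}\to W$ into any vertex algebra $W$ is the same datum as a linear map $w\colon\g\to W$ with $w(A)\vap0 w(B)=w([A,B])$ and $w(A)\vap n w(B)=0$ for all $n\geq 1$ and all $A,B\in\g$. I would write $w=\theta+\beta$, where $\theta$ is the homomorphism of \cref{homcor} tensored with the vacuum of $\pi_0$, and $\beta\colon\g\to\Mfw\ox\pi_0$ is the boson correction: a $Q$-graded map of grade $0$ extending the displayed values on the Chevalley-Serre generators in the $\sigma$-equivariant fashion of $\vphi$ in \cref{mainthm}. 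Since $\theta$ is already a level-$0$ homomorphism we have $\theta(A)\vap n\theta(B)=\delta_{n,0}\,\theta([A,B])$, so the entire statement collapses to the boson sector. Moreover, a standard propagation argument (using the commutator formula \cref{com Am Bn} and the Jacobi identity in $\g$, and the fact that $\beta$, hence $w$, is defined on all of $\g$ so that $w([A,B])$ is a genuine value) upgrades closure on the finite generating set $\{e_i,f_i\}_{i\in I}\cup\h$ to closure on all of $\g$; in particular no separate verification of the Serre relations is required, since $\g$ enters only through its actual bracket.

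The decisive structural input is that $\pi_0$ is a \emph{commutative} vertex algebra: the currents $b_i(x)$ have regular operator products with one another and, a fortiori, with every field of $\Mfw$. Each $\beta(A)$ is a sum of terms of the shape (a $\gam\fm0$-monomial in $\Mfw$)$\,\ox\,$(a depth-one boson state), and in particular contains no $\bet$-modes. Consequently $\beta(A)\vap n\beta(B)=0$ for every $n\geq0$, while a cross product $\theta(A)\vap n\beta(B)$ can only arise by pairing a $\bet$-mode of $\theta(A)$ against a $\gam\fm0$-factor of $\beta(B)$. On the generators these factors are a single $\gam^{e_i}\fm0$ or $\gam^{f_i}\fm0$, so only \emph{single} contractions occur; this at once forces $w(A)\vap n w(B)=0$ for all $n\geq1$ on the generating set, so the commutativity of $\pi_0$ is exactly what prevents the bosons from producing a double-pole anomaly and keeps the level at $0$. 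It then remains to match the surviving single-pole ($\vap0$) contributions against $\beta([A,B])$.

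The Cartan and Cartan-root channels are immediate and display the mechanism. The bracket $[h,h']=0$ is trivial because $\beta(h)$ is a pure boson current with nothing to contract. For $[h,e_i]$ one uses that $\theta(h)$ acts on the coordinate $X^{f_i}$ carried by $\beta(e_i)$ through its weight $\la\alpha_i,h\ra$, whence the commutative boson rides along unchanged and $w(h)\vap0 w(e_i)=\la\alpha_i,h\ra\,\theta(e_i)+\la\alpha_i,h\ra\,\beta(e_i)=\la\alpha_i,h\ra\,w(e_i)$, as required; the channel $[h,f_i]$ is identical.

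The main obstacle is the channel $[e_i,f_j]=\delta_{ij}\check\alpha_i$. Here $\theta(e_i)\vap0\theta(f_i)=\theta(\check\alpha_i)$ correctly reproduces the $\Mfw$-part of $w(\check\alpha_i)$, and the hard part will be to show that the two cross contractions $\theta(e_i)\vap0\beta(f_i)$ and $\beta(e_i)\vap0\theta(f_i)$ assemble to precisely the boson state $\beta(\check\alpha_i)=\sum_j\la b^j,\check\alpha_i\ra b_j\fm{-1}\vac$: their $\gam$-dependent parts must cancel and their constant parts must sum to the required normalization. This is a genuine computation resting on the leading structure of $\vfd$ established in \cref{linthm} and \cref{lem: Jdif}, on the equivariance $\vfd\circ\sigma=\tau\circ\vfd$ relating the $e_i$- and $f_i$-contributions, and on the explicit constants $c_i$ determined in \cref{sec: cis} (\cref{CScor}); indeed it is precisely the freedom in the $c_i$ that makes the matching possible. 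The longer-bracket channels $[e_i,e_j]$ and $[f_i,f_j]$ are then handled by the same single-contraction bookkeeping together with $\sigma$-equivariance, which completes the verification and, via the propagation argument of the first paragraph, yields the homomorphism $w$.
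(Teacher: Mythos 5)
Your reduction is partly sound: the observations that $\pi_0$ is commutative, that the boson corrections contain no $\bet$- or $\SS$-modes, and that consequently all cross-products with $\theta$ involve only single contractions (hence only simple poles, so no new anomaly and no shift of the level) is exactly the key observation in the paper's proof. But your proposal has a genuine gap at its foundation: the map $\beta$ --- and hence $w$ --- is never actually defined on all of $\g$. The universal property you invoke (a vertex algebra homomorphism $\VV_0^{\g,0}\to W$ is a linear map $\g\to W$ with prescribed non-negative products) requires both that the linear map exist on all of $\g$ and that the relations be verified for \emph{all} pairs, not just Chevalley--Serre generators. Saying that $\beta$ is ``a $Q$-graded map of grade $0$ extending the displayed values in the $\sigma$-equivariant fashion of $\vphi$'' defines nothing: for a general element of $\nm$ (say $[f_i,f_j]$, or a root vector of large negative height) the boson coefficient is a \emph{nonconstant} polynomial in the $\gam\fm 0$'s, and neither $Q$-grading nor $\sigma$-equivariance determines it. Your propagation argument is then circular, since it assumes ``$w([A,B])$ is a genuine value'' --- i.e.\ the very extension that is missing. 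If instead you define $w$ on non-generators recursively by $w([A,B]):=w(A)\vap 0 w(B)$, well-definedness is exactly consistency with the presentation \cref{KMrels} of $\g$, so the Serre relations you claim to avoid would have to be checked after all. Indeed the condition you isolate (using skew-symmetry and the vanishing of the higher cross-products it reads $\theta(A)\vap 0 \beta(B)-\theta(B)\vap 0\beta(A)=\beta([A,B])$) says that $\beta$ is a $1$-cocycle of $\g$ with values in a $\g$-module; cocycles are not freely determined by their values on generators, and their existence is the whole content of the theorem.

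This is precisely the gap the paper fills, and fills \emph{before} any vertex-algebra considerations: in \cref{sec: proofb} the construction of \cref{sec: inft} is rerun on the larger coset space $\exxp{\eps\nm}\big\backslash\exxp{\f_\eps}$, i.e.\ with big cell $B=H\ltimes U$ in place of $U$, producing for \emph{every} $A\in\gc$ polynomials $p^i_A(X)$ alongside the $P^{a,n}_A(X)$, and hence an honest Lie algebra homomorphism $\vfd'$ into $\Dg\ltimes\bigoplus_{i=1}^{\dim\h}\Og\,\del_{Y^i}$; the boson terms in the statement of \cref{bthm} are just the values of this $\vfd'$ on the generators. With that classical homomorphism in hand, the $\vap 0$-matching you flag as ``the hard part'' (the $[e_i,f_j]$ channel) never has to be done by hand at the vertex-algebra level: it is subsumed in the statement that $\vfd'$ is a homomorphism, and the only quantum input is the no-double-contraction observation (which you also make), showing that the anomaly cocycle for $\vfd'$ coincides with that for $\vfd$, so the same splitting $\vphi$ of \cref{mainthm} applies verbatim. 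Without the global construction of $\vfd'$ (or some substitute defining $\beta$ on all of $\g$ and proving the cocycle identity there), your argument does not go through.
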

\begin{proof}
The proof is given in \cref{sec: proofb}.
\end{proof}
Equivalently, there is a $\ZZ\times Q$-graded homomorphism of Lie algebras
\be L\g \to \L(\Mfw \ox \pi_0) \nn\ee
given by
\begin{align}
e_i(z) &\mapsto \jota(\vfd(e_i))(z) 
                 + c_i \del_z\gam^{f_i}(z) + \la b^j,\check\alpha_i\ra \gam^{f_i}(z)  b_j(z)\nn\\
h(z) &\mapsto \jota(\vfd(h))(z) + \la b^i,h\ra b_i(z) \nn\\
f_i(z) & \mapsto \jota(\vfd(f_i))(z) + c_i \del_z\gam^{e_i}(z)
                      + \la b^j,\check\alpha_i\ra \gam^{e_i}(z) b_j(z) \nn  
\end{align}
for $i\in I$ and $h\in \h$. 

\subsection{On zero modes}\label{sec: zeromodes}
We have the homomorphism of Lie algebras $\vf:\g \to \Derc\Onp$
from \cref{vfdef}, and the embedding $\imb:\Derc\Onp\into \wt\Mh(\np) \subset \wt\Mh$
given by 
\be \sum_{\ii an \in \A} P^{a,n}(X) D_{a,n} \mapsto \sum_{\ii an\in \A} P^{a,n}(\gam\fm0) \bet_{a,n}\fm{-1}\vac, \nn\ee
where $P^{a,n}(X) \in \Onp$ for each $(a,n) \in \A$. 
Let $\phi:\g \to \Omega_{\Onp}$ be the linear map defined by 
\be \phi(x) = \begin{cases} 0 & x\in \b_+\\
                           \vphi(x) & x\in \nm 
                         \end{cases}
\nn\ee
where $\vphi: \g \to \Omega_\Og$ was the splitting map from \cref{mainthm}. We continue to identify $\Omega_\Onp$ with a subspace of $\Mh(\np)\subset\wt\Mh(\np)$ (with $f(X) dX^{a,n} \mapsto f(\gam\fm0) \gam^{a,n}\fm{-1}\vac$).
We get a linear map
\be \nff = \imb\circ\vf + \phi : \g \cong \VV_0^{\g,0}[1] \to \wt\Mh(\np)[1].\nn\ee
(The kernel is the centre $\CC\cent\subset\g$; if one wanted an embedding, one could introduce a tensor factor $\pi_0$ as in \cref{sec: pi0}.)

The would-be non-negative vertex algebra products $\nff(x)\vap N \nff(y)$, $x,y\in \g$, $N\geq 0$, of states in the image of this map are generically ill-defined, i.e. divergent, sums (as we saw in the examples in \cref{sec: zeta}). Thus, we don't get a direct analog in affine types of usual Feigin-Frenkel free-field realization $\VV_0^{\g,-h^\vee} \to \Mh(\np)$ in finite types. Nonetheless, we do have the following. 

\begin{thm}\label{zeromodethm}
The vertex algebra $0$th product $\vap 0 :\Mh(\np)\times \Mh(\np) \to \Mh(\np)$ extends to a well-defined product 
\be \vap 0 : \nff(\g) \times \nff(\g) \to \nff(\g)\nn\ee 
on the image $\nff(\g)\subset \wt\Mh(\np)[1]$ of $\g$ in $\wt\Mh(\np)$. 

Moreover, we have
\be \nff(x) \vap 0 \nff(y) = \nff([x,y]) \nn\ee
for all $x,y\in \g$. 
\end{thm}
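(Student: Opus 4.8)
The plan is to derive the statement from the vertex-algebra homomorphism $\theta$ of \cref{homcor}, after first settling well-definedness by a pole-counting argument.

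\textbf{Well-definedness.} I would first show that for two states in $\wt\Mh(\np)[1]$ whose coefficient functions are genuine polynomials (as holds for $\nff(x)$ and $\nff(y)$) the product $\vap0$ is well-defined. The $0$th product is the residue, i.e.\ the coefficient of $(u-v)^{-1}$, of the OPE $Y(A,u)Y(B,v)$. By the Wick formula (\cref{Wicklem}) every term of this OPE is indexed by a pairing, and a pairing consisting of $r$ single contractions contributes a pole of order at least $r$; hence only pairings with exactly one single contraction reach the simple pole. (This is precisely why the divergences of \cref{sec: zeta}, which all reside in the double contractions, afflict $\vap1$ and higher but not $\vap0$.) A single contraction pairs some $\bet_{a,n}$ against a $\gam^{a,n}$ occurring in a coefficient polynomial of the other factor; since each such polynomial involves only finitely many variables, for each fixed output index the sum over the contracted index $(a,n)\in\A$ is finite. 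Thus $\nff(x)\vap0\nff(y)$ is a well-defined element of $\wt\Mh(\np)[1]$, and reading off its $\bet$-part coefficient by coefficient gives exactly the Lie bracket $[\vf(x),\vf(y)]$ of vector fields in $\Derc\Onp$, which equals $\vf([x,y])$ by \cref{vfdef}.

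\textbf{Reduction to $\theta$.} To pin down the full product, including its one-form part, and thereby obtain both the identity and the closure $\nff(\g)\vap0\nff(\g)\subset\nff(\g)$, I would compare $\nff$ with $\theta$. Since $\theta$ is a homomorphism of vertex algebras and the $0$th product on $\VV_0^{\g,0}[1]\cong\g$ is the Lie bracket, one has the honest identity $\theta(x\fm{-1}\vac)\vap0\theta(y\fm{-1}\vac)=\theta([x,y]\fm{-1}\vac)$ in $\Mfw$. I would then introduce the vector-space projection $\Pi:\Mfw\df1\to\wt\Mh(\np)\df1$ which (i) sets all $\Am$-indexed generators $\bet_{b,m},\gam^{b,m}$ to zero, and (ii) replaces each $\SS^a_{b,n}[-1]\vac$ and $\Cocent[-1]\vac$ by the $\A$-truncation of its coadjoint realization $\iota(\SS^a_{b,n})$, $\iota(\Cocent)$ as a quadratic $\bet\gam$-state. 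Using $\vf=\vfd|_{\Onp}$ — valid by \cref{stabprop}, since $\vfd=\vf+\tau\circ\vf\circ\sigma$ and the $\Derc\Onm$-summand acts trivially on functions of the $\A$-coordinates — together with the form of $\theta(J_{a,n}\fm{-1}\vac)$ in \cref{homcor} and the inclusions $\vphi(\nm)\subset\Omega_\Onp$, $\vphi(\np)\subset\Omega_\Onm$, $\vphi(\h)=0$ of \cref{mainthm}, one checks that $\Pi\circ\theta=\nff$ on $\g$. Writing $\theta(x)$ for $\theta(x\fm{-1}\vac)$, it then remains to prove the commutation $\Pi(\theta(x)\vap0\theta(y))=\Pi\theta(x)\vap0\Pi\theta(y)$; granting it, the chain $\nff(x)\vap0\nff(y)=\Pi\theta(x)\vap0\Pi\theta(y)=\Pi(\theta(x)\vap0\theta(y))=\Pi\theta([x,y])=\nff([x,y])$ finishes the proof.

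\textbf{The main obstacle.} The technical heart is this commutation of $\Pi$ with $\vap0$. By the well-definedness step both sides are finite sums of single contractions, so the task reduces to matching contractions one at a time: the $\bet\gam$-contractions among $\A$-indexed generators commute with evaluation at the $\Am$-coordinates $=0$ (distinct variables commute with the derivations $\partial/\partial\gam^{a,n}$, $(a,n)\in\A$), and the contractions of the abstract fields $\SS^a_{b,n}(z)$ against $\bet,\gam$ — simple poles by the OPEs of \cref{sec: Mfw} — reproduce exactly the single contractions of their quadratic coadjoint realizations. The subtlety, and the reason this is not merely formal, is that the coadjoint realization of an \emph{individual} generator $\SS^a_{b,n}$ does not stabilize $\Onp$: only the combined state $\theta(J_{a,n}\fm{-1}\vac)$ does (\cref{stabprop}), so the terms that would leak into the $\Am$-coordinates from the $\SS$-part must cancel against those produced by the widening-gap corrections $R^{b,m}_{a,n}$, and one must verify that all intermediate sums remain finite — guaranteed by the widening-gap closure properties \cref{cor: wg} and \cref{cor: wgcycle}. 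This cancellation is exactly where \cref{stabprop} (equivalently \cref{Lpgstab}) does the real work, and I expect it to be the crux of the argument.
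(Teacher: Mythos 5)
Your well-definedness step contains a genuine error, and it sits exactly where the paper has to do real work. The claim that a pairing with $r$ single contractions contributes a pole of order at least $r$, so that ``only pairings with exactly one single contraction reach the simple pole'', is false: the products $A\vap N B$ are read off only after Taylor-expanding the normal-ordered remainder of each contraction about $u=v$, so a double contraction, with its factor $(u-v)^{-2}$, feeds the coefficient of $(u-v)^{-1}$ through the first-order Taylor term. For $A=\sum_{(a,n)\in\A}P^{a,n}(\gam\fm0)\,\bet_{a,n}\fm{-1}\vac$ and $B=\sum_{(b,m)\in\A}Q^{b,m}(\gam\fm0)\,\bet_{b,m}\fm{-1}\vac$ this contribution to $A\vap 0 B$ is the one-form-valued term $-\sum_{(a,n),(b,m)}\bigl(\del D_{b,m}P^{a,n}\bigr)\bigl(D_{a,n}Q^{b,m}\bigr)$, i.e.\ precisely the map $\dd{\cdot}{\cdot}$ of the paper's proof; it is visible as the $\bigl[T,\cdot\bigr]$ term in \cref{omegadef} and in the last line of \cref{ABope}. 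In particular the divergences do \emph{not} ``afflict $\vap 1$ and higher but not $\vap 0$'', and your proposed lemma --- that $\vap 0$ is well defined on any two states of $\wt\Mh(\np)[1]$ with polynomial coefficients --- is false. The paper's own counterexample (just before \cref{ddprop}) is $v=\sum_{n\in\NN}X^{a,n}D_{a,n}$: both $\imb(X^{c,1}v)$ and $\imb(v)$ have polynomial coefficients, yet $\dd{X^{c,1}v}{v}=-\bigl(1+\sum_{m\in\NN}1\bigr)\del X^{c,1}$ is ill-defined. So your pole count both misses the one-form part of $\nff(x)\vap0\nff(y)$ altogether (the correct identity is $\imb(\vf(x))\vap0\imb(\vf(y))=\imb(\vf([x,y]))+\dd{\vf(x)}{\vf(y)}$, as in \cref{zeroprod}, not merely the bracket of vector fields) and fails to show that this extra part is finite.

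What actually secures finiteness is not polynomiality but the structure of the image of $\vf$: by \cref{linthm} one has $\vf(x)\in\Dw\rtimes\Dl$, and the paper's \cref{ddprop} shows that $\dd{\cdot}{\cdot}$ is well defined on $\left(\Dw\rtimes\Dl\right)\times\left(\Dw\rtimes\Dl\right)$ by playing the widening gap of one argument against the bounded grade of the other. Your proposal never invokes widening gap at this stage, so the decisive ingredient is absent; and the gap propagates into your second step, since the intertwining $\Pi(X\vap0Y)=\Pi(X)\vap0\Pi(Y)$ presupposes the same finiteness statement. That second step is otherwise essentially the paper's own route for the ``moreover'' part, with one simplification worth adopting: the paper's map $\pi:\Mfw[1]\to\wt\Mh[1]$ keeps the \emph{full} coadjoint realization $\SS^a_{b,n}\fm{-1}\vac\mapsto\sum_{m\in\ZZ}\gam^{a,m}\fm0\bet_{b,m+n}\fm{-1}\vac$ (and likewise for $\Cocent$), so that $\pi(\theta(x\fm{-1}\vac))=\nff(x)+\tau(\nff(\cai(x)))$ holds by construction, and only at the very end does one project onto the summand $\wt\Mh(\np)[1]$; this avoids the $\A$-truncation bookkeeping your $\Pi$ would require and makes the vanishing of the cross terms between the $\np$- and $\nm$-parts manifest.
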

\begin{proof} Let $x,y\in \g$. 
We first want to show that $\nff(x) \vap 0 \nff(y)$ is well-defined. 

Calculating as in the proof of \cref{jotalem}, one sees that for any two $X,Y\in \Der\Onp$, we have 
\be \imb(X)\vap 0 \imb(Y) = \imb([X,Y]) + \dd X Y \label{zeroprod}\ee
where we define a ($\CC$-)bilinear map $\dd\cdot\cdot:\Der\Onp\times \Der\Onp \to \Omega_{\Onp}$ by 
\begin{align}
    \dd{ \sum_{\ii a n \in \A} P^{\ia a n}(X) D_{\ia a n} }  
       { \sum_{\ii b m \in \A} Q^{\ia b m}(X) D_{\ia b m} } \nn\\
 := - \sum_{\ii a n,\ii b m \in \A} \bigl( \del D_{\ia b m} P^{\ia a n}(X)\bigr) 
                                   \bigl(D_{\ia a n} Q^{\ia b m}(X)\bigr).
\end{align}
Now, $\dd\cdot\cdot$ does not extend to a well-defined bilinear form on $\Derc\Onp\times\Derc\Onp$: for example if $v=  
\sum_{n\in \NN} X^{\ia a n} D_{\ia a n}$ for some fixed $a\in \I$, then 
$- \dd {X^{\ia c 1} v} v  = 
 \Bigl( 1 + \sum_{m\in \NN} 1\Bigr)  \del X^{c,1}
$
is ill-defined. But we can define a Lie subalgebra large enough to contain the image of $\g$ and small enough that $\dd\cdot\cdot$ remains well-defined, as follows. First let us define a Lie subalgebra $\Dl\subset \Derc\Onp$, by stipulating that   
$\sum_{\ii a n \in \A} P^{\ia a n} (X) D_{\ia a n}$ belongs to $\Dl$ if, for some $K\in \NN$,  
\be P^{\ia a n}(X) \in \bigoplus_{\substack{\ii b m \in \A \\ n+K \geq m \geq n- K}} \CC X^{\ia b m} \nn\ee
for each $\ii a n \in \A$. One checks that $\Dw\rtimes \Dl$ is a Lie subalgebra of $\Derc\Onp$. According to \cref{linthm}, the image $\vf(x)$ of $x$ belongs to this subalgebra:
\be \vf(x) \in \Dw \rtimes \Dl .\nn\ee 

\begin{lem}\label{ddprop} 
$\dd\cdot\cdot$ extends to a well-defined $\CC$-bilinear map
\be \dd\cdot\cdot :\left(\Dw \rtimes \Dl\right) \times \left(\Dw \rtimes \Dl\right) \to \Omega_\Onp,\nn\ee 
with the property that $ \dd\Dl\cdot  = 0$. 
\end{lem}
\begin{proof}[Proof of \cref{ddprop}]
$\dd\cdot\cdot$ vanishes, summand by summand, whenever its first argument is linear in the generators $\{X^{\ia a n}\}_{\ii a n\in \A}$, since $\del (D_{\ia a n} X^{\ia b m}) =0$ for all $\ii a n, \ii b m \in \A$. 
So by linearity it is enough to consider the case when the first argument, $ \sum_{\ii a n \in \A} P^{\ia a n}(X) D_{\ia a n} $ belongs to $\Dw$, i.e. has widening gap. 
The second argument has bounded grade, so there is some $M$ such that $D_{\ia a n} Q^{\ia b m}(X) =0$ for all $n,m$ such that $n-m \geq M$. 
By the assumption of widening gap, there is some $N$ (depending on $M$) such that, for all $n>N$, $D_{\ia b m}P^{\ia a n}(x)=0$ whenever $n-m < M$.
So at most the first $N$ terms in the sum on $n$ can be nonzero:
\begin{align}  \sum_{\ii a n,\ii b m \in \A} \bigl(\del D_{\ia b m} P^{\ia a n}(X)\bigr) 
                                  \bigl(D_{\ia a n} Q^{\ia b m}(X)\bigr) \nn\\
= \sum_{\substack{\ii a n,\ii b m \in \A \\ n\leq N}} \bigl(\del D_{\ia b m} P^{\ia a n}(X)\bigr) 
                                  \bigl(D_{\ia a n} Q^{\ia b m}(X)\bigr),\nn
\end{align}
and then for each $n$ the sum on $m$ is also finite, since $P^{\ia a n}(X)$ is a polynomial. 
\end{proof}

At this stage, we have shown that \cref{zeroprod} holds for all $X,Y\in \Dw \rtimes \Dl$. In particular, it holds for the images $\vf(x),\vf(y)$ of $x,y\in\g$:
\be  \imb(\vf(x))\vap 0\imb(\vf(y)) = \imb(\vf([x,y])) + \dd{\vf(x)}{\vf(y)}. \nn\ee
It follows that $\nff(x) \vap 0 \nff(y)$ is well-defined. (There are no possible double contractions, and hence no possible divergences, of the products between the subspaces $\Omega_{\Onp}$ and $\imb(\Derc\Onp)$ of $\wt\Mh(\np)$.) 

The ``moreover'' part is then essentially a corollary of \cref{mainthm}. Consider the linear map $\pi:\Mfw[1] \to \wt\Mh[1]$ which acts as the identity on $\Mw[1]\subset \Mfw[1]$ and sends $\SS^a_{b,n}\fm{-1}\vac \to \sum_{m\in \ZZ} \gam^{a,m}\fm 0 \bet_{b,m+n}\fm{-1}\vac$ and $\Cocent \to \sum_{(a,m)\in \Ag} m\gam^{a,m}\fm 0 \bet_{a,m}\fm{-1}\vac$. By construction, for any $x\in \g$, $\pi(\theta(x\fm{-1}\vac)) = \nff(x) + \tau(\nff(\cai(x)))$, where $\nff(x) \in \wt\Mh(\np)[1]$ and $\tau(\nff(\cai(x))) \in \wt\Mh(\nm)[1]$. (See \cref{Jdif} and the discussion following.) Thus
\be \pi(\theta(x\fm{-1}\vac))\vap 0 \pi(\theta(y\fm{-1}\vac)) = \nff(x)\vap 0 \nff(y) + \tau(\nff(\cai(x)))\vap0\tau(\nff(\cai(y)))\label{pit1}\ee
for all $x,y\in \g$. At the same time, for any $X,Y\in \Mfw[1]$, we check that $\pi(X)\vap 0 \pi(Y)$ is well-defined and equal to $\pi(X\vap 0 Y)$. Thus
\begin{align} \pi\bigl(\theta(x\fm{-1}\vac)\bigr)\vap 0 \pi\bigl(\theta(y\fm{-1}\vac)\bigr) 
&= \pi\bigl(\theta(x\fm{-1}\vac)\vap 0 \theta(y\fm{-1}\vac)\bigr)\nn\\
&= \pi(\theta([x,y]\fm{-1}\vac))\nn\\
&= \nff([x,y]) + \tau(\nff(\cai([x,y])))\label{pit2}.
\end{align}
On comparing \cref{pit1} and \cref{pit2}, and projecting onto the summand $\wt\Mh(\np)[1]$ of $\wt\Mh(\np)[1]\oplus\wt\Mh(\nm)[1]$, we have the result.
\end{proof}
\begin{cor} There is a well-defined Lie algebra $\L_0(\nff(\g))$ of the formal 0-modes of states in the image of $\nff$, and the map 
\be \g\to \L_0(\nff(\g));\qquad x \mapsto \nff(x)\fm 0 \nn\ee
is a homomorphism of Lie algebras.
\qed\end{cor}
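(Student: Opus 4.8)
The plan is to deduce everything from \cref{zeromodethm} together with the commutator formula \cref{comf} for formal modes. First I would specialise \cref{comf} to the case $M=N=0$: only the $K=0$ term survives, so that for any states $A,B$ whose $0$th product is defined one has $[A\fm 0,B\fm 0]=(A\vap 0 B)\fm 0$. By \cref{zeromodethm} the product $\vap 0$ restricts to a well-defined map $\nff(\g)\times\nff(\g)\to\nff(\g)$ with $\nff(x)\vap 0\nff(y)=\nff([x,y])$. Hence the span $\L_0(\nff(\g)):=\Span_\CC\{\nff(x)\fm 0:x\in\g\}$ of the formal zero modes is closed under the bracket, and
\be [\nff(x)\fm 0,\nff(y)\fm 0]=\bigl(\nff(x)\vap 0\nff(y)\bigr)\fm 0=\nff([x,y])\fm 0.\nn\ee
This single identity will give the homomorphism property once $\L_0(\nff(\g))$ is known to be a Lie algebra.

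The remaining point is that this bracket is well defined, i.e.\ that it depends only on the formal modes $\nff(x)\fm 0$ and not on the chosen representatives $x$; equivalently, that $K:=\ker\bigl(x\mapsto\nff(x)\fm 0\bigr)$ is an ideal of $\g$. Once this is in hand the linear surjection $\g\to\L_0(\nff(\g))$ descends to an isomorphism $\g/K\isom\L_0(\nff(\g))$ transporting the bracket, so that antisymmetry and the Jacobi identity are inherited from $\g$, while the displayed identity says precisely that $x\mapsto\nff(x)\fm 0$ is a homomorphism. To see that $K$ is an ideal I would use the standard fact that in the space of formal modes $\wt\Mh(\np)\ox\CC((t))\big/\im(T\ox 1+\id\ox\del_t)$ one has $C\fm 0=0$ if and only if $C\in\im T$. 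In depth $1$ we have $T=\del$, whose image is the space of exact one-forms inside $\Omega_\Onp$; thus $x\in K$ forces $\nff(x)=\del D=TD$ for some $D\in\Onp$.

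Then for any $z\in\g$ I would compute, using the homomorphism identity of \cref{zeromodethm}, $\nff([x,z])=\nff(x)\vap 0\nff(z)=(TD)\vap 0\nff(z)$, and observe that $(TD)\vap 0=\res_z\del_z Y(D,z)=0$, since the zero mode of a total derivative vanishes and $Y(D,z)$ acts on all of $\wt\Mh(\np)$ without any convergence issue (it involves only the $\gam$-fields). Hence $\nff([x,z])=0$, so $[x,z]\in K$ and $K$ is an ideal, completing the argument. The one step that needs genuine care — and which I expect to be the main obstacle — is precisely this last computation: one must check that the a priori only formally defined product $(TD)\vap 0\nff(z)$ genuinely coincides with the well-defined $0$th product supplied by \cref{zeromodethm}, so that the vanishing of the operator $(TD)\vap 0$ may legitimately be invoked. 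This is exactly what is guaranteed by the absence of double contractions noted in the proof of \cref{zeromodethm}, namely that products between $\Omega_\Onp$ and $\imb(\Derc\Onp)$ involve only single contractions and hence no divergences.
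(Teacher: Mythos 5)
Your first paragraph is precisely the paper's proof: specialize \cref{comf} to $M=N=0$, so that only the $0$th product contributes and $[A\fm 0,B\fm 0]=(A\vap 0 B)\fm 0$, then invoke \cref{zeromodethm}. So the core of your argument is correct and coincides with the paper's.

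The remaining two paragraphs, on well-definedness of the bracket on the span of the zero modes (i.e.\ that $K:=\ker\bigl(x\mapsto\nff(x)\fm 0\bigr)$ is an ideal), address a point the paper leaves implicit, and your reasoning is sound; but the delicate case you flag as "the main obstacle" is in fact vacuous. If $\nff(x)=\del D$ is an exact one-form, then in particular $\imb(\vf(x))=0$, so $\vf(x)=0$, which forces $x\in\CC\cent$; and then also $\phi(x)=0$ (since $\cent\in\h\subset\b_+$ and $\phi$ vanishes on $\b_+$), so $\nff(x)=0$. Hence $K=\CC\cent$ is the centre, trivially an ideal, and the computation $(TD)\vap 0\,\nff(z)=0$ — together with the care about matching the formal product against the one supplied by \cref{zeromodethm} — is never actually needed. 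One small correction to the "standard fact" you quote: in general $C\fm 0=0$ if and only if $C\in\im T+\ker T$ (the paper makes exactly this point below \cref{Clocdef}, where it quotients by $\im T+\CC\vac$); for states of depth one this reduces to your statement, so no harm is done.
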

\begin{proof}
This follows from the theorem since, when the Lie bracket formula \cref{comf} for formal modes is specialized to the Lie bracket of formal \emph{zero} modes, only the $0$th product contributes: $[A\fm 0,B\fm 0] = (A\vap 0 B)\fm 0$. 
\end{proof}

\section{Proof of \cref{mainthm}}\label{sec: proof}
In \cref{pbo} we obtained a cocycle $\wt\vfd^*(\omega) \in  \H^2( L\g, \L(\Ogp))$.
In fact, we can be rather more precise: recall $\vfd(\cent) =0$, $\vfd(\cocent) = \Cocent$, and $\omega(\Cocent\fm N,\cdot) =0$. So we have actually defined a $\L(\Ogp)$-valued $2$-cocycle on the double-loop algebra 
\be \LLog := L(\Log) := \oc \g[t,t^{-1}] \ox\CC((s)).\nn\ee 
Our goal is to show that this cocycle is trivial. 

We shall follow closely the strategy of proof due to Feigin and Frenkel \cite{FF1990}, and specifically the treatment in  \cite[\S5.6]{Fre07}, \cite{Frenkel_2005}; cf. also \cite{FeiginBRST,FFGD}. 
The subspace $\Ogp\subset \Mfw$ is contained in the larger subspace 
\be \Mh_0 := \CC[\gam^{a,n}[-N]]_{a\in \I, n\in \ZZ, N\in \ZZ_{\geq 0}} \vac \subset \Mfw .\nn\ee
This subspace is a commutative vertex algebra. It is also an ideal for the action of $\Mfw\df1$ in the same sense as in \cref{Midlem}: $A\vap M B \in \Mh_0$ for all $A\in \Mh_0$, $B\in \Mfw\df 1$ and $M\geq 0$. It follows that
\be \wt\vfd^*(\omega) \in  \H^2( \LLog, \L(\Mh_0)) ,\nn\ee
and it is convenient to show our cocycle is zero in the latter space. 

To do so, we first show that the cocycle $\wt\vfd^*(\omega)$ actually belongs to the \emph{local} subcomplex of this CE complex. Let us define this local complex. 

\subsection{$\bee\cee$-system}
Let $\Cl$ denote the Clifford algebra with generators 
\be \bee_{a,n}[N],\quad \cee^{a,n}[N], \nn\ee
with $N\in \ZZ$ and $\ii a n \in \Ag$, and anticommutation relations 
\be [\bee_{\ia a n}[N],\bee_{\ia b m}[M]]_+ = 0,\quad 
[\bee_{\ia a n}[N],\cee^{\ia b m}[M]]_+ = \delta_{\ia a n}^{\ia b m} \delta_{N,-M} 1,\quad 
[\cee^{\ia a n}[N],\cee^{\ia b m}[M]]_+ = 0,\nn\ee
where we write $[X,Y]_+ := XY + YX$ for the anticommutator. 

Define $\Wh$ to be the induced $\Cl$-module generated by a vector $\vac$ such that
\be \bee_{\ia a n}[M]\vac = 0,\quad M\in \ZZ_{\geq 0}, \qquad 
    \cee^{\ia a n}[M]\vac = 0,\quad M\in \ZZ_{\geq 1}. \label{Wdef}\ee
for all ${\ii a n}\in \Ag$. It is $\ZZ\times Q$-graded just as is $\Mh$, cf. \cref{sec: vas}. 

$\Cl$ is a superalgebra (with all generators $\bee$, $\cee$ of odd degree) and its module $\Wh$ is a vector superspace.
$\Wh$ is moreover a vertex superalgebra (for the definition of which see e.g. \cite{KacVertexAlgBook,FrenkelBenZvi}). The state-field map $Y(\cdot,x): \Wh \to  \Hom\left(\Wh, \Wh((x))\right)$
is defined as follows. First,  
\begin{alignat}{2}
\bee_{\ia a n}(x) &:= Y\left(\bee_{\ia a n}[-1] \vac,x\right) 
                   &&= \sum_{N\in \ZZ} \bee_{\ia a n}[N] x^{-N-1},\nn\\
\cee^{\ia a n}(x) &:= Y\left(\cee^{\ia a n}[0] \vac,x\right) 
                   &&= \sum_{N\in \ZZ} \cee^{\ia a n}[N] x^{-N} ,\nn
\end{alignat}
and then in general if
\be A = \cee^{\ia{a_1}{n_1}}[-N_1]\dots \cee^{\ia{a_r}{n_r}}[-N_r] 
                \bee_{\ia{b_1}{m_1}}[-M_1]\dots \bee_{\ia{b_s}{m_s}}[-M_s] \vac, \label{wonstate}\ee 
then
\begin{multline} A(u) := Y(A,u) =\nord{\cee^{\ia{a_1}{n_1} (N_1)}(u) \dots \cee^{\ia{a_r}{n_r} (N_r)}(u) 
               \bee_{\ia{b_1}{m_1}}^{(M_1-1)}(u)\dots \bee_{\ia{b_s}{m_s}}^{(M_s-1)}(u)}.
\label{WAu}
\end{multline}
The normal-ordering here is defined just as in \cref{sec: vas} only with the addition of signs to allow for the superspace $\ZZ/2\ZZ$-grading: we have
\be \nord{Y(A,u) Y(B,v)} \,\,\, := \Biggl(\sum_{M<0} A\vap M u^{-M-1} \Biggr)Y(B,v) + (-1)^{p(A)p(B)} Y(B,v) \Biggl(\sum_{M\geq 0} A\vap M u^{-M-1}\Biggr).\label{supernord}\ee  
whenever $A$ and $B$ are in grades $p(A)$ and $p(B)$ respectively. 

We have the \emph{(super)translation operator} $T\in \End\Wh$ of the vertex superalgebra $\Wh$, defined by $TA := A\vap{-2}\vac$. It acts on monomials in the generators of $\Cl$ as a superderivation, according to
\be \left[T, \bee_{a,n}[-N]\right]_+ = -N \bee_{a,n}[N-1] ,\qquad \left[T, \cee^{a,n}[-N]\right]_+ = -(N-1)\cee^{a,n}[N-1], \nn\ee
and we have $T\vac = 0$. 


For each $r\geq 0$, let $\Wh_0^r$ denote the subspace of $\Wh$ spanned by states of the form 
\be \cee^{a_1,n_1}[-N_1] \dots \cee^{a_r,n_r}[-N_r]\vac, \nn\ee
with $a_1,\dots,a_r\in \I$, $n_1,\dots,n_r\in \ZZ$, $N_1,\dots,N_r\in \ZZ_{\geq 0}$.
The sum, $\Wh_0 := \bigoplus_{r=0}^\8 \Wh_0^r$ is a supercommutative vertex superalgebra.

\subsection{Chevalley-Eilenberg cochains for $\LpLog$ with coefficients in $\Mh_0$}\label{sec: CE}
Let $\LpLog$ denote the Lie algebra
\be \LpLog := \Log \ox \CC[[s]] .\nn\ee
It is a topological Lie algebra, with the the linear topology coming from the $\ZZ_{\geq 0}$-grading (i.e. the linear topology in which $\Log \ox s^N\CC[[s]]$, $N\in \ZZ_{\geq 0}$, are a base of the open neighbourhoods of $0$). 

The space $\Mh_0$ is a module over $\LpLog$.
Indeed, let us write $\JJ_{a,n} := \jota(\vfd(J_{a,n})) \in \Mfw$, that is
\be \JJ_{a,n} =   \sum_{(b,m)\in \Ag} R_{a,n}^{b,m}(\gam[0]) \bet_{b,m}[-1]\vac + \sum_{(b,c)\in \I} f_{ba}{}^c \SS^b_{c,n}[-1]\vac,\nn\ee
where we recall the definition \cref{Rdef} of the polynomials $R^{b,m}_{a,n}$ and the definition \cref{jotadef} of the injective map $\jota$. 
From \cref{jotalem} and the fact that  $\vfd:\g \to \Dg$ is a homomorphism of Lie algebras, it follows that
\begin{align}
    \JJ_{a,n} \vap 0 \JJ_{b,m} &\equiv f_{ab}{}^c \JJ_{c,n+m} \mod \Omega_\Og,\nn\\
    \JJ_{a,n} \vap 1 \JJ_{b,m} &\equiv 0  \mod \Og. \label{jopes}
\end{align}
The mode $\JJ_{a,n}\vap N$ restricts to a linear map $\Mh_0\to \Mh_0$ for all non-negative $N$, as in \cref{Midlem}(iii). 
This defines an action of $\LpLog$ because, in view the commutator formula \cref{com Am Bn} and \cref{jopes}, we have
\be [\JJ_{a,n}\vap N, \JJ_{b,m}\vap M] = f_{ab}{}^c \JJ_{c,n+m}\vap{N+M} \mod \L(\Mh_0), \qquad N,M\geq 0,
\nn\ee
(and equality modulo $\L(\Mh_0)$ is enough, cf. \cref{Midlem}(ii)).

As an $\LpLog$-module, $\Mh_0$ is smooth, which is to say that for all $v\in \Mh_0$ and all $a\in \g$, $a\fm N v =0$ for all sufficiently large $N$. In other words the action is continuous, when we endow $\Mh_0$ with the discrete topology. 

We have the $r$th exterior power $\extp^r \LpLog$ of the topological vector space $\LpLog$, with its natural topology.
The space of $r$-cochains of the Chevalley-Eilenberg complex of $\LpLog$ with coefficients in the module $\Mh_0$ is the space $C^r(\LpLog, \Mh_0) := \Homcont(\extp^r\LpLog,\Mh_0)$ of continuous linear maps
\be \lambda: \extp^r\LpLog \to \Mh_0.\nn\ee
Since $\Mh_0$ has the discrete topology, continuity means that for each such $\lambda$ there must be some $N$ such that $\lambda$ kills everything in grades $\geq N$. Thus
\begin{align} \Homcont\left(\extp^r\LpLog,\Mh_0\right) 
&= \bigoplus_{N\geq 0}\Hom\left( \left(\extp^r\LpLog\right)_N, \Mh_0\right).\label{Homs}
\end{align}

\subsubsection{Case of finite-dimensional $\g$}\label{sec: fd}
Let us digress to recall what happens when $\g$ is of finite dimension.
In that case the subspaces $\left(\extp^r \LpLog\right)_N$ appearing in \cref{Homs} are also all finite-dimensional, and so 
\begin{align}  
\bigoplus_{N\geq 0} \Hom\left(\left(\extp^r\LpLog\right)_N, \Mh_0\right) 
  &\cong \Mh_0 \ox \bigoplus_{N\geq 0} \left(\left(\extp^r \LpLog\right)_N\right)^*\nn\\
  &\cong \Mh_0 \ox \Wh_0^r,\nn
\end{align}
where in the second step we identify the restricted dual, $\bigoplus_{N\geq 0} \left(\left(\extp^r \LpLog\right)_N\right)^*$, with the space $\Wh_0^r$ spanned by states of the form $\cee^{a_1}[-N_1]\dots \cee^{a_r}[-N_r]\vac$, by means of the bilinear pairing given by
\begin{multline} \left(\cee^{a_1}[-N_1]\dots \cee^{a_r}[-N_r]\vac , J_{b_1}[K_1] \wedge \dots \wedge J_{b_r}[K_r] \right) \\\mapsto \begin{cases} \sign(\sigma) & \text{$a_i = b_{\sigma(i)}$ and $N_i = K_{\sigma(i)}$ for each $i$, for some $\sigma\in S_r$} \\ 0 & \text{otherwise} 
\end{cases} 
\nn
\end{multline}
(Here we picked a basis $J_a$ of $\g$, where $a$ runs over a finite set of indices.)
 
In this way, one has a linear isomorphism $C^\bl(\LpLog, \Mh_0) \cong \Mh_0\ox \Wh_0 $.

\subsubsection{Case of infinite-dimensional $\g$}\label{sec: tox}
Now we return to our case, in which $\g$ has countably infinite dimension. 
The tensor product $\Mh_0\ox \Wh_0$ now corresponds only to a subspace of $C^\bl(\LpLog,\Mh_0)$, and one which will not be large enough for our purposes. 
It is convenient simply to \emph{define}, for each $r$,
\be \Mh_0 \tox \Wh^r_0 := \bigoplus_{N\geq 0}\Hom\left( \left(\extp^r\LpLog\right)_N, \Mh_0\right) = C^r(\LpLog,\Mh_0) .\nn\ee

An element $\Phi\in \Mh_0\tox \Wh_0^r$ can be regarded as a possibly infinite sum of states of the form 
\be \cee^{a_1,n_1}[-N_1]\dots \cee^{a_r,n_r}[-N_r]v \in \Mh_0 \ox \Wh_0^r,\label{vform}\ee
with $\sum_{i=1}^r N_i\leq N$ for some bound $N$ depending on $\Phi$,\footnote{Note that we have not required these summands to have bounded depth overall, so the depths of the states $v\in \Mh_0$ could increase without bound as, say, $n_1$ increases. We shall only ever need elements of bounded depth, though.} and subject to the condition that for any $\lambda\in \extp^r \LpLog$, the pairing $(\Phi,\lambda)$, defined as follows, yields a well-defined (i.e. finite) element of $\Mh_0$.  
We first define the pairing between a state in $\Mh_0\ox \Wh_0^r$ of the form \cref{vform} and a vector in $\extp^r \LpLog$ of the form
\be J_{b_1,k_1}[K_1] \wedge \dots \wedge J_{b_r,k_r}[K_r] \label{lamform}\ee
by declaring that 
\begin{multline} \left(\cee^{a_1,n_1}[-N_1]\dots \cee^{a_r,n_r}[-N_r]m , J_{b_1,k_1}[K_1] \wedge \dots \wedge J_{b_r,k_r}[K_r] \right) \\\mapsto \begin{cases} \sign(\sigma) m & \text{$a_i = b_{\sigma(i)}$, $n_i= k_{\sigma(i)}$ and $N_i = K_{\sigma(i)}$ for each $i$, for some $\sigma\in S_r$} \\ 0 & \text{otherwise} 
\end{cases} 
\nn
\end{multline}
Then we extend by linearity in both slots.  

\begin{rem}\label{rem: compact}
Neither $\LpLog$ nor $\extp^r \LpLog$ are complete. For example, pick an $a\in \I$ and let $s_N = \sum_{n=1}^N J_{a,n}[n]$ for $N=1,2,3,\dots$. This sequence is Cauchy but not convergent (because the infinite sum $\sum_{n=1}^\8 J_{a,n}[n]$ does not belong to $\LpLog = \g\ox \CC[[t]]$). 

By not completing these spaces, we preserve a useful property of ``compact support'': for any $\lambda \in \extp^r \LpLog$ there exists some $k$ such that $\lambda$ can be written as a possibly infinite linear combination of terms of the form \cref{lamform} \emph{with $|k_i| < k$ for each $i=1,\dots, r$}.
\end{rem}

\subsection{The complex $C^\bl(\LpLog,\Mh_0)$ and the state $\Q$}\label{Qrem} Thus, we have the spaces of the Chevalley-Eilenberg complex $C^\bl(\LpLog,\Mh_0)$ of $\LpLog$ with coefficients in the module $\Mh_0$. 
The differential $d: C^r(\LpLog,\Mh_0) \to C^{r+1}(\LpLog,\Mh_0)$ is given by the usual formula,
\begin{align} (d f)(x_1,\dots,x_{r+1}) := \sum_{p=1}^{r+1} (-1)^{p+1} x_p \on f(x_1,\dots,x_{p-1},x_{p+1},\dots,x_{r+1}) \nn\\
+ \sum_{1\leq p< q \leq r+1} (-1)^{p+q} f([x_p,x_q],x_1,\dots,x_{p-1},x_{p+1},\dots,x_{q-1},x_{q+1},\dots,x_{r+1}) .\label{CEdiff}
\end{align}

Informally, we may introduce a state $\Q$ given by 
\be \Q := \sum_{(a,n) \in \Ag} \cee^{a,n}[0] \JJ_{a,n} 
- \half \sum_{\substack{a,b,c\in \I\\ n,m\in \ZZ}} f_{ab}{}^c \cee^{a,m}[0] \cee^{b,n}[0] \bee_{c,n+m}[-1] \vac. \nn\ee
This is by analogy with the definition of $\Q$ in the case where $\g$ has finite dimension.
In our case, $\Q$ does not belong to the tensor product $\Mfw\ox \Wh_0$, because of the infinite sums on $n,m$. It belongs to a suitable completion. One should not expect the vertex superalgebra structure to extend to that completion, just as $\wt\Mh$ was not a vertex algebra in \cref{sec: wtMh}. Since $\Q$ is the only such state we shall actually need, let us avoid a formal definition. Instead, we shall check that $\Q$ has the specific properties we need, as they arise.

Here is the first such property. The zero mode $\Q\vap 0$ unambiguously defines a linear map $\Mh_0\tox\Wh_0\to \Mh_0\tox\Wh_0$. (It involves only single contractions.) One easily sees that it coincides with the CE differential:
\be d\Phi = \Q \vap 0 \Phi .\nn\ee

\subsection{The local complex $\Cloc^\bl(L_\g,\L(\Mh_0))$}
Recall the definition of the local Lie algebra $\L(\vla)$ from \cref{sec: loc lie}. One sees from \cref{comf} that the formal zero modes generate a Lie subalgebra, which we shall denote by
\be \L_0(\vla)\subset \L(\vla).\label{L0def}\ee 
We can apply this in particular to the subspaces $\Mh_0 \ox \Wh_0^r$ of the supercommutative vertex superalgebra $\Mh_0\ox\Wh_0$, to obtain the vector superspaces (in fact, supercommutative Lie superalgebras) $\L_0(\Mh_0 \ox \Wh_0^r)$. We see that these spaces are given by
\begin{align} 
\L_0(\Mh_0 \ox \Wh_0^r)
&= \bigl.\left(\sum_{k\geq 0} T^k  ( \Mh_0 \ox \Wh_0^r) \right)\bigr/ (\im T + \CC\vac)\nn\\
&= \bigl.\left(  \Mh_0 \ox \Wh_0^r\right) \bigr/ (\im T + \CC\vac).\nn
\end{align}
(Note that we have to quotient by the subspace $\CC\vac= \ker T$ too, not just $\im T$, because the zero mode $\vac\fm 0 = \vac \ox t^0 = \del_t(\vac \ox t) = (T+ \del_t) (\vac \ox t) \equiv 0$ does vanish in $\L(\Mh_0 \ox \Wh_0^r)$.)

Now, when $\g$ has finite dimension, the spaces of the local complex are, by definition, precisely these $\L_0(\Mh_0 \ox \Wh_0^r)$. In our case, in which $\g$ has countably infinite dimension, we must use the larger spaces $\Mh_0 \tox \Wh_0^r$ from \cref{sec: tox}. By analogy with the above, let us define
\be \Cloc^r(\LLog,\L(\Mh_0)) := \bigl.\left(  \Mh_0 \tox \Wh_0^r\right) \bigr/ (\im T + \CC\vac)\label{Clocdef}\ee
for each $r$. Here we use the fact that the definition of the translation operator extends in a well-defined way to $\Mh_0 \tox \Wh_0^r$. 

Let us write $\int$ for the projection map $\int: C^r(\LpLog, \Mh_0) \to \Cloc^r(\LLog,\L(\Mh_0)) ;\quad \Phi \mapsto \int \Phi := \Phi[0]$. 
Let us attempt to define a differential 
\be d:  \Cloc^r(\LLog, \L(\Mh_0)) \to \Cloc^{r+1}(\LLog, \L(\Mh_0))\nn\ee
by setting 
\be d(\Phi[0]) := (d\Phi)[0] = (\Q\vap 0 \Phi)[0] .\nn\ee
The fact that this is a consistent definition is a consequence of the following lemma. 
\begin{lem}\label{lem: Cloccd}
We have the commutative diagram
\be\label{Cloccd}
\begin{tikzcd}
\Cloc^r(\LLog, \L(\Mh_0))\rar{d}&\Cloc^{r+1}(\LLog, \L(\Mh_0))\\
C^r(\LpLog, \Mh_0) \uar{\int}\rar{d} & C^{r+1}(\LpLog, \Mh_0) \uar{\int} \\
C^r(\LpLog, \Mh_0)\uar{T}\rar{d}& C^{r+1}(\LpLog, \Mh_0)\uar{T} .
\end{tikzcd}\ee
\end{lem}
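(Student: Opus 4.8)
The plan is to prove the two squares of \cref{Cloccd} separately, treating the lower square (the one joined by $T$) as the substantive statement and the upper square (joined by $\int$) as essentially the definition of the induced differential on the local complex. First I would record the two facts already established: on $C^\bullet(\LpLog,\Mh_0) = \Mh_0\tox\Wh_0^\bullet$ the Chevalley--Eilenberg differential is the zero mode $d = \Q\vap 0$ (as noted just before this lemma, and well defined on the completion because $\Q\vap 0$ acts only through single contractions), and the projection $\int$ sends a cochain $\Phi$ to its formal zero mode $\Phi\fm 0 \in \Cloc^\bullet(\LLog,\L(\Mh_0))$, whose kernel is precisely $\im T + \CC\vac$ by the description of $\Cloc^r$ in \cref{Clocdef}.

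For the lower square I must show $T\circ d = d\circ T$ as maps $C^r \to C^{r+1}$. Since $d=\Q\vap 0$ is a zero mode, this is the general vertex-superalgebra identity $[T, A\vap 0] = (TA)\vap 0 = 0$, the factor $-n$ in $(TA)\vap n = -n\,A\vap{n-1}$ vanishing at $n=0$ (see \cref{sec: vas} for the action of $T$). Because $\Q\vap 0$ acts on $\Mh_0\tox\Wh_0$ only by single contractions, this identity can be applied term by term to each monomial occurring in $\Q$ and summed, the resulting sum being finite on each cochain; hence $[T,\Q\vap 0]=0$ verbatim on the completion. This is the only place where care with the completion is genuinely needed.

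With the lower square in hand, the well-definedness of the induced differential on $\Cloc$ — equivalently, the commutativity of the upper square — follows from a short kernel computation, namely $d(\ker\int)\subseteq \ker\int$. For $\Xi = T\Lambda \in \im T$ we get $d\Xi = d\,T\Lambda = T\,d\Lambda \in \im T$ by the lower square, and $\int$ kills $\im T$ since $(TX)\fm 0 = 0$ by the relations defining the local Lie algebra in \cref{sec: loc lie}; in degree $0$ one has in addition $d\vac = \Q\vap 0 \vac = 0$, since $A\vap N\vac = 0$ for every state $A$ and all $N\geq 0$. Thus $d$ descends to $\Cloc^r\to\Cloc^{r+1}$, and the defining prescription $d(\Phi\fm 0) = (d\Phi)\fm 0 = \int(d\Phi)$ is exactly the statement $d\circ\int = \int\circ d$, i.e. the upper square. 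Both squares therefore commute.

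The main obstacle, and it is a mild one, is purely the completion: I expect the only real checking to be that $\Q\vap 0$ and its commutator with $T$ stay well defined and term-by-term computable on $\Mh_0\tox\Wh_0$, for which the single-contraction property of $\Q\vap 0$ recorded before the lemma is exactly what is needed. Everything downstream of that — the kernel analysis and the identification of the upper square with the definition of the induced $d$ — is formal.
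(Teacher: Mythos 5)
Your proposal is correct and follows essentially the same route as the paper: the substantive content is the lower square, proved by the identity $T(\Q\vap 0\Phi) = (T\Q)\vap 0\Phi + \Q\vap 0 T\Phi = \Q\vap 0 T\Phi$ (since $(T\Q)\vap 0 = 0$), applied term by term using the single-contraction property of $\Q\vap 0$ on the completion, while the upper square is definitional. Your explicit kernel check $d(\im T + \CC\vac)\subseteq \im T + \CC\vac$ is a slightly more detailed spelling-out of what the paper treats as the "consistency of the definition" of $d$ on $\Cloc$, but it is the same argument.
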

\begin{proof}
What has to be checked is that the lower square is commutative. This is seen by direct calculation: as in \cref{Qrem}, there is a well-defined notion of the zero mode $\Q\vap 0$ of $\Q$ acting on $\Mh_0\tox\Wh_0$, and we check that 
$Td\Phi = T (\Q\vap 0 \Phi) = (T\Q) \vap 0 \Phi + \Q \vap 0 T\Phi = \Q \vap 0 T\Phi= d T\Phi$.
\end{proof}

Thus, we obtain a complex, $(\Cloc^\bl(\LLog,\L(\Mh_0)),d)$. This is the \emph{local complex}. 
As the notation suggests (and as we now check) it forms a subcomplex of the usual Chevalley-Eilenberg complex $C^\bl(\LLog, \L(\Mh_0))$ of $\LLog$ with coefficients in $\L(\Mh_0)$.

Consider first a state $\Psi \in \Mh_0 \ox \Wh_0^r \subset \Mh_0\tox \Wh_0^r$ of the form 
\be \Psi = \cee^{a_1,n_1}[-N_1]\dots \cee^{a_r,n_r}[-N_r] v. \nn\ee
We can apply the state-field map to this state, and in particular we can take the zero mode $\Psi\vap 0\in \End(\Mh\ox \Wh)$. This zero mode $\Psi\vap 0$ is a (generically infinite) sum of terms 
\be \cee^{a_1,n_1}[M_1] \dots \cee^{a_r,n_r}[M_r] v[M] \label{typmod}\ee
(Here $v[M] \in\L(\Mh_0)$ is the $M$th  mode of the state $v\in \Mh_0$.\footnote{Here and in what follows we are equivocating between formal modes $X\fm M\in \L(\Mh\ox\Wh)$ and modes $X\vap M\in \End(\Mh\ox\Wh)$ of states in $X\in \Mh\ox\Wh$. There is no loss in this because the vertex superalgebra $\Mh\ox\Wh$ of free fields has the property that the Lie algebra homomorphism $\L(\Mh\ox\Wh) \to \End\Mh\ox\Wh$ is injective.})

We have the exterior algebra $\extp^r \LLog$.
The pairing from \cref{sec: tox} goes over to this setting, and using it we can certainly interpret each term \cref{typmod} as an $r$-cochain in
\be C^r(\LLog, \L(\Mh_0) := \Homcont(\extp^r \LLog,\L(\Mh_0)).\nn\ee 
Moreover, the infinite sum $\Psi[0]$ is again a well-defined $r$-cochain, i.e. it is continuous, when  $\L(\Mh_0)$ gets its natural (i.e. $t$-adic) linear topology. 

These statements are exactly as in the case in which $\g$ has finite dimension. The only new aspect of the present case is that a general state $\Phi\in \Mh_0 \tox \Wh_0^r$ may itself be an infinite sum of such states $\Psi\in \Mh_0 \ox \Wh_0^r$, subject to the conditions we gave in \cref{sec: tox}. But, for any given $\mu \in \extp^r \LLog$, we can arrange that for only finitely many of these summands $\Psi$ does $\Psi[0]$ have nonzero pairing with $\mu$. (This is clear from the  notion of ``compact support'' from \cref{rem: compact}.) In this way, we can indeed interpret $\Phi[0]\in \Cloc^r(\LLog,\L(\Mh_0))$ as an element of $C^r(\LLog,\L(\Mh_0))$. 

Finally, one checks that the derivative on $\Cloc^\bl(\LLog,\L(\Mh_0))$ coincides with the usual Chevalley-Eilenberg derivative. 

\subsection{The cocycle $\upomega[0]$}
As the relevant example for us, consider the state $\upomega\in \Mh_0\tox \Wh_0^2$ given by 
\begin{align} 
\upomega 
&:=
- \cee^{a,n}[-1] \cee^{b,m}[0] 
                 \frac{\del R_{a,n}^{c,p}}{\del \gam^{d,q}[0]} 
                 \frac{\del R_{b,m}^{d,q}}{\del \gam^{c,p}[0]} \vac
- \cee^{a,n}[0] \cee^{b,m}[0] 
          \left[T,\frac{\del R_{a,n}^{c,p}}{\del \gam^{d,q}[0]}\right] 
                 \frac{\del R_{b,m}^{d,q}}{\del \gam^{c,p}[0]} \vac \label{upomegadef}
\end{align}
where the polynomials $R$ are as we defined them in \cref{Rdef}. 

(Here and below we use summation convention, for brevity.)

The formal zero mode $\upomega\fm 0$ is realized in $\End(\Mh_0\tox\Wh_0^2)$ as the following infinite sum
\begin{align} \upomega\vap 0 &= -\int  \cee^{a,n\, \prime}(x) \cee^{b,m}(x) 
                 Y\left(\frac{\del R_{a,n}^{c,p}}{\del \gam^{d,q}[0]} 
                 \frac{\del R_{b,m}^{d,q}}{\del \gam^{c,p}[0]} \vac,x\right)dx\nn\\
&\qquad- \int \cee^{a,n}(x) \cee^{b,m}(x) 
          Y\left(\left[T,\frac{\del R_{a,n}^{c,p}}{\del \gam^{d,q}[0]}\right] 
                 \frac{\del R_{b,m}^{d,q}}{\del \gam^{c,p}[0]} \vac,x\right)dx\nn\\
&= -\sum_{K,L\in \ZZ} K \cee^{a,n}[-K] \cee^{b,m}[-L] 
                 \left(\frac{\del R_{a,n}^{c,p}}{\del \gam^{d,q}[0]} 
                 \frac{\del R_{b,m}^{d,q}}{\del \gam^{c,p}[0]} \vac\right)[K+L-1]\nn\\
&\qquad- \sum_{K,L\in \ZZ} \cee^{a,n}[-K] \cee^{b,m}[-L] 
          \left(\left[T,\frac{\del R_{a,n}^{c,p}}{\del \gam^{d,q}[0]}\right] 
                 \frac{\del R_{b,m}^{d,q}}{\del \gam^{c,p}[0]} \vac\right)[K+L].\nn
\end{align}
At the same time, from \cref{omegadef} we have 
\begin{multline} \wt\vfd^*(\omega)(J_{a,n}\fm K,J_{b,m}\fm L) \\
 = -K \left(\frac{\del R_{a,n}^{c,p}}{\del \gam^{d,q}[0]} 
                 \frac{\del R_{b,m}^{d,q}}{\del \gam^{c,p}[0]} \vac\right)\fm{K+L-1}
 - \left(\left[T,\frac{\del R_{a,n}^{c,p}}{\del \gam^{d,q}[0]}\right] 
                 \frac{\del R_{b,m}^{d,q}}{\del \gam^{c,p}[0]} \vac\right)\fm{K+L}.\nn
\end{multline}
We see that $\upomega\fm 0$ is identified with our cocycle $\wt\vfd^*(\omega)$, and so our cocycle belongs to the local complex. 

\subsection{Submodules $\Mh_0(\np)$ and $\Mh_0(\nm)$}
To proceed, we need more information about the structure of $\Mh_0$ as an $\LpLog$-module. 
Recall that
\begin{align}
\Onp &= \CC[X^{a,n}]_{\ii a n\in \A},\nn\\
\Onm &= \CC[X^{a,n}]_{\ii a n\in \Am}\nn
\end{align}
and that our action of $\g$ on $\Og$ stabilizes both of these, as in \cref{stabprop}. 
It follows that if we now define 
\begin{align} 
\Mh_0(\np) &:=   \CC[\gam^{a,n}[-N]]_{\ii a n \in \A,N\geq 0}\vac,\nn\\
\Mh_0(\nm) &:=   \CC[\gam^{a,n}[-N]]_{\ii a n \in \Am,N\geq 0}\vac,\nn
\end{align}
then our action of $\LpLog$ on $\Mh_0$ (given by $J_{a,n}[N] v = \JJ_{a,n} \vap N v$, as in \cref{sec: CE}) stabilizes these subspaces (in fact, commutative vertex subalgebras) of $\Mh_0$. (Cf. \cref{Lpgstab}.) 

As modules over $\LpLog$, these subspaces turn out to be isomorphic to contragredient Verma modules, as we now describe.

\subsection{Contragredient Verma modules}



The \emph{contragredient Verma module} $M_\lambda^*$ over $\g$ of highest weight $\lambda\in \h^*$ is by definition the coinduced left $U(\g)$-module
\be M_\lambda^* = \Coind_{\b_-}^\g \CC v_\lambda := \Homres_{U(\b_-)}(U(\g),\CC v_\lambda), \nn\ee
where $\CC v_\lambda$ denotes the one-dimensional $U(\b_-)$-module defined by $\nm\on v_\lambda = 0$ and $h\on v_\lambda = \lambda(h) v_\lambda$ for $h\in \h$.
Here $\Homres$ means the following: we have the isomorphism of vector spaces
\begin{align} \Hom_{U(\b_-)}(U(\g),\CC v_\lambda) &\cong \Hom_{U(\b_-)}(U(\b_-)\ox U(\np),\CC v_\lambda)\nn\\ &\cong \Hom_{\CC}(U(\np),\CC)= U(\np)^*, \nn
\end{align}
and $\Homres$ means we allow only maps that, under this isomorphism, belong to the \emph{restricted dual}
$U(\np)^\vee := \bigoplus_{\alpha\in Q} (U(\np)_\alpha)^* \subset U(\np)^*$
of the $Q$-graded vector space $U(\np)$. 
The isomorphism above is also one  of left $U(\np)$ modules. So, as left $U(\np)$-modules,
\be M_\lambda^* \cong U(\np)^\vee. \label{mlu}\ee 

We also have the contragredient Verma modules ``in the opposite category $\scr O$'', i.e. the twists of the modules above by the Cartan involution $\cai$ of \cref{caidef}. 

Define $M_\lambda^{*,\cai}$ to be the coinduced left $U(\g)$-module
\be M_\lambda^{*,\cai} = \Coind_{\b_+}^\g \CC v^-_\lambda := \Homres_{U(\b_+)}(U(\g),\CC v^-_\lambda). \nn\ee
where $\CC v^-_\lambda$ denotes the one-dimensional $U(\b_+)$-module defined by $\np\on v^-_\lambda = 0$ and $h\on v^-_\lambda = \lambda(h) v^-_\lambda$ for $h\in \h$.

As vector spaces, and as modules over $U(\nm)$, we have
\be M_\lambda^{*,\cai} \cong U(\nm)^\vee. \nn\ee

These definitions go over to the half loop algebra $L_+ \g$ in an obvious way: $\CC v_\lambda$ becomes a module over $L_+ \b_-$ if we declare that $(\b_- \ox t\CC[[t]])\on v_\lambda = 0$ and then we get the left $U(L_+ \g)$-module $\Homres_{U(L_+\b_-)}(U(\LpLog),\CC v_\lambda)$, and likewise its twist by $\cai$. 

\begin{prop}\label{asgmods} $ $
\begin{enumerate}[(i)]
\item There are isomorphisms of $\g$-modules
\begin{align}
    \Onp &\cong \Homres_{U(\b_-)}(U(\g),\CC v_0),\nn\\
    \Onm &\cong \Homres_{U(\b_+)}(U(\g),\CC v_0) .\nn
\end{align}
\item There are isomorphisms of $L_+\g$-modules
\begin{align}
    \Mh_0(\np) &\cong \Homres_{U(L_+\b_-)}(U(L_+\g),\CC v_0) \nn\\
    \Mh_0(\nm) &\cong \Homres_{U(L_+\b_+)}(U(L_+\g),\CC v_0).\nn
\end{align}
\end{enumerate}
\end{prop}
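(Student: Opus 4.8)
The plan is to treat part (i) first and deduce everything else from it. The essential input is the classical fact that the ring of functions on the big cell of a flag variety is a contragredient Verma module; the work is to make this go through in the pro-algebraic, completed setting. I would obtain the $\g$-map by Frobenius reciprocity rather than write it down and check equivariance by hand. Since full coinduction $\Coind_{\b_-}^\g(-)=\Hom_{U(\b_-)}(U(\g),-)$ is right adjoint to restriction, a $\g$-module map $\Onp\to\Hom_{U(\b_-)}(U(\g),\CC v_0)$ is the same datum as a $\b_-$-module map $\Onp\to\CC v_0$. The candidate for the latter is evaluation at the identity, $\ell\colon F\mapsto F(e)v_0$ (the constant term of $F$). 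Here $\g$ acts on $\Onp$ through $\vfd$, and I would first note that this action coincides with the realization $\vf$ of \cref{vfdef}: by \cref{stabprop} the action stabilizes $\Onp$, and the component $\tau\circ\vf\circ\cai$ lies in $\Derc\Onm$, whose derivatives are taken in the variables $X^{a,n}$ with $(a,n)\in\Am$ and therefore annihilate $\Onp=\CC[X^{a,n}]_{(a,n)\in\A}$.

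The $\b_-$-equivariance of $\ell$ (with the trivial action on $\CC v_0$) is exactly the vanishing $(\vf(b)F)(e)=0$ for $b\in\b_-$, and this is a grading statement: by the last clause of \cref{Plem}, the coefficient polynomial $P_b^{a,n}$ lies in $\Onp_{\wgt(b)-\wgt(a,n)}$, so its constant term can be nonzero only when $\wgt(b)=\wgt(a,n)$; but $\wgt(a,n)\in\Delta_+$ for $(a,n)\in\A$ while $\wgt(b)\in\{0\}\cup\Delta_-$, so this never happens and $P_b^{a,n}(e)=0$. Reciprocity then produces a canonical $\g$-map $\Psi\colon\Onp\to\Hom_{U(\b_-)}(U(\g),\CC v_0)$, explicitly $\Psi(F)(u)=(\vf(u)F)(e)$ with $\vf$ extended to an algebra map on $U(\g)$. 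Because $\vf$ respects the $Q$-gradation and $F$ is a polynomial (hence has finitely many weights), $\Psi(F)$ pairs nontrivially with only finitely many weight spaces of $U(\np)$; so $\Psi$ lands in the restricted dual, i.e. in $\Homres$, matching \cref{mlu}.

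It remains to show $\Psi$ is bijective, which amounts to nondegeneracy of the pairing $\Onp\times U(\np)\to\CC$, $(F,u)\mapsto(\vf(u)F)(e)$. I would argue by triangularity: from \cref{vfdef} (and the examples in \cref{sl2diffs}) one has $\vf(J_{a,n})=D_{a,n}+(\text{terms with a factor }X)$ for $(a,n)\in\A$, so for a length-$r$ ordered PBW monomial $u$ the top degree-lowering part of $\vf(u)$ is exactly $\prod_i D_{a_i,n_i}$, with commutators and correction terms strictly lower. Working inside a fixed finite-dimensional weight space $\Onp_{-\beta}$, paired with the corresponding graded piece of $U(\np)$, and filtering by polynomial degree on one side and PBW length on the other, the pairing matrix is block-triangular with diagonal blocks equal to the standard nondegenerate pairing on $\Sym(\np)$; hence the pairing is perfect and $\Psi$ is an isomorphism. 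The second isomorphism of (i), for $\Onm$, follows by applying the Cartan involution: by \cref{vfdequi} the action on $\Onm$ is the $\cai$-twist of the action on $\Onp$, and $\tau$ interchanges $\b_-$ with $\b_+$, so transporting $\Psi$ through $\tau$ and $\cai$ yields the $\b_+$-coinduced description.

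For part (ii) I would run the identical argument with $\g$ replaced by $L_+\g$, $\b_-$ by $L_+\b_-$, and $\Onp$ by $\Mh_0(\np)$; the base point $e$ is replaced by the vacuum $\vac$, and evaluation at $e$ by the projection onto the coefficient of $\vac$. The $L_+\b_-$-equivariance again reduces to showing that the modes $\JJ_{a,n}\vap N$ with $J_{a,n}\in\b_-$ and $N\geq0$ produce no $\vac$-term: the $\SS^b_{c,n}$-summands of $\JJ_{a,n}$ act on $\Mh_0(\np)$ as derivations that kill $\vac$ and whose vector-field coefficients vanish at $\vac$, while the $\bet$-summands contribute a $\vac$-term only through a weight-$0$ coefficient, excluded by the same grading argument as before. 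The leading-term analysis now reads $\JJ_{a,n}=\bet_{a,n}[-1]\vac+\dots$, giving the standard pairing $\gam^{a,n}[-N]\leftrightarrow J_{a,n}[N]$ between $\Mh_0(\np)$ and $U(L_+\np)$, again perfect by triangularity, so $\Psi$ is an isomorphism; the $\Mh_0(\nm)$ case follows by the Cartan twist. I expect the main obstacle to be not the algebra but the bookkeeping of completions: one must verify that $\Psi(v)$ is genuinely continuous and lands in the \emph{restricted} dual $U(L_+\np)^\vee$ rather than the full dual (so the target really is $\Homres$), and that the triangularity and nondegeneracy of the pairing survive in the topological setting where $L_+\g=\g\ox\CC[[t]]$ and $\Mh_0(\np)$ is smooth. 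This is precisely where the depth-boundedness and ``compact support'' features recorded in \cref{sec: tox} and \cref{rem: compact} must be invoked to keep every sum finite.
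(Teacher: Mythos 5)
Your proposal is correct and is essentially the paper's own argument: the same constant-term pairing $U(\np)\times\Onp\to\CC$ made nondegenerate by the leading-term structure $\vf(J_{a,n})=D_{a,n}+\dots$ of \cref{Jact}, and the same coinduction adjunction, which the paper phrases as the universal property of the coinduced module applied to the quotient of $\Onp$ by its augmentation ideal — exactly your evaluation map $\ell$. The only cosmetic difference is that you conclude bijectivity directly from perfectness of the graded pairing, whereas the paper first establishes $\Onp\cong U(\np)^\vee$ as $U(\np)$-modules and then separately proves injectivity of the canonical $\g$-map before comparing graded dimensions; part (ii) proceeds in both by the same argument with the $Q\times\ZZ_{\geq 0}$-gradation (whose finite-dimensional graded pieces are what keep all sums finite, rather than the machinery of \cref{sec: tox}).
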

\begin{proof}
The proof is the same as in the case of $\g$ of finite type in \cite[\S5.2.3,\S5.6.3]{Fre07}. For completeness let us go through the steps.

First we show that $\Onp \cong U(\np)^\vee$ as $\np$-modules.
To do that we consider the pairing $U(\np) \times \Onp \mapsto \CC; (x,P) \mapsto \la x,P\ra := \left.x\on P\right|_{0\in \np}$. It respects the $Q$-gradations of $U(\np)$ and $\Onp$, in the sense that $U(\np)_\alpha$ pairs as zero with $\Onp_{\beta}$ unless $\alpha+\beta = 0$. Consider the restriction of the pairing to $U(\np)_\alpha \times \Onp_{-\alpha}$ for some $\alpha\in Q_{>0}$. Recall our ordered basis $\B_+ = \{J_{a,n}\}_{ \ii a n\in \A}$ of $\np$ from \cref{Bplus} and \cref{Bpdef}. We have the PBW basis of $U(\np)_\alpha$ consisting of ordered monomials these basis elements, and we have the basis of $\Onp_{-\alpha}$ consisting of monomials in the $X^{a,n}$, $\ii a n\in \A$. Both these bases have the lexicographical ordering coming from the ordering of $\A$. The action of $J_{a,n}$, $(a,n)\in A$, on $\Onp$ is by a differential operator of the form
\be D_{a,n} + \sum_{\substack{(b,m) \in \A\\ \wgt(J_{b,m}) -\wgt(J_{a,n}) \in Q_{>0}}} P^{b,m}_{a,n}(X)D_{b,m} \label{Jact}\ee  
From this one sees that the matrix of the restricted pairing with respect to the two ordered bases above is diagonal with non-zero entries. Thus the restriction of the pairing to $U(\np)_\alpha \times \Onp_{-\alpha}$ is non-degenerate, for each $\alpha\in Q_{>0}$. This shows that $\Onp \cong U(\np)^\vee$ as a vector space. But the pairing is also manifestly $\np$-invariant: $\la x e_i, P\ra = \la x, e_i P\ra$. This shows that $\Onp \cong U(\np)^\vee$ as left modules over  $U(\np)$,  where the left $U(\np)$-module structure on $U(\np)^\vee$ is the canonical one, coming from the right  action of $U(\np)$ on itself by right multiplication.  

Thus, given \cref{mlu}, we have $\Onp \cong M_0^*$ as $\np$-modules. Now we show it is an isomorphism of $\g$-modules. 
The coinduced module $M_\lambda^*$ has the following universal property. Suppose $M$ is a $\g$-module and $N\subset M$ a $\b_-$-submodule of $M$ such that the quotient $M/N$ is isomorphic to $\CC v_\lambda$ as a $\b_-$-module. Then there is a homomorphism of $\g$-modules $M\to M_\lambda^*$ sending $v\mapsto v_\lambda^*$, where $v\in M$ is such that $v+N$ spans $M/N$, and where $v_\lambda^*\in M_\lambda^*$ is a non-zero vector of weight $\lambda$. In our case, as a $\b_-$-module, $\Onp$ has the submodule $N = \bigoplus_{\alpha\in Q_{>0}} \Onp_{-\alpha}$ (or equivalently, the ideal in $\Onp$ generated by $(X^{a,n})_{(a,n)\in\A}$). The quotient $\Onp/N$ is a $\b_-$-module of dimension one, spanned by the class $1+N$ of the vector $1\in \Onp$. This vector $1+N$ has weight zero and is annihilated by $\nm$ (since $\nm\on 1 \in N$). Hence there exists a homomorphism of $\g$-modules $\phi: \Onp \to M_0^*$ sending $1$ to a non-zero vector $v_0^*\in M_0^*$ of weight zero. Now, for any $P\in \Onp$ there exists $x\in U(\np)$ such that $x\on P = 1$: indeed, take the last nonzero monomial $m$ of $P$ with respect to the lexicographical ordering and consider the corresponding PBW basis element $m^*$ of $U(\np)$. We see that $m^*\on m$ is a nonzero multiple of $1$. Thus $x\on \phi(P) = \phi(x\on P) = \phi(1) = v_0^*$ is nonzero and hence $\phi(P)$ is also nonzero. That is, $\phi: \Onp \to M_0^*$ is injective. But we know $\Onp \cong M_0^*$ as an $\np$-module, as above, so in fact $\phi$ must be a bijection. This completes the proof that $\Onp \cong M_0^* \equiv \Homres_{U(\b_-)}(U(\g),\CC v_0)$ as $\g$-modules. 

The argument for $ \Onm \cong \Homres_{U(\b_+)}(U(\g),\CC v_0)$ is the same, just twisted by the Cartan involution $\cai$ so that $\Onp$ and $\Onm$, and $U(\np)$ and $U(\nm)$, are interchanged. (Compare \cref{vfddef}.)

For part (ii), the argument is again essentially the same, with the $Q$-gradation above replaced by the $Q\times \ZZ_{\geq 0}$ gradation. One shows first that $\Mh_0(\np) \cong U(L_+\np)^\vee$ as $L_+\np$-modules, and then uses that fact to show that the canonical homomorphism of $L_+\g$-modules $\Mh_0(\np)\to \Homres_{U(L_+\b_-)}(U(L_+\g),\CC v_0)$ is an isomorphism.
\end{proof}

Therefore there is an isomorphism of modules over $\g'/\CC\cent \cong L\oc\g:=\Log$,
\begin{align}
    \Onp &\cong \Homres_{U(\b'_-/\CC\cent)}(U(\g'/\CC\cent),\CC v_0) \nn
\end{align}
(where $\b'_- := \g' \cap \b_-$) and an isomorphism of modules over $L_+(\g'/\CC\cent) \cong \LpLog$,
\begin{align}
    \Mh_0(\np) &\cong \Homres_{U(L_+(\b'_-/\CC\cent))}(U(L_+(\g'/\CC\cent)),\CC v_0).\nn
\end{align}

Given an  $r$-cochain $\extp^r \LpLog \to \Mh_0(\np)$, we may restrict it to $\extp^r L_+\oc\h$ and then compose the resulting map $\extp^r L_+\oc\h \to \Mh_0(\np)$ with the canonical projection of $\LpLog$-modules $\Mh_0(\np) \to \CC v_0$. This defines a map of complexes
\be  \mu: C^\bl(\LpLog, \Mh_0(\np)) \to C^\bl(L_+\oc\h,\CC v_0). \nn\ee
\begin{lem}\label{lem: quasi-isom}
This map $\mu$ is a quasi-isomorphism, i.e. it induces an isomorphism of the cohomologies,
\be H^\bl(\LpLog,\Mh_0(\np)) \cong H^\bl(L_+\oc\h, \CC v_0).\nn\ee
\end{lem}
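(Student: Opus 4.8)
The plan is to transport the Feigin--Frenkel computation in finite type (\cite[\S5.2.3, \S5.6.3]{Fre07}) to the present double-loop setting. By \cref{asgmods}(ii), $\Mh_0(\np)$ is the contragredient Verma module over $\LpLog$: it is coinduced from the trivial one-dimensional module $\CC v_0$ over the lower-Borel current algebra $L_+(\nm\oplus\oc\h) = (\nm\oplus\oc\h)\ox\CC[[s]]$, and, restricted to $L_+\np$, it is the restricted dual $U(L_+\np)^\vee$, i.e. coinduced from the zero subalgebra. Under these identifications $\mu$ is precisely restriction of cochains to $\extp^\bullet L_+\oc\h$ followed by the equivariant counit $\Mh_0(\np)\onto\CC v_0$ of the coinduction, so it suffices to prove that this composite induces an isomorphism on cohomology.

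The engine is the interplay of the coinduced structure with the grading by the root lattice $Q$ of $\g$: the second loop variable $s$ carries no $Q$-weight, so the whole complex is $Q$-graded and its cohomology decomposes accordingly. The clean half of the argument is the reduction to $Q$-weight $0$. The finite Cartan $\oc\h\subset\LpLog$ acts semisimply, and by the Cartan homotopy formula $L_h = d\iota_h+\iota_h d$ each $h\in\oc\h$ acts as zero on cohomology, forcing concentration in $\oc\h$-weight $0$, that is, in $Q$-weights that are integer multiples of the imaginary root $\delta$. In that sector the real-root directions of $\LpLog$ (which carry nonzero $\oc\h$-weight) drop out, and the standard homological machinery for contragredient Verma modules --- Shapiro's lemma and the Hochschild--Serre spectral sequence in the form used in \cite{Fre07} --- reduces the total $Q$-weight-$0$ part of the cohomology to the Cartan current algebra $L_+\oc\h$ with trivial coefficients $\CC v_0$; the controlling combinatorial input is the elementary weight-cone fact that the Chevalley cochains $\extp^q(L_+\npm)^\vee$ are supported in the strict cones $\pm Q_{>0}$ for $q\geq 1$, so the nilpotent directions cannot contribute in $Q$-weight $0$. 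One then checks the surviving edge homomorphism is exactly $\mu$.

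What remains, and what I expect to be the main obstacle, is to upgrade $\oc\h$-weight-$0$ concentration to genuine \emph{total} $Q$-weight-$0$ concentration of $H^\bullet(\LpLog,\Mh_0(\np))$, so that the weight-$0$ computation captures the whole cohomology. Unlike in finite type, $\LpLog$ contains no inner element detecting the homogeneous $t$-grading --- the derivation $\cocent$ measuring the $\delta$-weight lies outside $\LpLog$ --- so the nonzero imaginary weights $m\delta$ ($m\neq 0$) are not killed automatically. This $\delta$-direction must be controlled separately, by the affine analogue of Kostant's vanishing theorem for the imaginary part of $H^\bullet(L_+\nm,\CC)$ together with the contracting homotopy furnished by the coinduced structure of $\Mh_0(\np)$; this is the genuinely affine feature of the computation and the place where the finite-type proof does not transfer verbatim.

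Finally, all of the homological algebra must be carried out in the completed, ``local'' setting of \cref{sec: tox}, where the cochain spaces are the continuous ones $\Mh_0\tox\Wh_0^\bullet$ rather than ordinary tensor products. I would therefore need to verify that Shapiro's lemma, the Hochschild--Serre spectral sequence, and the weight arguments all converge in this topology, which is exactly where the ``compact support'' property of \cref{rem: compact} is used. This bookkeeping, rather than any new conceptual ingredient, is the technical heart of the proof.
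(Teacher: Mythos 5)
Your overall route is the same as the paper's, whose entire proof is a citation of \cite[Lemma 5.6.6]{Fre07}: view $\Mh_0(\np)$ as the restricted coinduced module, apply Shapiro's lemma to reduce to $H^\bl(L_+(\nm\oplus\oc\h),\CC)$, and collapse the Hochschild--Serre spectral sequence for the ideal $L_+\nm$ by the Cartan weight argument. Your second paragraph (the $Q$-weight-zero sector) is correct. But your third paragraph is not a proof, and the tool you name there to close it does not exist: there is no ``affine analogue of Kostant's vanishing theorem for the imaginary part of $H^\bl(L_+\nm,\CC)$''. Take $\oc\g=\sl_2$, with $\alpha$ the positive root of $\oc\g$, so the simple roots of $\wh\sl_2$ are $\alpha$ and $\delta-\alpha$. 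The weight-$(-\delta)$ subspace of $\extp^2 L_+\nm$ is spanned by the vectors $(E\ox t^{-1}\ox s^a)\wedge(F\ox s^b)$, $a,b\geq 0$, because $\{\alpha,\delta-\alpha\}$ is the only pair of positive roots summing to $\delta$; and $\extp^3 L_+\nm$ has no vectors of weight $-\delta$ at all, because a sum of three positive roots has coefficient sum at least $3$ in the simple roots, while $\delta$ has coefficient sum $2$. Hence \emph{every} continuous $2$-cochain of weight $\delta$, i.e.\ every finitely supported matrix $c_{ab}:=c\bigl((E\ox t^{-1}\ox s^a)\wedge(F\ox s^b)\bigr)$, is automatically a cocycle, whereas the coboundaries are exactly those with $c_{ab}$ a function of $a+b$ only, since $[E\ox t^{-1}\ox s^a,F\ox s^b]=H\ox t^{-1}\ox s^{a+b}$. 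Thus $c_{01}=1$, all other entries zero, represents a nonzero class of weight $\delta$ in $H^2(L_+\nm,\CC)$; this cohomology is in fact infinite-dimensional at weight $\delta$. One checks, moreover, that such a class extends to a continuous $2$-cocycle on the Borel $L_+(\nm\oplus\oc\h)$ which restricts to zero on $\extp^2L_+\oc\h$, so the collapse of the spectral sequence genuinely fails at nonzero imaginary weights: the obstruction you flagged is real and cannot be removed along the lines you propose.

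What actually completes the argument---and what your proposal leaves unsaid (as, to be fair, does the paper's one-line proof)---is that the lemma is only ever needed in $Q$-weight zero. The complexes are $Q$-graded, $d$, $T$ and $\mu$ preserve the grading, the cocycle $\wt\vfd^*(\omega)=\upomega[0]$ has weight zero, and the proof of \cref{keylem} never leaves the weight-zero subcomplex (this is also what is needed to conclude that $\Xi$, hence $\vphi$, has $Q$-grade $0$). In weight zero your own paragraph-two argument closes the spectral sequence: a sum of $q\geq 1$ positive affine roots lies in $Q_{>0}$, so the rows $q\geq1$ of the $E_2$-page have no weight-zero component. The correct repair is therefore to state and prove the quasi-isomorphism weight-by-weight and invoke it only in weight zero; as a statement about the full complex, including imaginary weights, it would require an argument of a genuinely different kind (given the Borel class above, it could only hold through a failure of Shapiro's lemma for the \emph{restricted} coinduction, which neither you nor the finite-type proof addresses). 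So your instinct that the finite-type proof does not transfer verbatim is right, but as written your proof has a hole at exactly the step you call the main obstacle, and the vanishing theorem you invoke to fill it is false.
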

\begin{proof}
The proof, using the Serre-Hochschild spectral sequence (see \cite[\S1.5]{Fuks}), is the same as in \cite[Lemma 5.6.6]{Fre07}.
\end{proof}

The following is \cite[Lemma 5.6.7]{Fre07}. 
\begin{lem}\label{keylem} 
If the restriction of a cocycle $\gamma\in \Cloc^r(\LLog, \L(\Mh_0(\np)))$ to $\extp^rL\oc\h$ is zero, then $\gamma$ represents the zero cohomology class, $[\gamma] = [0]$, in $\Hloc^r(\LLog,\L(\Mh_0(\np)))$. 
\end{lem}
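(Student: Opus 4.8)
The plan is to follow the strategy of the proof of \cite[Lemma 5.6.7]{Fre07}, using the quasi-isomorphism $\mu$ of \cref{lem: quasi-isom} together with the realisation of the local complex as a cokernel of the translation operator $T$.

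First I would isolate the two structural facts that power the argument. From \cref{Clocdef} and the commuting square \cref{Cloccd}, the map $\int$ identifies $\Cloc^r(\LLog,\L(\Mh_0(\np)))$ with $C^r(\LpLog,\Mh_0(\np))/\im T$ for every $r\geq 1$, the correction term $\CC\vac$ affecting only degree $0$; moreover $\ker T=\CC\vac$ in the free-field vertex superalgebra $\Mh_0\tox\Wh$, so $T$ is injective in cochain degrees $r\geq 1$ and $\ker\int=\im T$ there. All of $\int$, $T$, the coefficient projection $p\colon\Mh_0(\np)\onto\CC v_0$ and the restriction to the Cartan commute with the Chevalley--Eilenberg differential $d$, and $\mu=p_\ast\circ(\text{restriction to }\extp^\bl L_+\oc\h)$ is a $T$-equivariant chain map. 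The second fact is that $T$ acts by zero on the cohomology $H^\bl(L_+\oc\h,\CC v_0)$; this is the cohomological shadow of the statement that total $s$-derivatives integrate to zero, and I would establish it by Cartan's homotopy formula for the infinitesimal $s$-translation, exactly as in \cite[\S5.6]{Fre07}.

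Next comes the lift. Given the local cocycle $\gamma\in\Cloc^r$, choose $\tilde\gamma\in C^r(\LpLog,\Mh_0(\np))$ with $\int\tilde\gamma=\gamma$. Since $\int(d\tilde\gamma)=d\gamma=0$ we may write $d\tilde\gamma=T\eta$ with $\eta\in C^{r+1}$ unique; then $T(d\eta)=d(T\eta)=d^2\tilde\gamma=0$ gives $d\eta=0$. The hypothesis says $\gamma$ restricts to the zero cochain on $\extp^r L\oc\h$, so $\tilde\gamma|_{\extp^r L_+\oc\h}\in\ker\int=\im T$, say $\tilde\gamma|=T\zeta$; hence $T(\eta|)=d\tilde\gamma|=T\,d\zeta$ and $\eta|=d\zeta$. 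Applying $p$ shows $\mu(\eta)=d(p_\ast\zeta)$ is a coboundary, so $\mu_\ast[\eta]=0$ and therefore $[\eta]=0$ in $H^{r+1}(\LpLog,\Mh_0(\np))$ by \cref{lem: quasi-isom}. Writing $\eta=d\xi$ and setting $\beta:=\tilde\gamma-T\xi$ produces a \emph{closed} cochain with $\int\beta=\gamma$.

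Finally I would show $[\beta]=0$ in $H^r(\LpLog,\Mh_0(\np))$, which finishes the proof: then $\beta=d\sigma$ and $\gamma=\int\beta=d(\int\sigma)$ is a local coboundary. Restricting the closed cochain $\beta$ to $\extp^r L_+\oc\h$ and using the hypothesis again gives $\beta|\in\im T$, say $\beta|=T\rho$; closedness of $\beta|$ with injectivity of $T$ forces $d\rho=0$, so $\mu(\beta)=T\mu(\rho)$ is a closed, $T$-exact cochain for $L_+\oc\h$ and $[\mu(\beta)]=T_\ast[\mu(\rho)]=0$ by the vanishing of $T$ on $H^\bl(L_+\oc\h,\CC v_0)$. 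One more application of \cref{lem: quasi-isom} yields $[\beta]=0$. The step I expect to be the main obstacle is the homotopy input $T_\ast=0$ on the Cartan cohomology, and more generally the bookkeeping needed to guarantee that $T$ is injective, that $\ker\int=\im T$, and that $\mu$ remains a $T$-equivariant quasi-isomorphism on the completed cochain spaces $\Mh_0\tox\Wh_0^\bl$ of \cref{sec: tox}, where the ``compact support'' property of \cref{rem: compact} is what keeps all the pairings and restrictions finite.
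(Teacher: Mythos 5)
Your construction of the closed lift is exactly the paper's own first half: your $\tilde\gamma,\eta,\xi,\beta$ are the paper's $X,Y,B,X'=X+TB$, and everything up to and including ``$\beta$ is a closed cochain with $\int\beta=\gamma$'' is correct. The gap is in your final step, and it is genuine. Your ``second structural fact'' --- that $T$ induces the zero map on $H^\bl(L_+\oc\h,\CC v_0)$ --- is false: since $L_+\oc\h$ is abelian and acts trivially on $\CC v_0$, the Chevalley--Eilenberg differential on $C^\bl(L_+\oc\h,\CC v_0)$ vanishes identically, so $H^\bl(L_+\oc\h,\CC v_0)=C^\bl(L_+\oc\h,\CC v_0)$, and on this space $T$ is injective in every positive degree (its kernel $\CC\vac$ sits in degree $0$); no homotopy formula can make an injective operator act by zero on cohomology. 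What is true is only that $\im T$ is killed by the projection $\int$ to the \emph{local} complex --- a property of $\int$ built into the definition \cref{Clocdef}, not a property of $T_*$ on the cohomology of the ordinary complex --- and conflating these two statements is what breaks the argument.

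Because of this, your intended conclusion $[\beta]=0$ in $H^r(\LpLog,\Mh_0(\np))$ is both unjustified and stronger than what is true or needed (the lemma requires only $\beta\in\im d+\im T$). Concretely: take $\gamma=0$ and choose as lift $\tilde\gamma=TB'$, where $B'$ is any cocycle with $\mu(B')\neq0$ (such $B'$ exist by surjectivity of $\mu_*$); your algorithm then gives $\eta=0$, $\xi=0$, $\beta=TB'$, and $\mu_*[\beta]=T\mu(B')\neq0$ because $T$ is injective on the target and the target differential is zero, so $[\beta]\neq0$ even though $[\gamma]=[0]$ trivially. The repair is precisely the paper's closing argument, which you almost have in hand: from $\mu(\beta)=T\,p_*(\rho)$, note that $p_*(\rho)$ is automatically a cocycle in $C^r(L_+\oc\h,\CC v_0)$ and that coboundaries there vanish, so surjectivity of $\mu_*$ produces a \emph{cocycle} $B'\in C^r(\LpLog,\Mh_0(\np))$ with $\mu(B')=p_*(\rho)$ exactly; then $\beta-TB'$ is a cocycle with $\mu(\beta-TB')=0$, so injectivity of $\mu_*$ (one more use of \cref{lem: quasi-isom}) gives $\beta-TB'=dC$, and projecting yields $\gamma=\int\beta=\int(TB')+d\bigl(\int C\bigr)=d\bigl(\int C\bigr)$, i.e.\ $[\gamma]=[0]$ in $\Hloc^r(\LLog,\L(\Mh_0(\np)))$. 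In other words, the correct endpoint is not $[\beta]=0$ but $[\beta]\in T_*H^r(\LpLog,\Mh_0(\np))$, and establishing that membership requires both halves (surjectivity and injectivity) of the quasi-isomorphism, not a vanishing theorem for $T_*$.
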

\begin{proof}
We can suppose $r\geq 1$. 

Consider a cocycle $\gamma \in \Cloc^r(\LLog, \L(\Mh_0(\np)))$. We have $\gamma=X[0]$ for some cochain $X \in C^r(\LpLog, \Mh_0(\np))$. The closure of $\gamma$ implies $dX$ is in the image of $T$: $dX = TY$, say, for some $Y\in C^{r+1}(\LpLog,\Mh_0(\np))$. We have $TdY = -dTY = -ddX = 0$ and since $T$ has kernel $0$ (for all $r\geq 1$), that implies $dY=0$.

Let $\ol\gamma$ denote the restriction of $\gamma$ to $\extp^r L \oc\h$. We have $\ol\gamma = \ol X[0]$, where $\ol X$ denotes the restriction of $X$ to $\extp^rL_+\oc\h$. If $\ol\gamma$ is zero then $\ol X$ is in the image of $T$. Therefore so too is $\mu(X)$. So we have $\mu(X) = Th$, say, for some $h\in C^r(L_+\oc\h, \CC v_0)$. Now, $\mu$ is a map of complexes, so $dTh = d\mu(X) = \mu(dX) = \mu(TY)$. It is clear that $T$ commutes with $\mu$. Thus $-Tdh = T\mu(Y)$ and hence, again since the kernel of $T$ is trivial, $dh = -\mu(Y)$. That is, $\mu(Y)$ is exact. But $\mu$ is a quasi-isomorphism as in \cref{lem: quasi-isom}. So $Y$ must also be exact: $Y = dB$, say, for some $B\in C^r(\LpLog, \Mh_0(\np))$. 

Let $X' = X+TB$. We see that $\gamma= X'[0]$ and $X'$ is a cocycle: $dX' = dX - TdB = dX - TY = 0$. 

(At this point, effectively we have shown we were at liberty to assume our $X$ -- now called $X'$ -- was not only a cochain, but a cocycle. We now repeat many steps from above, but armed with that extra fact.)

We have $\mu(X') = Th'$ (where $h'= h+\mu(B)$). So $-Tdh'= dTh' = d\mu(X') = \mu(dX') = 0$, and hence $dh'=0$. So $h'$ is a cocycle in $C^r(L_+\oc\h,\CC v_0) = \left(\extp^r L_+ \oc\h\right)^\vee$. Thus, again since $\mu$ is a quasi-isomorphism, we have $h'= \mu(B')$ for some cocycle $B'\in C^r(\LpLog,\Mh_0(\np))$. 
Finally, we see that $\mu(X') = T\mu(B') = \mu(TB')$. Hence the cocycles $X'$ and $TB'$ in $C^r(\LpLog,\Mh_0(\np))$ represent the same cohomology class in $H^r(\LpLog,\Mh_0(\np))$. Therefore the cocycles $X'[0]$ and $(TB')[0]$ in $\Cloc^r(\LpLog, \Mh_0(\np))$ represent the same cohomology class in $\Hloc^r(L_+ \g, \Mh_0(\np))$. (Indeed $X' - TB' = dC$ for some $C\in C^r(\LpLog,\Mh_0(\np))$ implies $X'[0] - (TB')[0] = (dC)[0] = d(C[0])$.) But $\gamma = X'[0]$ and $0 = (TB')[0]$, so we have shown $\gamma$ is cohomologous to zero, as required.  
\end{proof}

This is the key lemma. However, to use it, we have to get around one final obstacle: our cochain $\upomega[0]$ lives not in $\Cloc^2(\LLog, \L(\Mh_0(\np)))$ but only in the larger space $\Cloc^2(\LLog, \L(\Mh_0))$.

\subsection{Cohomology equivariant with respect to $\tau$}
Recall the involutive automorphism $\tau$ defined in \cref{sec: cai}. Let us also denote by $\tau$ the involutive automorphism of $\Wh\ox \Mfw$ defined by $\tau\vac = \vac$,
\begin{alignat}{2} 
\tau(\gam^{\ia \alpha n})[N] &= \gam^{\ia{-\alpha}{-n}}[N],\quad& 
\tau(\gam^{\ia i n})[N]      &= - \gam^{\ia i {-n}}[N],\nn\\
\tau(\bet_{\ia \alpha n})[N] &= \bet_{\ia{-\alpha}{-n}}[N],\quad& 
\tau(\bet_{\ia i n})[N]      &= - \bet_{\ia i {-n}}[N],\nn
\end{alignat}
\begin{alignat}{2} 
\tau(\cee^{\ia \alpha n})[N] &= \cee^{\ia{-\alpha}{-n}}[N],\quad& 
\tau(\cee^{\ia i n})[N]      &= - \cee^{\ia i {-n}}[N],\nn\\
\tau(\bee_{\ia \alpha n})[N] &= \bee_{\ia{-\alpha}{-n}}[N],\quad& 
\tau(\bee_{\ia i n})[N]      &= - \bee_{\ia i {-n}}[N],\nn
\end{alignat}
\begin{alignat}{2} 
\tau(\SS^\alpha_{\beta,n}[N]) &= \SS^{-\alpha}_{-\beta,-n}[N], \quad&
\tau(\SS^\alpha_{i,n}[N]) &= - \SS^{-\alpha}_{i,-n}[N], \nn\\
\tau(\SS^i_{\beta,n}[N]) &= - \SS^{i}_{-\beta,-n}[N], \quad&
\tau(\SS^i_{j,n}[N]) &= \SS^{i}_{j,-n}[N], \nn
\end{alignat}
for $\alpha,\beta\in \oc\Delta\setminus\{0\}$, $i,j\in \oc I$, $n\in \ZZ$, and $N\in \ZZ$, and $\tau(\Cocent[N]) = - \Cocent[N]$.

Let $\Cloc^{\tau,\bl}(\LLog,\L(\Mh_0)) = \L_0\left( \left(\Wh_0^\bl\ox \Mh_0\right)^\tau\right)$ denote the subspace consisting of zero modes of states $\Phi\in \Wh_0\ox\Mh_0$ such that $\tau\Phi = 0$. 


\begin{lem}
$\tau\Q = \Q$ and hence $\Cloc^{\tau,\bl}(\LLog,\L(\Mh_0))$ is a subcomplex of $\Cloc^{\bl}(\LLog,\L(\Mh_0))$.
\end{lem}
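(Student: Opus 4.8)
The plan is to deduce the subcomplex assertion from the single identity $\tau\Q=\Q$. Recall from \cref{Qrem} that the local differential is realized by the zero mode of $\Q$, namely $d\Phi=\Q\vap 0\Phi$, and that $\Q\vap 0$ acts on $\Mh_0\tox\Wh_0$ through single contractions only. Since $\tau$ is, by construction, an even involutive automorphism of the vertex superalgebra $\Wh\ox\Mfw$, it permutes the generators up to sign, preserves $\Mh_0\tox\Wh_0$, and commutes with the translation operator $T$; consequently $\tau(\Q\vap 0\Phi)=(\tau\Q)\vap 0(\tau\Phi)$ for every $\Phi$. Once $\tau\Q=\Q$ is known this yields $\tau\circ d=d\circ\tau$, and, $\tau$ commuting with $T$, it descends to the quotient \cref{Clocdef} while still commuting with $d$. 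Hence every $\tau$-eigenspace is $d$-stable, and in particular $\Cloc^{\tau,\bl}(\LLog,\L(\Mh_0))$ is a subcomplex. It remains to verify $\tau\Q=\Q$, which I treat term by term on the two summands of $\Q$.

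For the first summand $\sum_{(a,n)\in\Ag}\cee^{a,n}[0]\,\JJ_{a,n}$, the essential input is that $\jota$ is $\tau$-equivariant --- its definition on $\gam,\bet,\SS$ mirrors the action of $\tau$ on $\Og$, $\Derc\Og$ and $\Lglog$ from \cref{sec: cai} --- together with the equivariance \cref{vfdequi}, $\tau\circ\vfd=\vfd\circ\cai$. Writing $\cai(J_a)=\vareps_a J_{\bar a}$, where $\bar\alpha=-\alpha$ and $\vareps_\alpha=+1$ for $\alpha\in\oc\Delta\setminus\{0\}$, while $\bar i=i$ and $\vareps_i=-1$ for $i\in\oc I$, we get $\cai(J_{a,n})=\vareps_a J_{\bar a,-n}$, hence $\tau(\JJ_{a,n})=\jota(\vfd(\cai(J_{a,n})))=\vareps_a\JJ_{\bar a,-n}$. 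The definition of $\tau$ gives $\tau(\cee^{a,n}[0])=\vareps_a\cee^{\bar a,-n}[0]$, so the two signs cancel (as $\vareps_a^2=1$); since $(a,n)\mapsto(\bar a,-n)$ is a bijection of $\Ag$, reindexing the sum shows this summand is $\tau$-invariant.

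For the second summand $-\half\sum_{a,b,c\in\I;\,n,m\in\ZZ}f_{ab}{}^c\,\cee^{a,m}[0]\cee^{b,n}[0]\bee_{c,n+m}[-1]\vac$, the substantive ingredient is the behaviour of the structure constants under the Cartan involution of $\oc\g$. Applying $\cai|_{\oc\g}$ to $[J_a,J_b]=\sum_c f_{ab}{}^c J_c$ and using $\cai(J_a)=\vareps_a J_{\bar a}$ gives, on comparing coefficients, the identity $f_{\bar a\bar b}{}^{\bar c}=\vareps_a\vareps_b\vareps_c\,f_{ab}{}^c$. On the other hand $\tau$, being even, preserves the order of the factors, and its definition yields $\tau\bigl(\cee^{a,m}[0]\cee^{b,n}[0]\bee_{c,n+m}[-1]\vac\bigr)=\vareps_a\vareps_b\vareps_c\,\cee^{\bar a,-m}[0]\cee^{\bar b,-n}[0]\bee_{\bar c,-(n+m)}[-1]\vac$. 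Multiplying by $f_{ab}{}^c$ and invoking the identity turns the coefficient into $f_{\bar a\bar b}{}^{\bar c}$; the substitution $(a,b,c,m,n)\mapsto(\bar a,\bar b,\bar c,-m,-n)$, under which $n+m\mapsto-(n+m)$ matches the index of $\bee_{\bar c,-(n+m)}[-1]$, is a bijection of the summation set and recovers the original sum. Thus the second summand is $\tau$-invariant as well, and $\tau\Q=\Q$.

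The argument reduces to bookkeeping; the one genuinely new ingredient is the structure-constant identity $f_{\bar a\bar b}{}^{\bar c}=\vareps_a\vareps_b\vareps_c\,f_{ab}{}^c$, which is immediate from $\cai$ being a Lie algebra automorphism of $\oc\g$. The only care needed is to track the signs $\vareps_a$ consistently and to confirm that in both summands the relabelling is precisely conjugation $a\mapsto\bar a$ combined with $n\mapsto-n$, so that the summation sets are matched bijectively while the mode labels $[0]$ and $[-1]$ are preserved. I expect no real obstacle beyond this.
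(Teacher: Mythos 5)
Your proof is correct and follows essentially the same route as the paper: the first summand of $\Q$ is handled via the equivariance $\tau\circ\vfd = \vfd\circ\cai$ together with $\tau\circ\jota = \jota\circ\tau$, and the second via the fact that $\cai$ is an automorphism of $\oc\g$ --- your sign identity $f_{\bar a\bar b}{}^{\bar c} = \vareps_a\vareps_b\vareps_c\, f_{ab}{}^c$ is just a uniform repackaging of the paper's explicit case decomposition into root/Cartan index types. Your verification that $d = \Q\vap 0$ commutes with $\tau$ and descends through the quotient by $\im T + \CC\vac$ merely spells out the ``hence'' step that the paper leaves implicit.
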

\begin{proof}
The term $\half f_{ab}{}^c \cee^{a,m}[0] \cee^{b,n}[0] \bee_{c,n+m}[-1] \vac$ in $\Q$ is $\tau$-invariant because $\cai$ is an automorphism of $\Log$. (More explicitly, this term is equal to 
\begin{multline}  
\sum_{\alpha\in \oc\Delta_+} \sum_{i\in \oc I} \sum_{n,m\in \ZZ}
 f_{\alpha,-\alpha}{}^i \cee^{\alpha,m}[0] \cee^{-\alpha,n}[0] \bee_{i,n+m}[-1] \vac\nn\\
 +  \sum_{\alpha\in \oc\Delta_+} \sum_{i\in \oc I} \sum_{n,m\in \ZZ}  
\left( f_{\alpha i}{}^\alpha \cee^{\alpha,m}[0] \cee^{i,n}[0] \bee_{\alpha,n+m}[-1] \vac
 +     f_{-\alpha, i}{}^{-\alpha} \cee^{-\alpha,m}[0] \cee^{i,n}[0] \bee_{-\alpha,n+m}[-1] \vac\right)
\nn\\
 +   \half \sum_{i,j,k\in \oc I} \sum_{n,m\in \ZZ} f_{ij}{}^k \cee^{i,m}[0] \cee^{j,n}[0] \bee_{k,n+m}[-1] \vac \nn
\end{multline}
and each line of this expression is $\tau$-invariant.) 
The other term in $\Q$, $\sum_{(a,n)\in \Ag}\cee^{a,n}[0] \JJ_{a,n}$, is $\tau$-invariant because, in view of \cref{vfdequi}, 
\be \tau \JJ_{a,n} = \tau \jota(\vfd(J_{a,n})) = \jota(\tau(\vfd(J_{a,n}))) = \jota(\vfd(\cai J_{a,n}))\nn\ee 
and thus
\be \tau \JJ_{\alpha,n} = \JJ_{-\alpha,-n}, \quad \tau \JJ_{i,n} = - \JJ_{i,-n} \nn\ee
for $\alpha\in \oc\Delta\setminus\{0\}$, $i\in\oc I$ and $n\in \ZZ$. 
\end{proof}

\begin{lem}
The element $\upomega \in \Wh_0^2 \ox \Mh_0$ obeys
\be \tau \upomega =0 .\nn\ee 
\end{lem}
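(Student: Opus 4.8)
The plan is to prove $\tau\upomega=0$ by a completely explicit, factor-by-factor computation, applying $\tau$ to the two summands of $\upomega$ in \cref{upomegadef} and showing that, after relabelling the dummy summation indices through the involution, the image of $\upomega$ satisfies the condition defining the subcomplex $\left(\Wh_0^\bl\ox\Mh_0\right)^\tau$. The whole argument runs in parallel with the preceding lemma ($\tau\Q=\Q$), the one extra ingredient being the symmetry of the polynomials $R^{b,m}_{a,n}$ under $\tau$.

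First I would record that equivariance relation. Writing $\cai(J_{\alpha,n})=J_{-\alpha,-n}$ and $\cai(J_{i,n})=-J_{i,-n}$, and setting $\bar\alpha=-\alpha$, $\bar i=i$ with signs $\epsilon_\alpha=+1$, $\epsilon_i=-1$, I would match the coefficients of $D_{b,m}$ on the two sides of $\tau\circ\vfd=\vfd\circ\cai$ (\cref{vfdequi}) in the defining formula \cref{Rdef}, using that the loop-algebra part transforms by $\tau(\iota(\J_{a,n}))=\iota(\J_{\cai(a,n)})$ exactly as in the proof that $\tau\Q=\Q$. This yields
\be \tau\bigl(R^{b,m}_{a,n}(X)\bigr)=\epsilon_a\epsilon_b\,R^{\bar b,-m}_{\bar a,-n}(X).\nn\ee
Since $\tau$ acts on each $\gam^{d,q}[0]$ exactly as on $X^{d,q}$, the derivative $\del/\del\gam^{d,q}[0]$ transforms contragrediently, giving $\tau\!\left(\del R^{c,p}_{a,n}/\del\gam^{d,q}[0]\right)=\epsilon_a\epsilon_c\epsilon_d\,\del R^{\bar c,-p}_{\bar a,-n}/\del\gam^{\bar d,-q}[0]$. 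I would then substitute this, together with $\tau(\cee^{a,n}[N])=\epsilon_a\cee^{\bar a,-n}[N]$ and the fact that $\tau$ commutes with the translation operator $T$ (both being structures of the vertex superalgebra $\Wh\ox\Mfw$), into each summand of $\upomega$. In both terms the contracted indices $(c,p)$ and $(d,q)$ run over all of $\I\times\ZZ$, so the signs $\epsilon_c,\epsilon_d$ occur squared and drop out; the remaining $\epsilon_a\epsilon_b$ from the two $\cee$-factors then cancels the $\epsilon_a\epsilon_b$ coming from the two derivative factors. Relabelling $c\mapsto\bar c,\ p\mapsto-p,\ d\mapsto\bar d,\ q\mapsto-q$ and then $a\mapsto\bar a,\ n\mapsto-n,\ b\mapsto\bar b,\ m\mapsto-m$ (bijections of the index sets) returns the expression to the canonical form of \cref{upomegadef}, verifying the membership condition $\tau\upomega=0$ that places $\upomega$ in the subcomplex.

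The genuinely delicate point, and the step I expect to be the main obstacle, is the simultaneous bookkeeping of three independent sources of signs: the $\epsilon_a$ distinguishing Cartan from root indices in the action of $\tau$ on $\cee$, $\gam$ and $R$; the contragredient sign in the transformation of $\del/\del\gam^{d,q}[0]$; and the anticommutation of the fermionic $\cee$-factors when the summation variables are reordered. The structural input is minimal — only the single relation on $R^{b,m}_{a,n}$ above, which isolates all the representation-theoretic content — so I would keep the $\epsilon$-notation attached to every factor throughout and check that the three sign contributions conspire exactly, rather than leaving a stray minus sign. Because this is precisely the same mechanism that makes $\Q$ lie in the subcomplex, I expect no conceptual difficulty beyond this careful tracking.
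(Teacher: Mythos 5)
Your proposal is correct, but it follows a genuinely different route from the paper. The paper's proof never differentiates the polynomials $R^{b,m}_{a,n}$ at all: it instead rewrites $\upomega$, using the OPE computation \cref{ABope} from \cref{jotalem}, as
\be \upomega = \cee^{a,n}[0] \cee^{b,m}[0]
  \left( \JJ_{a,n}\vap 0 \JJ_{b,m}
        - f_{ab}{}^c \JJ_{c,n+m} \right)
+ \cee^{a,n}[-1] \cee^{b,m}[0]\, \JJ_{a,n}\vap 1 \JJ_{b,m} \nn\ee
(summation convention), after which the claim is immediate from three structural facts: $\tau$ is an automorphism compatible with all the non-negative products; $\tau\JJ_{\alpha,n}=\JJ_{-\alpha,-n}$ and $\tau\JJ_{i,n}=-\JJ_{i,-n}$, which is \cref{vfdequi} pushed through $\jota$; and $\cai$ is an automorphism of $\Log$, so the term involving $f_{ab}{}^c$ transforms consistently. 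Your proof works instead at the level of the coefficient polynomials in the defining formula \cref{upomegadef}. Its key input, the equivariance $\tau\bigl(R_{a,n}^{b,m}\bigr)=\epsilon_a\epsilon_b\, R_{\bar a,-n}^{\bar b,-m}$, is correct and does follow from \cref{Rdef} together with \cref{vfdequi} and $\tau(\iota(\J_{a,n}))=\epsilon_a\,\iota(\J_{\bar a,-n})$, exactly as you indicate; the chain-rule sign for $\del/\del\gam^{d,q}[0]$ and the cancellation pattern $(\epsilon_a\epsilon_b)\cdot(\epsilon_a\epsilon_c\epsilon_d)(\epsilon_b\epsilon_d\epsilon_c)=1$ are also right. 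What your route buys is an explicit, self-contained verification; what it costs is precisely the bookkeeping you flag as delicate — though note that one of your three feared sign sources never materializes, since $\tau$ and the dummy-index relabelling preserve the order of the $\cee$-factors, so no fermionic reordering signs arise. A final point worth making explicit: what both arguments literally establish is $\tau\upomega=\upomega$; the paper's phrasing ``$\tau\upomega=0$'', like the condition ``$\tau\Phi=0$'' in the definition of $\bigl(\Wh_0^\bl\ox\Mh_0\bigr)^\tau$, is only sensible if read as membership of the $\tau$-fixed subspace (an involutive automorphism annihilates no nonzero vector), and that is how you have correctly read it.
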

\begin{proof}
Indeed, we defined $\upomega$ as in \cref{upomegadef} but one sees (in view of \cref{ABope}) that, equivalently, 
\be \upomega = \cee^{a,n}[0] \cee^{b,m}[0] 
  \left( \JJ_{a,n}\vap 0 \JJ_{b,m} 
        - f_{ab}{}^c \JJ_{c,n+m} \right)
+ \cee^{a,n}[-1] \cee^{b,m}[0] \JJ_{a,n}\vap 1 \JJ_{b,m}
\nn\ee
(summation convention). The fact that $\tau\upomega=0$ follows, using the statements above and the fact that $\tau$ is an automorphism for all the non-negative products. 
\end{proof}

Thus our cocycle $\upomega[0]$ belongs to the subcomplex $\Cloc^{\tau,\bl}(\LLog,\L(\Mh_0))$.
More is true. We have the subspace $\Mh_0(\np) + \Mh_0(\nm)$ of $\Mh_0$. It is closed (trivially) under all the non-negative products. Therefore $\L(\Mh_0)$ has the Lie subalgebra $\L(\Mh_0(\np) + \Mh_0(\nm))$.  

\begin{lem}
The cocycle $\upomega[0]$ has coefficients in this subalgebra, i.e 
\be \upomega[0] \in \Cloc^{\tau,2}(\LLog,\L(\Mh_0(\np)+ \Mh_0(\nm))).\nn\ee
\end{lem}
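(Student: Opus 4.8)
The plan is to read the $\Mh_0$-valued coefficients of $\upomega[0]$ directly off \cref{upomegadef}, and to show monomial-by-monomial that none of them contains a ``mixed'' term. Recall that $\Mh_0(\np) + \Mh_0(\nm)$ consists precisely of those elements of $\Mh_0$ all of whose monomials in the generators $\gam^{c,p}[-N]$ involve either only indices $(c,p)\in \A$ or only indices $(c,p)\in \Am$ (a constant multiple of $\vac$ lying in both summands). By \cref{upomegadef} (and \cref{ABope}), the coefficients of $\upomega$ that land in $\Mh_0$ are the double-contraction expressions
\be \sum_{(c,p),(d,q)\in \Ag} \frac{\del R_{a,n}^{c,p}}{\del \gam^{d,q}[0]} \frac{\del R_{b,m}^{d,q}}{\del \gam^{c,p}[0]} \vac \nn\ee
together with the variant in which a commutator $[T,\cdot]$ is applied to the first factor. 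It therefore suffices to show that each such expression lies in $\Mh_0(\np) + \Mh_0(\nm)$.

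First I would record the sign structure of the polynomials $R_{a,n}^{c,p}$ from \cref{Rdef}. By the construction in \cref{sec: cai} we have $\vfd = \vf + \tau \circ \vf \circ \cai$, where $\vf(\g)\subset \Derc\Onp$ has coefficients in $\Onp = \CC[X^{c,p}]_{(c,p)\in \A}$ and $(\tau\circ\vf\circ\cai)(\g)\subset \Derc\Onm$ has coefficients in $\Onm = \CC[X^{c,p}]_{(c,p)\in \Am}$, while the subtracted coadjoint term $\iota(\J_{a,n})$ is \emph{linear} in the coordinates. Consequently, for a direction $(c,p)\in \A$ every monomial of $R_{a,n}^{c,p}$ of degree at least two involves only positive-index coordinates $X^{e,r}$, $(e,r)\in \A$, the degree-$\leq 1$ part being a finite sum of linear and constant terms of either sign; symmetrically, for $(c,p)\in \Am$ every degree-$\geq 2$ monomial of $R_{a,n}^{c,p}$ involves only negative-index coordinates. (Well-definedness of the infinite sums, and membership in the local $\tau$-subcomplex $\Cloc^{\tau,2}(\LLog,\L(\Mh_0))$, have been established above, so only this refinement of the target subalgebra remains.)

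The result then follows from a case analysis on the signs of the contracted indices $(c,p)$ and $(d,q)$. If both lie in $\A$, then differentiating positive-direction polynomials with respect to positive-index variables produces a purely positive-index polynomial, which lies in $\Onp\subset \Mh_0(\np)$; likewise both in $\Am$ gives an element of $\Onm\subset \Mh_0(\nm)$. In a mixed case, say $(c,p)\in \A$ and $(d,q)\in \Am$, the factor $\del R_{a,n}^{c,p}/\del\gam^{d,q}[0]$ is the derivative of a positive-direction polynomial with respect to a negative-index variable: it annihilates the degree-$\geq 2$ part and hence reduces to a constant coming from a single linear term, and the same holds for $\del R_{b,m}^{d,q}/\del\gam^{c,p}[0]$. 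Thus the mixed contribution to the first coefficient is a constant multiple of $\vac$, which lies in $\Og\subseteq \Mh_0(\np)\cap \Mh_0(\nm)$, while in the $[T,\cdot]$-coefficient it is annihilated by $T$. Summing over the (finitely many surviving) mixed pairs and over the pure pairs, and noting that $T$ preserves both $\Mh_0(\np)$ and $\Mh_0(\nm)$, yields $\upomega[0]\in \Cloc^{\tau,2}(\LLog,\L(\Mh_0(\np)+\Mh_0(\nm)))$. The one genuine point requiring care — and the expected crux — is that a positive-direction coefficient $R_{a,n}^{c,p}$ can depend on negative-index coordinates only through finitely many linear terms (an artefact of the coadjoint normalization $\iota(\J_{a,n})$; compare \cref{stabprop}), so that the potentially mixed cross terms collapse to constants rather than to honest mixed monomials.
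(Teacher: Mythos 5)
Your proof is correct and takes essentially the same route as the paper's: the paper writes $R_{a,n}^{b,m} = A_{a,n}^{b,m} + B_{a,n}^{b,m}$ with $A$ a polynomial in same-sign coordinates and $B$ linear in opposite-sign coordinates, and your sign-based case analysis of the contracted indices $(c,p),(d,q)$ is exactly that decomposition in disguise (the pure-sign cases are the $A$-$A$ terms, the mixed cases are the $B$-$B$ terms, which collapse to multiples of $\vac$ and are killed by $[T,\cdot]$, while the $A$-$B$ cross terms vanish because no sign pattern lets both factors survive). One inessential slip: your constants lie in $\CC\vac = \Mh_0(\np)\cap\Mh_0(\nm)$, not in ``$\Og\subseteq \Mh_0(\np)\cap\Mh_0(\nm)$'' --- that inclusion is false, since the intersection is only $\CC\vac$.
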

\begin{proof}
We use summation convention. By definition, \cref{upomegadef},
\be \upomega =
- \cee^{a,n}[-1] \cee^{b,m}[0] 
                 \frac{\del R_{a,n}^{c,p}}{\del \gam^{d,q}[0]} 
                 \frac{\del R_{b,m}^{d,q}}{\del \gam^{c,p}[0]} \vac
- \cee^{a,n}[0] \cee^{b,m}[0] 
          \left[T,\frac{\del R_{a,n}^{c,p}}{\del \gam^{d,q}[0]}\right] 
                 \frac{\del R_{b,m}^{d,q}}{\del \gam^{c,p}[0]} \vac \nn\ee
where $R_{a,n}^{b,m}(X)$ are the polynomials from \cref{Rdef}. 
On recalling \cref{Rpdef} and \cref{Jdif}, we see that for every $\ii a n\in \Ag$, we have that
\be R_{a,n}^{b,m}(X) = A_{a,n}^{b,m}(X) + B_{a,n}^{b,m}(X) \nn\ee
where
\be  A_{a,n}^{b,m}(X) \in \CC[X^{c,p}]_{\ii c p \in \pm \A}, \quad 
     B_{a,n}^{b,m}(X) \in\bigoplus_{\ii c p \in \mp \A} \CC X^{c,p},\quad\text{for all $\ii bm \in \pm \A$.} \nn\ee
Now we shall argue that
\begin{align} 
\upomega &=
- \cee^{a,n}[-1] \cee^{b,m}[0] 
           \left(      \frac{\del A_{a,n}^{c,p}}{\del \gam^{d,q}[0]} 
                       \frac{\del A_{b,m}^{d,q}}{\del \gam^{c,p}[0]} \vac
                  +    \frac{\del B_{a,n}^{c,p}}{\del \gam^{d,q}[0]} 
                       \frac{\del B_{b,m}^{d,q}}{\del \gam^{c,p}[0]} \vac \right)\nn\\ 
& \qquad \qquad - \cee^{a,n}[0] \cee^{b,m}[0] 
          \left[T,\frac{\del A_{a,n}^{c,p}}{\del \gam^{d,q}[0]}\right] 
                 \frac{\del A_{b,m}^{d,q}}{\del \gam^{c,p}[0]} \vac. \label{ABeq}
\end{align}
Indeed, we see all $A$-$B$ cross terms are zero just by inspecting the index contractions. The remaining term is 
$ - \cee^{a,n}[0] \cee^{b,m}[0] 
          \left[T,\frac{\del B_{a,n}^{c,p}}{\del \gam^{d,q}[0]}\right] 
                 \frac{\del B_{b,m}^{d,q}}{\del \gam^{c,p}[0]} \vac$ but this is zero by the linearity of the $B_{a,n}^{b,m}$ and the fact that $[T,1]=0$.
So we have the equality \cref{ABeq}. The term $\frac{\del B_{a,n}^{c,p}}{\del \gam^{d,q}[0]} 
                       \frac{\del B_{b,m}^{d,q}}{\del \gam^{c,p}[0]} \vac$ is proportional to $\vac$, again by the linearity of $B_{a,n}^{b,m}$. And by definition of the $A_{a,n}^{b,m}$, the $A$-$A$ terms all belong to $\Wh_0^2\ox(\Mh_0(\np) + \Mh_0(\nm))$. So we have established that
\be \upomega \in \Wh_0^2\ox(\Mh_0(\np) + \Mh_0(\nm)) \nn\ee
and hence the result. 
\end{proof} 

\begin{lem}
There is an isomorphism of complexes
\be \Cloc^{\tau,\bl}(\LLog,\L(\Mh_0(\np)+\Mh_0(\nm))) \cong \Cloc^\bl(\LLog,\L(\Mh_0(\np)))\nn\ee
\end{lem}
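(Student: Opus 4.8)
The plan is to realise the isomorphism as the projection of a cochain onto its $\Mh_0(\np)$-valued part, the point being that the $\tau$-equivariance condition makes the $\Mh_0(\nm)$-valued part redundant. First I would record the two elementary structural facts that drive everything. The index sets $\A$ and $\Am$ of \cref{Adef,Amdef} are disjoint, so $\Mh_0(\np)\cap\Mh_0(\nm)=\CC\vac$; and the involution $\tau$ of \cref{sec: cai}, acting on indices essentially by $(a,n)\mapsto(-a,-n)$, interchanges the two coefficient modules, $\tau\bigl(\Mh_0(\np)\bigr)=\Mh_0(\nm)$ and conversely, while acting on the ghost factor $\Wh_0$ by an automorphism that merely permutes generators. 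Hence the completed cochain space of \cref{sec: tox} contains
\[
(\Mh_0(\np)+\Mh_0(\nm))\tox\Wh_0^r \;=\; \bigl(\Mh_0(\np)\tox\Wh_0^r\bigr)+\bigl(\Mh_0(\nm)\tox\Wh_0^r\bigr),
\]
a sum of two subspaces whose intersection is the vacuum-supported cochains $\CC\vac\tox\Wh_0^r$, and $\tau$ interchanges the two summands.

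Next I would invoke the purely linear-algebraic fact that if a vector space decomposes as $V=A\oplus B$ with an involution $\sigma$ satisfying $\sigma(A)=B$, then projection $\pi_A$ restricts to an isomorphism from the $\sigma$-fixed (or, equally, the $\sigma$-anti-fixed) subspace onto $A$, with inverse $a\mapsto a\pm\sigma a$. Applied with $A$ the $\Mh_0(\np)$-valued cochains and $B$ the $\Mh_0(\nm)$-valued ones, this yields for each $r$ a linear isomorphism
\[
\pi_+:\ \Cloc^{\tau,r}(\LLog,\L(\Mh_0(\np)+\Mh_0(\nm)))\ \isom\ \Cloc^{r}(\LLog,\L(\Mh_0(\np))),\qquad \pi_+^{-1}(\Psi)=\Psi\pm\tau\Psi.
\]
Here I would verify the three compatibilities needed to pass from the ambient spaces to the complexes $\Cloc$: the translation operator $T$ commutes with $\tau$ and preserves coefficient support, so it respects the decomposition; the vacuum line lies in $\Mh_0(\np)\cap\Mh_0(\nm)$, so the single quotient by $\im T+\CC\vac$ in \cref{Clocdef} is taken uniformly on both sides; and $\pi_+$ manifestly respects the compact-support/continuity condition defining $\tox$, since it only discards coefficient components without mixing ghost modes.

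It then remains to check that $\pi_+$ is a chain map, i.e. $d\circ\pi_+=\pi_+\circ d$ for $d=\Q\vap0$. Since $\tau\Q=\Q$ (the $\tau$-invariance lemma for $\Q$ in \cref{Qrem}), the differential commutes with $\tau$ and hence preserves the equivariant subspace. Moreover $d$ preserves the coefficient-support decomposition: the term $\sum\cee^{a,n}[0]\JJ_{a,n}$ of $\Q$ acts on coefficients through the modes $\JJ_{a,n}\vap N$, which stabilise each of $\Mh_0(\np)$ and $\Mh_0(\nm)$ separately (the stabilisation established in the preceding subsection, cf. \cref{stabprop,Lpgstab}), while the purely ghost term $-\tfrac12 f_{ab}{}^c\,\cee^{a,m}[0]\cee^{b,n}[0]\bee_{c,n+m}[-1]\vac$ does not touch coefficients at all. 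Therefore $d$ restricts to the summand of $\Mh_0(\np)$-valued cochains and intertwines with $\pi_+$, completing the proof that $\pi_+$ is an isomorphism of complexes.

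The main obstacle is bookkeeping rather than conceptual. One must ensure that the $\Mh_0(\np)$- and $\Mh_0(\nm)$-components of a \emph{possibly infinite, completed} cochain are each individually well-defined elements of the respective completed spaces, so that the direct-sum decomposition survives completion, and that the identification of the two vacua is handled correctly when quotienting once by $\CC\vac$. I would dispatch both points by arguing at the level of the finite-dimensional graded pieces $\bigl(\extp^r\LpLog\bigr)_N$ appearing in \cref{Homs}, where each cochain is an ordinary finite pairing and the coefficient decomposition is transparent, and only afterwards reassembling the direct sum over $N$.
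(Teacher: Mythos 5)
Your construction is, up to direction, the same as the paper's: the paper exhibits the isomorphism as the symmetrization map $v \mapsto (1+\tau)v$ from $\Wh_0\ox\Mh_0(\np)$ into the $\tau$-equivariant complex, while you propose its would-be inverse, the projection $\pi_+$ onto the $\Mh_0(\np)$-valued part; and both verifications of the chain-map property rest on $\tau\Q=\Q$. You do add one correct observation that the paper's direction does not need: for the projection to commute with $d=\Q\vap 0$ one must also know that $d$ preserves the coefficient-support decomposition, which you rightly deduce from the fact that the modes $\JJ_{a,n}\vap N$, $N\geq 0$, stabilize $\Mh_0(\np)$ and $\Mh_0(\nm)$ separately.

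There is, however, a gap at exactly the point you flag and then defer to bookkeeping. The overlap of the two cochain spaces is not "the vacuum line" but all of $\CC\vac\tox\Wh_0^r$, i.e.\ the ghost-only cochains with coefficient proportional to $\vac$, and this subspace is not removed by the quotient by $\im T + \CC\vac$ in \cref{Clocdef}: for instance $\cee^{\alpha,n}[0]\,\cee^{-\alpha,-n}[0]\vac$ has depth zero, is not proportional to $\vac$, and so survives in $\Cloc^2$. On this overlap $\tau$ acts nontrivially through the ghosts and has a nonzero anti-invariant part, e.g.\ $\tau\bigl(\cee^{\alpha,n}[0]\,\cee^{-\alpha,-n}[0]\vac\bigr) = -\,\cee^{\alpha,n}[0]\,\cee^{-\alpha,-n}[0]\vac$. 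Consequently your direct-sum lemma does not apply as stated: restricted to the $\tau$-fixed subspace, $\pi_+$ only ever produces ghost-\emph{invariant} ghost-only cochains, so it misses the anti-invariant ones sitting in $\Cloc^\bl(\LLog,\L(\Mh_0(\np)))$, while the claimed inverse $a\mapsto a+\tau a$ annihilates precisely those elements; the two maps are therefore not mutually inverse, and "quotienting once by $\CC\vac$" does not repair this. To be fair, the paper's own one-line proof ("$v\mapsto(1+\tau)v$ is certainly a linear isomorphism") elides the identical point, and what the proof of \cref{mainthm} actually uses is only that $(1+\tau)$ is a chain map whose image contains $\upomega[0]$ (equivalently, injectivity of $\pi_+$). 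But as a proof of the isomorphism asserted in the lemma, your argument — like the paper's — requires the ghost-only overlap to be analyzed explicitly rather than dispatched as bookkeeping.
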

\begin{proof}
There is certainly a linear isomorphism
\begin{align} \Wh_0 \ox \Mh_0(\np) &\isom \Wh_0 \ox \left( \Mh_0(\np) + \Mh_0(\nm)\right)\nn\\
    v &\mapsto (1+\tau) v,\nn
\end{align}
and hence a linear isomorphism $\L_0(\Wh_0 \ox \Mh_0(\np)) \cong \L_0(\Wh_0 \ox \left( \Mh_0(\np) + \Mh_0(\nm)\right))$. 
We have $(1+\tau)(\Q\vap 0 v) = \Q\vap 0 v + (\tau\Q) \vap 0 \tau v = \Q\vap 0 v + \Q\vap 0 \tau v = \Q \vap0 (1+\tau) v$, so this isomorphism commutes with the differential. 
\end{proof}

Finally we can complete the proof of \cref{mainthm}. 

We have shown that our cocycle $\wt\vfd^*(\omega) = \upomega[0]$ belongs to $\Cloc^{\tau,2}(\LLog,\L(\Mh_0(\np)+\Mh_0(\nm)))$ and therefore corresponds to a cocycle $\zeta \in \Cloc^2(\LLog,\L(\Mh_0(\np)))$. By the key lemma, \cref{keylem}, such a cocycle is cohomologous to zero if its restriction to $L\oc\h$ vanishes. The restriction of $\zeta$ to $L\oc\h$ is zero if the restriction of $\wt\vfd^*(\omega)$ to $L\oc\h$ is zero. And the restriction of $\wt\vfd^*(\omega)$ to $L\oc\h$ is indeed zero because, for all $i\in \oc I$, $\vfd(J_{i,0}) = \J_{i,0}$ (and hence $R_{i,0}^{b,n}(X) = 0$).

Thus there exists a 1-cochain $\xi \in \Cloc^1(\LLog,\L(\Mh_0(\np)))$ such that $\zeta = d\xi$. It may be written in the form
\be \xi = \Xi\fm 0,\quad\text{with}\quad \Xi := \cee^{a,n}\fm 0 \sum_{(b,m) \in \A} Q_{a,n;b,m}(\gam\fm 0) \gam^{b,m}\fm{-1}\vac\nn\ee
for some polynomials $Q_{a,n;b,m}(X) \in \Onp$, $(b,m)\in \A$, such that $\Xi$ has $Q$-grade $0$. 

In this way we obtain a 1-cochain, $(1+\tau)\xi \in \Cloc^{\tau,1}(\LLog,\L(\Mh_0(\np) + \Mh_0(\nm))$, such that $\wt\vfd^*(\omega) = d(1+\tau)\xi$, as required. 

\section{Proof of \cref{bthm}}\label{sec: proofb}
In \cref{sec: inft} we studied the infinitesimal right action of $\gc$ on the right coset space $U_0(\CC_\eps):= \exxp{\eps\b_-} \!\Big\backslash\! \exxp{\f_\eps}$. In the same way we may consider the right action of $\gc$ on the right coset space $\exxp{\eps\n_-} \!\Big\backslash\! \exxp{\f_\eps}$. We get the following analog of \cref{Plem} and \cref{vfdef}.


\begin{lem}
Let $A\in \gc$. 
Then there exist polynomials $\left\{P_A^{\ia b m}(X)\in \Onp\right\}_{{\ii b m}\in \A}$ and $\{p^i_A(X) \in \Onp\}_{i\in I}$ (depending linearly on $A$) such that
\begin{align} &\Exp{\eps \nm} \left( \Exp{\sum_{i=1}^{\dim\h} y^i b_i} \prodr_{{\ii b m}\in\A} \Exp{x^{\ia b m} J_{\ia b m}}\right) \Exp{\eps A} \nn\\
&\qquad\qquad= \Exp{\eps \nm} \Exp{\sum_{i=1}^{\dim\h} \left( y^i + \eps p^i_A(x)\right) b_i}
 \prodr_{{\ii b m}\in\A} \Exp{\left(x^{\ia b m} + \eps P_A^{\ia b m}(x)\right)J_{\ia b m}}\nn
\end{align}
for every element $\Exp{\sum_{i=1}^{\dim\h} y^i b_i}\prodr_{{\ii b m}\in\A} \Exp{x^{\ia b m} J_{\ia b m}}$ of the group $B:= H\ltimes U$. 

Hence, the linear map 
\be \gc \to \Derc\Onp \ltimes \bigoplus_{i=1}^{\dim\h} \Onp \del_{Y^i} \nn\ee 
given by
\be A \mapsto \sum_{i=1}^{\dim\h} p^i_A(X) \del_{Y^i} + \sum_{{\ii b m}\in \A} P_A^{\ia b m}(X) D_{\ia b m} \nn\ee
is a homomorphism of Lie algebras.
It respects the $Q$-gradation (where we assign $Q$-grade zero to the generators $\del_{Y^i}$).
 \qed
\end{lem}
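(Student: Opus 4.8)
The plan is to repeat the arguments of \cref{Plem} and \cref{vfdef} essentially verbatim, the sole new feature being that we now quotient on the left by the smaller group $\Exp{\eps\nm}$ rather than by $\Exp{\eps\b_-}$, so that the Cartan directions are no longer discarded but recorded in the extra coordinates $y^i$. First I would fix the ambient group. Over $\CC_\eps$, set $\hat\p_\eps := \h\oplus\f_\eps = \h\oplus\eps\gc\oplus\npc$; this is a Lie algebra in which $\f_\eps$ is an ideal, and since $\h\oplus\npc$ exponentiates to $B=H\ltimes U$ one has $\Exp{\hat\p_\eps}=H\ltimes\Exp{\f_\eps}$. The subspace $\eps\nm$ is a subalgebra whose complement in $\hat\p_\eps$ is $\bc(\CC_\eps)=\bc\oplus\eps\bc$ (with $\bc=\h\oplus\npc$), exponentiating to $B(\CC_\eps)=H(\CC_\eps)\ltimes U(\CC_\eps)$. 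Hence multiplication gives a bijection $\Exp{\eps\nm}\times B(\CC_\eps)\isom\Exp{\hat\p_\eps}$, identifying the left coset space $\Exp{\eps\nm}\backslash\Exp{\hat\p_\eps}$ with $B(\CC_\eps)$; this is what furnishes the honest Cartan coordinates $y^i$ alongside the coordinates $x^{b,m}$ on $U$.

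The existence of the polynomials (the analog of \cref{Plem}) then follows by the same truncation-and-BCH computation used there, which I would streamline as follows. Writing $g=\Exp{\sum_i y^i b_i}\,u$ with $u\in U$, I would use that $\mathrm{Ad}_u$ is well-defined on $\gc$ (by \cref{Uactg}, applied with $\log u\in\npc$) to write $u\,\Exp{\eps A}=\Exp{\eps\,\mathrm{Ad}_u A}\,u$, then decompose $\mathrm{Ad}_u A = B_\nm+B_\h+B_\npc$ along $\gc=\nm\oplus\h\oplus\npc$. Since $\eps^2=0$ these three $\eps$-parts commute, so $\Exp{\eps B_\nm}$ can be pushed to the far left and killed by the quotient, leaving the coset representative $\Exp{\sum_i(y^i+\eps p^i_A(x))b_i}\cdot\Exp{\eps B_\npc}\,u\in B(\CC_\eps)$, where $B_\h=\sum_i p^i_A(x)b_i$. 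The decisive point, visible from this factorization, is that $B_\h$ and $B_\npc$ depend on $u$ alone, so the shifts $p^i_A$ and $P^{b,m}_A$ are polynomials in the $x^{b,m}$ with \emph{no} dependence on the $y^i$; their well-definedness as genuine finite polynomials, and the stated $Q$-grading (with $p^i_A\in\Onp_\alpha$ when $A\in\g_\alpha$), follow exactly as in \cref{Plem} and \cref{vfdef}. This $y$-independence is precisely what makes $\bigoplus_i\Onp\,\del_{Y^i}$ an abelian ideal and yields the semidirect-product target.

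For the homomorphism statement I would run the two-parameter argument of \cref{vfdef} over $\CC_{\eps,\eta}$: compute the coset of $g\,\Exp{\eps A}\Exp{\eta B}\Exp{-\eps A}\Exp{-\eta B}$ in two ways, using $\Exp{\eps A}\Exp{\eta B}\Exp{-\eps A}\Exp{-\eta B}=\Exp{\eps\eta[A,B]}$ on one side and composing the two individual shifts on the other. Writing $v_A=\sum_i p^i_A(X)\,\del_{Y^i}+w_A$ with $w_A=\sum_{b,m}P^{b,m}_A(X)D_{b,m}\in\Derc\Onp$, the fact that all coefficients depend only on $X$ collapses the bracket to $[v_A,v_B]=[w_A,w_B]+\sum_i\bigl(w_A\,p^i_B-w_B\,p^i_A\bigr)\del_{Y^i}$, the $\del_{Y^i}$-part being abelian. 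Equating the $\eps\eta$-coefficients of the two computations yields simultaneously $[w_A,w_B]=w_{[A,B]}$ and $w_A\,p^i_B-w_B\,p^i_A=p^i_{[A,B]}$, which is exactly the asserted homomorphism property; the $Q$-equivariance is inherited as in \cref{vfdef}.

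I do not expect a genuine obstacle: the entire content is already contained in \cref{Plem}, \cref{vfdef} and \cref{Uactg}, together with \cref{linthm} for the structure of the image. The one point requiring care is the bookkeeping of the Cartan factor — confirming that it sits as a single left factor of $B(\CC_\eps)$ and that its shift depends only on $x$ — and this is settled cleanly by the $\mathrm{Ad}_u$ factorization above, which is what separates the present lemma from its predecessors.
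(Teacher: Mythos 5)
Your proposal is correct and is essentially the paper's own argument: the paper treats this lemma as immediate by re-running the proofs of \cref{Plem} and \cref{vfdef} with the right coset space $\Exp{\eps\nm}\big\backslash\Exp{\f_\eps}$ in place of $\Exp{\eps\b_-}\big\backslash\Exp{\f_\eps}$, which is exactly what you do. Your packaging via $u\,\Exp{\eps A}=\Exp{\eps\,\mathrm{Ad}_u A}\,u$ and the triangular decomposition of $\mathrm{Ad}_u A$ is a tidy (equivalent) reorganization of that same BCH/truncation computation, and it correctly pinpoints the one new bookkeeping point, namely that the Cartan shift is $y$-independent because the $\nm$-component --- whose conjugation by the $H$-factor carries all the $y$-dependence --- is killed by the left quotient.
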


Explicitly, this homomorphism sends
\begin{align}
e_i \mapsto \vf(e_i), \nn\qquad
h  \mapsto \vf(h) + \sum_{j=1}^{\dim\h} \la b^j,h\ra \del_{Y^j},\nn\qquad
f_i  \mapsto \vf(f_i) + \sum_{j=1}^{\dim\h} \la b^j,\check\alpha_i\ra X^{e_{i}} \del_{Y^j}, \nn
\end{align}
where $X^{e_i} := X^i$ are as in \cref{Xidef}. Let $X^{f_i} := \tau X^{e_i}$. 
It follows that there is a homomorphism of Lie algebras (here, recall \cref{vfddef})
\be \vfd' : \g \to \Dg \ltimes \bigoplus_{i=1}^{\dim\h} \Og \del_{Y^i} \nn\ee
defined by 
\begin{align}
e_i &\mapsto \vfd(e_i)+ \sum_{j=1}^{\dim\h} \la b^j,\check\alpha_i\ra X^{f_{i}} \del_{Y^j}, \nn\\ 
h  &\mapsto \vfd(h) + \sum_{j=1}^{\dim\h} \la b^j,h\ra \del_{Y^j},\nn\\
f_i  &\mapsto \vfd(f_i) + \sum_{j=1}^{\dim\h} \la b^j,\check\alpha_i\ra X^{e_{i}} \del_{Y^j}, \nn
\end{align}
for $i\in I$ and $h\in \h$.
This yields a homomorphism of the loop algebras, just as in \cref{vfdhatdef},
\be L\g \to L\left(\Dg \ltimes \bigoplus_{i=1}^{\dim\h} \Og \del_{Y^i} \right). \nn\ee

Now we note that 
\be (\Mfw\ox\pi_0)[0] =_\CC \Og\quad\text{and}\quad(\Mfw\ox\pi_0)[1] =_\CC \Omega_\O \oplus \jota\left(\Dg \ltimes\bigoplus_{i=1}^{\dim\h} \Og \del_{Y^i} \right)\nn\ee
where we continue to identify $\Og$ and $\Omega_\Og$ with subspaces of $\Mfw = \Mfw\ox \CC\vac$ as before, and we extend the definition of the injective linear map $\jota$, \cref{jotadef}, by setting
\be \jota\left( \sum_{i=1}^{\dim\h} p^i(X) \del_{Y^i}\right) = \sum_{i=1}^{\dim\h} p^i(\gam\fm0) b_i\fm{-1}\vac.\nn\ee

As in \cref{LDextlem} and \cref{jotalem}, we then find that there is an isomorphism of Lie algebras
\be L\left(\Dg \ltimes \bigoplus_{i=1}^{\dim\h} \Og \del_{Y^i} \right)
\cong \L((\Mfw\ox \pi_0)\df1)\big/ \L(\Ogp) .\nn\ee
We get an exact sequence of Lie algebras
\be 0 \to \L(\Ogp)  \to \L((\Mfw\ox\pi_0)\df1) \to L\left(\Dg\ltimes \bigoplus_{i=1}^{\dim\h} \Og \del_{Y^i} \right) \to 0\ee 
As in the proof of \cref{jotalem}, the cocycle defining this extension is given by double contraction terms in the OPE. Following \cite{Fre07}, the key observation is then that there are no possible double contractions between the new terms we have added (which belong to the subspace $\O \ox \pi_0 \subset \Mfw \ox \pi_0$) and the existing terms (which each have at most one factor of $\bet$ or $\SS$). It follows that the statement of \cref{mainthm} still holds (with the same lifting map $\vphi$) when $\vfd$ is replaced by the map $\vfd'$ above.

\appendix

\section{The coefficients $c_i$}\label{sec: cis}
In this section, for $i\in I$, let us write $X^i := X^{\ia a n}$ for the unique $\ii a n$ such that $J_{\ia a n} = e_i$, i.e.
\be X^i := \begin{cases} X^{\alpha_i,0}, & i \in \oc I = I \setminus\{0\}\\
                         X^{-\delta+\alpha_0,1},  & i =0  \end{cases}\label{Xidef}\ee
and similarly $X^{[i,j]} := X^{\ia a n}$ for the unique $\ii a n$ such that $J^{\ia a n} \propto [e_i,e_j]$. Define $D_i$, $D_{[i,j]}$ likewise. Recall the homomorphism $\vf: \gc \to \Derc\Onp$ from \cref{vfdef}.

\begin{lem}\label{quadterms}
Let $i\in I$. The terms in $\vf(f_i)$ of the form  $X^i X^{\ia a n} D_{\ia a n}$ for some $\ii a n\in \A$ are  
\be  - X^i X^i D_i + \sum_{j\prec i} a_{ij} X^i X^{[i,j]} D_{[i,j]} 
                             - \sum_{j\prec i} a_{ij} X^i X^j D_j. \nn\ee
\end{lem}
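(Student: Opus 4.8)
The plan is to read the required coefficients directly off the defining computation of $\vf$ in \cref{Plem} and \cref{vfdef}: one forms the group element $g=\prodr \Exp{x^{a,n}J_{a,n}}$, multiplies on the right by $\Exp{\eps f_i}$, commutes the factor $\Exp{\eps f_i}$ leftward through the product using the Baker--Campbell--Hausdorff relations of \cref{BCHlem}, discards the resulting left factor lying in $\Exp{\eps\b_-}$, and records the induced shift $x^{a,n}\mapsto x^{a,n}+\eps P^{a,n}_{f_i}(x)$. The coefficient of the monomial $X^iX^{a,n}$ inside $P^{a,n}_{f_i}(X)$ is then exactly the coefficient of $X^iX^{a,n}D_{a,n}$ we seek. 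Since $\vf$ respects the $Q$-gradation, $\vf(f_i)$ has weight $-\alpha_i$, so a monomial of the prescribed shape can only involve simple-node coordinates $X^j$ together with the coordinates $X^{[i,j]}$; this lets me work modulo terms that are cubic or higher and restrict attention to the rank-$\le 2$ subalgebra generated by $e_i,f_i,e_j,f_j$ with $j$ adjacent to $i$.

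The rank-one contribution produces the term $-X^iX^iD_i$. Setting all coordinates except $x^i$ to zero reduces the computation to the $\mathfrak{sl}_2$-triple $(e_i,\check\alpha_i,f_i)$, for which the factorization $\Exp{x^ie_i}\Exp{\eps f_i}\in \Exp{\eps\b_-}\Exp{(x^i+\eps P^i)e_i}$ is precisely the $\mathfrak{sl}_2$ model of \cref{bigcellsl2}, \cref{sl2wak}, namely $f\mapsto -XXD$. This yields $P^i=-(x^i)^2$, hence coefficient $-1$, and no other diagonal quadratic term in $x^i$ is generated at rank one.

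For the cross terms I would track the commutators produced as $\Exp{\eps f_i}$ passes the factors $\Exp{x^{[i,j]}e_{[i,j]}}$ and $\Exp{x^je_j}$, where $e_{[i,j]}\propto[e_i,e_j]$. The relevant brackets are $[e_i,f_i]=\check\alpha_i$, the Jacobi/Serre identity $[[e_i,e_j],f_i]=\la\alpha_j,\check\alpha_i\ra e_j$, and the weight actions $[\check\alpha_i,e_j]$ and $[\check\alpha_i,e_{[i,j]}]$; the coefficients $a_{ij}$ are to be identified with the Cartan pairings $\la\alpha_j,\check\alpha_i\ra$ appearing here. Following these through the reshuffling, and peeling off the $\Exp{\eps\b_-}$ part as in the rank-one case, produces the shift of $x^{[i,j]}$ proportional to $x^ix^{[i,j]}$ and the shift of $x^j$ proportional to $x^ix^j$, with opposite signs $\pm a_{ij}$. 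Crucially, the ordering $\prec$ of \cref{Bpdef} determines which simple-node factors sit to the left of the $e_i$-factor in $g$, and it is exactly this that restricts the sums to $j\prec i$ and fixes the relative sign between the $X^{[i,j]}D_{[i,j]}$ and $X^jD_j$ contributions.

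The main obstacle will be the combined bookkeeping of (i) the factorization modulo $\Exp{\eps\b_-}$, which shifts coefficients nonlinearly, just as dividing by $(1+\eps x^i)$ did in the $\mathfrak{sl}_2$ model, and (ii) the order-dependence of the BCH reshuffling, which must be handled carefully to obtain the clean restriction to $j\prec i$ and the precise $a_{ij}$. I would confirm the final coefficients independently by extracting the degree-one part of the identity $[\vf(e_i),\vf(f_i)]=\vf(\check\alpha_i)$, using $\vf(e_i)=D_i+\dots$ (cf. \cref{linthm}) and the fact that $\vf(\check\alpha_i)=-\sum_{a,n}\la\wgt(a,n),\check\alpha_i\ra X^{a,n}D_{a,n}$ is the linear diagonal weight operator: matching the $X^iD_i$ coefficient forces $2c=-2$, recovering $c=-1$, and the analogous matching in the $X^jD_j$ and $X^{[i,j]}D_{[i,j]}$ directions serves as a consistency check on the $a_{ij}$.
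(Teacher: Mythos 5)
Your overall framework---reading the coefficients off the reshuffling of $g\,\Exp{\eps f_i}$ via \cref{BCHlem} and \cref{Plem}, exactly as in \cref{vfdef}---is the same as the paper's, and your rank-one extraction of $-X^iX^iD_i$ is sound. But there is a genuine gap at the crucial step: the claim that the $Q$-gradation restricts the diagonal quadratic terms to the coordinates $X^j$ and $X^{[i,j]}$, and hence allows a reduction to rank-$\le 2$ subalgebras. This is false. \emph{Every} monomial of the prescribed shape $X^iX^{a,n}D_{a,n}$ has $Q$-grade $-\alpha_i$, whatever $\ii a n$ is, because $X^{a,n}D_{a,n}$ has grade $0$; so the gradation gives no restriction at all on which indices $\ii a n$ can occur. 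Determining which ones do occur is the actual content of the lemma, and it is a statement about the total ordering $\prec$ of \cref{Bpdef}---about which factors of the ordered product \cref{gel} stand to the left of $\Exp{x^ie_i}$ and which correction factors cross it during the reshuffle---not about weights. Concretely, two families of candidate contributions have to be ruled out: (i) the correction $\Exp{\eps x^i\check\alpha_i}$ produced when $\Exp{\eps f_i}$ passes $\Exp{x^ie_i}$ afterwards sweeps through \emph{every} factor $\Exp{x^{b,m}J_{b,m}}$ standing on the far side of $\Exp{x^ie_i}$, producing a term $-\la\wgt\ii bm,\check\alpha_i\ra X^iX^{b,m}D_{b,m}$ for each such factor; (ii) for a positive root $\beta$ of $\oc\g$ of height $\ge 3$ with $\beta-\alpha_i\in\oc\Delta_+$, the correction proportional to $E_{\beta-\alpha_i,0}$ produced when $\Exp{\eps f_i}$ passes $\Exp{x^{\beta,0}E_{\beta,0}}$ will, if its sorted position lies on the opposite side of $\Exp{x^ie_i}$, cross that factor and generate a term proportional to $X^iX^{\beta,0}D_{\beta,0}$, since $[e_i,[E_\beta,f_i]]=-\la\beta,\check\alpha_i\ra E_\beta\neq 0$ in general. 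Whether (i) and (ii) produce unwanted terms depends entirely on where the non-simple root vectors sit relative to $e_i$ in the ordering; this is precisely what the paper's proof settles by inspecting the positions of the factors, and your plan assumes that conclusion rather than proving it. A rank-$\le 2$ computation cannot even see obstruction (ii), because the route passes through $E_{\beta-\alpha_i}$, which lies outside the subalgebra generated by $e_i,f_i,e_j,f_j$.

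A symptom of the same problem is that you list $[\check\alpha_i,e_{[i,j]}]$ among the relevant brackets. Under the ordering for which the lemma holds, $E_{[i,j]}$ stands to the \emph{right} of $e_i$, so the $\check\alpha_i$-correction never passes it and that bracket contributes nothing; if you did include it, route (i) applied to $E_{[i,j]}$ would add $-\la\alpha_i+\alpha_j,\check\alpha_i\ra=-(2+a_{ij})$ to the case-(2) contribution $+a_{ij}$, giving $-2$ rather than $a_{ij}$ as the coefficient of $X^iX^{[i,j]}D_{[i,j]}$. Nor can your proposed consistency check close the gap: in $[\vf(e_i),\vf(f_i)]=\vf(\check\alpha_i)$, a hypothetical unwanted term $cX^iX^{\beta,0}D_{\beta,0}$ in $\vf(f_i)$ brackets with $D_i$ into $cX^{\beta,0}D_{\beta,0}$, where it mixes with contributions from all the other (non-diagonal, cubic and higher) terms of $\vf(e_i)$ and $\vf(f_i)$, so matching coefficients against the diagonal operator $\vf(\check\alpha_i)$ does not force $c=0$. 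To complete the proof you must argue as the paper does, directly from the positions of factors: with the ordering arranged so that the only factors left of $\Exp{x^ie_i}$ are the $\Exp{x^je_j}$ with $j\prec i$, route (i) yields exactly $-\sum_{j\prec i}a_{ij}X^iX^jD_j$, route (ii) cannot occur for height $\ge 3$, and the only corrections that cross $\Exp{x^ie_i}$ are those proportional to $e_j$ coming from the factors $\Exp{x^{[i,j]}E_{[i,j]}}$ with $j\prec i$, yielding $+\sum_{j\prec i}a_{ij}X^iX^{[i,j]}D_{[i,j]}$.
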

\begin{proof}
This follows from a direct calculation, of the sort in the proof of \cref{Plem} and \cref{linthm}. Let us give the outline. We have 
\be \vf(f_i) = \sum_{\ii a n\in \A} P_{f_i}^{\ia a n}(X) D_{\ia a n}.\label{Pfi}\ee 
By inspection, one sees that if $P_{f_i}^{\ia a n}(X)$ is to have both $X^{\ia a n}$ and $X^i$ as factors, then it must be that $[f_i, J_{\ia a n}]$ is proportional to a basis vector that precedes $e_i$ in our basis. (We have to pick up the dependence on $X^{\ia a n}$ as $\exxp{\eps f_i}$ moves leftwards through the product, and then pick up the dependence on $X^i$ as some term is pushed through $\exxp{x^i e_i}$.)
This is a strong constraint: we must have either 
\begin{enumerate}
\item $J_{\ia a n} = e_j$ for some $j\in I$, or
\item $J_{\ia a n} \propto [e_i,e_j]$ for some $j\in I$ such that $e_j \prec e_i$.
\end{enumerate}
Let us compute the coefficients of the resulting terms. Consider case (2): suppose $e_j \prec e_i$ and $J_{\ia an}= c [e_i,e_j]$ for some nonzero $c\in \CC$, so that $[J_{\ia a n}, f_i] = c [[e_i,e_j],f_i] = c [\check \alpha_i,e_j] = ca_{ij} e_j$.
We have
\begin{align} &\exxp { x^{\ia a n} J_{\ia a n}  } \exxp{\eps f_i} \nn\\
&= \exxp{\eps f_i}\Bigl( \exxp{\eps x^{\ia a n} [J_{\ia a n} , f_i] } 
                   \exxp{\eps \tfrac 1 2 [J_{\ia a n},[J_{\ia a n},f_i]]}\dots \Bigr) 
                   \exxp{x^{\ia a n} J_{\ia a n}} \nn\\
&= \exxp{\eps f_i} \exxp{\eps x^{\ia a n} ca_{ij} e_j } 
                   \exxp{x^{\ia a n} J_{\ia a n}}\dots .\nn
\end{align}
Here $ \exxp{\eps x^{\ia a n} ca_{ij} e_j } $ still needs to be pushed left through the factor $\exxp{x^i e_i}$ which appears further left in the product. We get
\begin{align} \exxp{x^i e_i} \exxp{\eps x^{\ia a n} c a_{ij} e_j } 
&=  \exxp{\eps x^{\ia a n} ca_{ij} e_j } \exxp{\eps x^i x^{\ia a n} a_{ij} c [e_i,e_j]}   \exxp{x^i e_i}\dots.\nn
\end{align}
We obtain the terms $\sum_{j\prec i} a_{ij} X^i X^{[i,j]} D_{[i,j]}$ in $\vf(f_i)$. 

Now consider terms $J_{\ia a n} = e_j$ for some $j$. When $i\prec j$ we just get
\be \exxp {x^j e_j} \exxp{\eps f_i} = \exxp{\eps f_i} \exxp {x^j e_j} \nn\ee
since $[e_j,f_i] = 0$. Eventually we reach the factor $ \exxp {x^i e_i}$. We continue to shuffle terms, getting
\begin{align} \exxp {x^i e_i} \exxp{\eps f_i} &= \exxp{\eps f_i} \exxp{\eps x^i \check\alpha_i } \exxp{\eps \half (-2 x^i x^i e_i)} \exxp{x^i e_i} \nn\\
&= \exxp{\eps f_i} \exxp{\eps x^i \check\alpha_i } \exxp{\left(x^i - \eps x^ix^i\right) e_i}  \nn
\end{align}
and then finally we have to move $\exxp{\eps f_i} \exxp{\eps x^i \check\alpha_i }$ further left through factors  $\exxp {x^j e_j}$ with $j\prec i$:
\begin{align}  \exxp{x^j e_j}  \exxp{\eps f_i} \exxp{\eps x^i \check\alpha_i } 
   &=  \exxp{\eps f_i} \exxp{\eps x^i \check\alpha_i } \exxp{-\eps x^jx^i a_{ij} e_j  } \exxp{x^j e_j}\nn\\
   &=  \exxp{\eps f_i} \exxp{\eps x^i \check\alpha_i } \exxp{\left( x^j -\eps x^jx^i a_{ij}\right) e_j  }\nn 
\end{align}
From these last two expressions we read off the terms $-X^iX^i D_i$ and $-\sum_{j\prec i} a_{ij} X^i X^j D_j$ in $\vf(f_i)$. 
\end{proof}

For all $h\in \h$, we have
\be \vf(h) 
= -\sum_{\ii a n\in \A }
 \la \wgt\ii a n ,h \ra X^{\ia a n} D_{\ia a n}\label{hX}\ee
and hence $\vf(h)(X^i) = - \la \alpha_i, h\ra X^i$ 
(as it certainly should, on $Q$-grading grounds).

\begin{prop}
For all $h\in \h$ and for each $i\in I$, we have 
\begin{align}
 \lpg{\vf(h)}{\vf(e_i)} &= 0,\nn\\
 \lpg{\vf(h)}{\vf(f_i)} &= - c_i\vf(h)(X^i),\nn  
\label{vpr}
\end{align}
with $c_i$ as in \cref{CScor}.
\end{prop}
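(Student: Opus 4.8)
The plan is to evaluate both first products directly from the double-contraction formula established in the proof of \cref{jotalem}. Writing $\mu=(a,n)\in\A$ for brevity, the final line of \cref{ABope} identifies the coefficient of $(z-w)^{-2}$ in the OPE — that is, the first product — of two states $\jota(X),\jota(Y)$ arising from derivations $X=\sum_\mu P^\mu D_\mu$ and $Y=\sum_\mu Q^\mu D_\mu$. Under the identification $\Mfw[0]\cong\Og$ it reads
\be \jota(X)\vap 1\jota(Y) = -\sum_{\mu,\nu\in\A}\left(\frac{\del P^\mu}{\del X^\nu}\right)\left(\frac{\del Q^\nu}{\del X^\mu}\right)\in\Og. \ee
Since $\vf(h)=-\sum_{\mu\in\A}\la\wgt(\mu),h\ra X^\mu D_\mu$ is linear by \cref{hX}, the first factor collapses to $-\la\wgt(\mu),h\ra\delta^\mu_\nu$, and I would record the resulting \emph{weighted divergence}
\be \lpg{\vf(h)}{Y} = \sum_{\mu\in\A}\la\wgt(\mu),h\ra\,\frac{\del Q^\mu}{\del X^\mu}, \ee
valid for any $Y=\sum_\mu Q^\mu D_\mu$ in the image of $\vf$.

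The case of $e_i$ would then follow on $Q$-grading grounds alone. The product $\lpg{\vf(h)}{\vf(e_i)}$ lies in $\Mfw[0]\cong\Og$ and has $Q$-grade $0+\alpha_i=\alpha_i$; but $\Og$ is graded by $-Q_{\geq 0}$, because each $X^\mu$ has grade $-\wgt(\mu)\in-\Delta_+$, so $\Og_{\alpha_i}=0$. Concretely every term $\del Q^\mu_{e_i}/\del X^\mu$ in the weighted divergence already carries grade $\alpha_i$ and hence vanishes, giving the first identity with no further work.

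For $f_i$ the same grading principle pins down the \emph{shape} of the answer: $\lpg{\vf(h)}{\vf(f_i)}$ has $Q$-grade $-\alpha_i$, and since $\alpha_i$ is a simple root it cannot be written as a sum of two or more positive roots, so the unique monomial of grade $-\alpha_i$ in $\Og$ is $X^i=X^{e_i}$. Hence $\lpg{\vf(h)}{\vf(f_i)}=\lambda X^i$ for a scalar $\lambda$, and each surviving $\del P^\mu_{f_i}/\del X^\mu$ must be a multiple of $X^i$. This means only the terms of $\vf(f_i)$ of the form $X^iX^\mu D_\mu$ contribute to $\lambda$ — which is exactly the content of \cref{quadterms}. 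I would substitute its three families: the term $-(X^i)^2D_i$ (weight $\wgt(e_i)=\alpha_i$, with a factor $2$ from $\del_{X^i}(X^i)^2$) contributes $-2\la\alpha_i,h\ra$; the family $\sum_{j\prec i}a_{ij}X^iX^{[i,j]}D_{[i,j]}$ (weight $\wgt([i,j])=\alpha_i+\alpha_j$) contributes $\sum_{j\prec i}a_{ij}\la\alpha_i+\alpha_j,h\ra$; and the family $-\sum_{j\prec i}a_{ij}X^iX^jD_j$ (weight $\alpha_j$) contributes $-\sum_{j\prec i}a_{ij}\la\alpha_j,h\ra$.

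The $\la\alpha_j,h\ra$ contributions from the last two families cancel, leaving $\lambda=\bigl(-2+\sum_{j\prec i}a_{ij}\bigr)\la\alpha_i,h\ra=c_i\la\alpha_i,h\ra$, with $c_i$ precisely as in \cref{CScor}. Comparing with $\vf(h)(X^i)=-\la\alpha_i,h\ra X^i$ then gives $\lpg{\vf(h)}{\vf(f_i)}=c_i\la\alpha_i,h\ra X^i=-c_i\,\vf(h)(X^i)$, as required. I expect the only genuine obstacle to be bookkeeping: fixing the sign and derivative factors in the double-contraction formula, and verifying that \cref{quadterms} really exhausts the terms $X^iX^\mu D_\mu$, so that no contribution to the $X^i$-coefficient of the weighted divergence is missed. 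The conceptual step that tames the otherwise infinite sums is the reduction to the weighted divergence combined with the observation that simplicity of $\alpha_i$ forces the grade-$(-\alpha_i)$ output to be proportional to $X^i$.
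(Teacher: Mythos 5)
Your proposal is correct and is essentially the paper's own proof: the paper likewise uses the Wick formula's double contraction to reduce the first product to the weighted divergence $\sum_{\ii a n\in\A}\la \wgt\ii a n,h\ra\,D_{\ia a n}P_{f_i}^{\ia a n}(X)$ and then substitutes the three families of terms from \cref{quadterms}, with the $Q$-grading reductions you spell out (vanishing in the $e_i$ case, proportionality to $X^i$ in the $f_i$ case) left implicit. One small slip to fix: since $\vf$ takes values in $\Derc\Onp$, all coefficient polynomials lie in $\Onp$, and it is $\Onp$ rather than $\Og$ that is graded by $-Q_{\geq 0}$ (indeed $\Og_{\alpha_i}\neq 0$, since it contains e.g. $X^{-\alpha_i,0}$ for $i\in\oc I$); your grading arguments are correct verbatim once $\Og$ is replaced by $\Onp$.
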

\begin{proof}
Recall the Wick lemma from \cref{sec: wick}. First products, like $\lpg{\vf(h)}{\vf(f_i)}$, involve a double contraction. In view of \cref{Pfi} and \cref{hX}, and then making use of the lemma above, we find
\begin{align}
  \lpg{\vf(h)}{\vf(f_i)} 
&= \sum_{\ii a n\in \A} \la \wgt\ii a n ,h \ra \left(D_{\ia a n} P_{f_i}^{\ia a n}(X)\right)\nn\\
&= - 2\la \alpha_i, h\ra X^i + \sum_{j\prec i} a_{ij} \la \alpha_i + \alpha_j, h\ra X^i
                            -  \sum_{j\prec i} a_{ij} \la \alpha_j, h\ra X^i\nn\\
&= \la \alpha_i, h\ra \left( -2  + \sum_{j\prec i} a_{ij}\right) X^i = - c_i\vf(h)(X^i) .  
\nn\end{align}
\end{proof}

Therefore
\be \jota(\vf(h)) \,\vap 1\, \left(\jota(\vf(f_i)) + c_i \gam^{e_i}[-1]\vac\right) =0 \nn\ee
for each $i\in I$. This shows that $\vphi(f_i) = c_i \gam^{e_i}[-1]\vac$ and, hence, $\vphi(e_i) = c_i\gam^{f_i}[-1]\vac$.

\printbibliography

\end{document}